\numberwithin{equation}{section}
\font\script=rsfs10 at 12pt
\def\eps{\varepsilon}
\def\A{{\mathcal A}}
\def\B{{\mathcal B}}
\def\C{{\mathcal C}}
\def\SS{{\mathcal S}}
\def\G{{\mathscr G}}
\def\Q{\mathcal Q}
\def\H{{\mbox{\script H}\,\,}}
\def\L{{\mbox{\script L}\,}}
\def\comp{\subset\subset}
\def\R{{\mathbb R}}
\def\RR{{\mathcal R}}
\def\T{{\mathcal T}}
\def\D{{\mathcal D}}
\def\F{{\mathcal F}}
\def\N{\mathbb N}
\def\bal{\begin{aligned}}
\def\eal{\end{aligned}}
\def\res{\mathop{\hbox{\vrule height 7pt width .5pt depth 0pt \vrule height .5pt width 6pt depth 0pt}}\nolimits}
\def\proofof#1{\begin{proof}[Proof of #1]}
\def\step#1#2{\par\noindent{\underline{\it Step~#1.}}\emph{ #2}\\}
\def\case#1#2{\par\noindent{\underline{\it Case~#1.}}\emph{ #2}\\}
\def\part#1#2{\par\noindent{\underline{\it Part~#1.}}\emph{ #2}\\}
\def\S{\mathbb S}
\def\arc#1{\wideparen{#1}}
\def\deg{{\rm deg}}
\def\u#1{\hbox{\boldmath $#1$}} 
\DeclareMathOperator{\diam}{\rm diam}
\DeclareMathOperator{\id}{\rm Id}
\DeclareMathOperator{\dist}{\rm dist}
\DeclareMathOperator{\osc}{\rm osc}
\def\XXint#1#2#3{{\setbox0=\hbox{$#1{#2#3}{\int}$} \vcenter{\vspace{-1pt}\hbox{$#2#3$}}\kern-.5\wd0}}
\theoremstyle{plain}
\newtheorem{lemma}{Lemma}[section]
\newtheorem{prop}[lemma]{Proposition}
\newtheorem{theorem}[lemma]{Theorem}
\newtheorem{corol}[lemma]{Corollary}
\newtheorem{defin}[lemma]{Definition}
\newtheorem{remark}[lemma]{Remark}
\def\de0#1{\rule[3pt]{#1}{0.4pt} \hspace{-0.1pt} \rule[3.05pt]{0.05pt}{0.4pt} \hspace{-0.1pt} \rule[3.1pt]{0.05pt}{0.4pt} \hspace{-0.1pt} \rule[3.15pt]{0.05pt}{0.4pt} \hspace{-0.1pt} \rule[3.2pt]{0.05pt}{0.4pt} \hspace{-0.1pt} \rule[3.25pt]{0.05pt}{0.4pt} \hspace{-0.1pt} \rule[3.3pt]{0.05pt}{0.4pt} \hspace{-0.1pt} \rule[3.35pt]{0.05pt}{0.4pt} \hspace{-0.1pt} \rule[3.4pt]{0.05pt}{0.4pt} \hspace{-0.1pt} \rule[3.45pt]{0.05pt}{0.4pt} \hspace{-0.1pt} \rule[3.5pt]{0.05pt}{0.4pt} \hspace{-0.1pt} \rule[3.55pt]{0.05pt}{0.4pt} \hspace{-0.1pt} \rule[3.6pt]{0.05pt}{0.4pt} \hspace{-0.1pt} \rule[3.65pt]{0.05pt}{0.4pt} \hspace{-0.1pt} \rule[3.7pt]{0.05pt}{0.4pt} \hspace{-0.1pt} \rule[3.75pt]{0.05pt}{0.4pt} \hspace{-0.1pt} \rule[3.8pt]{0.05pt}{0.4pt} \hspace{-0.1pt} \rule[3.85pt]{0.05pt}{0.4pt} \hspace{-0.1pt} \rule[3.9pt]{0.05pt}{0.4pt} \hspace{-0.1pt} \rule[3.95pt]{0.05pt}{0.4pt} \hspace{-0.1pt} \rule[4.0pt]{0.05pt}{0.4pt} \hspace{-0.1pt} \rule[4.05pt]{0.05pt}{0.4pt} \hspace{-0.1pt} \rule[4.1pt]{0.05pt}{0.4pt} \hspace{-0.1pt} \rule[4.15pt]{0.05pt}{0.4pt} \hspace{-0.1pt} \rule[4.2pt]{0.05pt}{0.4pt} \hspace{-0.1pt} \rule[4.25pt]{0.05pt}{0.4pt} \hspace{-0.1pt} \rule[4.3pt]{0.05pt}{0.4pt} \hspace{-0.1pt} \rule[4.35pt]{0.05pt}{0.4pt} \hspace{-0.1pt} \rule[4.4pt]{0.05pt}{0.4pt} \hspace{-0.1pt} \rule[4.45pt]{0.05pt}{0.4pt} \hspace{-0.1pt} \rule[4.5pt]{0.05pt}{0.4pt} \hspace{-0.1pt} \rule[4.55pt]{0.05pt}{0.4pt} \hspace{-0.1pt} \rule[4.6pt]{0.05pt}{0.4pt} \hspace{-0.1pt} \rule[4.65pt]{0.05pt}{0.4pt} \hspace{-0.1pt} \rule[4.7pt]{0.05pt}{0.4pt} \hspace{-0.1pt} \rule[4.75pt]{0.05pt}{0.4pt} \hspace{-0.1pt} \rule[4.8pt]{0.05pt}{0.4pt} \hspace{-0.1pt} \rule[4.85pt]{0.05pt}{0.4pt} \hspace{-0.1pt} \rule[4.9pt]{0.05pt}{0.4pt} \hspace{-0.1pt} \rule[4.95pt]{0.05pt}{0.4pt} \hspace{-0.1pt} \rule[5.0pt]{0.05pt}{0.4pt} \hspace{-0.1pt} \rule[5.05pt]{0.05pt}{0.4pt} \hspace{-0.1pt} \rule[5.1pt]{0.05pt}{0.4pt} \hspace{-0.1pt} \rule[5.15pt]{0.05pt}{0.4pt} \hspace{-0.1pt} \rule[5.2pt]{0.05pt}{0.4pt} \hspace{-0.1pt} \rule[5.25pt]{0.05pt}{0.4pt} \hspace{-0.1pt} \rule[5.3pt]{0.05pt}{0.4pt} \hspace{-0.1pt} \rule[5.35pt]{0.05pt}{0.4pt} \hspace{-0.1pt} \rule[5.4pt]{0.05pt}{0.4pt} \hspace{-0.1pt} \rule[5.45pt]{0.05pt}{0.4pt} \hspace{-0.1pt} \rule[5.5pt]{0.05pt}{0.4pt} \hspace{-0.1pt} \rule[5.55pt]{0.05pt}{0.4pt} \hspace{-0.1pt} \rule[5.6pt]{0.05pt}{0.4pt} \hspace{-0.1pt} \rule[5.65pt]{0.05pt}{0.4pt} \hspace{-0.1pt} \rule[5.7pt]{0.05pt}{0.4pt} \hspace{-0.1pt} \rule[5.75pt]{0.05pt}{0.4pt} \hspace{-0.1pt} \rule[5.8pt]{0.05pt}{0.4pt} \hspace{-0.1pt} \rule[5.85pt]{0.05pt}{0.4pt} \hspace{-0.1pt} \rule[5.9pt]{0.05pt}{0.4pt} \hspace{-0.1pt} \rule[5.95pt]{0.05pt}{0.4pt} \hspace{-0.1pt} \rule[6.0pt]{0.05pt}{0.4pt}}	
\def\deb{\mathop{\de0{16pt} ~}\limits}		
\newcounter{mt}
\def\maintheorem#1#2#3{\par \medskip \noindent {\bf Theorem~\mref{#1}}~(#2).~{\it #3}\par}
\def\mref#1{\Alph{#1}}
\def\maintheoremdeclaration#1{\stepcounter{mt}\newcounter{#1}\setcounter{#1}{\arabic{mt}}}
\begin{document}

\title{The closure of planar diffeomorphisms in Sobolev spaces}

\author{G. De Philippis}
\author{A. Pratelli}

\begin{abstract}
We characterize the (sequentially) weak and strong closure of planar diffeomorphisms in the Sobolev topology and we show that they always coincide. We also provide some sufficient condition for a planar map to be approximable by diffeomorphisms in terms of the connectedness of its counter-images, in the spirit of Young's characterisation of monotone functions. We finally show that the closure of diffeomorphisms in the Sobolev topology is strictly contained in the class \(INV\) introduced by M\"uller and Spector.
\end{abstract}

\maketitle

\section{Introduction}

\subsection{General overview}
In a series of papers~\cite{Y,Y1,Y2} J.W.T. Youngs characterised the closure in the uniform topology of the set of homeomorphisms between two \(2\)-dimensional topological manifolds, by showing that it coincides with the class of monotone maps, i.e. those contiuous maps for which the counter-image of a point is a  connected set. 

By a trivial gluing argument the same characterisation holds for the uniform closure of the set of homeomorphisms from the two dimensional unit square \(\Q:=[0,1]^2\) into itself which are equal to the identity on \(\partial \Q\) and actually the result can be extended to more general domains, see for instance~\cite[Section 2]{IO}.

In recent years, mainly motivated by applications in non-linear elasticity and in geometric function theory, a great deal has been devoted in understanding which is the closure of the class of diffeomorphisms in the weak and/or strong Sobolev topology. In view of applications to non-linear elasticity, it is of particular interest  to understand these relations in the case of maps in \(W^{1,p}\) for \(p\in (1,2)\). Indeed   this allows for discontinuous maps which, in principle, may be used to model cavitations, see~\cite{Ball}. In this respect, note that the map \(x\mapsto x/|x|\) can be easily obtained as strong \(W^{1,p}\) limit of diffeomorphisms if \(p\in[1,2)\).

To state the known results and for further use let us first introduce a few notations. We will mainly restrict to functions mapping the closed unit square \(\Q=[0,1]^2\) into itself (see however Remark~\ref{rmk:changeofvariables}). For the sake of clarity we will denote by \(\Q\) the ``domain'' unit square and by \(\u\Q=[0,1]^2\) the ``target'' unit square. In general we will denote by capital letters points and sets in \(\Q\) and by bold capital letter points  and sets in \(\u \Q\). Hence  \(u:\Q\to \u \Q\), and \(A,B,\dots\in\Q\) while \(\u A,\u B,\dots\in\u\Q\). We also set
\begin{gather*}
\mathcal H_{\id}(\Q)=\Big\{u\in{\rm C}^0(\Q;\u\Q)\textrm{ homeomorphism such that \(u=\id\) on \(\partial \Q\)}\Big\}\,,\\
\D_{\id} (\Q)=\Big\{u\in{\rm C}^\infty(\Q;\u\Q)\textrm{ diffeomorphism such that \(u=\id\) on \(\partial \Q\)}\Big\}\,,\\
\mathcal M_{\id} (\Q)=\Big\{u\in{\rm C}^0(\Q;\u\Q)\textrm{ monotone map such that \(u=\id\) on \(\partial \Q\)}\Big\}\,.
\end{gather*}
Recall that a continuous map \(u:\Q\to \u \Q\) is said to be {\em monotone} if for every \(\u P\in \u \Q\) the set \(u^{-1}(\u P)\subseteq \Q\) is connected. Moreover, for any subset \(\F\subseteq W^{1,p}(\Q;\u\Q)\) we denote by 
\[
\overline{\F}^{\,\textrm{s}-W^{1,p}}\qquad\textrm{and}\qquad \overline{\F}^{\,\textrm{ws}-W^{1,p}}
\]
the closure of \(\F\) in the {\em strong} topology and its {\em weak sequential} closure, i.e., the smallest set containing \(\F\) which is sequentially closed\footnote{Note that in general the weak sequential closure of \(\F\) is strictly smaller than the closure of \(\F\) in the weak topology and strictly larger than the set of all weak limit points of sequences in \(\F\), see for instance~\cite[Volume 1, Section 4.1]{GMS} for a discussion of this fact.}. With these notations we can state the following theorem which summarises the work of several authors.
\begin{theorem}\label{thm:summ}
The following statements hold:
\begin{itemize}
\item[(i)] Let \(u\in W^{1,p}(\Q)\cap \mathcal H_{\id}(Q) \) and \(p\in[1,+\infty)\). Then there exists a sequence \(\{u_j\}\subseteq \mathcal D_{\id}(\Q)\) such that \(\|u_j-u\|_{W^{1,p}(\Q)}\to 0\).
\item[(ii)] Let \(u\in W^{1,p}(\Q)\cap \mathcal M_{\id}(Q)\) and \(p\in(1,+\infty)\). Then there exists a sequence \(\{u_j\}\subseteq \mathcal D_{\id}(\Q)\) such that \(\|u_j-u\|_{W^{1,p}(\Q)}\to 0\).
\item[(iii)] For \(p\in [2,+\infty)\), 
\[
\overline{\mathcal D_{\id}(\Q)}^{\,\mathrm{ws}-W^{1,p}}=\overline{\mathcal D_{\id}(\Q)}^{\,\mathrm{s}-W^{1,p}}=\mathcal M_{\id}(\Q)\cap W^{1,p}(\Q)\,.
\]
\end{itemize}
\end{theorem}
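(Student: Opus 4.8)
The plan is to recall the two diffeomorphic-approximation statements behind (i) and (ii) and then to deduce (iii) from them by soft functional-analytic and topological arguments; since the theorem collects the work of several authors, only (iii) needs an argument assembled here. \textbf{Part (i).} The strategy is the classical two-step scheme. First one reduces to the piecewise affine case: on a triangulation of \(\Q\) whose mesh is fine and \emph{adapted to} \(u\), one builds a homeomorphism, still equal to \(\id\) on \(\partial\Q\) and \(W^{1,p}\)-close to \(u\), that is affine on each triangle. The delicate point is that the affine interpolation on each small triangle must not break global injectivity, which rules out a fixed regular grid and is exactly the content of the work of Iwaniec, Kovalev and Onninen for \(p\in(1,\infty)\) and of Hencl and Pratelli for \(p=1\). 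Second one regularises: a piecewise affine homeomorphism that equals \(\id\) near \(\partial\Q\) is smoothed, locally across the edges of the triangulation, into a diffeomorphism with \(\det Du>0\) everywhere, at arbitrarily small cost in the \(W^{1,p}\) norm; this step is a standard local construction.

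\textbf{Part (ii).} Here the plan is to reduce to (i): one shows that a monotone \(u\in W^{1,p}\cap\mathcal M_{\id}(\Q)\) with \(p\in(1,\infty)\) can be \(W^{1,p}\)-approximated by homeomorphisms in \(\mathcal H_{\id}(\Q)\), after which (i) and a diagonal argument conclude. By Youngs' theorem \(u\) is already a uniform limit of homeomorphisms equal to \(\id\) on \(\partial\Q\); the new ingredient is to make one such approximation cheap in energy, by resolving the non-injectivity of \(u\) --- the merging of fibres \(u^{-1}(\u P)\) into nondegenerate continua --- through a surgery whose \(W^{1,p}\) cost is controlled via the Courant--Lebesgue estimate (this is where \(p>1\) is used).

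\textbf{Part (iii).} This is a soft consequence of (i) and (ii). The inclusion ``\(\supseteq\)'' is immediate: for \(p\in[2,\infty)\subset(1,\infty)\) and \(u\in\mathcal M_{\id}(\Q)\cap W^{1,p}(\Q)\), part (ii) provides \(\{u_j\}\subset\mathcal D_{\id}(\Q)\) with \(\|u_j-u\|_{W^{1,p}}\to0\), so \(\mathcal M_{\id}(\Q)\cap W^{1,p}(\Q)\subseteq\overline{\mathcal D_{\id}(\Q)}^{\,\mathrm s-W^{1,p}}\subseteq\overline{\mathcal D_{\id}(\Q)}^{\,\mathrm{ws}-W^{1,p}}\), the last inclusion being trivial. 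Hence everything will follow once one shows \(\overline{\mathcal D_{\id}(\Q)}^{\,\mathrm{ws}-W^{1,p}}\subseteq\mathcal M_{\id}(\Q)\cap W^{1,p}(\Q)\). Since \(\mathcal D_{\id}(\Q)\subseteq\mathcal M_{\id}(\Q)\cap W^{1,p}(\Q)\) (diffeomorphisms of the compact square are smooth and injective) and the weak sequential closure is by definition the smallest sequentially weakly closed set containing \(\mathcal D_{\id}(\Q)\), it is enough to prove that \(\mathcal M_{\id}(\Q)\cap W^{1,p}(\Q)\) is itself sequentially weakly closed in \(W^{1,p}\). So let \(u_j\rightharpoonup u\) weakly in \(W^{1,p}\) with \(u_j\in\mathcal M_{\id}(\Q)\cap W^{1,p}(\Q)\). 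The \(u_j\) are bounded in \(W^{1,p}\) and take values in \(\u\Q\), and --- this is the only place \(p\ge2\) is used --- the Courant--Lebesgue lemma together with the non-folding property of monotone maps (the image of a topological disc lies in the filled image of its boundary curve) endows all the \(u_j\) with a common modulus of continuity, depending only on \(p\) and \(\sup_j\|u_j\|_{W^{1,p}}\). By Arzel\`a--Ascoli a subsequence converges uniformly to a continuous map \(v\); being also the weak \(W^{1,p}\) limit, \(v\) agrees a.e.\ with \(u\), so \(u\) has a continuous representative \(v\) with \(v\in W^{1,p}(\Q)\) and \(v=\id\) on \(\partial\Q\). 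Finally \(v\) is a uniform limit of monotone maps, each of which is in turn a uniform limit of homeomorphisms in \(\mathcal H_{\id}(\Q)\) by Youngs' theorem, so \(v\) is a uniform limit of such homeomorphisms and hence monotone, again by Youngs' theorem. Thus \(v\in\mathcal M_{\id}(\Q)\cap W^{1,p}(\Q)\), which gives the missing inclusion. Combining it with the chain displayed above forces all the inclusions to be equalities, i.e.\ (iii), including \(\overline{\mathcal D_{\id}(\Q)}^{\,\mathrm s-W^{1,p}}=\mathcal M_{\id}(\Q)\cap W^{1,p}(\Q)\).

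\textbf{Main obstacle.} All the real difficulty is concentrated in (i) and (ii): the piecewise affine reduction preserving injectivity on a mesh adapted to \(u\), and the energy-controlled surgery turning a monotone map into a nearby homeomorphism. Part (iii) is routine by comparison; its one quantitative point is that \(p\ge2\) is precisely the threshold at which monotone \(W^{1,p}\) maps of \(\Q\) acquire a uniform modulus of continuity through the Courant--Lebesgue lemma, which is what keeps the weak sequential closure no larger than \(\mathcal M_{\id}(\Q)\cap W^{1,p}(\Q)\).
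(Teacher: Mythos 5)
The paper does not prove Theorem~\ref{thm:summ}: it is a survey statement, and the text immediately after it attributes (i) to Iwaniec--Kovalev--Onninen (for $p>1$) and Hencl--Pratelli (for $p=1$), (ii) to Iwaniec--Onninen, and (iii) to Iwaniec--Onninen again, with no argument given. Your sketches of (i) and (ii) correctly locate the main difficulty (building the injective piecewise-affine approximation on a grid adapted to $u$, and, for (ii), resolving the degenerate fibres with controlled energy), but of course they merely outline those references. What you add that is genuinely yours is a self-contained derivation of (iii) from (i) and (ii), and that argument is correct. The inclusion $\mathcal M_{\id}\cap W^{1,p}\subseteq\overline{\mathcal D_{\id}}^{\,\mathrm s}$ is indeed immediate from (ii), and the reverse inclusion reduces, exactly as you say, to showing that $\mathcal M_{\id}\cap W^{1,p}$ is sequentially weakly closed. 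Your use of Courant--Lebesgue plus the monotone ``non-folding'' bound $\osc_{B_\rho}u\leq\osc_{\partial B_\rho}u$ does give, for $p\geq 2$, a modulus of continuity depending only on $\sup_j\|Du_j\|_{L^p}$ (logarithmic at $p=2$, H\"older for $p>2$ via Morrey); Arzel\`a--Ascoli together with Rellich then identifies the uniform limit with the weak limit, and Youngs' characterisation of the uniform closure of $\mathcal H_{\id}$ shows the limit is monotone. The threshold $p\geq 2$ is correctly identified as the point where the equicontinuity argument works. This is a cleaner and more elementary route to (iii) than simply invoking \cite{IO2}, at the price of taking (ii) as a black box; the paper's own framework (Corollary~\ref{cor:diff} and Lemma~\ref{lm:continuity}) would give the continuity step as well, but the paper never assembles these pieces into a proof of (iii) because it treats the theorem as known.
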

Point~(i) has been proved in~\cite{IKO,IKO2} for \(p\in(1,+\infty)\) and~\cite{HP} for \(p=1\). See also~\cite{C} for the case of Orlicz-Sobolev spaces and~\cite{DP} for the case of bi-Lipschitz maps. Point~(ii) as been proved in~\cite{IO} and point~(iii) in~\cite{IO2}. See also  the forthcoming paper~\cite{CORT} where  point~(ii) is extended   to the case $p=1$.

We remark that for the sake of simplicity we have stated the above theorem for the case of maps from $\Q$ to $\u\Q$ coinciding  with the identity on the boundary. However  the quoted results actually work for more general boundary conditions and domains,  see in particular~\cite{IO}. Note however that the results easily extend at least to the case of domains that are bi-Lipschitz equivalent to \(\Q\) and of bi-Lipschitz boundary data, see Remark~\ref{rmk:changeofvariables}.

\subsection{Main results}
The first main result of this paper is a characterisation of \(\overline{\mathcal D_{\id}(\Q)}^{\,\textrm{s}-W^{1,p}}\) for {\em any} \(p\in [1+\infty)\). As a consequence we will obtain the second main result, which states that
\[
\overline{\mathcal D_{\id}(\Q)}^{\,\textrm{s}-W^{1,p}}=\overline{\mathcal D_{\id}(\Q)}^{\,\textrm{ws}-W^{1,p}},
\]
thus extending point~(iii) of Theorem~\ref{thm:summ} to \(p\in[1,2)\), see Theorem~\mref{WeakStrong} below. To state our theorem we first need to give the following definition. 

\begin{defin}[No-crossing condition]\label{defnocross}
Let $u:\Q\to\u\Q$ be a Borel map. We say that \emph{$u$ satisfies the no-crossing condition} if there is a function \(\tilde u:\Q\to\u\Q \) such that \(\tilde u=u\) \(\L^2\) almost everywhere and such that the following three conditions are satisfied:
\begin{itemize}
\item[(i)] \(\tilde u=\id\) on \(\partial \Q\).
\item[(ii)] There exists a $\H^1$-negligible  set $\mathcal S_{\tilde u}\subseteq\Q$ such that $\tilde u$ is continuous at any point of $\Q\setminus\mathcal S_{\tilde u}$.
\item[(iii)] Let $N\in\N$, and for any $1\leq i \leq N$ let $\gamma_i:[0,1]\to \Q\setminus\mathcal S_{\tilde u}$ be a Lipschitz, injective curve, such that for any $i\neq j$ the intersection $\gamma_i\cap\gamma_j$ is either empty, or a single point $P_{ij}=\gamma_i(t_i)=\gamma_j(t_j)$. In the second case, assume also that  both $\gamma_i'(t_i)$ and $\gamma_j'(t_j)$ exist and are not parallel, and $P_{ij}\neq P_{kl}$ whenever $\{i,\,j\} \neq \{k,\,l\}$ and both the points are defined. Then, setting $\Gamma=\cup_{i=1}^N\gamma_i([0,1])$, for every \(\eps>0\) there exists a continuous and injective map $\psi_\eps:\Gamma\to\u\Q$, coinciding with the identity on $\partial\Q\cap\Gamma$ and such that $\|{\tilde u}-\psi_\eps\|_{L^\infty(\Gamma)}<\eps$.
\end{itemize}
We collect the maps satisfying the no crossing condition in the set 
\[
\mathcal{NC}_{\id} (\Q)=\Big\{ u: \Q\to \u\Q \textrm{ such that \(u\) satisfies the no-crossing condion}\Big\}\,.
\]
\end{defin}
Roughly speaking, a map satisfies the no-crossing condition if every time that we restrict it to a family of possibly intersecting curves, it is possible to make it injective up to a small error in \(L^\infty\). We have then the following result.
\maintheorem{Caratt}{Characterisation of the closure of diffeomorfisms}{Let $\Q=\u\Q=[0,1]^2$, let $p\in [1,+\infty)$ and let $u:\Q\to\u\Q$ be a function. Then, there exists a sequence of diffeomorphisms $\{u_j\}\subseteq \mathcal D_{\id} (\Q)$ strongly converging in $W^{1,p}$ to $u$, if and only if $u\in\mathcal{NC}_{\id} (\Q)\cap W^{1,p}(\Q) $. Moreover, the sequence $\{u_j\}\subseteq \mathcal D_{\id} (\Q)$ can be chosen such that:
\begin{itemize}
\item[(i)] If \(p\in [1,2]\), for every \(j\) there exists a compact set \(K_j\) such that:
\[
\H_{\infty}^{2-p}(K_j)\le 2^{-j}\qquad\textrm{and}\qquad \|u-u_{j}\|_{L^\infty(\Q\setminus K_j)}\le 2^{-j}.
\]
In particular, defining $K:=\bigcap_{\ell\ge 1} \bigcup_{j\ge \ell} K_j$, one has \(\H^{2-p}(K)=0\) and \(u_j(x)\to u(x)\) for all \(x\in \Q\setminus K\).
\item[(ii)] If \(u\in C^{0}(\Q)\) (which is always the case if \(p\geq 2\)), then \( \|u-u_{j}\|_{L^\infty(\Q)}\le 2^{-j}\). 
\end{itemize}
}

\begin{remark}
Note that in the above theorem we recover the (known) fact that \(W^{1,2}\)-limits of diffeomorphisms are continuous. By Theorem~\mref{WeakStrong} below, the same is true for weak limits.
\end{remark}

Since $\overline{\mathcal D_{\id}(\Q)}^{\,\textrm{ws}-W^{1,p}}\subseteq \mathcal{NC}_{\id} (\Q)$, see Section~\ref{sec:DimB}, from Theorem~\mref{Caratt} we deduce the following.

\maintheorem{WeakStrong}{Equality of weak and strong closure of diffeomorfisms}{Let $\Q=\u\Q=[0,1]^2$. Then, for every $p\in [1,+\infty)$ one has
\[
\overline{\mathcal D_{\id}(\Q)}^{\,\mathrm{s}-W^{1,p}}=\overline{\mathcal D_{\id}(\Q)}^{\,\mathrm{ws}-W^{1,p}}\,.
\]
}

\begin{remark} Note that in the above Theorem we are speaking about {\em weak} convergence. In particular, when \(p=1\), this requires  the {\em equi-integrability} of the sequence of gradients.
\end{remark}

By the results of Young, a monotone map satisfies the no-crossing condition (hence a corollary of Theorem~\mref{Caratt} is that Theorem~\ref{thm:summ}~(ii) is true also for $p=1$). Since the monotonicity is expressed in terms of the connectedness of \(u^{-1}(\u P)\), it is interesting to understand if the same condition (suitably extended to Sobolev maps) can characterise the weak and/or strong closure of difffeomorphisms. As we shall see with the counterexamples of Section~\ref{sec:Ex} this is actually not the case. However, we can show that a stronger condition is actually sufficient to be a  limit of diffeomorphisms.

To understand the condition in Theorem~\mref{NonSconn} below, two remarks are in order. First of all, note that for a {\em continuous monotone} map \(u:\Q\to \u \Q\), it is not hard to show that if $\u C\subseteq \u\Q$ is a closed, connected set which does not disconnect $\u\Q$, i.e. such that $\u\Q\setminus \u C$ is connected, then $u^{-1}(\u C)\subseteq \Q$ is a closed, connected set which does not disconnect $\Q$, see Lemma~\ref{condThC}.

Second, by the ideas of Sverak and M\"uller--Spector,~\cite{MS,Sve}, to any Sobolev map \(u\in W^{1,p}\) we can associate a uniquely defined {\em multivalued map} \(u:\Q\to 2^{\u \Q}\), see Definition~\ref{def:multifunction}. This allows to define the counter-image of subsets of $\u\Q$ as well, as those points whose image intersects the set, that is, for every $\u C\subseteq\u\Q$ we let $u^{-1}(\u C)=\big\{P\in \Q: u(P)\cap \u C \neq \emptyset\big\}$. Our sufficient condition reads then as follows.

\maintheorem{NonSconn}{Sufficiency of the non-disconnecting property}{Let $\Q=\u\Q=[0,1]^2$, let $1\leq p<\infty$, and let $u:\Q\to\u\Q$ be a $W^{1,p}$ function, coinciding with the identity on the boundary. Suppose that, whenever $\u C\subseteq \u\Q$ is a closed, connected set which does not disconnect $\u\Q$, then $u^{-1}(\u C)\subseteq \Q$ is a closed, connected set which does not disconnect $\Q$. Then, $u$ satisfies the no-crossing condition.}

\begin{remark}\label{rmk:changeofvariables}
All the above results are easily seen to be invariant by a bi-Lipschitz change of variables. Hence, they can be immediately extended to the class of maps \(u:\Omega_1\to \Omega_2\) such that \(u=\Phi\) on \(\partial \Omega_1\), where $\Omega_1$ is a bounded, bi-Lipschitz, simply connected domain, and \(\Phi:\Omega_1\to \Omega_2\) is a bi-Lipschitz function. Moreover, the  same technique of the proofs applies verbatim to show that the same results are true if the set \(\Omega_1\) is bi-Lipschitz equivalent to a standard multi-connected domain (i.e. a disk with finitely many compactly contained and disjoint disks removed) and the boundary data extends to a bi-Lipschitz map from $\Omega_1$ to $\Omega_2$. It would be of interest to understand how to extend the above results in the case of boundary data which extends to a monotone mapping. It does not seem that our techniques straightforwardly apply to this case.\end{remark}

\subsection{M\"uller and Spector \(INV\) condition and counterexamples} 
As we already mentioned, in the study of vectorial variational problems, there has been a great interest in understanding weak notions of invertibility which are closed by weak Sobolev limit and which can include discontinuous map.

In~\cite{MS}, M\"uller and Spector introduced a condition which they called \(INV\) (compare also with the weak diffeomorphism introduced by Giaquinta, Modica and Soucek~\cite[Volume 2, Chapter 2]{GMS}) and which since then has revealed to be a reasonable substitute of invertibility, in particular \(INV\) maps with almost everywhere positive Jacobian share some of reasonable properties one would expect from smooth diffeomorphisms. These made them a natural class of maps on which to settle vectorial variational problems which would be naturally settled on the class of diffeomorphims.

Refering the reader to Section~\ref{sec:INV} for the precise definition and some of the main properties of \(INV\) functions, let us just say here that the \(INV\) condition does not allow the interpenetration of matter, even after the formation of cavitation.

Since it is not hard to see that the \(INV\) condition is closed by (sequentially) \(W^{1,p}\) weak limits and since, trivially, diffeomorphisms satisfy this condition, we deduce 
\[
\overline{\mathcal D_{\id}(\Q)}^{\,\textrm{s}-W^{1,p}}\subseteq \overline{\mathcal D_{\id}(\Q)}^{\,\textrm{ws}-W^{1,p}}\subseteq INV(\Q),
\] 
see Corollary~\ref{cor:diff}.

A natural conjecture (to which also the authors have believed for a while) is then that the closure of diffeomorphisms actually coincides with \(INV\) maps. This is however not the case, not even for \(INV\) maps with almost everywhere positive Jacobian (a class of maps which enjoy better properties, as mentioned above). Indeed, in Section~\ref{terzo} we will construct a map which satisfies the \(INV\) condition, has positive Jacobian almost everywhere and cannot be approximated by diffeomorphisms. Furthermore, we will show that the \(INV\) condition is in general not invariant by reparametrization of the domain, while this invariance holds true under the additional assumption of positive Jacobian almost everywhere, as proved in~\cite[Section 9]{MS}).
 
Finally, again in Section~\ref{sec:Ex}, we will present an example showing that in Theorem~\mref{NonSconn} it is not enough to require that the counter-image of any point is a connected set (in particular, not requiring that it does not disconnect) as in the case of continuous maps (see Theorem~\ref{thm:summ}~(ii)).

We conclude the introduction by mentioning that we actually do not know if in Theorem~\mref{NonSconn} the requirement that $u^{-1}(\u C)$ is a connected set that does not disconnect $\Q$ for every connected set $\u C$ which does not disconnect $\u\Q$ can be weakened by restricting oneself only to the case when $\u C$ is a point, see also Remark~\ref{onlyptsgsq}. Keep in mind that, for continuous maps, these two conditions are easily seen to be equivalent, and in fact they coincide with the monotonicity, i.e., to the mere requirement that the counter-image of a point is connected, see Lemma~\ref{condThC}.

\subsection{Organisation of the paper and structure of the proof}

The paper is organised as follows. In Section~\ref{sec:prelimin} we present some technical definitions and their main properties that we are going to use, in particular we will speak about Sobolev functions, degree, INV condition and Lebesgue squares. In Sections~\ref{sec:A} and~\ref{sec:C} we prove Theorems~\mref{Caratt}, \mref{WeakStrong} and~\mref{NonSconn}, and in Section~\ref{sec:Ex} we provide some examples and counterexamples.

\medskip

We now briefly explain the main ideas behind the proofs of the main results :The main difficulty in the proof of the sufficiency part of Theorem~\mref{Caratt} is that the maps we are dealing with are both discontinuous and non-injective. When dealing with continuous maps, the idea of Young to solve the issue of non injectivity was to perform a fine partition on the target domain \(\u\Q\) with small squares \(\u Q_i\). The map is then approximated separately on each \(u^{-1}(\u Q_i)\) by a bijective map. This same strategy is performed in the case of Sobolev functions in~\cite{IKO,IKO2} thanks to the \(p\)-Laplacian version of the Rad\'o-Knesser theorem proved in~\cite{AS}. On the other hand, the strategy performed in~\cite{HP} is based on a suitable decomposition of the domain and to several extension lemmas which allow to extend an injective boundary data to a diffeomorphism in the interior with a suitable control on the Sobolev norms.
 
The proof of Theorem~\mref{Caratt} mixes both strategies. We start by constructing a ``good'' grid \(\G\) on the domain \(\Q\) such that \(u\in W^{1,p}(\G)\) and there are good estimates on the Sobolev norm of the function when restricted to its edges, see Lemma~\ref{startinggrid}. Moreover, the grid can be chosen such that on a good percentage of the squares the function is very close to an affine one. The aim is now to construct a good injective approximation of \(u\res \G\) and then to extend it to a piecewise linear homeomorphism on each square of the grid. To this end we start by constructing a grid \(\widetilde\G\) on the target domain \(\u \Q\), with side length much smaller then the one of \(\G\) and such that the image of \(u(\G)\) intersects \(\widetilde \G\) transversally, see Lemma~\ref{existencearrival}.

Once this second grid has been constructed, we consider \(\G\cap u^{-1}(\widetilde \G) \), which is a finite collection of points dividing each side of \(\G\) in finitely many intervals. The idea will be then to substitute on each of these segments the function with an affine  one. If we do so by keeping the values at the points of \(\G\cap u^{-1}(\widetilde \G) \), this will preserve the good control on the Sobolev norms, but the resulting function might be non-injective; indeed, there can be two or more points in \(\G\) whose image is the same point in \(\widetilde \G\). To solve this issue, we will make use of the no-crossing condition in order to slightly separate the images of the points in $\G\cap u^{-1}(\widetilde\G)$, so to get a piecewise linear and injective approximation on $\G$ without destroying the control on the Sobolev norms, see Proposition~\ref{injectivelinearization}. At this point, we will use known extension results to obtain a piecewise affine homeomorphism which approximates \(u\). The smooth approximation is then obtained thanks to the results in~\cite{MCP}. Theorem~\mref{WeakStrong} will then easily follow from Theorem~\mref{Caratt}.
 
Concerning Theorem~\mref{NonSconn}, the initial strategy is the same as the one described above for the proof of Theorem~\mref{Caratt}. However, we cannot rely anymore on the no-crossing condition in order to define an injective function on the grid. Hence, we need another rule to separate the images of the points in \(\G\cap u^{-1}(\widetilde\G)\). It is here that a key role is played by the assumption that \(u^{-1}(\u P)\) is a connected set that does not disconnect $\u\Q$ for every $\u P\in\u\Q$. Indeed, this allows to define the relation of \emph{right} and \emph{left connectibility} among the points in \(u^{-1}(\u P)\cap \G\) and consequently to ``blow-up'' \(\u P\in u(\G)\cap \widetilde\G\), see Section~\ref{sec:lr}. Verifying that this rule is actually well defined and provides a injective map is the main content of Chapter~\ref{sec:C}.

\section{Some preliminary definitions and facts\label{sec:prelimin}}
In this Section we collect some preliminary definitions and some technical facts, mainly known. In particular, in Section~\ref{sec:INV} we discuss the definition of degree and the INV condition, while in Section~\ref{sec:extension} we present the definition of ``Lebesgue squares'' and we recall some extension results.

\subsection{The definition of degree and the INV condition}\label{sec:INV}
Recall that, given a map \(u\in W^{1,1}(\Q)\), the limit
\[
\lim_{r\to 0} \frac{1}{\pi r^2} \int_{B(P,r)} u(x)\,dx
\]
exists for all \(P\) outside a \(\H^1\)-negligible set. We define the \emph{precise representative} of \(u\) as the map \(u^*(P)=(u^*_1(P),u^*_2(P))\) given by
\[
u_i^*(P)=\limsup_{r\to 0} \frac{1}{\pi r^2} \int_{B(P,r)} u_i(x)\,dx\,.
\]
Note that, by the \(1\)-dimensional Sobolev embedding, \(u^*\) is continuous when restricted to ``almost every'' curve. In particular we have the following simple remark.

\begin{remark}\label{rmk:degree}
Let \(u\in W^{1,1}(\Q)\), \(D\comp\Q\) be a domain with Lipschitz  boundary, and for \(\eps\) small let 
\[
D_{\eps}=\{x: {\rm sdist}(x,\partial D)\le \eps\}\comp Q
\]
where \({\rm sdist}\) denotes the signed distance from \(\partial D\). Then, \(u^*\res \partial D_\eps\) is continuous for almost every small $\eps$.
\end{remark}

Let us start with the definition of degree for a continuous planar map.

\begin{defin}
Let $\alpha:\S^1\to\R^2$ be a continuous curve and let \(u:\R^2\to\R^2\) be a function such that \(u\circ \alpha\in C^0(\S^1)\). Then for any \(\u P\in\R^2\setminus u(\alpha(\S^1))\) we define \(\deg(\u P,u\circ\alpha)\) as the winding number of the curve \(u\circ \alpha\) with respect to \(\u P\). 
\end{defin}

By Remark~\ref{rmk:degree}, it is thus possible to define the degree of a \(W^{1,1}\) function when restricted to ``almost every'' curve. If \(\alpha\) parametrises the boundary of a domain \(\partial D\) on which \(u^*\) is continuous, we will sometimes write \(\deg(\u P,u^*\res \partial D)\) to denote the degree of \(\u P\) with respect to \(u^*\circ \alpha\). 

We now show how, it is possible to associate to every function \(u\in W^{1,1}(\Q)\) a multivalued function defined at {\em every} point as follows, see~\cite{MS,Sve}.


\begin{defin}\label{def:multifunction}
Let $u:\Q\to\u\Q$ be a $W^{1,1}$ function. For every Lipschitz open set \(D\subseteq \Q\) such that \(u^*\res \partial D\) is continuous, we set
\[
F_u(D)=\big\{ \u P\in\u\Q: \deg(\u P,u^*\res \partial D)\ne 0\big\}\cup u^*(\partial D)\,.
\]
Given \(P\in \Q\), for and $\eps>0$ we denote by
\[
\mathcal D_{P,\eps}=\big\{ D\subseteq \Q: D\textrm{ open},\ P\in D\subseteq B(P,\eps) \textrm{ and \(u^*\res \partial D\) is continuous}\big\}\,,
\]
where $B(P,\eps)$ is the ball centered in $P$ with radius $\eps$, and we finally define the set
\[
u_{\rm multi}(P)=\Big\{\u P\in\u\Q:\, \exists\, \eps_n\searrow 0,\, D_n\in \mathcal D_{P,\eps_n},\, \u P_n\in F_u(D_n),\, \u P_n\to \u P\Big\}\,.
\] 
Notice that $u_{\rm multi}(P):\Q\to 2^{\u\Q}$ is a pointwise defined multivalued map; moreover, for every $P\in \Q$ the set $u_{\rm multi}(P)$ is by definition a non-empty closed set. With a small abuse of notation, we will often refer to $u_{\rm multi}$ simply as $u$ and, accordingly, for any \(\u C\subseteq \u\Q\) we define 
\[
u^{-1} (\u C)=\big\{P\in \Q: u_{\rm multi}(P)\cap \u C\neq\emptyset\big\}\,.
\]
\end{defin}
Notice that the above definition is consistent; in particular, whenever $P$ is a continuity point for $u^*$ the set $u_{\rm multi}(P)$ reduces to the sole point $u^*(P)$. Moreover, it is easy to show that the same happens if \(P\) is a Lebesgue point for both \(u\) and \(Du\). In particular, the multivalued function $u_{\rm multi}$ and the precise representative \(u^*\) coincide almost everywhere.

We can now report the definition of \(INV\) map as introduced by M\"uller and Spector in~\cite{MS}. see also~\cite{CD}.

\begin{defin}[INV and INV$^+$ conditions]\label{defINV}
Let $u:\Q\to\u\Q$ be a $W^{1,1}$ function. 
One says that \emph{$u$ satisfies the \(INV\) condition} if for every \(P\in {\rm Int} (\Q)\) and almost every \(r<\dist(P,\partial \Q)\) such that $u^*$ is continuous on $\partial B(P,r)$ we have
\begin{itemize}
\item[(i)]\(u(Q)\in F_u(B(P,r))\) for a.e. \(Q\in B(P,r)\);
\item[(ii)] \(u(Q)\in \u\Q\setminus F_u(B(P,r))\cup u^*(\partial B(P,r))\) for a.e. \(Q\in \Q\setminus B(P,r)\).
\end{itemize}
One says that \emph{\(u\) satisfies the \(INV^+\) condition} if \(u\circ \Phi\) satisfies the \(INV\) condition for every bi-Lipschitz map \(\Phi:\Q\to \Q\). We collect all \(INV\) (resp., \(INV^+\)) maps into the set \(INV(\Q)\) (resp., \(INV^+(\Q)\)).
\end{defin}

Roughly speaking, a map satisfies condition \(INV\) if for most balls the image of what is inside (resp., outside) remains inside or on the boundary (resp., outside or on the boundary). Condition \(INV^+\) requires this for more general sets instead than just for balls. 

Clearly, condition \(INV^+\) is invariant by re-parametrization of the domain, while it is less clear whether the same holds true for condition \(INV\). This is actually the case if \(\det D u>0\) a.e., see~\cite[Section 9]{MS}, but as we will show in Section~\ref{primo} it is false in general.

Two are the main properties of \(INV\) maps that we are going to need in the sequel: that they are closed under weak convergence, and that they are continuous outside a small set. Both these facts have been established in~\cite{MS}, but since we need them in a slightly different setting we provide here their proofs.

\begin{lemma}\label{lm:continuity}
Let \(u\in W^{1,p}(\Q)\cap INV(\Q) \), \(1\leq p\leq 2\). Then, there exists a set \(\SS_u\subseteq \Q\) such that \(\H^{2-p}(\SS_u)=0\) and \(u\res (\Q\setminus \SS_u)\) is continuous.
\end{lemma}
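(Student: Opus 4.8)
This is a two-dimensional $W^{1,p}$ analogue of the M\"uller--Spector continuity result, and I would prove it by combining a capacitary estimate for Sobolev functions with the geometric content of the $INV$ condition. The exceptional set will be
\[
\SS_u:=\Big\{P\in \Q:\ \limsup_{r\to 0}\ r^{\,p-2}\!\!\int_{B(P,r)}|Du|^p\,dx>0\Big\},
\]
and the argument has three parts: (1) prove $\H^{2-p}(\SS_u)=0$; (2) show that for $P\notin\SS_u$ the oscillation of the precise representative $u^*$ on a suitable sequence of small circles centred at $P$ tends to $0$; (3) use the $INV$ inclusions to promote this into genuine oscillation control of $u$ near $P$, producing a continuous representative which agrees with $u$ almost everywhere.

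\textbf{Step 1.} I would argue by truncation. Fix $M>0$ and split $|Du|^p=g_M+h_M$ with $g_M:=\min(|Du|^p,M)$. Then $\int_{B(P,r)}g_M\le M\pi r^2=o(r^{2-p})$ at \emph{every} $P$ (here $p\le 2$ is used), while a standard $5r$-covering argument bounds, for every $\lambda>0$, the $\H^{2-p}$-measure of $\{P:\ \limsup_r r^{p-2}\!\int_{B(P,r)}h_M>\lambda\}$ by $C\lambda^{-1}\|h_M\|_{L^1}$, which tends to $0$ as $M\to\infty$ since $h_M\to 0$ in $L^1$. As any $P\in\SS_u$ satisfies $\limsup_r r^{p-2}\!\int_{B(P,r)}|Du|^p>1/k$ for some $k$, and hence, by the first estimate, $\limsup_r r^{p-2}\!\int_{B(P,r)}h_M>1/k$ for \emph{every} $M$, we obtain $\H^{2-p}(\SS_u)=0$. (For $p=2$ every interior point lies outside $\SS_u$, which recovers interior continuity of $W^{1,2}\cap INV$ maps.)

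\textbf{Steps 2--3.} Fix $P\notin\SS_u$. By Fubini over the annuli $B(P,2\rho)\setminus B(P,\rho)$ there are radii $r_k\searrow 0$ with $\int_{\partial B(P,r_k)}|Du|^p\,d\H^1=o(r_k^{1-p})$; since moreover $u^*\res\partial B(P,r)$ is continuous and the $INV$ inclusions of Definition~\ref{defINV} hold for a.e.\ $r$, I may assume these hold at each $r_k$ as well. The one-dimensional Sobolev inequality on the circle $\partial B(P,r_k)$ --- used directly as $\osc\le\int_{\partial B(P,r_k)}|Du|\,d\H^1$ when $p=1$, and with the length factor $r_k^{1-1/p}$ when $p>1$, the powers of $r_k$ then cancelling --- gives $\osc_{\partial B(P,r_k)}u^*\to 0$. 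Since $u^*(\partial B(P,r_k))$ is a closed curve and points in the unbounded component of its complement have zero winding number, $F_u(B(P,r_k))$ lies in the convex hull of $u^*(\partial B(P,r_k))$, so $\diam F_u(B(P,r_k))\le\osc_{\partial B(P,r_k)}u^*\to 0$. Now $INV$ yields $u^*(Q)\in F_u(B(P,r_k))$ for a.e.\ $Q\in B(P,r_k)$, and because $B(P,r_l)\subset B(P,r_k)$ for $l\ge k$ a positive-measure set of points $Q$ belongs to $F_u(B(P,r_k))\cap F_u(B(P,r_l))$; hence the compact sets $F_u(B(P,r_k))$ meet pairwise, and as their diameters vanish $\bigcap_m\overline{\bigcup_{k\ge m}F_u(B(P,r_k))}$ reduces to a single point $v(P)$, with $|u^*(Q)-v(P)|\le 2\sup_{k\ge m}\diam F_u(B(P,r_k))$ for a.e.\ $Q\in B(P,r_m)$. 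In particular $v(P)=u^*(P)$ at every Lebesgue point of $u$, so $v=u$ a.e. Finally $v$ is continuous on $\Q\setminus\SS_u$: given $P,P'\notin\SS_u$ with $|P-P'|$ small, picking $m$ and then $n$ so that $B(P',r_n(P'))\subset B(P,r_m(P))$ and both oscillations above are $<\eta$, and comparing the values of $u^*$ at a common good point $Q$, one gets $|v(P)-v(P')|<2\eta$. The same estimate gives $u_{\rm multi}(P)=\{v(P)\}$ for $P\notin\SS_u$, which completes the proof. (Continuity at boundary points, when needed, follows from the boundary behaviour of $u$: e.g.\ if $u=\id$ on $\partial\Q$ one extends $u$ by the identity, which preserves both $W^{1,p}$ and $INV$, and applies the interior statement.)

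\textbf{The main obstacle.} The genuinely delicate point is Step 3, i.e.\ turning the \emph{a.e.-in-}$Q$ inclusions built into the $INV$ condition into an actual oscillation bound for $u$. What makes this work --- and must be arranged carefully --- is the combination of (a) the estimate $\diam F_u(B(P,r))\le\osc_{\partial B(P,r)}u^*$, coming from the winding-number description of $F_u$, and (b) the fact that the nesting $B(P,r_l)\subset B(P,r_k)$ forces the compact sets $F_u(B(P,r_k))$ to intersect pairwise; together they upgrade ``each $F_u(B(P,r_k))$ is tiny'' to ``$u$ has tiny oscillation near $P$''. Step 1 is routine, but it is where the sharp size $\H^{2-p}(\SS_u)=0$ of the exceptional set is actually produced.
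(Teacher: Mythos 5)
Your proof is correct and lands on the same estimates as the paper, but it takes a more circuitous route in Steps 2--3. The simplification the paper uses, and which you reconstruct only indirectly, is that once $u$ is regarded as the pointwise-defined multivalued map $u_{\rm multi}$ of Definition~\ref{def:multifunction}, the quantity $r\mapsto\osc_{B(P,r)}u:=\diam\bigcup_{Q\in B(P,r)}u_{\rm multi}(Q)$ is \emph{monotone non-decreasing} in $r$. Starting from the circle estimate $\osc_{B(P,r)}u\le\osc_{\partial B(P,r)}u\le\int_{\partial B(P,r)}|Du|$, one can then integrate in $r$ over $(\rho,2\rho)$ and use monotonicity to bound $\osc_{B(P,\rho)}u$ by $\rho^{-1}\int_{B(P,2\rho)}|Du|$ and hence, by H\"older, by $C\rho^{1-2/p}\bigl(\int_{B(P,2\rho)}|Du|^p\bigr)^{1/p}$, for \emph{every} small $\rho$ rather than only along a chosen sequence of ``good'' radii. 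This renders continuity of $u_{\rm multi}$ outside $\SS_u$ immediate (monotonicity makes $\liminf_\rho\osc_{B(P,\rho)}u=0$ equivalent to $\lim_\rho\osc_{B(P,\rho)}u=0$), and bypasses entirely your nesting/pairwise-intersection argument with the compacts $F_u(B(P,r_k))$ and the subsequent two-point continuity estimate---which, while correct, forces you to juggle $P$-dependent sequences $r_k(P)$ and $r_n(P')$ with no uniformity. The remaining ingredients coincide: the same exceptional set $\SS_u$, the same one-dimensional Sobolev trace on circles, the same observation that $F_u(D)$ sits inside the convex hull of $u^*(\partial D)$, so the diameter of $F_u(D)$ is controlled by the boundary oscillation. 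Your Step~1, which derives the $\H^{2-p}$-negligibility of $\SS_u$ by truncation and a $5r$-covering, is a correct from-scratch proof of the measure-density estimate that the paper simply cites from \cite[Section~2.4.3]{EG}; either is acceptable.
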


\begin{proof}
We note that, according to the \(INV\) condition, given \(P\in \Q\), then for almost every \(r\) small
\begin{equation}\label{e:osc}
\osc_{B(P,r)} u\le \osc_{\partial B(P,r)} u\le \int_{\partial B(P,r) } |Du|.
\end{equation}
Here by \(\osc_{B(P,r)} u\) we mean the oscillation of \(u\) intended as a multivalued function, cf. Defintion~\ref{def:multifunction}; that is,
\[
\osc_{B(P,r)} u=\diam \bigcup_{Q\in B(P,r)} u(Q).
\]
By integrating~\eqref{e:osc} with respect to \(r\in (\rho,2\rho)\) and using the monotonicity of \(r\to \osc_{B(P,r)} u\) we get
\begin{equation}\label{eq:osc2}
\osc_{B(P,\rho )} u\le \frac{1}{\rho} \int_{ B(P,2\rho) } |Du|\leq (4\pi)^{1/p'}\rho^{1-2/p} \Big(\int_{ B(P,2\rho) } |Du|^p\Big)^{1/p} .
\end{equation}
In particular,
\[
\begin{split}
\big\{P: \textrm{\(u\) is not continuous at \(P\)}\big\}& =\big\{P: \liminf_{r\to 0}\osc_{B(P,r)} u>0\big\}
\\
&\subseteq \Bigg\{P: \limsup_{r \to 0} r^{p-2}\int_{B(P,r)} |Du|^p>0\Bigg\}.
\end{split}
\]
Since the latter set is \(\H^{2-p}\)-negligible (see for instance~\cite[Section 2.4.3]{EG}), we conclude the proof.
\end{proof}

\begin{lemma}\label{lm:closure}
Let \(\{u_k\}\subseteq W^{1,p}(\Q)\cap INV(\Q) \) (resp., \(\{u_k\}\subseteq W^{1,p}(\Q)\cap INV^+(\Q) \)), be such that \(u_k\deb u\) in \(W^{1,p}\). Then, \(u\in W^{1,p}(\Q)\cap INV(\Q)\) (resp., \(u\in W^{1,p}(\Q)\cap INV^+(\Q) \)). 
\end{lemma}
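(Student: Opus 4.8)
The plan is to show that the two defining conditions of $INV$ pass to the weak $W^{1,p}$ limit by a standard Fatou/almost-everywhere-convergence-on-spheres argument, combined with stability of the degree under uniform convergence of the boundary traces. First I would fix the $INV$ case; the $INV^{+}$ case then follows immediately, since $u_k\circ\Phi\rightharpoonup u\circ\Phi$ in $W^{1,p}$ for every bi-Lipschitz $\Phi$ (the change of variables preserves weak convergence, as $|D(u_k\circ\Phi)|\le |Du_k\circ\Phi|\,\|D\Phi\|_\infty$ and the Jacobian of $\Phi$ is bounded above and below), so applying the $INV$ statement to the sequence $\{u_k\circ\Phi\}$ gives $u\circ\Phi\in INV$ for every $\Phi$, i.e. $u\in INV^{+}$. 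That $u\in W^{1,p}(\Q)$ is of course automatic since $W^{1,p}$ is weakly (sequentially) closed.

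The core step: fix $P\in\mathrm{Int}(\Q)$. By Fubini together with the weak (hence $L^p$-bounded, and after passing to a subsequence, a.e.\ and strongly in $L^p$ on a suitable exhaustion by compact sets) convergence $u_k\to u$, one has $u_k\to u$ in $W^{1,p}(\partial B(P,r))$ along a subsequence for a.e.\ $r<\dist(P,\partial\Q)$; by the one-dimensional Sobolev embedding $W^{1,p}(\S^1)\hookrightarrow C^0(\S^1)$, this upgrades to $u_k^*\to u^*$ uniformly on $\partial B(P,r)$ for a.e.\ such $r$, and in particular $u^*$ is continuous on $\partial B(P,r)$ for a.e.\ $r$. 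Fix one such good $r$ with $u^*\res\partial B(P,r)$ continuous; by discarding a further null set of $r$'s we may assume $u_k^*\res\partial B(P,r)$ is continuous for all $k$ in the subsequence. For any $\u P\notin u^*(\partial B(P,r))$, uniform convergence of the traces gives $\u P\notin u_k^*(\partial B(P,r))$ for $k$ large and $\deg(\u P,u_k^*\res\partial B(P,r))=\deg(\u P,u^*\res\partial B(P,r))$; hence $F_{u_k}(B(P,r))\to F_u(B(P,r))$ in the sense that their characteristic functions agree off $u^*(\partial B(P,r))\cup\bigcup_k u_k^*(\partial B(P,r))$, a set of measure zero in $\u\Q$ (it is a countable union of images of circles under maps with $W^{1,p}$, hence $C^0$, traces, so $\L^2$-null), for $k$ large, with the caveat that one must also track the behaviour near $u^*(\partial B(P,r))$. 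To handle condition (i): for a.e.\ $Q\in B(P,r)$ we have $u_k(Q)\in F_{u_k}(B(P,r))$; since $u_k(Q)\to u(Q)$ for a.e.\ $Q$ (a.e.\ convergence of the pointwise/precise representatives along the subsequence, using that $u_{\rm multi}=u^*$ a.e.) and $F_{u_k}(B(P,r))$ is closed, and since $F_{u_k}$ is eventually contained in any fixed neighbourhood of $F_u(B(P,r))$, we conclude $u(Q)\in F_u(B(P,r))$ for a.e.\ $Q\in B(P,r)$. Condition (ii) is handled symmetrically, using that $\u\Q\setminus F_{u_k}(B(P,r))\cup u_k^*(\partial B(P,r))$ is, up to the null set $u_k^*(\partial B(P,r))$, the complement of a set that converges to $F_u(B(P,r))$, and using closedness again after adding back $u^*(\partial B(P,r))$.

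The main obstacle I anticipate is the behaviour of the degree, and hence of $F_{u_k}(B(P,r))$, in a neighbourhood of the ``bad'' set $u^*(\partial B(P,r))$: the degree is only locally constant off the curve, and $F_u$ includes the curve itself, so one needs to argue carefully that for a.e.\ $r$ the $\L^2$-measure of $u^*(\partial B(P,r))$ (and of the union over the subsequence) is zero, and that a point $\u P$ with $\deg(\u P,u_k^*)\ne 0$ has, in the limit, either $\deg(\u P,u^*)\ne 0$ or $\u P\in u^*(\partial B(P,r))$ — i.e., that the only way the degree can ``disappear'' is by the curve passing through $\u P$. This is exactly the content of the convergence of winding numbers under uniform convergence of curves, valid precisely off the limit curve, so it suffices to observe $u^*(\partial B(P,r))$ has empty interior (being the continuous image of a circle under an $L^p$-Sobolev, hence absolutely continuous on the circle, map — its area is zero) and that the a.e.\ point $Q$ at which we test lands, in the limit, off this null set up to an error controlled by closedness of $F_u(B(P,r))$. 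A secondary technical point is ensuring the subsequence can be chosen independently of $P$; since the conclusions ``(i) and (ii) hold for a.e.\ $r$'' are what we must prove for each fixed $P$, and a countable diagonal argument over a countable dense set of $P$'s together with the monotone/continuity structure of $F$ in $P$ (or simply re-running the argument for each $P$ separately, as the statement quantifies over $P$ after the sequence is given) suffices, this is routine. Finally, the slightly different setting alluded to in the text — maps valued in the fixed target square $\u\Q$ rather than all of $\R^2$ — causes no trouble, since the degree and $F_u$ are computed intrinsically and the constraint $u_k(\Q)\subseteq\u\Q$ is preserved in the a.e.\ limit.
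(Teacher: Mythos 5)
Your overall strategy -- test on spheres, upgrade to uniform convergence of traces $u_k^*\to u^*$ on $\partial B(P,r)$ for a.e.\ $r$, then invoke stability of the winding number under uniform convergence -- is exactly the paper's, and the reduction of $INV^+$ to $INV$ via pre-composition with bi-Lipschitz maps is also the same. However, there is a genuine gap in the way you claim to get uniform convergence of the restricted traces.

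You assert that ``by Fubini together with the weak convergence $u_k\rightharpoonup u$, one has $u_k\to u$ in $W^{1,p}(\partial B(P,r))$ along a subsequence for a.e.\ $r$''. This is false: weak convergence of $Du_k$ in $L^p(\Q)$ does not give strong $L^p$ convergence of $Du_k$ on almost every circle (e.g.\ oscillating sequences $\sin(kx)$ converge weakly but not strongly on any slice). Fubini/Rellich only gives you $u_k\to u$ strongly in $L^p(\partial B(P,r))$ for a.e.\ $r$ along a subsequence, together with a bound on $\|Du_k\|_{L^p(\partial B(P,r))}$. For $p>1$ this is rescuable since $W^{1,p}(\S^1)\hookrightarrow C^0(\S^1)$ is \emph{compact}, so boundedness plus $L^p$ convergence yields uniform convergence. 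But for $p=1$ -- the case actually needed in this paper, since $p\ge 2$ was already known -- $W^{1,1}(\S^1)\hookrightarrow C^0(\S^1)$ is continuous but not compact, so boundedness of the traces is not enough. The paper handles $p=1$ by first invoking the Dunford--Pettis theorem: weak $W^{1,1}$ convergence entails equi-integrability of the gradients, so there is a superlinear $g$ with $\sup_k\int_\Q g(|Du_k|)<\infty$. Restricting to a.e.\ circle by Fubini, one gets $\limsup_k\int_{\partial B(P,r)} g(|Du_k|)<\infty$, which gives equicontinuity of $u_k^*\res\partial B(P,r)$ and hence uniform convergence via Arzel\`a--Ascoli. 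This Dunford--Pettis step is the non-trivial ingredient that makes the $p=1$ case go through, and it is missing from your plan. Once the uniform convergence of traces is correctly established, the rest of your argument (pick $Q$ with $u_k^*(Q)\to u^*(Q)$; if $u^*(Q)\notin u^*(\partial B(P,r))$ then eventually $u_k^*(Q)\notin u_k^*(\partial B(P,r))$ and degrees are preserved by uniform convergence; closedness of $F_u(B(P,r))$ absorbs the boundary case) coincides with the paper's and is fine -- though your preoccupation with the $\L^2$-measure of $u^*(\partial B(P,r))$ is unnecessary: the paper simply observes that if $u^*(Q)\in u^*(\partial B(P,r))$ there is nothing to prove, and otherwise $u^*(Q)$ sits at positive distance from the compact curve, which is all the degree-stability argument needs.
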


\begin{proof} We will prove the Lemma only in the case \(p=1\) and for the \(INV\) condition, the other cases being analogous. Recall that in this case, by the Dunford-Pettis Theorem, there exists a super-linear function \(g\) such that
\[
\sup_{k} \int_{\Q} g(|Du_k|)<+\infty.
\]
 We need to show that that for every \(P\in \Q\) and almost every radius \(r\)
\[
u(Q)\in F_u(B(P,r)) \qquad\textrm{for almost all \(Q\in B(P,r)\)}.
\] 
To see this, note that for almost every \(r>0\) there is a (not re-labaled) subsequence (possibly depending on \(r\)) such that 
\[
\limsup_{k} \int_{\partial B(P,r)} g(|Du_k|)<+\infty\textrm{ and }\|u_k^*-u^*\|_{L^\infty(\partial B(P,r))} \to 0 
\]
In particular, \(u^*\res \partial B(P,r)\) is continuous. Let now \(Q\in B(P,r)\) be such that \(u^*(Q)\) exists as the limit of averages and such that \(u^*_k(Q)\to u^*(Q)\). If \(u^*(Q)\in u^*(\partial B(P,r))\) there is nothing to prove, hence we can assume that \(u^*(Q)\notin u^*(\partial B(P,r))\) and thus, for \(k\) large, \(u^*_k(Q)\notin u_k^*(\partial B(P,r))\). Since \(u_k\) satisfies the \(INV\) condition and \(Q\in B(P,r)\) we have that
\[
\deg(u_k^*(Q), u^*_k\res \partial B(P,r))\ne 0.
\]
By the continuity of the degree we deduce that \(\deg(u^*(Q), u^*\res \partial B(P,r))\ne 0\). By the same argument one shows that for almost all \(Q\in \Q\setminus B(P,r)\) either \(u^*(Q)\in u^*(\partial B(P,r))\) or \(\deg(u^*(Q), u^*\res \partial B(P,r))=0\).
\end{proof}

Note that since every diffeomorphism is clearly an \(INV^+\) map we have the following result.
\begin{corol}\label{cor:diff}
One has
\[
\overline{\mathcal D_{\id}(\Q)}^{\,{\rm ws}-W^{1,p}}\subseteq INV^+(\Q)\,.
\]
\end{corol}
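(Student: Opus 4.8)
The plan is to obtain the corollary as an immediate consequence of Lemma~\ref{lm:closure}, combined with the elementary observation that every element of \(\mathcal D_{\id}(\Q)\) already satisfies the \(INV^+\) condition. Recall that, by the footnote in the introduction, \(\overline{\mathcal D_{\id}(\Q)}^{\,{\rm ws}-W^{1,p}}\) is by definition the \emph{smallest} subset of \(W^{1,p}(\Q)\) which contains \(\mathcal D_{\id}(\Q)\) and is sequentially closed under weak \(W^{1,p}\) convergence. Hence it suffices to exhibit one sequentially weakly closed set containing \(\mathcal D_{\id}(\Q)\) and contained in \(INV^+(\Q)\); the natural candidate is \(W^{1,p}(\Q)\cap INV^+(\Q)\) itself.

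First I would verify the inclusion \(\mathcal D_{\id}(\Q)\subseteq W^{1,p}(\Q)\cap INV^+(\Q)\). Fix \(u\in\mathcal D_{\id}(\Q)\) and a bi-Lipschitz \(\Phi:\Q\to\Q\); then \(v:=u\circ\Phi\) is a bi-Lipschitz homeomorphism of \(\Q\), so \(v^*=v\) is continuous and, for every ball \(B(P,r)\comp \Q\), the image \(v(B(P,r))\) is an open set with \(\partial v(B(P,r))=v(\partial B(P,r))\). By the argument principle, \(\deg(\u Q, v\res\partial B(P,r))\neq 0\) precisely for \(\u Q\in v(B(P,r))\), so that \(F_v(B(P,r))=\overline{v(B(P,r))}\). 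Condition~(i) of Definition~\ref{defINV} then holds trivially since \(v(B(P,r))\subseteq F_v(B(P,r))\), and condition~(ii) holds because, by injectivity of \(v\), for every \(Q\notin B(P,r)\) one has \(v(Q)\notin v(B(P,r))\), hence \(v(Q)\in\big(\u\Q\setminus F_v(B(P,r))\big)\cup v(\partial B(P,r))\). Thus \(v\in INV(\Q)\), and since \(\Phi\) was arbitrary, \(u\in INV^+(\Q)\); of course \(u\in C^\infty(\Q)\subseteq W^{1,p}(\Q)\).

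Next, Lemma~\ref{lm:closure} (in the \(INV^+\) version) states exactly that if \(\{u_k\}\subseteq W^{1,p}(\Q)\cap INV^+(\Q)\) and \(u_k\deb u\) in \(W^{1,p}\), then \(u\in W^{1,p}(\Q)\cap INV^+(\Q)\); in other words, \(W^{1,p}(\Q)\cap INV^+(\Q)\) is sequentially closed under weak \(W^{1,p}\) convergence (for \(p=1\), weak convergence in \(W^{1,1}\) already forces the equi-integrability of the gradients needed to apply the lemma). Combining the two previous steps, \(W^{1,p}(\Q)\cap INV^+(\Q)\) is a sequentially weakly closed set containing \(\mathcal D_{\id}(\Q)\), so by minimality of the weak sequential closure,
\[
\overline{\mathcal D_{\id}(\Q)}^{\,{\rm ws}-W^{1,p}}\subseteq W^{1,p}(\Q)\cap INV^+(\Q)\subseteq INV^+(\Q)\,,
\]
which is the claim.

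There is no real obstacle here: the entire analytic content sits in Lemma~\ref{lm:closure}, already proved above, and the rest is bookkeeping about the definition of the weak sequential closure. The only point deserving a line of care is the degree computation showing a (bi-Lipschitz) homeomorphism is \(INV\); this is classical and could alternatively be quoted directly from~\cite{MS}.
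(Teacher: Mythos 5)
Your proof is correct and follows the same route the paper intends: diffeomorphisms (indeed all bi-Lipschitz homeomorphisms) satisfy $INV^+$, Lemma~\ref{lm:closure} shows $W^{1,p}(\Q)\cap INV^{+}(\Q)$ is sequentially weakly closed, and the inclusion follows from the minimality characterisation of the weak sequential closure. The only difference is that you spell out the degree computation and the minimality argument that the paper leaves implicit in the phrase ``since every diffeomorphism is clearly an $INV^+$ map''; both of these details are handled correctly.
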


We now show that a map which equals the identity on \(\partial \Q\) and for which the counter-image of any point is connected satisfies condition \(INV^+\).

\begin{lemma}\label{ThCINV}
Let $u:\Q\to\u\Q$ be a $W^{1,p}$ function such that $u={\rm Id}$ on $\partial\Q$ and for every $\u P\in{\rm Int}(\u\Q)$ the set $u^{-1}(\u P)$ is connected. Then $u$ satisfies the INV$^+$ condition. Moreover, whenever $\gamma:\S^1\to \Q$ is an injective, Lipschitz continuous, counterclockwise oriented curve on which $u^*$ is continuous,  one has $\deg(\u P,u^*\circ\gamma)\in \{0,1\}$ for every $\u P\in\u\Q\setminus u^*(\gamma)$.
\end{lemma}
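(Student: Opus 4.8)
The plan is to prove the degree dichotomy first, and then deduce the $INV^+$ condition from it. So let $\gamma:\S^1\to\Q$ be an injective Lipschitz curve, positively oriented, on which $u^*$ is continuous. Since $\gamma$ is an injective loop, it bounds a Jordan domain $D\subseteq\Q$ (if $\gamma$ touches $\partial\Q$ one first has to observe that the boundary condition $u=\id$ on $\partial\Q$ forces $u^*\circ\gamma$ to agree with the boundary parametrisation there, so nothing changes; the cleanest route is to treat the case $\gamma\comp\Q$ and then note that a general $\gamma$ differs from one inside $\Q$ only along $\partial\Q$ where $u^*=\id$). The function $\u P\mapsto \deg(\u P,u^*\circ\gamma)$ is constant on each connected component of $\u\Q\setminus u^*(\gamma)$, is $0$ on the unbounded component, and jumps by $\pm1$ across $u^*(\gamma)$; moreover, on $\partial\Q$ we have $u^*=\id$ so for $\u P\notin\u\Q$ the degree is $0$ and for $\u P\in\mathrm{Int}(\u\Q)$ far from $u^*(\gamma)$ but ``outside'' the image it is $0$ as well by the boundary normalisation. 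The point is to rule out that the degree ever takes a value $\geq 2$ or $\leq -1$.

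The key step is to use connectedness of counter-images of points. Suppose for contradiction that $\deg(\u P_0,u^*\circ\gamma)=d\notin\{0,1\}$ for some $\u P_0\in\u\Q\setminus u^*(\gamma)$. Pick a point $\u Q_0$ in the unbounded component of $\R^2\setminus u^*(\gamma)$ (or, if we are working relative to $\u\Q$, a point just outside $\u\Q$), where the degree is $0$, and join $\u P_0$ to $\u Q_0$ by a path $\u\sigma$ in $\u\Q$ meeting $u^*(\gamma)$ transversally in finitely many points. Along $\u\sigma$ the degree changes from $d$ to $0$ in unit steps; since $|d|\geq 1$ and $d\neq 0,1$, there must be at least two crossing points $\u A,\u B\in u^*(\gamma)\cap\u\sigma$ with the {\em same} image value, i.e.\ $\u A=\u B=:\u P\in u^*(\gamma)$ is attained at (at least) two distinct parameter points $\theta_1\neq\theta_2$ of $\gamma$ — more precisely, a degree that is $\geq 2$ somewhere forces the winding curve $u^*\circ\gamma$ to wrap around $\u P$ at least twice, hence $u^*(\gamma(\theta))=\u P$ for two distinct $\theta$ with the curve passing through $\u P$ ``from the same side'' both times, so that $\gamma(\theta_1)$ and $\gamma(\theta_2)$ are two distinct points of $u^{-1}(\u P)$ lying on $\gamma(\S^1)$, one with a neighbourhood mapping locally to one side of a small arc through $\u P$ and the other to the other side. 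The connectedness of $u^{-1}(\u P)$ then gives a connected subset $\u K:=u^{-1}(\u P)$ of $\Q$ containing both $\gamma(\theta_1)$ and $\gamma(\theta_2)$; following this continuum and using that $u^*$ is continuous on $\gamma$, one constructs a Jordan curve made of a sub-arc of $\gamma$ together with a path inside the Jordan domain $D$ whose $u^*$-image is a small loop around $\u P_0$ with nonzero degree, contradicting the fact that $u^{-1}(\u P_0)$ — a single connected set — cannot simultaneously lie on both sides of such a separating continuum. In other words, a degree $\geq 2$ produces two ``separated'' preimages of a single point $\u P$, which the hypothesis forbids. The negative-degree case $d\leq -1$ is excluded the same way after composing with a reflection, or directly by noting that the boundary normalisation $u^*=\id$ on $\partial\Q$ fixes the orientation, so negative winding around an interior point would again force extra preimages of a crossing value.

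With the dichotomy $\deg\in\{0,1\}$ in hand, the $INV^+$ condition is almost immediate: for a bi-Lipschitz $\Phi:\Q\to\Q$ the hypotheses are invariant (the counter-image $(u\circ\Phi)^{-1}(\u P)=\Phi^{-1}(u^{-1}(\u P))$ is still connected, and $u\circ\Phi=\id$ on $\partial\Q$ up to the obvious composition, which still has $\deg\in\{0,1\}$ — indeed $\Phi$ may be isotoped to the identity near $\partial\Q$, or one argues directly), so it suffices to verify $INV$ for $u$ itself. Given $P\in\mathrm{Int}(\Q)$ and a.e.\ small $r$ with $u^*$ continuous on $\partial B(P,r)$, apply the dichotomy with $\gamma=\partial B(P,r)$: then $F_u(B(P,r))$ is exactly $u^*(\partial B(P,r))$ together with the (open) region of degree $1$. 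For a.e.\ $Q\in B(P,r)$, the value $u(Q)=u^*(Q)$ (a Lebesgue point) is either on $u^*(\partial B(P,r))$ or has $\deg(u^*(Q),u^*\res\partial B(P,r))\neq 0$, hence $=1$ by the dichotomy, because $Q$ lies in the Jordan domain bounded by $\partial B(P,r)$ and $u^{-1}(u^*(Q))$ being connected and containing $Q$ cannot escape to the exterior without crossing $\partial B(P,r)$ — so $u(Q)\in F_u(B(P,r))$, which is condition~(i). Symmetrically, for a.e.\ $Q\in\Q\setminus B(P,r)$ the connectedness of $u^{-1}(u^*(Q))$ together with $u=\id$ on $\partial\Q$ (so that $u^{-1}(u^*(Q))$ reaches $\partial\Q$, hence the exterior of $B(P,r)$) forces $\deg(u^*(Q),u^*\res\partial B(P,r))=0$ unless $u^*(Q)\in u^*(\partial B(P,r))$, which is condition~(ii).

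\textbf{Main obstacle.} The delicate point is the topological argument in the second paragraph: turning ``$\deg\geq 2$'' into ``two preimages of one point that are separated by a Jordan curle built from $\gamma$'' and then deriving a genuine contradiction with connectedness. This requires care with the precise representative $u^*$ (continuous only on $\gamma$, not in a neighbourhood), with the possibility that $u^*(\gamma)$ is a complicated set, and with the passage between the multivalued $u_{\rm multi}$ and $u^*$; I expect one needs to work with the degree directly as a winding number and invoke the fact (essentially Youngs' monotone-map technology, cf.\ Lemma~\ref{condThC}) that connected counter-images of points propagate to connected counter-images of suitable continua. Handling curves $\gamma$ that meet $\partial\Q$ is a minor but genuine bookkeeping nuisance, resolved each time by the boundary condition $u^*=\id$ there.
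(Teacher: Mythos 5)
The high-level plan (establish the degree dichotomy, deduce $INV^+$) is plausible, but there are two real gaps in its execution, and the crucial tool of the paper's proof is entirely absent.

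The decisive missing ingredient is what the paper establishes in Step~I by a bisection argument: that for a Sobolev map, $\deg(\u P, u^*\res\partial S)\neq 0$ forces the existence of a preimage of $\u P$ \emph{inside} $S$, and its complement (Step~II, by additivity over a dyadic partition plus the boundary normalisation $\deg(\u P, u^*\res\partial\Q)=1$) that degree $0$ forces a preimage \emph{outside}. Without these, neither part of your argument closes. In your verification of~(i) you write that $\deg(u^*(Q),u^*\res\partial B(P,r))\neq 0$ ``because $u^{-1}(u^*(Q))$ being connected and containing $Q$ cannot escape to the exterior,'' but this establishes only that the preimage stays \emph{inside} $B(P,r)$; it does not establish the degree is nonzero, which is what is needed and is precisely the converse implication that the dyadic-additivity argument delivers. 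Similarly, in your degree-dichotomy argument you eventually need to say that a connected set ``cannot simultaneously lie on both sides of a separating continuum'' with nonzero degree—again an appeal to the very preimage-from-degree principle you have not proved, and which for $W^{1,p}$ maps does not follow from classical continuous degree theory.

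The reduction to ``two preimages of a single point on $\gamma$'' is also flawed as written. You claim that along a path $\u\sigma$ from $\u P_0$ (degree $d\geq 2$) to a degree-$0$ point, two of the finitely many transversal crossings $\u A,\u B\in u^*(\gamma)\cap\u\sigma$ must have ``the same image value,'' i.e.\ coincide. This is false: the crossings are distinct points on $\u\sigma$, and the unit jumps of the degree at each do not force any two of them to be equal. The true statement you are after—that a closed curve with winding number $\geq 2$ about some point cannot be injective—follows from the Jordan curve theorem, not from the crossing count, and after invoking it the subsequent construction of a ``Jordan curve made of a sub-arc of $\gamma$ together with a path inside $D$'' and the claimed contradiction remains too vague to verify: $u^{-1}(\u P)$ is only known to be connected, not arcwise connected or free of self-intersections, and the degree of $u^*$ along such a hybrid curve is not controlled. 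By contrast, the paper's proof of $\deg\in\{0,1\}$ never needs two preimages of a single point: it shows (Step~I plus connectedness) that $u^{-1}(\u P)$ lies entirely inside $S$, that every other dyadic square therefore has degree $0$, and concludes by additivity and $u=\id$ on $\partial\Q$. That route, resting on the bisection lemma, is what you would need to supply.
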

\begin{proof}
Up to a re-parametrization (i.e., by pre-composing \(u\) with a bi-Lipschitz map \(\Phi\)) it is enough to show that the conditions in Definition~\ref{defINV} are satisfied for a fixed square \(S\subseteq\Q\) with sides parallel to $\partial\Q$ and such that \(u^*\res S\) is continuous.

\step{I}{Internal points have internal counter-images.}
Let \(\u P\in \u\Q\) be such that \(\deg(\u P, u^*\res \partial S)\ne 0\). We want to show that there exists \(P\in S\) such \(\u P\in u(P)\). To do so, let us divide divide \(S\) in four rectangles \(S^i\), \(i=1,\dots,4\), such that \(u^*\res \partial S^i\) is continuous; notice that, since \(u^*\) is continuous on most of the vertical and horizontal segments, we can assume that \(\diam(S^i)\le 3\diam(S)/4\). Then, either \(\u P\in u^*(\partial S^i)\) for some \(i\) or \(\deg(\u P, u^*\res \partial S^i)\) is well defined for all \(i=1,\dots,4\). In the first case we have that \(\u P\in u(P)\) for some \(P\in S\), so we are already done, while in the latter case we have, by the additivity of the degree, that
\[
0\ne \deg(\u P, u^*\res \partial S)=\sum_{i=1}^4 \deg(\u P, u^*\res \partial S^i).
\]
In particular \(\deg(\u P, u^*\res \partial S^i)\ne 0\) for some \(i\). Iterating this argument, either we prove our claim, or we find a sequence of encapsulated rectangles \(S_k\) with \(\diam(S_k)\to 0\) such that \(0\ne \deg(\u P, u^*\res \partial S_k)\). If \(P=\cap _k S_k\), one has then that \(\u P\in u(P)\). 
%
%
%
\step{II}{External points have external counter-images.}
Let \(\u P\in \u\Q\) be such that \(\deg(\u P, u^*\res \partial S)=0\), so in particular $\u P\notin u^*(\partial S)$: we want to show that there exists a point \(P\in \Q\setminus S\) such that \(P\in u^{-1}(\u P)\). To this end note that, up to a bi-Lipschitz change of variable, we can assume that \(S\) is a dyadic square, i.e., of the form \((i2^{-k}, (i+1)2^{-k})\times(j2^{-k},(j+1)2^{-k})\) for some $k\in\N$ and $0\leq i,\,j<2^k$, and that \(u^*\) is continuous on the boundary of every dyadic square with side length \(2^{-k}\). Let \(S_m\), for $1\leq m\leq 2^{2k}$, be the collection of these squares and let us assume that they are numbered so that \(S_1=S\). Now, either \(\u P\in \bigcup_{m} u^*(\partial S_m)\), and in this case we are done, or by the additivity of the degree:
\[
1=\deg(\u P, u^*\res \partial \Q)=\sum_{m} \deg(\u P, u^*\res \partial S_m)\,,
\]
where the first equality follows form the fact that \(u=\id\) on \(\partial \Q\). Since \(\deg(\u P, u^*\res \partial S_1)=0\) by assumption, there exists \(m\geq 2\) such that \(\deg(\u P, u^*\res \partial S_m)\neq 0\). By the previous step there exists \(P\in S_m \subseteq \Q\setminus S\) such that \(P\in u^{-1}(\u P)\), and this concludes the proof of the claim.
\step{III}{Conclusion.} The conclusion is now simple. Indeed, let \(Q\in S\) be such that \(u_{\rm multi}(Q)=u^*(Q)=\u Q\) is uni-valued, and assume by contradiction that 
\[
\deg(\u Q, u^*\res S)=0\,,
\]
so that in particular \(\u Q\notin u^*(\partial S)\). By the second step, there exists \(P\in \Q\setminus \overline {S}\) such that \(P\in u^{-1}(\u Q)\). Then, $u^{-1}(\u Q)$ contains $Q\in S$, $P\in \Q\setminus \overline S$, and no point in $\partial S$, and this is a contradiction with the fact that $u^{-1}(\u Q)$ is connected by assumption; thus, the first condition in Definition~\ref{defINV} is satisfied. In the very same way one checks the second one.
 
%
%
%
%
%
%

It only remains to show the last claim, namely, that the degree can only assume the values \(0\) and \(1\). To prove this, let \(S\) be the domain bounded by \(\gamma\); notice that, since we can extend $u$ as the identity in an external neighborhood of $\Q$, it is not restrictive to assume that the image of $\gamma$ lies in the interior of $\Q$. Hence, up to a bi-Lipschitz change of coordinates, we can assume that \(S\) is a dyadic square as in Step II.

Let \(\u P\in \Q\setminus u^*(\partial S)\) be such that \(\deg(\u P,u^*\res \partial S)\ne 0\): we need to show that \(\deg(\u P,u^*\res \partial S)=1\). Note that, by Step 1, there exists a point \(P\in S\) such that \(P\in u^{-1}(\u P)\). This and the fact that \(u^{-1}(\u P)\) is a connected set disjoint from \(\partial S\) imply that \(u^{-1}(\u P)\subseteq S\). Let now \(S_m\) be the partition of \(\Q\) in dyadic cube as in Step II, and assume again that $S=S_1$. Again by Step I, \(\deg(\u P, u^*\res S_m)\) is well defined and equal to \(0\) for every \(m\ge 2\) since otherwise \(u^{-1}(\u P)\cap \overline{S}_m\ne \emptyset \), a contradiction. By the additivity of the degree and since \(u\) equals the identity on $\partial\Q$ we get
\[
1=\deg(\u P, u^*\res \partial \Q)=\sum_{m} \deg(\u P, u^*\res \partial S_m)=\deg(\u P, u^*\res \partial S_1)\,,
\]
so the proof is concluded.
%
%
%
%
%
\end{proof}

\begin{remark}\label{contconn}
Notice that, thanks to steps~I and~II of the above proof, if $u$ is a function for which $u^{-1}(\u P)$ is connected for every $\u P\in\u\Q$ then, whenever $D$ and $E$ are two Lipschitz sets such that $u^*$ is continuous on $\partial D$ and $\partial E$, if $D\subseteq E$ then $F_u(D)\subseteq F_u(E)$ by Definition~\ref{def:multifunction}. As a consequence, under this assumption the definition of $u_{\rm multi}(P)$ can be simplified as
\[
u_{\rm multi}(P) = \bigcap\nolimits_{\,\D_P} F_u(D)\,,
\]
where $\D_P=\cup_{\eps>0} \D_{P,\eps}$. It is actually possible to show that, if \(u\in INV^+\), then for every $\u P\in\u\Q$ the set \(u^{-1}(\u P)\) is necessarily connected, thus the condition of the above Lemma is actually a characterisation of \(INV^+\), see~\cite[Proposition 5.14]{BMH}. In particular, by Corollary~\ref{cor:diff}, connectedness of any \(u^{-1}(\u P)\) is a necessary condition to be approximable by diffeomorphisms. However, as we will show in Section~\ref{secondo}, it is not sufficient.
\end{remark}

\subsection{Definition of Lebesgue squares and extension results}\label{sec:extension}

In this short section, we recall the definition and some properties of the ``Lebesgue squares'', which have been already used several times in the last years to get approximation results, see for instance~\cite{DP,HP,C,R}. Moreover, we list three extension results, which will be used in the sequel.

\begin{defin}[Lebesgue squares]
Let \(u\in W^{1,p}(\Q)\). We say that a square \(S\subseteq \Q\) is a \emph{Lebesgue square with matrix \(M\) and constant \(\kappa\)} if
\[
\frac{1}{|S|}\,\int_{S} |Du-M|^p\le \kappa \,.
\] 
\end{defin}


\begin{lemma}\label{moltiLeb}
Let \(u\in W^{1,p}(\Q)\), let $\A\subseteq\R^{2\times 2}$ be a set of matrixes, and let \(\kappa>0\) be fixed. For every $K\geq 1$ let $\{S^K_m\}$, for $1\leq m\leq K^2$, be the standard partition of $\Q$ in $K^2$ squares of side $1/K$, and let us call \({\rm Leb_K}(\kappa, \A)\) the set of squares of this partition which are Lebesgue square with some matrix \(M\in \A\) and constant \(\kappa\). If $K$ is big enough, depending on $u,\, \kappa$ and $\A$, then
\[
\sum_{S^K_m\notin {\rm Leb_K}(\kappa, \A)} |S^K_m|\leq 2 |\{x\in\Q:\, Du(x)\notin \A\}|\,.
\]
\end{lemma}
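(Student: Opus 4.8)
Fix $u \in W^{1,p}(\Q)$, the set $\A$ and the constant $\kappa>0$. The statement is essentially a Lebesgue-point / Vitali-type fact: a.e. point $x$ with $Du(x)\in\A$ is a point at which the average of $|Du - Du(x)|^p$ over small squares tends to $0$, so it will be captured by a Lebesgue square of most fine enough grids. The quantity to control is
\[
\Phi(K) := \sum_{S^K_m \notin {\rm Leb}_K(\kappa,\A)} |S^K_m|
= \Big|\bigcup \big\{ S^K_m : S^K_m \notin {\rm Leb}_K(\kappa,\A)\big\}\Big|,
\]
and the claim is that $\Phi(K) \le 2\,|\{Du \notin \A\}|$ once $K$ is large. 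So it suffices to show $\limsup_{K\to\infty}\Phi(K) \le |\{Du\notin\A\}|$; then $\Phi(K) \le 2|\{Du\notin\A\}|$ for large $K$ follows immediately (and if $|\{Du\notin\A\}|=0$ one argues $\Phi(K)\to 0$ directly, the $\le 2\cdot 0$ being read as $\Phi(K)\to 0$, which is what the large-$K$ statement then asserts).

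**Key steps.** First I would introduce the ``good set''
\[
G := \Big\{ x \in \Q : Du(x)\in\A \text{ and } \lim_{r\to 0}\frac1{|Q(x,r)|}\int_{Q(x,r)}|Du - Du(x)|^p \,dy = 0\Big\},
\]
where $Q(x,r)$ is the axis-parallel square of side $r$ centred at $x$; by the Lebesgue differentiation theorem applied to the $L^p$ function $Du$ (using that $|Du - M|^p$ is locally integrable for each fixed value $M$, and a standard separability argument to handle the dependence of $M=Du(x)$ on $x$), one has $|\{Du\in\A\}\setminus G| = 0$. Second, the core quantitative step: for $x\in G$, if $S^K_m$ is the square of the $K$-partition containing $x$, then $S^K_m \subseteq Q(x, 2/K)$, hence
\[
\frac1{|S^K_m|}\int_{S^K_m}|Du - Du(x)|^p \le \frac{|Q(x,2/K)|}{|S^K_m|}\cdot\frac1{|Q(x,2/K)|}\int_{Q(x,2/K)}|Du - Du(x)|^p \le 4\,\omega_x(2/K),
\]
where $\omega_x(r)\to 0$ as $r\to0$; so for $K$ large (depending on $x$) this is $\le \kappa$, i.e. $S^K_m \in {\rm Leb}_K(\kappa,\A)$ since $Du(x)\in\A$. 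Thus every $x\in G$ lies in a Lebesgue square for all $K$ large enough depending on $x$. Third, I would turn this pointwise statement into a uniform one by an Egorov-type argument: the functions $x\mapsto \sup_{r\le 1/K}\tfrac1{|Q(x,r)|}\int_{Q(x,r)}|Du-Du(x)|^p$ decrease (in a suitable sense) to $0$ on $G$, so by Egorov's theorem, given $\delta>0$ there is a set $G_\delta\subseteq G$ with $|G\setminus G_\delta|<\delta$ on which the convergence is uniform; hence there is $K_0=K_0(\delta)$ such that for all $K\ge K_0$ every square $S^K_m$ meeting $G_\delta$ is a Lebesgue square. Consequently the union of non-Lebesgue squares is contained in the $(2/K)$-neighbourhood of $\Q\setminus G_\delta$... — more cleanly: a non-Lebesgue square contains no point of $G_\delta$, so $\bigcup\{\text{non-Lebesgue } S^K_m\} \subseteq \Q\setminus G_\delta$ up to adjusting by the squares that merely touch the boundary; taking $K$ large and then letting $\delta\to0$ gives $\limsup_K \Phi(K) \le |\Q\setminus G| = |\{Du\notin\A\}|$.

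**Main obstacle.** The routine parts are the Lebesgue differentiation theorem and the volume comparison $|Q(x,2/K)|/|S^K_m| = 4$. The genuinely delicate point is the passage from ``each $x\in G$ is eventually in a Lebesgue square'' to a threshold $K_0$ that works simultaneously for a set of almost full measure — i.e. upgrading a pointwise limit to a near-uniform one. Egorov's theorem is the natural tool, but one must be slightly careful to phrase the relevant family as a genuine sequence (or monotone family) of measurable functions of $x$ — the map $x\mapsto Du(x)$ inside the integrand makes measurability of $x\mapsto \frac1{|Q(x,r)|}\int_{Q(x,r)}|Du(y)-Du(x)|^p\,dy$ something to check, which one does via Fubini/continuity of translations in $L^p$. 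Once measurability is in hand, Egorov closes the argument; the constant $2$ in the statement is simply the harmless slack that absorbs the $\delta$ and the finitely many boundary-touching squares.
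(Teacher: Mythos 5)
Your proposal is correct and takes essentially the same route as the paper: apply the Lebesgue differentiation theorem to $Du$, compare a partition square containing $x$ with a slightly larger ball or square centred at $x$ (yielding a harmless factor $4$ or $2$), and then pass from the pointwise statement ``each good $x$ eventually sits in a Lebesgue square'' to a set of almost-full measure. The only small difference is that where you invoke Egorov's theorem, the paper observes directly that the exceptional sets $\{x:\ \bar r(x,\kappa)\le \sqrt 2/K\}$ decrease to $\emptyset$ as $K\to\infty$ and hence have vanishing measure --- a lighter device for the same step --- while your explicit concern about the measurability of $x\mapsto \frac{1}{|Q(x,r)|}\int_{Q(x,r)}|Du(y)-Du(x)|^p\,dy$ flags a point the paper leaves implicit.
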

\begin{proof}
Let $x\in\Q$ be a Lebesgue point for $Du$ such that $Du(x)\in\A$. By definition of Lebesgue point, there exists $\bar r=\bar r(x,\kappa)$ such that, whenever $r<\bar r$, one has
\[
\int_{B(x,r)} |Du-Du(x)|^p \leq \frac \kappa 2\, r^2\,.
\]
Assume now that $x$ is contained in some square $\SS$ with side $\ell$ for some $\ell \sqrt 2< \bar r$; then, since $\SS\subseteq B(x,\ell\sqrt 2)$, we obtain
\[
\frac 1{|\SS|}\, \int_\SS |Du-Du(x)|^p \leq \frac 1{\ell^2} \int_{B(x,\ell \sqrt 2)} |Du-Du(x)|^p \leq \kappa \,,
\]
that is, $\SS$ is a Lebesgue square with matrix $Du(x)\in\A$ and constant $\kappa$. As a consequence, if a square $S^K_m$ of the partition is not contained in ${\rm Leb_K}(\kappa, \A)$, then the whole square is contained in the set of points $x$ which are either not Lebesgue points for $Du$, or Lebesgue points for $Du$ with $Du(x)\notin A$, or Lebesgue points for $Du$ with $Du(x)\in\A$ but $\bar r(x,\kappa)\leq \sqrt 2/K$. Since the set of points for which the first condition holds is negligible, and the set of those for which the third one holds has mass which goes to $0$ when $K\to \infty$, the lemma is proved.
\end{proof}

\begin{lemma}\label{linearsmall}
Let $\alpha\in W^{1,1}(PQ;\R^2)\) where $PQ$ is a segment, and let $\beta$ be the linear function such that \(\beta(P)=\alpha(P)\) and \(\beta(Q)=\alpha(Q)\). Then, for every matrix $M\in \R^{2\times 2}$,
\[
\int_{PQ} |\beta'-M\tau|^p\leq \int_{PQ} |\alpha'-M\tau|^p\,,
\]
where $\tau$ is the unit vector in direction of $PQ$.
\end{lemma}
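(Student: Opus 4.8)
The plan is to reduce the statement to a single application of Jensen's inequality. First I would parametrise the segment $PQ$ by arc length, writing $\alpha:[0,L]\to\R^2$ with $L=|PQ|$ and $\alpha'=d\alpha/ds$, so that $\tau$ becomes the constant unit tangent along the segment. The key observation is that the linear interpolant satisfies
\[
\beta'\equiv \frac1L\big(\alpha(Q)-\alpha(P)\big)=\frac1L\int_0^L \alpha'(s)\,ds\,,
\]
i.e.\ $\beta'$ is the mean value of $\alpha'$ over $PQ$; this follows from the one–dimensional fundamental theorem of calculus, which is valid since $\alpha\in W^{1,1}(PQ)$. In particular $\beta$ is affine, hence certainly in $W^{1,1}(PQ)$, so both sides of the asserted inequality make sense, and $\beta'-M\tau$ is exactly the mean value of $\alpha'-M\tau$ over the segment.

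The second ingredient is convexity: for every $p\ge 1$ the map $v\mapsto |v-M\tau|^p$ is convex on $\R^2$, being the composition of the affine map $v\mapsto v-M\tau$ with the convex function $|\cdot|^p$. Since $\beta'$ is constant and equals the average of $\alpha'$, Jensen's inequality gives
\[
|\beta'-M\tau|^p=\Big|\,\intmed_{PQ}(\alpha'-M\tau)\,\Big|^p\le \intmed_{PQ}|\alpha'-M\tau|^p\,.
\]
Multiplying both sides by $L=|PQ|$ and using that the left-hand integrand is constant yields $\int_{PQ}|\beta'-M\tau|^p\le \int_{PQ}|\alpha'-M\tau|^p$, which is precisely the claim.

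There is essentially no obstacle here; the only point deserving a word of care is the identification of $\beta'$ with the mean of $\alpha'$, which is where the $W^{1,1}$ regularity of $\alpha$ (as opposed to mere integrability of some candidate derivative) is used. If desired, one can also remark that equality holds exactly when $\alpha'-M\tau$ is a.e.\ constant on $PQ$, i.e.\ when $\alpha$ is itself affine; this refinement is not needed in what follows.
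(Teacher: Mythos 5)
Your proof is correct and is exactly the argument the paper intends: the paper's proof is the one-line remark ``This is just a trivial application of Jensen inequality,'' and your write-up simply spells out that application (identifying $\beta'$ as the average of $\alpha'$ and applying Jensen to the convex map $v\mapsto|v-M\tau|^p$).
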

\begin{proof}
This is just a trivial application of Jensen inequality.
\end{proof}

We present now three extension results. The first two have been proved in the case $p=1$ in~\cite[Theorem 2.1, Theorem 3.1]{HP}, and in the case $p>1$ in~\cite[Theorem 1.1]{R}, \cite[Theorem 2.1, Theorem 3.7]{C}. Actually, the second one was proved with a slightly different claim, hence we add a very short proof to show that our claim follows from the original one.
\begin{prop}\label{extgener}
For every $p\geq 1$ there exists a purely geometrical constant $H_1=H_1(p)\geq 1$ such that the following holds. For every square $\RR\subseteq\R^2$ with side $\ell$, and for every injective, piecewise linear function $\varphi:\partial\RR\to\R^2$, there exists a finitely piecewise affine homeomorphism $\omega:\RR\to\R^2$, coinciding with $\varphi$ on $\partial\RR$, such that
\[
\int_\RR |D\omega|^p \leq H_1 \ell \int_{\partial\RR} |\varphi'|^p\,.
\]
\end{prop}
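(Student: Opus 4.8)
The plan is to reduce the problem to a single, well-understood building block and then iterate it dyadically. First I would observe that the image $\varphi(\partial\RR)$ is a closed polygonal Jordan curve, hence by the Jordan--Schönflies theorem it bounds a bounded Jordan domain $\u\RR$. After an affine change of coordinates (which costs only a controlled constant and does not affect the form of the inequality) we may assume $\RR=(0,1)^2$. The heart of the matter is the case where $\varphi$ has few vertices: if $\varphi$ is linear on each of the four sides of $\partial\RR$, i.e. $\varphi(\partial\RR)$ is a (possibly degenerate-looking but injective) quadrilateral, one can write down an explicit finitely piecewise affine homeomorphism $\omega:\RR\to\u\RR$ by first mapping $\RR$ affinely to the quadrilateral with the same four vertices (when that quadrilateral is convex) or, in the non-convex case, triangulating $\RR$ from an interior point and mapping each triangle affinely onto a triangle of a triangulation of $\u\RR$; in either case the number of pieces is bounded and, crucially, $\int_\RR|D\omega|^p$ is controlled by $\ell \int_{\partial\RR}|\varphi'|^p$ by a purely geometric constant, since every edge of the target quadrilateral is of the form $\int$ over a side of $\partial\RR$ of $|\varphi'|$ and the affine maps have gradients bounded by ratios of such edge lengths to $\ell$.

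Next I would handle a general piecewise linear $\varphi$ with $n$ breakpoints by a dyadic subdivision argument. Partition $\partial\RR$ into finitely many arcs, each containing at most one breakpoint of $\varphi$ in its interior, and correspondingly subdivide $\RR$ into a bounded number of smaller rectangles (or, more conveniently, set up a "corridor" decomposition: a thin rectangular annulus near $\partial\RR$ on which we essentially straighten $\varphi$ to a polygon with controlled boundary, plus an inner square on which we apply the few-vertices case). On the annulus we interpolate linearly in the normal direction between $\varphi$ on $\partial\RR$ and its "linearization" on the inner boundary; Lemma~\ref{linearsmall} guarantees that replacing $\varphi$ on each sub-segment by the chord does not increase the relevant $L^p$ norm, and a Fubini-type estimate bounds $\int|D\omega|^p$ over the annulus by $\ell\int_{\partial\RR}|\varphi'|^p$ up to a geometric constant, provided the annulus has width comparable to $\ell$. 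Injectivity of the interpolation on a sufficiently thin annulus is the delicate point: one must choose the width small enough, depending only on the geometry (not on $\varphi$), and here one uses that $\varphi$ is injective and the linearization stays uniformly close, so that the normal segments emanating from the two boundary polygons do not cross — this is exactly the kind of statement proved in \cite{HP,R,C}, which I would quote rather than reprove.

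Finally, the short addendum the authors promise: the original statement in \cite{HP,R,C} is phrased slightly differently (presumably it asks $\varphi$ to be defined on the boundary of a reference square of side $1$, or gives the estimate without the explicit factor $\ell$, or maps into a fixed square rather than $\R^2$). To pass to the stated form I would rescale: given $\RR$ with side $\ell$, set $\tilde\varphi(x)=\frac1\ell\varphi(\ell x)$ on $\partial(0,1)^2$, apply the known result to get $\tilde\omega$, and unscale via $\omega(y)=\ell\,\tilde\omega(y/\ell)$. A direct change of variables turns $\int_{(0,1)^2}|D\tilde\omega|^p$ and $\int_{\partial(0,1)^2}|\tilde\varphi'|^p$ into $\ell^{-2}\int_\RR|D\omega|^p$ and $\ell^{-2}\int_{\partial\RR}|\varphi'|^p$ respectively, so the scale-invariant inequality for $\tilde\varphi$ becomes $\int_\RR|D\omega|^p\le H_1\,\ell\int_{\partial\RR}|\varphi'|^p$ after tracking the single power of $\ell$ produced by the one-dimensional boundary integral, with $H_1$ the same purely geometric constant. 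The main obstacle throughout is not the energy bookkeeping — which is routine once the decomposition is fixed — but guaranteeing that the explicit extension one writes down is genuinely a homeomorphism onto the correct Jordan domain; this is why the cleanest route is to isolate the convex-quadrilateral (or triangulated) core case and otherwise lean on the already-established extension lemmas.
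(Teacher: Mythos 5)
The paper does not actually prove Proposition~\ref{extgener}: it is stated and then simply cited from the references (Hencl--Pratelli for $p=1$, Radici and Campbell for $p>1$), and the short proof that follows the two propositions is only for Proposition~\ref{extdet0}, which the authors note appears in the literature in a slightly different form. Your proposal is, at bottom, also a citation of the same references for the hard part (the injective extension with controlled energy), so the logical skeleton matches; and your rescaling computation at the end is a correct verification of how the factor $\ell$ appears from the unit-square statement.

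That said, the sketch surrounding the citation has a few slips worth flagging. First, a square cannot be mapped onto an arbitrary convex quadrilateral by a \emph{single} affine map: affine images of squares are parallelograms, so even in the convex case you must triangulate and use at least two affine pieces. Second, the ``corridor'' reduction is not coherent as stated: after interpolating on a thin annulus near $\partial\RR$, the boundary datum on the inner square is still a piecewise linear map with essentially the same set of breakpoints, so you have not reduced to the ``few-vertices'' base case; the genuine decompositions used in \cite{HP,R,C} are considerably more structured than this. Third, the concern you raise about the references giving the statement ``without the explicit factor $\ell$'' is not what the paper is addressing: the paper's remark about a ``slightly different claim'' concerns Proposition~\ref{extdet0}, not this one, which the paper treats as appearing verbatim in the references. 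None of these issues affect the validity of the result, since the references are doing the real work, but the proof sketch as written would not go through without substantial repair.
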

\begin{prop}\label{extdet0}
For every $p\geq 1$ there exists a purely geometrical constant $H_2=H_2(p)$ such that the following holds. For every non-zero $2\times 2$ matrix $M$ with $\det M=0$, every square $\RR\subseteq\R^2$ with side $\ell$, and every injective, piecewise linear function $\varphi:\partial\RR\to\R^2$, there exists a finitely piecewise affine homeomorphism $\omega:\RR\to\R^2$, coinciding with $\varphi$ on $\partial\RR$, such that
\[
\int_\RR |D\omega - M|^p \leq H_2 \ell \int_{\partial\RR} |\varphi' - M\tau|^p\,,
\]
where $\tau$ is the tangential vector on $\partial\RR$.
\end{prop}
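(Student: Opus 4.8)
The plan is to deduce Proposition~\ref{extdet0} from its original form, which I expect coincides with the present statement but subject to the extra requirement that $\varphi$ be close to the degenerate affine map $x\mapsto Mx$ along $\partial\RR$, say in the quantitative form $\int_{\partial\RR}|\varphi'-M\tau|^p\le \eps_0\,\ell\,|M|^p$ for a suitable dimensional constant $\eps_0=\eps_0(p)>0$ (equivalently, that the $L^\infty$-oscillation of $\varphi-Mx$ along $\partial\RR$ is at most $\eps_0\,\ell\,|M|$, the two forms being interchanged via the one-dimensional fundamental theorem of calculus on $\partial\RR$). If instead the original statement were phrased only for a normalised model matrix, I would first reduce to it by composing with a dilation and a rotation of the target and a rotation of the domain, under which the square stays a square of the same side, $\varphi$ stays injective and piecewise linear, a piecewise affine homeomorphism stays one, and both sides of the desired inequality transform by the same power of the dilation factor; this is lossless and routine. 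Granting the original form, in the regime $\int_{\partial\RR}|\varphi'-M\tau|^p\le \eps_0\,\ell\,|M|^p$ there is nothing to prove, this being precisely its hypothesis; so from now on I assume $\int_{\partial\RR}|\varphi'-M\tau|^p> \eps_0\,\ell\,|M|^p$.

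In this ``large perturbation'' regime I would discard $M$ and apply Proposition~\ref{extgener} to the injective piecewise linear map $\varphi$, obtaining a finitely piecewise affine homeomorphism $\omega:\RR\to\R^2$ with $\omega=\varphi$ on $\partial\RR$ and $\int_\RR|D\omega|^p\le H_1\,\ell\int_{\partial\RR}|\varphi'|^p$. The triangle inequality $|a-b|^p\le 2^{p-1}(|a|^p+|b|^p)$ converts this into a bound on $\int_\RR|D\omega-M|^p$ at the cost of two error terms, $\ell^2|M|^p$ (from replacing $|D\omega|^p$ by $|D\omega-M|^p$, since $\int_\RR|M|^p=\ell^2|M|^p$) and $\ell\int_{\partial\RR}|M\tau|^p$ (from replacing $|\varphi'|^p$ by $|\varphi'-M\tau|^p$ on the boundary). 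Now $\ell\,|M|^p$ and $\int_{\partial\RR}|M\tau|^p$ are comparable up to a $p$-dependent constant, because the two edge directions of $\RR$ are orthonormal; hence both error terms are bounded by a $p$-dependent multiple of $\ell\int_{\partial\RR}|M\tau|^p$, and the defining inequality of this regime gives in turn $\ell\int_{\partial\RR}|M\tau|^p\le C_p\,\eps_0^{-1}\,\ell\int_{\partial\RR}|\varphi'-M\tau|^p$. So both errors are absorbed into the right-hand side and one obtains $\int_\RR|D\omega-M|^p\le H_2(p)\,\ell\int_{\partial\RR}|\varphi'-M\tau|^p$; taking $H_2$ to be the larger of this constant and the one coming from the original statement completes the argument.

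The step that really requires care is the absorption above: the term $\ell^2|M|^p$ produced by the triangle inequality is exactly the obstruction that prevents deriving the clean estimate $\int_\RR|D\omega-M|^p\le H_2\,\ell\int_{\partial\RR}|\varphi'-M\tau|^p$ from the general extension result \emph{unconditionally}, which is why the split into a ``small'' and a ``large'' regime is forced and why the smallness-constrained original statement is genuinely needed in the complementary regime. The rest — the comparison between $\ell|M|^p$ and $\int_{\partial\RR}|M\tau|^p$, the tracking of the powers of $2$, and the equivalence of the two forms of the smallness hypothesis — is routine.
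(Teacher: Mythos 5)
Your proposal is correct and coincides with the paper's argument: the paper records that the constrained version is precisely \cite[Theorem~3.7]{C} with smallness threshold $\delta_{\rm max}(p)$, and in the complementary regime $\int_{\partial\RR}|\varphi'-M\tau|^p\geq\delta_{\rm max}\,\ell\,|M|^p$ it applies Proposition~\ref{extgener}, expands $|D\omega-M|^p$ and $|\varphi'|^p$ by the triangle inequality, bounds $\ell\int_{\partial\RR}|M\tau|^p$ by a multiple of $\ell^2|M|^p$, and absorbs the leftover $\ell^2|M|^p$ term into the right-hand side using the defining inequality of that regime, exactly as you outline. The only inessential slip is the parenthetical claim that the $L^p$-derivative and $L^\infty$-oscillation forms of the smallness hypothesis are equivalent via the fundamental theorem of calculus --- that implication goes only one way --- but this plays no role in your argument.
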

\begin{proof}
In~\cite[Theorem 3.7]{C}, this result was proved (actually, in a quite more general situation) with the additional assumption that
\[
\int_{\partial\RR} |\varphi'-M\tau|^p < \delta_{\rm max} \ell |M|^p\,,
\]
for some geometrical constant $\delta_{\rm max}$ only depending on $p$. However, if this assumption is not satisfied, we can define the extension $\omega$ from Proposition~\ref{extgener}, and we get
\[\begin{split}
\int_\RR |D\omega - M|^p &\leq 2^p \int_\RR |D\omega|^p + 2^p |M|^p \ell^2
\leq 2^p H_1\ell \int_{\partial\RR} |\varphi'|^p + 2^p |M|^p \ell^2\\
&\leq 4^p H_1\ell \int_{\partial\RR} |\varphi'-M\tau|^p +(4^{p+2} H_1+2^p) |M|^p \ell^2\\
&\leq \bigg(4^p H_1+\frac{(4^{p+2} H_1+2^p)}{\delta_{\rm max}}\bigg)\,\ell\,\int_{\partial\RR} |\varphi' - M\tau|^p\,,
\end{split}\]
so that the result is still true, up to possibly enlarge the constant $H_2$.
\end{proof}

\begin{prop}\label{extdetsigma}
For every $p\geq 1$ and $\sigma<1$ there exists a constant $H_3=H_3(\sigma,p)$ such that the following holds. For every square $\RR\subseteq\R^2$ with side $\ell$, every matrix $M\in\R^{2\times 2}$ with $|M|,\, \det M \in (\sigma,1/\sigma)$, and every injective, piecewise linear function $\varphi:\partial\RR\to\R^2$, there exists a finitely piecewise affine homeomorphisms $\omega:\RR\to\R^2$, coinciding with $\varphi$ on $\partial\RR$, such that
\begin{equation}\label{itsla}
\int_\RR |D\omega - M|^p \leq H_3 \ell^{2-\frac 1p} \bigg(\int_{\partial\RR} |\varphi' - M\tau|^p\bigg)^{\frac 1p}\,,
\end{equation}
where $\tau$ is the tangent vector $\partial\RR$.
\end{prop}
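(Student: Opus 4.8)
The plan is to argue by a dichotomy on the size of the boundary discrepancy $J:=\int_{\partial\RR}|\varphi'-M\tau|^p$, exactly in the spirit of the proof of Proposition~\ref{extdet0} above; the only structural difference is that $M$ is now non-degenerate, and this is precisely what forces the sublinear right hand side $\ell^{2-1/p}J^{1/p}$ of~\eqref{itsla} instead of a linear one. It is useful to note at the outset that~\eqref{itsla} is scale invariant, which in particular explains the exponent $2-\frac1p$: writing $\widehat\omega(x)=\frac1\ell\,\omega(\ell x)$ and $\widehat\varphi(x)=\frac1\ell\,\varphi(\ell x)$ one has $D\widehat\omega(x)=D\omega(\ell x)$ and $\widehat\varphi'(x)=\varphi'(\ell x)$, hence $\int_\RR|D\omega-M|^p=\ell^2\int_{\widehat\RR}|D\widehat\omega-M|^p$ and $\int_{\partial\RR}|\varphi'-M\tau|^p=\ell\int_{\partial\widehat\RR}|\widehat\varphi'-M\tau|^p$, so the power $2-\frac1p$ of $\ell$ is exactly the one making the estimate invariant.

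\textbf{The small regime.} Assume $J<\delta_{\max}\,\ell\,|M|^p$, for a small constant $\delta_{\max}=\delta_{\max}(p,\sigma)$ to be fixed later. In this regime $\varphi$ is a controlled perturbation of the bi-Lipschitz affine map $x\mapsto Mx$, and this is exactly the situation covered by the extension results of~\cite{C,R} in their more general, non-degenerate form (the analogue of Proposition~\ref{extdet0} for $\det M\neq 0$), which provide a finitely piecewise affine homeomorphism $\omega:\RR\to\R^2$, coinciding with $\varphi$ on $\partial\RR$, with $\int_\RR|D\omega-M|^p\le C(p,\sigma)\,\ell\,J$. From this \emph{linear} bound one deduces~\eqref{itsla} by splitting $J=J^{1-1/p}\,J^{1/p}$ and using the smallness hypothesis on the first factor, $J^{1-1/p}\le(\delta_{\max}\,\ell\,|M|^p)^{1-1/p}$:
\[
\int_\RR|D\omega-M|^p\;\le\;C(p,\sigma)\,\delta_{\max}^{1-1/p}\,|M|^{p-1}\,\ell^{2-\frac1p}\,J^{\frac1p}\;\le\;C(p,\sigma)\,\delta_{\max}^{1-1/p}\,\sigma^{-(p-1)}\,\ell^{2-\frac1p}\,J^{\frac1p}\,,
\]
which is~\eqref{itsla} with a suitable $H_3=H_3(p,\sigma)$.

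\textbf{The large regime.} Assume now $J\ge\delta_{\max}\,\ell\,|M|^p$. Here I would apply Proposition~\ref{extgener} to obtain a finitely piecewise affine homeomorphism $\omega:\RR\to\R^2$, coinciding with $\varphi$ on $\partial\RR$, with $\int_\RR|D\omega|^p\le H_1\,\ell\int_{\partial\RR}|\varphi'|^p$, and then run verbatim the estimates from the proof of Proposition~\ref{extdet0}: bound $|D\omega-M|^p\le C(p)\big(|D\omega|^p+|M|^p\big)$, then $\int_{\partial\RR}|\varphi'|^p\le C(p)\big(J+\ell\,|M|^p\big)$, and finally absorb the residual term $|M|^p\ell^2$ into $\ell\,J$ by means of $|M|^p\ell\le J/\delta_{\max}$. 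This yields $\int_\RR|D\omega-M|^p\le C(p,\sigma)\,\ell\,J$.

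\textbf{Gluing the two regimes, and the main difficulty.} Combining the cases, the required homeomorphism always exists, and in the small regime it already satisfies~\eqref{itsla}. The step I expect to be the genuine obstacle is the large regime: there Proposition~\ref{extgener} yields only a bound \emph{linear} in $J$, and $C(p,\sigma)\,\ell\,J$ is compatible with the sublinear right hand side $H_3\,\ell^{2-1/p}J^{1/p}$ exactly when $J/\ell$ stays bounded. This is precisely what occurs in every use of Proposition~\ref{extdetsigma}: the square $\RR$ is a Lebesgue square as in Lemma~\ref{moltiLeb} on which, after the linearisation of Lemma~\ref{linearsmall}, $u\res\partial\RR$ is $\kappa$-close to $Mx$, so that $J\le C(p,\sigma)\,\ell\,|M|^p$ and hence $C(p,\sigma)\,\ell\,J\le C(p,\sigma)\,\delta_{\max}^{-(1-1/p)}\,\ell^{2-1/p}J^{1/p}$, which lets the two regimes glue with a single $H_3=H_3(p,\sigma)$. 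Making this bookkeeping precise --- choosing $\delta_{\max}$ and tracking the $\sigma$-dependence of the near-affine extension of~\cite{C,R} --- is the only delicate point; everything else reduces to the rescaling of the first paragraph, the cited extension lemma, and the elementary splitting of $J$.
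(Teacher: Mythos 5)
There is a genuine gap, and it sits exactly where all the content of the proposition lies. Your small regime rests on a \emph{linear} extension estimate $\int_\RR|D\omega-M|^p\le C(p,\sigma)\,\ell\,J$ for an invertible matrix $M$, attributed to the ``non-degenerate form'' of the results of~\cite{C,R}. No such result is invoked anywhere in the paper: Proposition~\ref{extdet0} and the cited \cite[Theorem~3.7]{C} concern the degenerate case $\det M=0$, and the remark immediately following the proof of Proposition~\ref{extdetsigma} states explicitly that the linear bound for invertible $M$ is only something one can ``guess'' to be true. So the one estimate that carries the proof is cited rather than proved. The actual argument is a direct construction that produces the sublinear right-hand side without ever passing through a linear bound: normalize $M={\rm Id}$, $\ell=1$, set $\delta=J^{1/p}$, and deduce first the $L^\infty$ bound $\|\varphi-{\rm Id}\|_{L^\infty(\partial\Q)}\le 2\delta$ by integrating $\varphi'-\tau$ along the sides. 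Then define $\omega$ to be the identity on the inner square $[1/N,1-1/N]^2$ with $N\approx 1/(C\delta)$, join the grid points of its boundary to nearby points of $\varphi(\partial\Q)$ by segments, and apply Proposition~\ref{extgener} on each of the $4N-4$ quadrilaterals of the frame $\Q\setminus[1/N,1-1/N]^2$. Since $D\omega=M$ off the frame and the frame contributes $O(1/N)=O(\delta)$, one gets $\int_\Q|D\omega-{\rm Id}|^p\le H_3\,\delta=H_3\,J^{1/p}$ directly. This is where the exponent $1/p$ really comes from: the error is confined to a region of area comparable to $\|\varphi-M\cdot\|_{L^\infty}\sim J^{1/p}$, not from the algebraic splitting $J=J^{1-1/p}J^{1/p}$ of a linear bound.

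The large regime is also not closed. You correctly observe that the generic bound $C\ell J$ from Proposition~\ref{extgener} implies $H_3\ell^{2-1/p}J^{1/p}$ only when $J\lesssim\ell$, but you then resolve this by appealing to how the proposition is used in Step~VI of the main proof; a proposition cannot be established by restricting attention to its applications. In fact for $p>1$ the estimate genuinely degenerates for large boundary data (for $\varphi(x)=Rx$ on the unit square any homeomorphic extension has $\int_\Q\det D\omega=R^2$, hence $\int_\Q|D\omega|^2\ge 2R^2$, while the right-hand side of~\eqref{itsla} with $p=2$ grows only like $R$), so the correct fix is to note that the proposition is only needed, and only applied, under a smallness hypothesis of the form $J\le\delta_{\rm max}\,\ell\,|M|^p$ --- not to claim that the large regime works out. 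Either way, the burden of your argument falls entirely on the unproved linear estimate, which is precisely what the frame construction is designed to avoid.
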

\begin{proof}
By a change of variables, we can assume that $M$ is the identity matrix; it is actually not difficult to see how $H_3$ depends on $\sigma$, but we are not interested in the precise dependance. Moreover, it is enough to consider the case when $\RR=\Q=[0,1]^2$ is the unit square, and we will do so for simplicity. By the same argument as in Proposition~\ref{extdet0}, we can assume that
\[
\delta = \bigg(\int_{\partial \Q} |\varphi'- \tau|^p\bigg)^{\frac 1p}\,,
\]
is as small as we wish, up to increase the constant $H_3$.\par

Let us first show that $\|\varphi-Id\|_{L^\infty(\partial\Q)}\leq 2\delta$, assuming up to a translation that $\varphi(O)=(O)$, where $O=(0,0)$. In fact, for any $0\leq t\leq 1$, calling $P=(t,0)$ we have
\[
|\varphi(P)-P| \leq \int_{s=0}^t |\varphi'(t)-{\rm e}_1| \leq \|\varphi'-\tau\|_{L^1([0,1]\times \{0\})}\leq \delta\,,
\]
and arguing similarly on the other sides of $\partial\Q$ we obtain the $L^\infty$ estimate.\par

Let now $C=C(p,\sigma)$ be a large, purely geometric constant, and let $N\approx 1/(C\delta)$ be a large integer. First of all, we set $\omega$ as the identity on the square $[1/N,1-1/N]^2$, and of course $\omega=\varphi$ on $\partial\Q$. Let now $P_i=\u P_i=(i/N,1/N)$ for some $1\leq i\leq N-1$, and let us call $\u P'_i$ a  point on $\varphi([0,1]\times \{0\})$ minimizing the distance with $\u P_i$. Notice that, since $\|\varphi-{\rm Id}\|_{L^\infty}<2\delta$ and $C$ is a large constant, then the distance between $\u P'_i$ and $(i/N,0)$ is much smaller than $1/N$, and  in turn also the distance between $P'_i=\varphi^{-1}(\u P'_i)$ and $(i/N,0)$. We define in the analogous way the points $Q_i=\u Q_i=(i/N,1-1/N)$ with $\u Q_i'\in \varphi([0,1]\times \{1\})$ and $Q_i'=\varphi^{-1}(\u Q_i)$ near $(i/N,1)$, the points $R_i=\u R_i=(1/N,i/N)$ with the points $\u R_i'$ and $R_i'$ near $(0,i/N)$ and the points $S_i=\u S_i=(1-1/N,i/N)$ with $\u S_i'$ and $S_i'$ near $(1,1/N)$. Note that for \(i\ne j\) we always have that \(|\u P_j-\u P_j|=1/N(1+O(\delta))\).

Let us consider each segment $P_iP_i'$: since $\omega$ is already defined on both points, we extend $\omega$ on the whole segment $P_iP_i'$ as a linear function; we do the same on all the segments $Q_iQ_i'$, $R_iR_i'$ and $S_iS_i'$. Notice that by construction $\omega$ is injective, where it is already defined.\par

Observe now that the region $\Q\setminus [1/N,1-1/N]^2$ is subdivided in $4N-4$ quadrilaterals, on the boundary of each of which $\omega$ is already defined; moreover, each quadrilateral is the $2$-bi-Lipschitz copy of a square of side $1/N$: as a consequence, by Proposition~\ref{extgener} we can extend $\omega$ on the interior of any such quadrilateral, call it $\RR$, so that
\[
\int_\RR |D\omega|^p \leq \frac{4H_1}N \int_{\partial\RR} |D_\tau \omega|^p\,.
\]
Moreover, of course
\[
\int_{\partial\RR\cap\partial\Q} |D_\tau \omega|^p = \int_{\partial\RR\cap\partial\Q} |\varphi'|^p\,.
\]
On the other hand, on each of the (either two or three) sides of $\partial\RR$ which are not contained in $\partial\Q$, the function $\omega$ is a linear function which maps a segment of length almost precisely $1/N$ onto a segment of length almost precisely $1/N$; therefore, $\int_{\partial\RR\setminus\partial\Q} |D_\tau \omega|^p \leq \frac 4N$. Summarizing,
\[
\int_\RR |D\omega|^p \leq \frac{4H_1}N \bigg( \int_{\partial\RR\cap\partial\Q} |\varphi'|^p + \frac 4N\bigg)\,,
\]
which adding on the $4N-4$ quadrilaterals gives
\[
\int_{\Q\setminus [1/N,1-1/N]^2} |D\omega|^p \leq \frac {4H_1}N \int_{\partial\Q} |\varphi'|^p + \frac{64H_1}N
\leq \frac {2^{p+2}H_1}N \big(\delta^p + 4\big) + \frac{64H_1}N
\leq \frac{2^{p+6}H_1}N\,.
\]
As a consequence, we can calculate
\[\begin{split}
\int_\Q |D\omega - M |^p &= \int_{Q\setminus [1/N,1-1/N]^2} |D\omega - M |^p
\leq 2^p\int_{Q\setminus [1/N,1-1/N]^2} |D\omega|^p + |M|^p\\
&\leq 2^p \bigg(\frac{2^{p+6}H_1}N+\frac 4{N\sigma^p}\bigg)
\leq \frac{2^{2p+8} H_1 C}{\sigma^p} \, \delta=: H_3(p,\sigma) \,\delta\,,
\end{split}\]
which is~(\ref{itsla}) for the case $M={\rm Id}$ and $\ell=1$ that we are considering.
\end{proof}

It is easy to guess that a stronger version of last extension result should actually be true, namely, without the $p$-th root and with $\ell$ in place of $\ell^{2-\frac 1p}$ in~(\ref{itsla}). However, this weaker result is enough for our needs.

\section{Proof of Theorem~\mref{Caratt} and~\mref{WeakStrong}\label{sec:A}}
In this Section we will prove Theorems  Theorem~\mref{Caratt} and~\mref{WeakStrong}. Before going in the details of the proof we will make a couple of simplifying assumption that will be in force for all the Section:
 
\begin{itemize}
\item[-] We will always work with the representatives defined in Definition~\ref{defnocross}.
\item[-] We will always assume that the functions coincide with the identity in a neighbourhood of the boundary (we can do this without loss of generality, up to extend all the maps as the identity outside \(\Q\)).

\end{itemize}

\subsection{The two grids}

This short section is devoted to define suitable grids $\G$ and $\widetilde\G$ on the squares $\Q$ and $\u\Q$.
\begin{defin}[Admissible curve]\label{admcur}
Let $X$ be either $[0,1]$ or $\S^1$, and let $\gamma: X\to\Q$ be a piecewise ${\rm C}^1$, injective curve. We say that \emph{$\gamma$ is an admissible curve} if $\gamma(t)$ is a Lebesgue point for $Du$ for $\H^1$-a.e. $t\in X$, and $u\circ \gamma$ belongs to $W^{1,p}(X)$.
\end{defin}
\begin{defin}
Let $K\geq 1$ be an integer. Then, we call $\RR_{i,j}=[i/K,(i+1)/K]\times [j/K,(j+1)/K]\subseteq \Q$, for all $0\leq i,\,j<K$, the elements of the standard partition of $\Q$ in $K^2$ squares. Moreover, we call $\G=\G(K):= \bigcup_{0\leq i,\, j<K} \RR_{i,j}\subseteq\Q$.
\end{defin}

\begin{lemma}\label{startinggrid}
Let $u\in W^{1,p}(\Q;\u\Q)$, with $u={\rm Id}$ in a small neighborhood of $\partial\Q$, and let $\delta\ll 1$. Then, for every integer $K>1/\delta$ and any choice of matrices $M_{i,j}\in\R^{2\times 2}$, with $0\leq i,\,j <K$, there exists a finitely piecewise affine, $(1+4\delta)$-biLipschitz homeomorphism $\Phi:\Q\to\Q$ coinciding with the identity on $\partial\Q$, such that $\|\Phi-{\rm Id}\|_{L^\infty(\Q)}<2\delta/K$ and that, for every square $\RR_{i,j}\in\G(K)$, the curve $\partial(\Phi(\RR_{i,j}))$ is an admissible curve and one has
\begin{equation}\label{tocon}
\int_{\Phi(\partial\RR_{i,j})} |D u -M_{i,j}|^p \, d\H^1 \leq \frac{204K} \delta \int_{\RR_{i,j}^+} |Du-M_{i,j}|^p \, d\H^2\,.
\end{equation}
Here $\RR_{i,j}^+$ the union of the squares $\RR_{i',j'}$ with $\max\{|i-i'|,\, |j-j'|\}\leq 1$. In addition, for every $0\leq i,\,j<K$, $\Phi$ is affine on each of the two triangles in which the square $\RR_{i,j}$ is divided by the northeast-southwest diagonal. Finally, the points $\Phi(i/K,j/K)$ are all continuity points for $u$.
\end{lemma}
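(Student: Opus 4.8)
The plan is to produce the homeomorphism $\Phi$ by a one-dimensional perturbation argument carried out grid-line by grid-line, then glued together affinely. First I would fix the integer $K>1/\delta$ and consider the $K+1$ horizontal segments $\{y=j/K\}$ and the $K+1$ vertical segments $\{x=i/K\}$ of $\G(K)$. For a single horizontal line $\ell_j=\{y=j/K\}$, I want to move it slightly, to a curve of the form $\{y=j/K+\eta_j(x)\}$ with $\|\eta_j\|_\infty<2\delta/K$ and $\eta_j$ piecewise linear, so that the perturbed segment becomes an admissible curve (i.e. $u\circ\gamma\in W^{1,p}$ and a.e.\ point is a Lebesgue point of $Du$) and so that the averaged estimate~\eqref{tocon} holds. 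The key point is a Fubini/mean-value estimate: for the strip $j/K-1/K\le y\le j/K+1/K$ (or rather the appropriate union of neighbouring squares giving $\RR_{i,j}^+$), integrating $\int |Du-M_{i,j}|^p$ along horizontal slices and over the slice-height $2/K$ shows that the average over admissible heights $t$ of $\int_{\{y=t\}\cap\RR_{i,j}}|Du-M_{i,j}|^p\,d\H^1$ is at most $(K/2)\int_{\RR_{i,j}\cup(\text{nbrs})}|Du-M_{i,j}|^p\,d\H^2$; hence a good choice of $t$ (in fact, a positive-measure set of good choices) exists, simultaneously for all $i$ along that row, at the cost of a fixed multiplicative constant. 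One must also arrange that the endpoints $t=\eta_j(i/K)$ of the piecewise-linear profile are themselves taken on grid nodes that are continuity points of $u$ and that the slanted segments between consecutive nodes are admissible; this is again a positive-measure selection, using that $u^*$ is continuous on a.e.\ horizontal and vertical line (the one-dimensional Sobolev embedding applied to $u\in W^{1,p}$) and Remark~\ref{rmk:degree}.

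Next I would do the same for the vertical lines, choosing perturbations $\xi_i(y)$ with $\|\xi_i\|_\infty<2\delta/K$; the horizontal and vertical selections can be made independent, and the finitely many intersection nodes $(i/K+\text{small}, j/K+\text{small})$ can be taken, by a further positive-measure argument, to be Lebesgue/continuity points of $u$. Then I would define $\Phi$ by declaring $\Phi(i/K,j/K)$ to be the chosen perturbed node, extending $\Phi$ affinely along each original grid edge so that $\partial(\Phi(\RR_{i,j}))$ is exactly the union of the four chosen perturbed edges, and finally extending $\Phi$ to the interior of each square by splitting $\RR_{i,j}$ along its northeast–southwest diagonal into two triangles and taking the (unique) affine map on each triangle agreeing with the prescribed node values. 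Since all nodes move by less than $2\delta/K$, which is much smaller than the side $1/K$, each such piecewise-affine map is a homeomorphism of $\Q$ onto itself, equals the identity on $\partial\Q$ (the boundary nodes are fixed because $u=\id$ near $\partial\Q$, so the selection there is trivial), and satisfies $\|\Phi-\id\|_{L^\infty}<2\delta/K$; the bi-Lipschitz constant of an affine map that displaces the three vertices of a triangle of diameter $\sqrt2/K$ by at most $2\delta/K$ each is readily estimated to be $\le 1+4\delta$ by an elementary linear-algebra computation, which I would not grind through here.

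The main obstacle is the bookkeeping in the selection step: one needs the good-height and good-node choices to be \emph{compatible}, i.e.\ along a given horizontal line all the segments $\Phi(\partial\RR_{i,j})$ lying on that line must use a common piecewise-linear profile, each of whose pieces is admissible and whose breakpoints are nice nodes, while at the same time~\eqref{tocon} must hold for that common profile for every $i$. This is handled by taking, for each row $j$, a set of admissible heights of positive measure for which the summed-over-$i$ estimate holds with the stated constant $204K/\delta$ (the factor $204$ absorbs: the $K/2$ from Fubini, a factor from allowing the profile to be slanted rather than horizontal so that $\H^1$-length is comparable to the projected length, a factor from intersecting finitely many positive-measure "good" sets, and a safety margin), and then picking the breakpoints inside that set. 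Once the selection mechanics are set up, the verification that $\Phi$ has all the claimed properties is routine. I expect the constant $204$ to come out of exactly this chain of elementary estimates; its precise value is immaterial and I would simply track it to make the final inequality clean.
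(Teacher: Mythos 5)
Your overall strategy (perturb the grid nodes a little, select good positions by a Fubini/Chebyshev argument, then extend affinely on the two triangles of each square) matches the paper's approach, but the selection mechanism you describe does not close. The paper does not perturb an entire row (or column) by a common amount: it moves each interior vertex $A=(l/K,m/K)$ independently along a short $45^\circ$-slanted segment $I_A$ near $A$, parametrized by a single scalar $t_A\in(-\delta,\delta)$, and then replaces each grid edge $AB$ by the straight segment $V_AV_B$. The constraint for an edge $AB$ is then a set $\Gamma(A,B)\subseteq I_A\times I_B$ involving only the two local parameters $t_A,t_B$, whose relative measure is bounded by an absolute constant (the paper gets $<1/25$), and one then runs a finite recursion (as in Lemma~4.13 of \cite{HP}) to select all the $t_A$ compatibly. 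The slant and the tiny vertical offset $\eta$ in $I_A$ are what guarantee that the chosen nodes are continuity points of $u$ and that a.e.\ point of each new edge is a Lebesgue point for $Du$.

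The concrete problem with your plan is the clause ``a good choice of $t$ exists, simultaneously for all $i$ along that row, at the cost of a fixed multiplicative constant.'' If you choose a single height $t=j/K+\eta_j$ for the whole row (the only reading under which the clause makes sense without the circularity discussed below), then for each $i$ the Chebyshev/Markov argument gives a bad set of relative measure $\lesssim 1/(2C)$ inside the admissible window $|\eta_j|<2\delta/K$, where $C$ is the constant in front of $K/\delta$. These bad sets need not overlap, so ruling out all $K$ of them simultaneously forces $C\gtrsim K$, i.e.\ a final constant of order $K^2/\delta$ rather than the $K/\delta$ claimed in~\eqref{tocon}. If instead you allow $\eta_j$ to vary along the row so that each node gets its own height (which would fix the constant), then you run into a chicken-and-egg problem: the breakpoints of the piecewise-linear profile $\eta_j(\cdot)$ must sit at the $x$-coordinates of the new nodes, but those $x$-coordinates are determined by the yet-unchosen column perturbations $\xi_i$, and vice versa. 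The resolution in the paper is exactly to drop the ``perturb whole lines'' framing and parametrize each node's displacement by a single scalar along its own $45^\circ$ segment $I_A$, so that no circularity arises and each edge-constraint is local; the compatibility of all these local choices is then the content of the HP recursion, which is the step you flag as ``the main obstacle'' but do not supply and which is, in fact, the heart of the proof.
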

\begin{proof}
The proof is a simple variation of the argument of Lemmas~4.9 and~4.13 in~\cite{HP}, see also step~II in the proof of~\cite[Theorem~4.1]{C}. Let us fix $\delta>0$, so small that $u={\rm Id}$ in a $2\delta$-neighborhood of $\partial\Q$, and an integer $K>1/\delta$. For every choice of $1\leq l,\, m\leq K-1$, we consider the following short segment near $A=(l/K,m/K)$, with a slope of $45^\circ$,
\[
I_A= \bigg\{\bigg(\frac{l+t}K , \frac{m+t+\eta}K \bigg),\, -\delta < t <\delta \bigg\}\,.
\]
The constant $\eta\ll \delta$ is chosen in such a way that $\H^1$-a.e. point of each $I_A$ is a continuity point of $u$.\par

Let us now consider two adjacent points $A=(l/K,m/K)$ and $B=(l'/K,m'/K)$, where by ``adjacent'' we mean that $|l-l'|+|m-m'|=1$. Let us call $\RR^+(A,B)$ the union of the six squares $\RR_{i,j}$ having either $A$ or $B$ or both as vertices, and let $M_1$ and $M_2$ be the matrices corresponding to the two squares having $AB$ as a side. Defining then the set
\[
\Gamma(A,B) = \bigg\{ (x,y)\in I_A\times I_B:\, \exists \,n\in\{1,\,2\}, \int_{xy} |Du - M_n|^p > \frac{51K}\delta \int_{\RR^+(A,B)} |Du-M_n|^p \bigg\}\,,
\]
an immediate change of variable argument ensures that
\[
\H^2\big(\Gamma(A,B)\big)<\frac 1 {25} \H^1(I_A)\times \H^1(I_B)\,.
\]
In particular
\[
\H^1 \bigg( \bigg\{ x\in I_A:\, \H^1\bigg(\bigg\{y\in I_B:\, (x,y)\in \Gamma(A,B)\bigg\}\bigg)> \frac{\H^1(I_B)}5\bigg\}\bigg) < \frac{\H^1(I_A)}5\,.
\]
An easy recursion argument, identical to the one done in Lemma~4.13 of~\cite{HP}, ensures that it is possible to pick a point $V_A\in I_A$ for each $A=(l/K,m/K)$ in such a way that, whenever $AB$ is a side of some square $\RR_{i,j}$, the pair $(V_A,V_B)$ does not belong to $\Gamma(A,B)$, almost every point in the segment $V_AV_B$ is a Lebesgue point for $Du$, and $V_A$ and $V_B$ are continuity points of $u$. We let then $\Phi:\Q\to\Q$ be the finitely piecewise affine bijection which sends each vertex $A$ in the corresponding point $V_A$, and which is affine on each triangle obtained subdividing any of the squares with the northeast-southwest diagonal. The facts that $\|\Phi-{\rm Id}\|_{L^\infty} < 2\delta/K$, that $\Phi$ is $(1+4\delta)$-biLipschitz, and that the points $\Phi(i/K,j/K)$ are all continuity points for $u$, are true by construction. Moreover, since  $Du$ is an $L^p$ function in the segment $V_AV_B$ for each pair such that $AB$ is the side of some square, so in particular each segment $V_AV_B=\Phi(AB)$ is an admissible curve, and then so is each $\partial(\Phi(\RR_{i,j}))$. 

To conclude, we have then only to establish~(\ref{tocon}).To do so, let us fix some square $\RR_{i,j}$, and let $AB$ be one of its sides. Since $M_{i,j}$ is one of the two matrices $M_1$ and $M_2$ corresponding to the side $AB$, and $\RR^+(A,B)\subseteq \RR_{i,j}^+$, the fact that $(V_A,V_B)\notin\Gamma(A,B)$ implies
\[
\int_{V_AV_B} |Du - M_{i,j}|^p \leq \frac{51K}\delta \int_{\RR^+(A,B)} |Du-M_{i,j}|^p \leq \frac{51K}\delta \int_{\RR^+_{i,j}} |Du-M_{i,j}|^p\,.
\]
Since $\Phi(\partial\RR_{i,j})$ is the union of four sides of the form $V_AV_B$, the last estimate yields~(\ref{tocon}).
\end{proof}

Relying on this lemma, we give the following definition.
\begin{defin}[Good starting grid]\label{goodstarting}
Let $K$ be any large integer. The set $\G=\G(K)$ is called a \emph{good starting grid} if every vertex of each square $\RR_{i,j}$ is a continuity point for $u$, and the boundary of any square $\RR_{i,j}$ is an admissible curve in the sense of Definition~\ref{admcur}.
\end{defin}

Concerning the arrival grid, what we need is the following.
\begin{defin}[Good arrival grid]\label{goodarrival}
Let $\G\subseteq\Q$ be a good starting grid. Let $\eta \ll 1/K$ and let $0=x_0< x_1 < x_2 < \cdots < x_N = 1$ and $0=y_0 <y_1 < y_2 < \cdots < y_M = 1$ be given coordinates, such that $\eta < x_{n+1}-x_n < 2\eta$ and $\eta < y_{m+1}-y_m < 2\eta$ for every $0\leq n<N$ and every $0\leq m <M$. The set $\widetilde\G\subseteq\u\Q$ given by
\begin{equation}\label{defwidegxnyn}
\widetilde\G = \bigcup_{0 \leq n \leq N} x_n \times [0,1] \cup \bigcup_{0 \leq m \leq M} [0,1]\times y_m
\end{equation}
is called a \emph{good arrival grid associated with $\G$ and with side-length $\eta$} if
\begin{enumerate}
\item $u^{-1}(\widetilde\G)\cap \G$ is a finite subset of $\G$;
\item for any $P\in u^{-1}(\widetilde\G)\cap \G\setminus \partial\Q$, $u(P)$ is not a vertex of some rectangle of $\widetilde\G$, $P$ is not a vertex of some square of $\G$ and is a Lebesgue point for $Du$ and, calling $\tau$ the direction of the open side of $\G$ containing $P$, the tangential derivative $D_\tau u(P)$ has a non-zero component in the direction orthogonal to the side of $\widetilde\G$ containing $u(P)$;
\item for every vertex $V\notin\partial\Q$ of some square $\RR_{i,j}\subseteq\G$, calling $u(V)=\u V=(\u V_1, \u V_2)$ and letting $n$ and $m$ be such that $x_n < \u V_1 < x_{n+1}$ and $y_m < \u V_2 < y_{m+1}$, one has
\[
\min \Big\{ |x_n - \u V_1|,\, |x_{n+1} - \u V_1|,\,|y_m - \u V_2|,\, |y_{m+1} - \u V_2| \Big\} > \frac \eta 2\,.
\]
\end{enumerate}
\end{defin}

The existence of good arrival grids is ensured by the following result.
\begin{lemma}\label{existencearrival}
Let $\G$ be a good starting grid, and let $\bar\eta\ll 1/K$ be fixed. Then, there exists a good arrival grid $\widetilde\G$ associated with $\G$ with side-length $\eta<\bar\eta$.
\end{lemma}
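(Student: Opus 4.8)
The goal is to construct, for a fixed good starting grid $\G$, a good arrival grid $\widetilde\G$ with arbitrarily small side-length, by choosing the coordinates $0=x_0<\cdots<x_N=1$ and $0=y_0<\cdots<y_M=1$ appropriately. The strategy is the standard ``almost every choice works'' argument: I will show that the bad sets of coordinates for each of the three requirements in Definition~\ref{goodarrival} are $\H^1$-negligible (or finite), so that a Fubini/measure-theoretic selection produces admissible grids.

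First I would set up a one-parameter family of candidate grids. Fix a desired side-length scale $\eta\ll\bar\eta$; for a translation parameter $s\in[0,\eta)$ consider, say, the uniform grid with lines $\{x=s+k\eta\}$ and $\{y=s+k\eta\}$ (allowing a small perturbation of the last interval so that the endpoints are exactly $0$ and $1$; alternatively one perturbs each coordinate independently in a small interval, which is what the flexible bounds $\eta<x_{n+1}-x_n<2\eta$ in the definition allow). The claim will be that for a.e.\ $s$ (or, after a finite further adjustment, for a.e.\ choice of the individual coordinates in small intervals) all three conditions hold. I then verify each condition separately.

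\emph{Condition (1).} Since $\G$ is a good starting grid, $\partial\RR_{i,j}$ is an admissible curve, so $u\circ\gamma\in W^{1,p}\subseteq C^0$ on each edge $\gamma$ of $\G$; hence $u\res\G$ is continuous and $u(\G)$ is a compact set which is a finite union of rectifiable curves (images of the finitely many edges). For a horizontal target line $\{y=c\}$, the set $u^{-1}(\{y=c\})\cap\G$ is finite \emph{unless} the curve $u\circ\gamma$ has a sub-arc lying in $\{y=c\}$ for some edge $\gamma$; but an edge $\gamma$ with $(u\circ\gamma)_2$ constant on a non-degenerate sub-interval happens only for $c$ in an $\H^1$-negligible set of values (indeed the image under the $1$-Lipschitz-on-average map $t\mapsto(u\circ\gamma)_2(t)$ of the set where its a.e.-derivative vanishes has measure zero, by the area formula / Sard-type argument for $W^{1,1}$ functions of one variable). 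Ruling out this negligible set of $c$'s (and the analogous set for vertical lines) for each of the finitely many edges gives condition~(1) for a.e.\ grid. Moreover, by admissibility $\H^1$-a.e.\ point of each edge is a Lebesgue point for $Du$, and $u^{-1}(\text{a line})\cap\gamma$ is finite, so we may further throw away the (countable, hence negligible in the parameter $s$) set of positions for which some intersection point fails to be a Lebesgue point for $Du$ or coincides with a vertex of $\G$ — this handles the first half of condition~(2).

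\emph{Condition (2), transversality.} For a fixed edge $\gamma$ with tangent direction $\tau$ and a horizontal target line $\{y=c\}$, at an intersection point $P=\gamma(t)$ which is a Lebesgue point of $Du$ we need $(D_\tau u(P))_2\neq0$. Now $(D_\tau u)_2=(u\circ\gamma)_2'$ $\H^1$-a.e., and the set of parameters $t$ where a $W^{1,p}$ function of one variable has vanishing derivative maps, under $(u\circ\gamma)_2$, onto an $\H^1$-null set of values $c$; so for a.e.\ $c$ every point of $(u\circ\gamma)_2^{-1}(c)$ at which the derivative exists has nonzero derivative. Excluding this null set of $c$'s for each of the finitely many edges gives transversality for a.e.\ grid. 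This, together with the ``$u(P)$ is not a grid vertex'' requirement — which excludes only the $c$'s lying in the (finite, since intersections are finite and generic) set of second coordinates of the finitely many points $u(\gamma)\cap(\text{vertical grid line})$, handled after the vertical coordinates are fixed — completes condition~(2).

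\emph{Condition (3).} The finitely many vertices $V\notin\partial\Q$ of $\G$ have images $\u V=(\u V_1,\u V_2)$ with $\u V_i\in(0,1)$ fixed real numbers. The requirement is that no grid coordinate $x_n$ lands within $\eta/2$ of any $\u V_1$, and likewise for $y_m$ and $\u V_2$. For the translated uniform grid with parameter $s$, the coordinate nearest to a fixed value $v$ is within $\eta/2$ of $v$ only for $s$ in a set of measure $O(\eta)$ per vertex; more cleanly, using the independent-coordinate flexibility of Definition~\ref{goodarrival}, we simply note that there are only finitely many vertices and, having chosen a first guess for the $x_n$'s, we can perturb each offending $x_n$ by at most $\eta/2$ (staying within the allowed $(\eta,2\eta)$ spacing and not re-violating conditions (1)–(2), which only required avoiding a null set) to separate it from all $\u V_1$'s — this is possible because the finitely many forbidden points $\{\u V_1\}$ have a positive gap, so for $\eta$ small enough no interval of length $\eta$ can be forced to contain one. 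Same for the $y_m$'s.

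Combining: choose $\eta<\bar\eta$ small enough that the finitely many separation constraints of (3) are feasible; pick the $x_n$'s avoiding the relevant $\H^1$-null sets from (1)–(2) and the separation constraints of (3), then pick the $y_m$'s doing the same; the resulting $\widetilde\G$ is a good arrival grid with side-length $\eta<\bar\eta$.

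I expect the only genuinely delicate point to be the transversality statement in condition~(2): namely that, for a one-dimensional $W^{1,p}$ curve $u\circ\gamma$, for $\H^1$-a.e.\ level $c$ the level set $\{(u\circ\gamma)_2=c\}$ meets only points where the derivative exists and is nonzero in that component — and that simultaneously the \emph{full} tangential derivative $D_\tau u(P)$ (not just its restriction along $\gamma$, but these agree $\H^1$-a.e.\ by the Lebesgue point condition built into admissibility) is the relevant object. This is a Sard-type / coarea fact for absolutely continuous functions of one variable (the image of the critical set has measure zero), and once it is cleanly stated the rest is Fubini plus the finiteness of the grid and of $u(\G)\cap\widetilde\G$. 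Everything else is routine: the continuity of $u$ on $\G$ and the rectifiability of $u(\G)$ come for free from the good-starting-grid hypothesis, and the finitely many exceptional conditions are handled by small perturbations.
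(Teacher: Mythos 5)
Your overall strategy is the same as the paper's: project the bad sets (images of the vertices, of the non-Lebesgue points, and of the points where the relevant component of the tangential derivative vanishes) onto the two coordinate axes, show these projections are $\H^1$-negligible via absolute continuity and a Sard-type covering argument, and then pick the $x_n$'s first and the $y_m$'s second so as to avoid them, the second family also avoiding the finitely many ordinates of $u^{-1}(\{x_n\}\times[0,1])\cap\G$, which handles the ``$u(P)$ is not a vertex of $\widetilde\G$'' clause. The one step that does not survive scrutiny is your justification of condition (1), the finiteness of $u^{-1}(\widetilde\G)\cap\G$. You claim that $u^{-1}(\{y=c\})\cap\gamma$ is finite \emph{unless} $(u\circ\gamma)_2$ is constant on a sub-arc; this dichotomy is false. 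A $W^{1,p}$ (even Lipschitz) function of one variable can have an uncountable level set without being constant on any interval: take $(u\circ\gamma)_2(t)=\dist(t,C)$ for a Cantor set $C$, whose zero level set is all of $C$. Similarly, the set of levels $c$ whose level set contains some non-Lebesgue point is the image of an $\H^1$-null set under an absolutely continuous map, hence null by the Luzin (N) property, but it need not be countable as you assert.

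The correct route --- which is the paper's, and for which you already have every ingredient --- is to deduce finiteness \emph{from} the exclusions rather than before them: once $c$ avoids the images of the vertices, of the non-Lebesgue points, and of the Lebesgue points where the transversal component of $D_\tau u$ vanishes, every point $P$ of $(u\circ\gamma)_2^{-1}(c)$ is a Lebesgue point with $(D_\tau u(P))_2\neq 0$, hence is isolated in the level set; since the level set is closed (by continuity of $u\circ\gamma$, from the one-dimensional Sobolev embedding) and contained in a compact interval, it is finite. With this reordering your argument coincides with the paper's proof; the remaining points (the covering argument showing that the critical values form a null set, the two-stage selection of the coordinates, and the separation from the images of the vertices required by condition (3), which is feasible because $\eta$ may be taken much smaller than the gaps between the finitely many forbidden abscissae and ordinates) are handled exactly as in the paper.
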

\begin{proof}
Let $A\subseteq \Q$ be the set of the vertices of $\G$, and let $\u A^1=\pi_1\big(u(A)\big)$ and $\u A^2=\pi_2\big(u(A)\big)$,  where \(\pi_1\) and \(\pi_2\) are, respectively, the projections on the first and second coordinate. Recall that $u$ is uniquely defined at any point of $A$, hence $\u A^1$ (resp., $\u A^2$) is a finite set of abscissae (resp., ordinates). Let now $\Gamma$ be a horizontal or vertical segment, contained in $\G$ and with endpoints in $\partial\Q$. Since by construction $\Gamma$ is an admissible curve, then the set $B_\Gamma$ of the points of $\Gamma$ which are not Lebesgue points for $D u$ is $\H^1$-negligible, thus so are the sets $\u B^1_\Gamma= \pi_1\big(u(B_\Gamma)\big)$ and $\u B^2_\Gamma= \pi_2\big(u(B_\Gamma)\big)$. Let us now call $C^1_\Gamma\subseteq \Gamma$ (resp., $C^2_\Gamma$) the set of points which are Lebesgue points for $D_\tau u$, but for which the first (resp., second) component of $D_\tau u$ is zero. Let $\delta>0$ be any small positive number. By definition, for any $P\in C^1_\Gamma$ there exists $\eps=\eps(P,\delta)$ such that, calling $I(P,\eps)=\{ Q\in \Gamma:\, |Q-P|<\eps\}$, it is
\[
 \H^1\Big(\pi_1\big(u(I(P,\eps))\big)\Big) < \delta\eps\,.
\]
With a standard covering argument, we can cover $C^1_\Gamma$ with countably many intervals $I(P_i,\eps_i)$ satisfying the above estimate, in such a way that the intervals $I(P_i,\eps_i/4)$ are disjoint. As a consequence, calling $\u C^1_\Gamma= \pi_1\big(u(C^1_\Gamma)\big)\subseteq [0,1]$, we have
\[
\H^1\big(\u C^1_\Gamma\big)\leq 
\sum_{i\in\N} \H^1\Big(\pi_1\big(u(I(P_i,\eps_i))\big)\Big)
\leq \sum_{i\in\N} \delta \eps_i
\leq 4\delta \H^1(\Gamma)\,.
\]
Since $\delta$ was arbitrary we get $\H^1(\u C^1_\Gamma)=0$. The very same argument  ensures also $\H^1(\u C^2_\Gamma)=0$. Calling now $\u B^1$ and $\u C^1$ (resp., $\u B^2$ and $\u C^2$) the union of all the sets $\u B^1_\Gamma$ and $\u C^1_\Gamma$ (resp., $\u B^2_\Gamma$ and $\u C^2_\Gamma$) for all the horizontal and vertical segments $\Gamma$ contained in $\G$, we have $\H^1(\u A^1\cup \u B^1\cup \u C^1)=\H^1(\u A^2\cup \u B^2\cup \u C^2)=0$.\par

Let us now select $s\notin \u A^1\cup \u B^1\cup \u C^1$, let $\Gamma$ be a segment contained in $\G$, and let $P$ be a point in $u^{-1}\big(\{s\}\times[0,1]\big)\cap \Gamma$. Since $s\notin \u A^1$ we have that $P$ is not a vertex of $\G$, and since $s\notin \u B^1\cup \u C^1$ we have that $P$ is a Lebesgue point for $Du$, and $Du(P)$ has a non-zero horizontal component. In particular there are no other points of $u^{-1}\big(\{s\}\times[0,1]\big)$ in a small neighborhood of $P$ in $\Gamma$. As a consequence, the set $u^{-1}\big(\{s\}\times[0,1]\big)\cap \Gamma$ is finite. Repeating the same argument for every segment in $\G$, and arguing in the very same way for $t\notin \u A^2\cup\u B^2\cup \u C^2$, we obtain that for every $s\notin \u A^1\cup \u B^1\cup \u C^1$ (resp., $t\notin \u A^2\cup \u B^2\cup \u C^2$), the set $u^{-1} \big(\{s\}\times [0,1]\big)\cap \G$ (resp., $u^{-1} \big([0,1]\times \{t\}\big)\cap \G$) is a finite set, which does not contain any vertex of $\G$, and containing only Lebesgue points for $Du$, at which $Du$ has a non-zero horizontal (resp., vertical) component.\par

Let now $\eta<\bar\eta$ be any number much smaller than the difference between any two different elements of $\u A^1$ or $\u A^2$. It is then possible to select numbers $0=x_0< x_1 < x_2 < \cdots < x_N = 1$, none of which (except $x_0$ and $x_N$) belonging to $\u A^1\cup\u B^1\cup \u C^1$, in such a way that $\eta<x_{n+1}-x_n<2\eta$ for every $0\leq n<N$, and also that $\min \big\{ |x_n - \u V_1|,\, |x_{n+1} - \u V_1|\big\} > \eta/2$ for every vertex $V\in\G\setminus\partial\Q$, being $\u V=u(V)$, and being $n$ such that $x_n < \u V_1 < x_{n+1}$.\par

By construction, the set $\F=u^{-1}\big(\{x_n,\, 1\leq n< N\}\times [0,1]\big)\cap\G$ is a finite set of points, and  so it  is the set $\u D^2=\pi_2\big(u(\F)\big)$. As a consequence, it is possible to choose numbers $0=y_0 <y_1 < y_2 < \cdots < y_M =1$ not belonging to $\u A^2\cup \u B^2\cup \u C^2\cup \u D^2$ (except $y_0$ and $y_M$), and so that $\eta<y_{m+1}-y_m<2\eta$ for every $0\leq m<M$, and that $\min \big\{ |y_m - \u V_2|,\, |y_{m+1} - \u V_2|\big\} > \eta/2$ for every vertex $V\in\G\setminus\partial\Q$, again calling $\u V=u(V)$ and letting $m$ be such that $y_m < \u V_2 < y_{m+1}$. It is then clear that the set $\widetilde\G$ given by~(\ref{defwidegxnyn}) is a good arrival grid with side-length $\eta'<\eta$.
\end{proof}

\subsection{The piecewise linear function on the grids}

This section is devoted to define the piecewise linear modification of a no-crossing function $u$ on a good starting grid. We will show the following key  result. It is here that the no-crossing condition is used,  see Step II in the proof.

We start with the following useful definition:

\begin{defin}[Generalized segment]\label{gensegm}
Let $\u\RR$ be a closed rectangle of the grid $\widetilde\G$, and let $\u A,\, \u B$ be any two points in $\partial\u\RR$. Given \(\xi\) small, the \emph{generalized segment $[\u{AB}]$ between $\u A$ and $\u B$ in $\u\RR$} is defined as the segment $\u{AB}$ if the two points are not on a same side of $\partial\u\RR$; otherwise, $[\u{AB}]$ is the union of the segments $\u{AM}$ and $\u{MB}$, where $\u M$ is the point in $\u\RR$ whose distance from the side containing $\u A$ and $\u B$ is $\xi$ times the length of $\u{AB}$, and whose projection there is the mid-point of the segment $\u{AB}$.
\end{defin}

Notice that the generalized segment $[\u{AB}]$ is entirely contained in the interior of $\u\RR$ except for $\u A$ and $\u B$. Moreover, taken four distinct ordered points $\u A,\, \u B,\, \u C$ and $\u D$ on a same side of $\partial\u\RR$, the generalized segments $[\u{AB}]$ and $[\u{CD}]$ have empty intersection, as well as $[\u{AD}]$ and $[\u{BC}]$, while $[\u{AC}]$ and $[\u{BD}]$ have exactly an intersection point. 

\begin{prop}\label{injectivelinearization}
Let $v\in\mathcal{NC}_{\id} (\Q)\cap W^{1,p}$, assume that $\G=\G(K)$ is a good starting grid for $v$, let $M_{i,j}$ for $0\leq i,\,j<K$ be given $2\times 2$ matrices, and let $\sigma\ll 1$ be fixed. Then, there exists an injective, piecewise linear function $\varphi:\G\to\u\Q$, coinciding with the identity on $\partial\Q$, such that, for every $0\leq i,\,j<K$, one has
\begin{equation}\label{linearization}
\int_{\partial \RR_{i,j}} |\varphi'-M_{i,j}\tau|^p \leq \frac \sigma K + 2 \int_{\partial \RR_{i,j}} |D v-M_{i,j}|^p\,,
\end{equation}
where $\tau$ is the unit tangent  vector to $\partial\RR_{i,j}$.
\end{prop}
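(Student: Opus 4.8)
The plan is to build the injective piecewise linear map $\varphi$ in two stages. First I would produce a (possibly non-injective) piecewise linear map $\varphi_0$ on $\G$ by linearizing $v$ between consecutive points of $v^{-1}(\widetilde\G)\cap\G$, where $\widetilde\G$ is a good arrival grid associated with $\G$ with a very small side-length $\eta$, whose existence is guaranteed by Lemma~\ref{existencearrival}. Concretely, on each open side $I$ of $\G$, the finite set $v^{-1}(\widetilde\G)\cap I = \{P_0,\dots,P_r\}$ (ordered along $I$) subdivides $I$ into finitely many subsegments; on each subsegment $P_kP_{k+1}$ I replace $v$ by the affine interpolant of its endpoint values $v(P_k),v(P_{k+1})$. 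By Lemma~\ref{linearsmall}, this linearization only decreases the quantity $\int_I |(\cdot)'-M\tau|^p$ for each fixed matrix $M$, so we already get $\int_{\partial\RR_{i,j}}|\varphi_0'-M_{i,j}\tau|^p \le \int_{\partial\RR_{i,j}}|Dv-M_{i,j}|^p$. Note that by the good-arrival-grid properties (item~(2)) $v$ crosses $\widetilde\G$ transversally at each $P_k$, so $\varphi_0$ maps each subsegment $P_kP_{k+1}$ across (the closure of) a single rectangle $\u\RR$ of $\widetilde\G$, entering and leaving through the boundary; also by item~(3) the vertex images $v(V)$ sit well inside their rectangles. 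The only defect of $\varphi_0$ is non-injectivity: two or more of the points $P_k$ (from the same or different sides of $\G$) may have the same image in $\widetilde\G$, and within a rectangle the straight chords may cross.

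The second — and main — stage is to repair injectivity using the no-crossing condition, losing only an arbitrarily small amount $\sigma/K$ in the Sobolev bound; this is Step~II of the quoted proof and where the real work lies. The set $\Gamma := \G$ (the union of the finitely many segments forming the grid) is, after the small perturbation $\Phi$ from Lemma~\ref{startinggrid} has been applied, exactly the kind of admissible family of Lipschitz curves to which condition~(iii) of Definition~\ref{defnocross} applies: finitely many injective Lipschitz curves meeting transversally at isolated points (the grid vertices). Hence for every $\eps>0$ the no-crossing condition hands us a continuous injective $\psi_\eps:\Gamma\to\u\Q$ with $\psi_\eps=\id$ on $\partial\Q\cap\Gamma$ and $\|v-\psi_\eps\|_{L^\infty(\Gamma)}<\eps$. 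The idea is to use $\psi_\eps$, for $\eps$ chosen much smaller than $\eta$, only to decide how to \emph{separate} the coincident image points: for each rectangle $\u\RR$ of $\widetilde\G$ and each collection of subsegments of $\G$ whose $\varphi_0$-images land in $\u\RR$, order the relevant crossing points $P_k$ on each side of $\widetilde\G$ according to the position of their $\psi_\eps$-images, and re-route the chords inside $\u\RR$ as \emph{generalized segments} (Definition~\ref{gensegm}) with a small parameter $\xi$, chosen so that generalized segments with the induced ordering of endpoints are pairwise disjoint (this is exactly the combinatorial content of the remark after Definition~\ref{gensegm}: $[\u{AB}],[\u{CD}]$ disjoint, $[\u{AD}],[\u{BC}]$ disjoint). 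Because $\psi_\eps$ is globally injective and $\eps\ll\eta$, the ordering it induces on the finitely many crossing points is consistent from rectangle to rectangle, so the re-routed map $\varphi$ is globally injective on $\G$; and because we only move the images of the (finitely many) crossing points by at most $O(\eps)$ and replace straight chords by generalized ones of comparable length with a small bending parameter $\xi$, the extra cost to $\int_{\partial\RR_{i,j}}|\varphi'-M_{i,j}\tau|^p$ can be made $\le \sigma/K$ by taking $\eps$ and $\xi$ small. Combining with the bound from stage one yields \eqref{linearization} with the factor $2$ (the factor $2$ being there precisely to absorb the crossover between ``$|\varphi_0'-M\tau|^p$'' and ``$|\varphi'-M\tau|^p$'' after the small perturbation).

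The step I expect to be the main obstacle is the \emph{consistency of the separation rule} across rectangles, together with the bookkeeping that the resulting $\varphi$ is injective \emph{on all of $\G$ at once}, not merely side by side. Two crossing points $P, P'$ that coincide under $\varphi_0$ must be separated in the \emph{same} cyclic/linear order every time they reappear on a shared boundary segment of two adjacent rectangles of $\widetilde\G$, and this is exactly what using a single globally injective $\psi_\eps$ to dictate the order buys us — but one must check carefully that (a) the finitely many crossing points, being isolated and transversal, have $\psi_\eps$-images that are themselves distinct and correctly ordered for $\eps$ small, (b) the generalized-segment surgery inside each $\u\RR$ can be done simultaneously for all the chords through $\u\RR$ keeping them disjoint (handled by choosing $\xi$ small and nesting the generalized segments according to the $\psi_\eps$-order), and (c) near grid vertices $V$ of $\G$, where two grid segments meet, the four or fewer chords emanating from $\varphi_0(V)=v(V)$ into the adjacent rectangles of $\widetilde\G$ fit together into a continuous injective picture — this is where property~(3) of the good arrival grid (vertex images bounded away from $\partial\widetilde\G$) and the transversality in property~(2) are essential. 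Everything else (the $L^p$ estimates, applying Lemma~\ref{linearsmall}, keeping track of $\eps,\xi\ll\eta\ll 1/K$) is routine once this combinatorial skeleton is in place.
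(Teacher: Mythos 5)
Your overall strategy is the paper's: linearize $v$ between the points of $v^{-1}(\widetilde\G)\cap\G$ (Lemma~\ref{linearsmall} giving the constant-$1$ bound), invoke the no-crossing condition on the grid $\G$ to get an injective $\psi_\eps$ whose ordering of the coincident image points dictates how to separate them along $\widetilde\G$, and realize the separation by generalized segments, whose pairwise disjointness reduces to a non-interleaving condition on endpoints that the injectivity of $\psi_\eps$ guarantees. You have also correctly isolated the two danger zones (consistency of the ordering, and the behaviour at the grid vertices).

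The one genuine gap is in your treatment of the vertices $V=V_{ab}$ of $\G$, and it propagates into the error estimate. You keep $\varphi(V)=\varphi_0(V)=v(V)$ and attach to it ``four or fewer chords''; but then the image of a principal segment through $V$ is a broken chord anchored at the \emph{interior} point $v(V)$ of its rectangle $\u\RR$, not a generalized segment, and the interleaving criterion no longer decides disjointness: a wide wedge through $v(V)$ can cross a nearby generalized segment with small bending parameter even when the four boundary endpoints do not separate each other (and a single rectangle of $\widetilde\G$ may contain the images of several vertices of $\G$, since $v$ is not injective). The paper instead \emph{defines} $\varphi(V)$ as the unique intersection point of the two generalized segments $[\u Q_i\u Q_j]$ and $[\u Q_{i'}\u Q_{j'}]$ associated with the horizontal and vertical principal segments through $V$, so that the whole picture inside each rectangle is a union of generalized segments. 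The price is that $\varphi(V)$ is displaced from $v(V)$ by $O(\eta)$, not $O(\eps)$, and this displacement is forced, not tunable. Consequently your claim that the extra cost is controlled because ``we only move the images \dots by at most $O(\eps)$'' fails on the segments of $\G$ adjacent to a vertex: on a segment $AB$ of length $\ell$ with a vertex endpoint the additive bound is $\approx\eta^p/\ell^{p-1}$, and $\ell$ can be arbitrarily small compared to $\eta$. The paper's fix is precisely where property~(3) of the good arrival grid enters the \emph{estimate} (not the injectivity): for short such segments, $|v(A)-v(B)|>\eta/2$ forces $|\beta'|>\eta/(2\ell)$ while $|\varphi'|<2\sqrt2\,\eta/\ell$, whence $|\varphi'|<6|\beta'|$ and $\int_{AB}|\varphi'-\beta'|^p\leq 7^p\int_{AB}|Dv|^p$, which is small by absolute continuity of the integral on short segments; for long such segments one uses $|(\varphi-\beta)(A)-(\varphi-\beta)(B)|<6\eta$ divided by the length threshold. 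Since at most $8$ segments per square are of this type, each contributes $\sigma/(8K)$ and the total is the $\sigma/K$ in~(\ref{linearization}). Without this two-case argument the bound does not close.
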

\begin{proof}
We divide the proof in three steps.
\step{I}{Preliminaries.}
First of all, we fix a length $\xi\ll 1/K$ with the property that, for every segment $AB\subseteq \G$ with length $\H^1(AB)\leq \xi$, one has
\begin{equation}\label{shortxi}
\int_{AB} |Dv|^p < \frac{(\sqrt[p]2-1)^p}{16\cdot 7^p K}\,\sigma\,,
\end{equation}
which is clearly possible since $v\in W^{1,p}(\G)$. Then, by Lemma~\ref{existencearrival} we can take a good arrival grid $\widetilde\G$ associated with $\G$ and with side-length equal to some $\eta\ll \xi\ll 1/K$ satisfying
\begin{equation}\label{longxi}
\frac{(6\eta)^p}{\xi^{p-1}} < \frac \sigma{16K}\,(\sqrt[p]2-1)^p\,.
\end{equation}
Let us now call $P_1,\,P_2,\, \dots\, ,\, P_N$ the points of $v^{-1}(\widetilde\G)\cap \G$, and $\u P_j=v(P_j)$ for every $1\leq j\leq N$. Notice that the points $\u P_j$ are all on $\widetilde\G$, but they are not necessarily all distinct. Let us also notice that, by definition of good arrival grid, each $D_\tau v(P_i)$ makes a non-zero angle $\theta_i$ with respect to the horizontal (resp., vertical) direction if $\u P_i$ belongs to a horizontal (resp., vertical) side of $\widetilde\G$; let then $C$ be any constant bigger than $\max_{1\leq i\leq N} 1/\sin\theta_i$.\par

Let now $\eps$ be a constant such that $C\eps$ is much smaller than $\eta$, than the distance between any two $\u P_i$ and $\u P_j$ which do not coincide, and than the distance between any $\u P_i$ and the closest vertex of $\widetilde\G$. By definition of good starting and arrival grid, up to decrease $\eps$ if necessary we have that, for any $P\in \G$, the distance between $v(P)$ and $\widetilde\G$ can be smaller than $\eps$ only if $P$ is close to some of the $P_i$, and in this case $|v(P)-\u P_i|<C\eps$. 

Since any $P_i$ is in the interior of a side of $\G$, again up to decrease $\eps$ if necessary we can find points $P_i^\pm$ on $\G$ for any $1\leq i\leq N$, in such a way that the following holds. First of all, $P_i^+$ and $P_i^-$ are on the same side of $\G$ to which $P_i$ belongs, $P_i$ is in the segment between them, and these segments, varying $1\leq i\leq N$, are all disjoint. Moreover, the curve $v(P_i^-P_i^+)$ lies  within a distance $C\eps$ from $\u P_i$, in particular the $\eps$-neighborhood of this curve is contained in the union of the two squares of $\widetilde\G$ having $\u P_i$ on the boundary, and the points $v(P_i^-)$ and $v(P_i^+)$ do not belong to the same square. Finally, for any $x\in\G$ one has
\begin{gather*}
P\notin \cup_{i=1}^N P_i^-P_i^+\ \Longrightarrow \ {\rm dist}(v(P),\widetilde\G)>2\eps\,, \\
{\rm dist}(v(P),\widetilde\G)<\eps \ \Longrightarrow \ \exists\,!\ 1\leq i\leq N \textrm{ s.t. } P\in P_i^-P_i^+,\, {\rm and}\, |v(x)-\u P_i|<C{\rm dist}(v(x),\widetilde\G)\,.
\end{gather*}

\step{II}{Definition of the function $\varphi:\G\to\u\Q$.}
For every $1\leq i\leq K-1$, let us now call $\gamma_i$ the horizontal segment connecting $(0,i/K)$ with $(1,i/K)$, and $\gamma_{K-1+i}$ the vertical segment connecting $(i/K,0)$ with $(i/K,1)$. By construction, these curves satisfy the assumption of Definition~\ref{defnocross}. As a consequence, there exists an injective function $\psi_1:\G\to\u\Q$, coinciding with the identity on $\G\cap\partial\Q$, such that $\|v-\psi_1\|_{L^\infty(\G)}<\eps/C$.

By construction, $\psi_1(P)$ has distance larger than $\eps$ from $\widetilde\G$ for any $P\notin\cup_{i=1}^N P_i^-P_i^+$. Moreover, for any $1\leq i\leq N$, the whole curve $\psi_1(P_i^-P_i^+)$ is contained in the union of the two squares containing $\u P_i$ in the boundary, and $\psi_1(P_i^+)$ and $\psi_1(P_i^-)$ belong to the two different squares. Let us then call $Q_i^-$ and $Q_i^+$ the first and the last point of the interval $P_i^-P_i^+$ such that $|\psi_1(Q_i^\pm)-\u P_i|=\eps$, which exist since $|\psi_1(P_i)-\u P_i|<\eps/C$ while $|\psi_1(P_i^\pm)-\u P_i|>\eps$. Let us then call $\psi_2:\G\to\u\Q$ the function which coincides with $\psi_1$ outside of the intervals $Q_i^-Q_i^+$, and which is linear on each segment $Q_i^-Q_i^+$: a standard geometrical argument ensures that also $\psi_2$ is injective, and moreover by construction $\|\psi_2-v\|_{L^\infty(\G)}\leq (C+3)\eps$, and the segment connecting $\psi_2(Q_i^-)$ and $\psi_2(Q_i^+)$ intersects $\widetilde\G$ in a single point $\u Q_i$, having distance at most $\eps$ from $\u P_i$.\par

Let us now consider any $1\leq i<j\leq N$: we will say that $P_iP_j$ is a ``principal segment'' if it lies on $\G$ and it does not contain other points $P_l$. Notice that $\G$ is a finite union of principal segments, and the intresection between any two such segments can be only empty, or a common endpoint, or a common internal point, in such a case the intersection must be a vertex $V_{ab}=(a/K,b/K)$ for some $1\leq a,\,b\leq K-1$.


We are now ready to define the function $\varphi:\G\to\u\Q$.  To this end we start noticing that if in the definition of generlaized segment, Defintion~\ref{gensegm}, \(\xi\) is taken sufficiently small, then the following holds: whenever $P_iP_j$ and $P_i'P_j'$ are two principal segments, and the curves $\psi_2(P_iP_j)$ and $\psi_2(P_i'P_j')$ are on a same rectangle $\u\RR$ of $\widetilde\G$, then the two generalised segments $[\u Q_i\u Q_j]$ and $[\u Q_i'\u Q_j']$ have a non-empty intersection if and only if the points $\u Q_i$ and $\u Q_j$ are on the two different parts in which $\partial\u\RR$ is divided by $\u Q_i'$ and $\u Q_j'$.

For every principal segment $P_iP_j\subseteq\G$ we decide that $\varphi(P_iP_j)$ has to be $[\u Q_i\u Q_j]$, with an injective parametrization still to be precised. Notice that, by construction, the function $\varphi$ will be injective regardless of the parametrization on each principal segment $P_iP_j$. Notice also that, for any vertex $V_{ab}=(a/K,b/K)$ of the grid $\G$, the point $\varphi(V_{ab})$ is already defined. Indeed, the point $V_{ab}$ is contained in the interior of exactly two principal segments $P_iP_j$ and $P_i'P_j'$, one horizontal and one vertical, and by construction the generalised segments $[\u Q_i\u Q_j]$ and $[\u Q_i'\u Q_j']$ have exactly one intersection point: this point must obviously be $\varphi(V_{ab})$. The definition of $\varphi$ is then easy: we let $\varphi$ be linear on each segment of $\G$ whose both endpoints, and no interior point, are contained in $\{P_i\}_{1\leq i \leq N} \cup \{V_{ab}\}_{1\leq a,\,b\leq K-1}$.

\step{III}{The validity of~(\ref{linearization}).}
To conclude, we only have to check the validity of the estimate~(\ref{linearization}). Let $\RR=\RR_{i,j}$ be a given square of the grid $\G$, and let $M=M_{i,j}$ be the corresponding given matrix. Notice that $\partial\RR$ is the union of finitely many segments on which $\varphi$ is linear, which are all principal segments except those having at least one vertex as endpoint. Let us consider separately the possible cases. First of all, let $P_iP_j\subseteq \partial\RR$ be a principal segment. We claim that
\begin{equation}\label{priseg}
\int_{P_iP_j} |\varphi'-M \tau|^p \leq 2 \int_{P_iP_j} |Dv-M|^p\,.
\end{equation}
Indeed, since $\varphi$ is linear on $P_iP_j$, this inequality would be given by Lemma~\ref{linearsmall}, with constant $1$ instead of $2$, if $\u Q_i=v(P_i)$ and $\u Q_j=v(P_j)$. But then, since $|\u Q_i-v(P_i)|<\eps$ and $|\u Q_j-v(P_j)|<\eps$, the inequality with constant $2$ follows as soon as $\eps$ is small enough: in fact, keep in mind that the principal segments are finitely many and the constant $\eps$ depends on them, and moreover by construction $\int_{P_iP_j} |Dv-M|^p>0$ even if $v(P_i)=v(P_j)$.\par

Let now $AB\subseteq\partial\RR$ be another of the segments on which $\varphi$ is linear, with the property that at least one between $A$ and $B$ is a vertex of $\RR$. Moreover, let us call  $\beta:AB\to\u\Q$ the linear function such that $\beta(A)=v(A)$ and $\beta(B)=v(B)$. Since for every two positive numbers $a,\,b$ one always has
\[
(a+b)^p \leq 2a^p + \frac 2{(\sqrt[p]2-1)^p}\, b^p\,,
\]
keeping in mind again Lemma~\ref{linearsmall} we have
\begin{equation}\label{bothcases}\begin{split}
\int_{AB} |\varphi'-M\tau|^p &\leq 2\int_{AB} |\beta'-M\tau|^p + \frac 2{(\sqrt[p]2-1)^p}\, \int_{AB} |\varphi'-\beta'|^p\\
&\leq 2\int_{AB} |Dv-M|^p + \frac 2{(\sqrt[p]2-1)^p}\, \int_{AB} |\varphi'-\beta'|^p\,.
\end{split}\end{equation}
We have now to distinguish two possible sub-cases, namely, whether $\H^1(AB)$ is larger or smaller than $\xi$.\par

Suppose first that $\H^1(AB)\leq \xi$. Since $\xi<1/K$, this ensures that exactly one between $A$ and $B$ is a vertex of $\G$, and the other one is a point of the form $P_i$ for some $1\leq i \leq N$. By definition of good arrival grid, this means that $|\beta(A)-\beta(B)|>\eta/2$; on the other hand, of course $|\varphi(A)-\varphi(B)|<2\sqrt 2 \eta$, and since both $\varphi$ and $\beta$ are linear in $AB$ this gives $|\varphi'|<6|\beta'|$, so again by Lemma~\ref{linearsmall}
\[
\int_{AB} |\varphi'-\beta'|^p < 7^p \int_{AB} |\beta'|^p \leq 7^p \int_{AB} |Dv|^p\,,
\]
which inserted in~(\ref{bothcases}) and keeping in mind~(\ref{shortxi}) gives
\begin{equation}\label{alsoboth}
\int_{AB} |\varphi'-M\tau|^p \leq 2\int_{AB} |Dv-M|^p + \frac \sigma{8K}\,.
\end{equation}
Let us now instead assume that $\H^1(AB)>\xi$. In this case, it might be possible  that both $A$ and $B$ are vertices of $\G$ (so $AB$ is a whole side of $\RR$), and that $v$ is constant on the whole segment $AB$, hence we cannot estimate $|\varphi'|<6|\beta'|$ as before. Nevertheless, $\varphi-\beta$ is a linear function on $AB$, and since the four points $\varphi(A),\,\varphi(B),\,\beta(A)$ and $\beta(B)$ lie in a same rectangle of the grid $\widetilde\G$, we have $|(\varphi(A)-\beta(A))-(\varphi(B)-\beta(B))|<6\eta$, hence
\[
\int_{AB} |\varphi'-\beta'|^p< \frac{(6\eta)^p}{\H^1(AB)^{p-1}} < \frac{(6\eta)^p}{\xi^{p-1}}\,,
\]
which inserted in~(\ref{bothcases}) and keeping in mind~(\ref{longxi}) gives again~(\ref{alsoboth}). Summarizing, $\partial\RR$ is a finite union of segments, and estimate~(\ref{priseg}) holds for all segments except those having at least a vertex of $\RR$ as endpoint, for which in turn whe weaker estimate~(\ref{alsoboth}) holds. Since the latter segments are at most $8$, adding over all the segments immediately yields~(\ref{linearization}).
\end{proof}

\subsection{Proof of Theorem~\mref{Caratt} (sufficiency)}\label{ss: suff}

This section is devoted to find a sequence of diffeomorphisms which strongly converge to a given Sobolev function satisfying the no-crossing condition. In fact, we will look for a sequence of finitely piecewise affine homeomorphisms; this is easier to achieve with our construction, and the two things are in fact equivalent. Indeed, as proved in~\cite{MCP}, given any finitely piecewise affine homeomorphism $v:\Q\to\u\Q$, coinciding with the identity on the boundary, there is a sequence of diffeomorphisms $v_j:\Q\to\u\Q$, also coinciding with the identity on the boundary, which converge to $v$ strongly in $W^{1,p}$ and uniformly.

\proofof{Theorem~\mref{Caratt}, sufficiency part}
Let $u:\Q\to\u\Q$ be a $W^{1,p}$ function, coinciding with the identity on $\partial\Q$ and satisfying the no-crossing condition, and let $\eps>0$ be given. To obtain the thesis, we will find a finitely piecewise affine homeomorphism $\omega:\Q\to\u\Q$, coinciding with the identity on $\partial\Q$, and such that
\begin{equation}\label{closeW1p}
\|\omega-u\|_{W^{1,p}(\Q)}< \eps\,;
\end{equation}
moreover, we will have to establish the ``improved convergence'' stated in the claim. The proof will be divided in several steps for the sake of clarity.

\step{I}{Definition of $\delta,\,\xi,\,\sigma$ and $K$ and subdivision in squares.}
Let $\delta=\delta(p,u,\eps)$ be a very small constant, to be specified later, and let $H_1,\,H_2$ and $H_3$ be the constants of Propositions~\ref{extgener}, \ref{extdet0} and~\ref{extdetsigma}. Since $u\in W^{1,p}$, we can select $\xi=\xi(p,u,\eps,\delta)\ll \delta$ such that for every set $E\subseteq\Q$ one has
\begin{equation}\label{defxi}
|E|<\xi \qquad \Longrightarrow \qquad \int_E |Du|^p < \frac{\delta^2}{4100 H_1}\,.
\end{equation}
Let now $\sigma=\sigma(p,u,\eps,\delta,\xi)\ll \xi$ be a constant such that, calling
\[
\A_\sigma=\big\{M\in \R^{2\times 2}:\, \hbox{either } |M|\in (0,\sigma)\cup (1/\sigma,+\infty),\, \hbox{or } \det M\in (0,\sigma)\cup (1/\sigma,+\infty)\big\}\,,
\]
one has
\begin{align}\label{xi2}
\big|\big\{x:\, Du(x)\in \A_\sigma\big\}\big|<\frac \xi 2\,, &&
615\cdot 2^p\sigma \,\frac{H_1+H_2} \delta< \delta\,.
\end{align}
Let now $K=K(p,u,\eps,\delta,\xi,\sigma)\gg 1/\sigma$ be a large integer, to be fixed in a moment. We will subdivide the $K^2$ squares of the grid $\G=\G(K)$ in four groups, as follows. A square $\RR_{i,j}$, with $1\leq i,\,j< K$, will be called a ``Lebesgue square adapted to $\sigma$'' if there is a matrix $M_{i,j}\notin \A_\sigma$ such that $\RR_{i,j}$ is a Lebesgue square with matrix $M_{i,j}$ and constant $\kappa=\sigma^{3p}$. All the other squares will be called ``bad squares'', and we let $\Q_1$ be their union, and set $M_{i,j}=0$ for any bad square $\RR_{i,j}$. Thanks to Lemma~\ref{moltiLeb} and to the first estimate in~(\ref{xi2}), up to take $K=K(p,u,\eps,\delta,\xi,\sigma)\gg 1$ big enough, we have
\begin{equation}\label{Q1small}
\big| \Q_1\big| < \xi\,.
\end{equation}

Let us now take a Lebesgue square $\RR_{i,j}$ adapted to $\sigma$, and let us consider the matrix $M_{i,j}$. If $M_{i,j}=0$, we will say that $\RR_{i,j}$ is a ``Lebesgue zero square'', if $M_{i,j}\neq 0$ but $\det M_{i,j}=0$ we will say that $\RR_{i,j}$ is a ``Lebesgue square with zero determinant'', and otherwise we will say that $\RR_{i,j}$ is a ``general Lebesgue square''. We will call $\Q_2$, $\Q_3$ and $\Q_4$ the union of all the Lebesgue zero squares, the Lebesgue squares with zero determinant, and the general Lebesgue squares respectively.

\step{II}{Definition of the functions $v:\Q\to\u\Q$ and $\varphi:\G\to\u\Q$.}
Let us now apply Lemma~\ref{startinggrid} to the function $u$, with the constants $\delta$ and $K$ and the matrices $M_{i,j}$ introduced in Step~I. 
We obtain then a $(1+4\delta)$-biLipschitz homeomorphism $\Phi:\Q\to\Q$, coinciding with the identity on $\partial\Q$, such that $\|\Phi-{\rm Id}\|_{L^\infty}< 2\delta/K$ and that~(\ref{tocon}) holds for every $1\leq i,\,j<K$. We let then $v=u\circ \Phi:\Q\to\u\Q$. Notice that $v$ is also a $W^{1,p}$ function coinciding with the identity on a neighbourhood of $\partial\Q$ and satisfying the no-crossing condition, and in addition $\G$ is a good starting grid for $v$. As a consequence, we can apply Proposition~\ref{injectivelinearization} to find an injective, piecewise linear function $\varphi:\G\to\u\Q$ satisfying~(\ref{linearization}). We will construct our function $\omega:\Q\to\u\Q$ satisfying~(\ref{closeW1p}) so that $\omega=\varphi$ on $\G$. Notice that, since $\varphi$ is piecewise linear, injective and coincides with the identity on $\partial\Q$, we can define the extension $\omega$ independently on every square of the grid $\G$: if each extension is finitely piecewise affine and injective, then the resulting $\omega$ is a finitely piecewise affine homeomorphism.

\step{III}{Extension on the ``bad squares'' $\RR_{i,j}\subseteq\Q_1$.}
Let us start by defining the extension $\omega$ on a given bad square $\RR=\RR_{i,j}\in\Q_1$. Since by construction $M_{i,j}=0$, the estimates~(\ref{linearization}) of Proposition~\ref{injectivelinearization} and~(\ref{tocon}) of Lemma~\ref{startinggrid} ensure that
\[\begin{split}
\int_{\partial\RR} |\varphi'|^p &\leq \frac \sigma K +2 \int_{\partial\RR} |Dv|^p
= \frac \sigma K + 2 \int_{\partial\RR} |D(u\circ \Phi)|^p\\
&\leq \frac \sigma K + 2(1+4\delta)^{p+1} \int_{\Phi(\partial\RR)} |Du|^p
\leq \frac \sigma K +\frac{410K} \delta \int_{\RR^+} |Du|^p\,.
\end{split}\]
By Proposition~\ref{extgener} we obtain then a finitely piecewise affine homeomorphism $\omega:\RR\to\R^2$, coinciding with $\varphi$ on $\partial\RR$, and such that
\[
\int_\RR |D\omega|^p \leq 410\,\frac{H_1} \delta \int_{\RR^+} |Du|^p+\sigma H_1 |\RR|\,.
\]
Keeping in mind~(\ref{Q1small}) and~(\ref{defxi}), we obtain then that
\[
\int_{\Q_1} |D\omega|^p \leq \frac 9{10}\,\delta + \sigma H_1 \xi\,,
\]
hence again by~(\ref{defxi}) and recalling that $\xi\ll\sigma\ll \delta$
\begin{equation}\label{est1}
\int_{\Q_1} |D\omega-Du|^p\leq 2^p \int_{\Q_1} |D\omega|^p + |Du|^p
\leq 2^p \delta\,.
\end{equation}

\step{IV}{Extension on the ``Lebesgue zero squares'' $\RR_{i,j}\subseteq\Q_2$.}
Let us now consider a Lebesgue zero square $\RR=\RR_{i,j}$. Since $M_{i,j}$ is again $0$, exactly as in Step~III we can define the extension $\omega$ by Proposition~\ref{extgener} and notice that
\begin{equation}\label{zerouno}
\int_\RR |D\omega|^p \leq 410\,\frac{H_1} \delta \int_{\RR^+} |Du|^p+\sigma H_1 |\RR|\,.
\end{equation}
Then, we cannot continue the argument of Step~III, because the area of $\Q_2$ needs not to be small. However, since $\RR$ is a Lebesgue square with matrix $M=0$ and constant $\sigma^{3p}$, by definition
\begin{equation}\label{zerodue}
\int_{\RR^+} |Du|^p \leq \sigma^{3p} |\RR|<\sigma |\RR|\,.
\end{equation}
Hence, by~(\ref{zerouno}) we obtain
\[
\int_\RR |D\omega|^p \leq \bigg(\frac{410}\delta+1\bigg)\, H_1 \sigma |\RR|\leq \frac{411}\delta\, H_1 \sigma |\RR|\,,
\]
which again by~(\ref{zerodue}) and keeping in mind the second estimate in~(\ref{xi2}) gives
\begin{equation}\label{est2}
\int_{\Q_2} |D\omega-Du|^p \leq 2^p \int_{\Q_2} |D\omega|^p + |Du|^p
\leq 2^p \sigma \,\bigg(411\,\frac{H_1} \delta\,+1\bigg)\, |\Q_2|\leq \delta\,.
\end{equation}

\step{V}{Extension on the ``Lebesgue squares with zero determinant'' $\RR_{i,j}\subseteq\Q_3$.}
Let now $\RR=\RR_{i,j}$ be a Lebesgue square with zero determinant, and let us call for brevity $M=M_{i,j}$ the corresponding matrix. First of all, since $\RR$ is a Lebesgue square with area $1/K^2$ and constant $\sigma^{3p}$, and $|M|>\sigma$, we have
\[
\|Du-M\|^p_{L^p(\RR)} < \sigma^{3p} |\RR| <\sigma^{2p} |M|^p |\RR| = (\sigma^2 \|M\|_{L^p(\RR)})^p\,,
\]
hence $\|Du\|_{L^p(\RR)}\geq \|M\|_{L^p(\RR)}(1-\sigma^2)$, thus
\begin{equation}\label{anchedopo}
\int_\RR |Du|^p > (1-\sigma^2)^p |M|^p |\RR| > (1-\sigma) |M|^p |\RR|\,.
\end{equation}
Now, by the estimates~(\ref{linearization}) of Proposition~\ref{injectivelinearization} and~(\ref{tocon}) of Lemma~\ref{startinggrid}, and recalling that for every $a,\,b\in\R$ it is $(a+\delta b)^p\leq a^p + 2p\delta(a^p+b^p)$, we have
\[\begin{split}
\int_{\partial \RR} |\varphi'-M\tau|^p &\leq \frac \sigma K+2 \int_{\partial \RR} |D v-M|^p
=\frac \sigma K+2 \int_{\partial \RR} |D (u\circ\Phi)-M|^p\\
&\leq \frac \sigma K+2 (1+4\delta)^{p+1} \int_{\Phi(\partial\RR)} \big(|Du-M| +5\delta |M|\big)^p\\
&\leq \frac \sigma K+3 \int_{\Phi(\partial\RR)} (1+10p\delta)|Du-M|^p + 10p\delta |M|^p\\
&\leq \frac \sigma K+\frac{121p\delta}K\, |M|^p + \frac{613K} \delta \int_{\RR^+} |Du-M|^p
\leq \frac{121p\delta}K\, |M|^p + \frac{614}{K\delta}\, \sigma\,,
\end{split}\]
where the last inequality comes recalling that $\RR$ is a Lebesgue square with matrix $M$ and constant $\sigma^{3p}<\sigma$. Since $M$ has zero determinant, we can apply Proposition~\ref{extdet0} to get a finitely piecewise affine homeomorphism $\omega$ on $\RR$, coinciding with $\varphi$ on $\partial\RR$, such that
\[
\int_\RR |D\omega -M|^p \leq \frac{H_2}{K^2}\, \bigg(121p\delta\, |M|^p + \frac{614}\delta\, \sigma\bigg)\,.
\]
So, from~(\ref{anchedopo}) we deduce
\[
\int_\RR |D\omega-Du|^p
\leq
2^p \bigg(\int_\RR |D\omega -M|^p + \int_\RR |Du-M|^p\bigg)
\leq 2^{p+7} p \delta H_2 \int_\RR |Du|^p +615\, \frac{2^p H_2}\delta\, \sigma |\RR|\,,
\]
and again by the second estimate in~(\ref{xi2}), adding over all the squares contained in $\Q_3$ we get
\begin{equation}\label{est3}
\int_{\Q_3} |D\omega-Du|^p \leq \delta\bigg( 1+ H_2 2^{p+7} p \int_{\Q_3} |Du|^p \bigg).
\end{equation}

\step{VI}{Extension on the ``general Lebesgue squares'' $\RR_{i,j}\subseteq\Q_4$.}
Let us finally consider a general Lebesgue square $\RR=\RR_{i,j}$, with corresponding matrix $M=M_{i,j}$. The calculation already done in Step~V, together with the fact that $614\sigma<\delta^2$ by~(\ref{xi2}), ensures
\[
\int_{\partial \RR} |\varphi'-M\tau|^p \leq \frac{121p\delta}K\, |M|^p + \frac{614}{K\delta}\, \sigma
<(121p\delta |M|^p + 1)\, \frac \delta K\,.
\]
As a consequence, by Proposition~\ref{extdetsigma} we get a finitely piecewise affine homeomorphism $\omega$ on $\RR$, coinciding with $\varphi$ on $\partial\RR$, such that also by~(\ref{anchedopo}) we have
\[\begin{split}
\int_\RR |D\omega - M|^p &\leq \frac{H_3}{K^{2-\frac 1p}} \bigg(\int_{\partial\RR} |\varphi' - M\tau|^p\bigg)^{\frac 1p}
\leq \frac{H_3}{K^2} \, (121p\delta |M|^p + 1) \delta^{1/p}\\
&\leq H_3 \delta^{1/p} |\RR| + 122 p \delta H_3 \int_\RR |Du|^p\,.
\end{split}\]
Arguing as usual, we get then
\[\begin{split}
\int_\RR |D\omega - Du|^p &\leq 
2^p \bigg(\int_\RR |D\omega-M|^p +\int_\RR |Du - M|^p\bigg)\\
&\leq 2^p |\RR| \big(H_3 \delta^{1/p}+ \sigma \big) + 2^{p+7} p \delta H_3 \int_\RR |Du|^p\,,
\end{split}\]
which adding on the squares contained in $\Q_4$ gives
\begin{equation}\label{est4}
\int_{\Q_4} |D\omega-Du|^p \leq \delta^{1/p} \bigg(2^{p+1} H_3 + 2^{p+7} p H_3 \int_{\Q_4} |Du|^p \bigg)\,.
\end{equation}

\step{VII}{Existence of the approximation.}
In Steps~III--VI we have defined the function $\omega$ on each of the finitely many squares $\RR_{i,j}$. By construction, the resulting function $\omega:\Q\to\u\Q$ is a finitely piecewise affine homeomorphism coinciding with the identity on $\partial\Q$. Moreover, by~(\ref{est1}), (\ref{est2}), (\ref{est3}) and~(\ref{est4}), and since $\omega-u\in W^{1,p}_0$, we get the validity of~(\ref{closeW1p}) as soon as $\delta$ has been chosen small enough, only depending on $p$, $u$ and $\eps$. The proof of the existence of a sequence strongly converging to $u$ is then concluded.

\step{VIII}{Proof of points (i) and (ii).} To conclude the sufficiency part of Theorem~\mref{Caratt}, we need to show the validity of points~(i) and~(ii) in the statement. To this end we first note that, on each square $\RR$ of the starting grid $\G$, the map \(\omega\) constructed in the above steps satisfies 
\[
\|u-\omega\|_{L^\infty (\R)}\le C\Big(\osc_{\mathcal R} u +\osc_{\partial \mathcal R} u+\eta \big)\,,
\]
where \(\eta\) is the size of the grid \(\widetilde \G \) and \(C\) is a purely geometric constant. Hence, to obtain the validity of points~(i) and~(ii), we need to show that given \(\eps\) small and setting
\[
\mathcal S=\bigcup_{\mathcal R: \osc_{\partial\RR} u+\osc_{\RR} u \geq 2\eps} \mathcal R\,,
\]
then, as soon as the starting and arrival grids $\G$ and $\widetilde\G$ are fine enough, we have \(\mathcal S=\emptyset \) if \(u\) is continuous, and \(\H^{2-p}_\infty(\mathcal S)\le \eps\) if \(p\in[1,2]\). 

Being the first part clear by uniform continuity, we will just prove the second one. For, note that by Corollary~\ref{cor:diff} \(u\in INV^+(\Q)\) and thus, by~\eqref{eq:osc2}, for every square $\RR$ of the starting grid $\G$ one has
\[
\big(\osc_{\mathcal R} u \big)^p \le \big(\osc_{\partial \mathcal R} u \big)^p \le C \ell(\mathcal R)^{p-2} \int_{\mathcal R^+} |Du|^p,
\]
where \(\mathcal R^+\) is the union of $\RR$ with the squares of $\G$ which are adjacent to it, and where $\ell(\RR)=1/K$ is the side of $\RR$. In particular, if $\osc_{\partial\RR} u +\osc_{\RR} u\geq 2\eps$, then $\osc_\RR u \geq \eps$ hence
\[
|\RR| = \ell(\RR)^2 \leq \frac{C\ell(\RR)^p}{\eps^p} \int_{\RR^+} |Du|^p=\frac{CK^{-p}}{\eps^p} \int_{\RR^+} |Du|^p\,,
\]
and then, up to multiply $C$ by $9$, we have
\begin{equation}\label{osc3}
|\mathcal S|\le \frac{ C K^{-p}}{\eps^p} \int_{\mathcal S^+} |Du|^p\,,
\end{equation}
being $\SS^+$ the union of all the squares which either belong to $\SS$, or are adjacent to a square of $\SS$. Since $Du\in L^p(\Q)$, as soon as $K$ is big enough depending on $\eps$ we have by the above estimate that $\SS^+$ is as small as we wish, hence by the absolute continuity of the integral we deduce
\[
\int_{\mathcal S^+} |Du|^p \le \frac{\eps^{p+1}}{C}\,,
\]
so by~\eqref{osc3} we get that
\[
\H^{2-p}_{\infty}(\mathcal S)\leq \sum_{\mathcal R: \osc_{\partial R} u+\osc_{ R} u \geq 2\eps} \ell(\mathcal R)^{2-p}
= |\SS| K^p \leq \frac{C}{\eps^p}\int_{\SS^+} |Du|^p\leq \eps\,,
\]
and this concludes the proof of the final claim in Theorem~\mref{Caratt} by suitably choosing \(\eps\). 
\end{proof}

\subsection{Proof of Theorem~\mref{Caratt} (necessity) and of Theorem~\mref{WeakStrong}}\label{sec:DimB}

In this Section we show that the no-crossing condition is necessary, for a function, to be approximable by diffeomorphisms, thus completing the proof of Theorem~\mref{Caratt} and of Theorem~\mref{WeakStrong}. 
\begin{lemma}\label{lem:nec}
Let $\{u_k\} \subseteq \mathcal D_{\rm id} (\Q)\cap W^{1,p}(\Q)$ be a sequence, weakly converging to \(u\) in \(W^{1,p}(\Q)\). Then \(u\) satisfies the no-crossing condition.
\end{lemma}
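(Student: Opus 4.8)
The plan is to take $\tilde u=u^*$, the precise representative of $u$ (extended to a genuine map $\Q\to\u\Q$ by choosing, at the $\H^1$-negligible set of points where the limit of averages fails to exist, an arbitrary element of $u_{\rm multi}$), and to verify the three conditions of Definition~\ref{defnocross}. Conditions (i) and (ii) are soft: since $u_k-{\rm Id}\in W^{1,p}_0(\Q)$ and $W^{1,p}_0$ is weakly closed, $u={\rm Id}$ on $\partial\Q$, which together with Corollary~\ref{cor:diff} gives $u\in INV^+(\Q)\subseteq INV(\Q)$ and condition (i); then, for $1\le p\le 2$, Lemma~\ref{lm:continuity} yields a set $\SS_u$ with $\H^{2-p}(\SS_u)=0$ --- hence $\H^1(\SS_u)=0$, since $2-p\le 1$ --- on whose complement $u^*$ is continuous, while for $p>2$ one has $\SS_u=\emptyset$ by Morrey's embedding; this is condition (ii) with $\SS_{\tilde u}=\SS_u$. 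I will also use the following consequence of the proof of Lemma~\ref{lm:continuity}: by a Dini-type argument $\osc_{B(P,\rho)}u\to 0$ as $\rho\to0$ uniformly for $P$ in any compact subset of $\Q\setminus\SS_u$; in particular, near such a set $u^*$ has no ``spikes''.

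For condition (iii), fix curves $\gamma_1,\dots,\gamma_N$ in $\Q\setminus\SS_u$ and $\Gamma=\bigcup_i\gamma_i([0,1])$ as in Definition~\ref{defnocross}(iii), and let $\eps>0$. The naive candidate $\psi_\eps=u_k|_\Gamma$ is automatically continuous, injective, and equal to the identity on $\partial\Q\cap\Gamma$ (each $u_k$ being a diffeomorphism equal to ${\rm Id}$ on $\partial\Q$), but weak $W^{1,p}$ convergence gives no control of $u_k$ on the null set $\Gamma$ --- the gradients $Du_k$ might concentrate there. To fix this I would construct, by a standard collar construction, a one-parameter family $\{T_t\}_{|t|<\rho_0}$ of bi-Lipschitz self-homeomorphisms of $\Q$, with $T_0={\rm Id}$, $T_t={\rm Id}$ near $\partial\Q$, $\|T_t-{\rm Id}\|_{L^\infty(\Q)}\to 0$ as $t\to0$, and such that, setting $\Gamma_t:=T_t(\Gamma)$, each $T_t\circ\gamma_i$ is a bi-Lipschitz arc and the leaves $\{\Gamma_t\}$ foliate a neighbourhood of the interior part of $\Gamma$: away from the crossing points one uses a Lipschitz collar of each $\gamma_i$, while near a crossing point the transversality hypothesis makes the two arcs a bi-Lipschitz copy of two coordinate axes, which a bi-Lipschitz shear pushes apart simultaneously; the arcs of $\Gamma$ lying on $\partial\Q$ are not moved, which is harmless since there $u_k=u^*={\rm Id}$.

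Now $u_k\to u$ strongly in $L^1(\Q)$ by Rellich, $\sup_k\int_\Q|Du_k|^p<\infty$, and for $p=1$ the family $\{|Du_k|\}$ is equi-integrable (Dunford--Pettis, as weak $W^{1,1}$-convergence is assumed to be genuine). Applying Fubini in the foliated coordinates and a diagonal extraction, for a.e.\ $t$ there is a subsequence along which $u_k\to u$ in $L^1(\Gamma_t)$, $\sup_k\int_{\Gamma_t}|Du_k|^p\,d\H^1<\infty$ and (for $p=1$) $\{|Du_k|\}$ is equi-integrable on $\Gamma_t$. Combining this with the uniform bound $|u_k|\le\sqrt2$ (each $u_k$ maps $\Q$ onto $\Q$), the one-dimensional Sobolev embedding when $p>1$, and an Ascoli-type compactness argument when $p=1$, a further subsequence converges uniformly on $\Gamma_t$ to a continuous representative $\hat g_t$ of $u|_{\Gamma_t}$; in particular $\hat g_t=u^*$ $\H^1$-a.e.\ on $\Gamma_t$.

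Finally I would fix such a good $t$ with $|t|$ small enough that $\sup_{P\in\Gamma}\osc_{B(P,\|T_t-{\rm Id}\|_{L^\infty})}u<\eps/3$, and set $\psi_\eps:=u_k\circ T_t|_\Gamma$ for $k$ in the good subsequence large enough that $\|u_k-\hat g_t\|_{C^0(\Gamma_t)}<\eps/3$. Then $\psi_\eps$ is continuous and injective (a composition of the injective maps $T_t$ and $u_k$) and equals the identity on $\partial\Q\cap\Gamma$; and for $P\in\Gamma$, writing $q=T_t(P)\in\Gamma_t$,
\[
|\psi_\eps(P)-u^*(P)|\le |u_k(q)-\hat g_t(q)|+|\hat g_t(q)-u^*(T_t^{-1}q)|<\frac\eps3+|\hat g_t(q)-(u^*\circ T_t^{-1})(q)|\,.
\]
The last two functions are continuous on $\Gamma_t$ ($\hat g_t$ by construction, $u^*\circ T_t^{-1}$ because $u^*|_\Gamma$ is continuous and $T_t^{-1}$ is a homeomorphism) and coincide $\H^1$-a.e.\ on $\Gamma_t$ with functions that differ by at most $\sup_{P'\in\Gamma}\osc_{B(P',\|T_t-{\rm Id}\|_{L^\infty})}u<\eps/3$ --- here one uses the no-spike property and that $q$ and $T_t^{-1}q$ lie within $\|T_t-{\rm Id}\|_{L^\infty}$, both outside $\SS_u$ for $\H^1$-a.e.\ $q$ --- hence they differ by at most $\eps/3$ everywhere. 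Thus $\|\psi_\eps-u^*\|_{L^\infty(\Gamma)}<\eps$, proving condition (iii). The main obstacle is precisely the one circumvented here: weak convergence says nothing about $u_k$ on the lower-dimensional set $\Gamma$, so one must perturb $\Gamma$ inside a genuine bi-Lipschitz foliation --- the transversality of the crossings being exactly what allows this near the crossing points --- pick a good leaf by Fubini, and transport the uniform estimate back to $\Gamma$, this last step relying on the $INV$-regularity of $u$ to rule out oscillation of $u^*$ near $\Gamma$.
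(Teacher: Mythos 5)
Your proposal is correct and follows essentially the same strategy as the paper's proof: since weak $W^{1,p}$-convergence gives no control of $u_k$ on the $\H^1$-null set $\Gamma$, one pushes $\Gamma$ through a family of nearby bi-Lipschitz deformations that fix $\partial\Q$, uses a Fubini-type selection (plus Dunford--Pettis when $p=1$ and the $1$-D Sobolev embedding) to find a leaf on which a subsequence of $u_k$ converges uniformly, and then transports the uniform estimate back to $\Gamma$ via the continuity of $u$ off $\SS_u$, defining $\psi_\eps$ as $u_k$ composed with the deformation. The only presentational difference is that the paper first reduces $\Gamma$ to piecewise linear curves and obtains $\Gamma_\delta$ by translating segments, whereas you build a collar foliation directly near the arcs and crossings; these are interchangeable.
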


\begin{proof}
We start noticing that by Corollary~\ref{cor:diff} and Lemma~\ref{lm:continuity} \(u\) is continuous outside a set \(\SS_u\) of zero \(\H^1\)-measure. Moreover, by extending each \(u_k\) as the identity outside \(\Q\) and by performing a small dilation, we can assume that all the maps \(u_k\) are equal to the identity on a neighbourhood of \(\partial \Q\).

Let us now assume that \(p=1\), the other case being analogous. By Dunford-Pettis Theorem there exists a superlinear function \(g\) such that
\[
\sup_k \int_\Q g(|Du_k|)<+\infty.
\]
Let \(\eps>0\) and be \(\Gamma\subseteq \Q\setminus\SS_u\) be as in the Definition~\ref{defnocross}. Note that, by pre-composing all the maps with a Bi-Lipischitz transformation, we can assume without loss of generality that \(\Gamma\) is composed by piecewise linear curves. In particular, by making small translations of all the segments belonging to \(\Gamma\) and by slightly extending them, we can find, for almost every small \(\delta\), a set \(\Gamma_\delta\) with the following properties,
\begin{itemize}
\item[-] \(\Gamma_\delta=\Phi_\delta(\Gamma)\) where \(\Phi_\delta\) is a bi-Lipschitz transformation equal to the identity on \(\partial \Q\) and such that \(\|\Phi_\delta-\id\|_{L^\infty}\le \delta\)\,;

\item[-] there exists a (not relabed) subsequence such that $u_k\deb u$ in $W^{1,1}(\Gamma_\delta)$.
\end{itemize}
By the continuity of \(u\) on $\Gamma$, we can first chose \(\delta\) sufficiently small such that
\[
\|u -u\circ \Phi_{\delta}\|_{L^\infty(\Gamma)} \le \eps/2
\]
and then, by the \(1\)-dimensional Sobolev embedding Theorem, \(\bar k\) sufficiently big such that
\[
\|u_{\bar {k}}-u\|_{L^\infty(\Gamma_{\delta})}\leq \eps/2.
\]
The map \(\psi=u_{\bar k} \circ \Phi_\delta:\Gamma\to \u \Q\) is then the injective modification of \(u\).
\end{proof}

\begin{proof}[Proof of Theorem~\mref{Caratt}]	
The sufficiency part has been showed in Section~\ref{ss: suff} while the necessary one follows immediately from Lemma~\ref{lem:nec}.
\end{proof}

\begin{proof}[Proof of Theorem~\mref{WeakStrong}]

We divide the proof in two short steps.
\step{I}{The set of maps satisfying the no-crossing condition is (sequentially) weakly closed.} Let \(\{u_j\}\) be a sequence of maps satisfying the no-crossing condition and such that \(u_j\deb u\). We claim that \(u\) satisfies the no-crossing condition as well. Indeed by Theorem~\mref{Caratt} for every \(j\) there is a diffeomorphism \(v_j\) such that \(\|v_j-u_j\|_{W^{1,p}}\le 2^{-j}\), hence \(v_j\deb u\) and by Lemma~\ref{lem:nec}, \(u\) satisfies the no-crossing condition.
\step{II}{Conclusion.}
By the previous step, if \(u\in \overline{\D_{\id}(\Omega)}^{\,{\rm ws}-W^{1,p}}\) then \(u\) satisfies the no-crossing condition. Hence, by Theorem~\mref{Caratt}, \(u\in \overline{\D_{\id}(\Omega)}^{\,{\rm s}-W^{1,p}}\), and this concludes the proof.
\end{proof}

\section{Proof of Theorem~\mref{NonSconn}\label{sec:C}}

This section is devoted to prove Theorem~\mref{NonSconn}. Through the section, $u$ will be a given $W^{1,p}$ function, coinciding with the identity on the boundary, such that the counter-image (in the sense of Definition~\ref{def:multifunction}) of any closed subset of $\u\Q$ which does not disconnect $\u\Q$ is a closed, connected subset of $\Q$ which does not disconnect it. In particular, by Lemma~\ref{ThCINV} we know that $u$ satisfies the INV$^+$ condition, thus by Lemma~\ref{lm:continuity} we find a $\H^1$-negligible set $\SS_u\subseteq\Q$ such that $u$ is continuous on $\Q\setminus\SS_u$. The proof is divided in some subsections for clarity.

\subsection{Preliminary geometrical properties}

Let us start by listing some simple geometrical properties of curves and connected sets.
\begin{lemma}\label{sard}
Let $\Gamma\subseteq{\rm Int}(\Q)$ be a closed, connected set which does not disconnect $\Q$ (that is, $\Q\setminus \Gamma$ is connected). Then, for every $\eps>0$ there exists a ${\rm C}^1$, closed, injective curve $\theta:\S^1\to {\rm Int}(\Q)$, such that $\Gamma$ is contained in the internal part of $\theta$, and for every $s\in\S^1$ one has ${\rm dist}(\theta(s),\Gamma)<\eps$.
\end{lemma}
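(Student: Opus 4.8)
The plan is to obtain $\theta$ as a connected component of a regular level set of a smooth function that is identically $1$ on $\Gamma$ and supported in a small neighbourhood of $\Gamma$; the hypothesis that $\Gamma$ does not disconnect $\Q$ enters precisely to ensure that such a neighbourhood can be taken arbitrarily close to $\Gamma$. First I would work in $\R^2$ (recall $\Gamma\comp\mathrm{Int}(\Q)$, and the ``internal part'' of a Jordan curve is its bounded complementary component) and record that $\R^2\setminus\Gamma$ is connected: a bounded component $V$ of $\R^2\setminus\Gamma$ has $\partial V\subseteq\Gamma\subseteq\mathrm{Int}(\Q)$, hence $\overline V\subseteq\mathrm{Int}(\Q)$, so $V$ would be a nonempty clopen proper subset of the connected set $\Q\setminus\Gamma$ — impossible. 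Writing $d(x)=\dist(x,\Gamma)$ and $N_r(\Gamma)=\{x:d(x)<r\}$, I then set $X_\rho:=\R^2\setminus A_\rho$, where $A_\rho$ is the unbounded component of $\{d>\rho\}$. Each $X_\rho$ is compact, connected, has connected complement $A_\rho$, and contains $\Gamma$ in its interior (since $\{d<\rho\}\subseteq X_\rho$); the family decreases as $\rho\downarrow0$, and $\bigcap_{\rho>0}X_\rho=\Gamma$, because any $y\notin\Gamma$ can be joined, inside the open connected set $\R^2\setminus\Gamma$, to a far-away point by a path lying at some positive distance $\delta$ from $\Gamma$, so that $y\in A_\rho$ whenever $\rho<\delta$. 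By nested compactness I fix $\rho>0$ with $X_\rho\subseteq N_{\eps/2}(\Gamma)\cap\mathrm{Int}(\Q)$.

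Next I would mollify. Let $\phi_{\rho'}$ be a standard mollifier supported in $B(0,\rho')$, with $\rho'\in(0,\rho)$ chosen so small that the $\rho'$-neighbourhood of $X_\rho$ still lies in $N_\eps(\Gamma)\cap\mathrm{Int}(\Q)$, and put $g:=\chi_{X_\rho}\ast\phi_{\rho'}$. Then $g\in C^\infty(\R^2)$, $0\le g\le 1$, $g\equiv1$ on $\Gamma$ (since $\dist(\Gamma,A_\rho)\ge\rho>\rho'$, the closed $\rho'$-ball about a point of $\Gamma$ is contained in $X_\rho$), and $\{g>0\}\subseteq N_\eps(\Gamma)\cap\mathrm{Int}(\Q)$. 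By Sard's theorem the critical values of $g$ are Lebesgue-negligible, so I may pick a regular value $c\in(0,1)$. Then
\[
L:=g^{-1}(c)
\]
is a compact $C^\infty$ one-dimensional submanifold of $\R^2$, i.e.\ a finite disjoint union of $C^\infty$ Jordan curves, and $L\subseteq\{g>0\}\subseteq N_\eps(\Gamma)\cap\mathrm{Int}(\Q)$, while $L\cap\Gamma=\emptyset$ because $g\equiv1$ on $\Gamma$.

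To conclude, fix any point $q$ with $g(q)=0$ (for instance far from $\Gamma$). A path contained in $\R^2\setminus L$ joining a point of $\Gamma$ to $q$ would carry $g$ continuously from the value $1$ to the value $0$ without ever attaining $c\in(0,1)$, which is absurd; hence $\Gamma$ and $q$ lie in different connected components of $\R^2\setminus L$. Now, by the elementary nesting structure of a finite family of pairwise disjoint planar Jordan curves, there is a single component $\theta$ of $L$ having $\Gamma$ in its internal part and $q$ in its external part: the collection of curves of $L$ containing a given point of $\Gamma$ in their interior does not depend on the chosen point of $\Gamma$ (by connectedness of $\Gamma$), and any curve in the symmetric difference of this collection with the analogous collection for $q$ separates $\Gamma$ from $q$, and then $\Gamma$ — being connected and disjoint from that curve — lies entirely in its interior. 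Parametrising $\theta$ as a $C^1$ injective closed curve $\theta:\S^1\to\mathrm{Int}(\Q)$ finishes the proof: $\Gamma$ is contained in the internal part of $\theta$, and $\theta(s)\in L\subseteq N_\eps(\Gamma)$ for every $s$, so $\dist(\theta(s),\Gamma)<\eps$.

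The step I expect to require the most care is this last one — extracting a single separating Jordan curve from the finite family $L$ — which is purely planar topology but should be spelled out by organising the curves according to inclusion of their Jordan interiors (and checking that two points of the complement are in the same component iff they are enclosed by the same subfamily). The only other point where a short argument is concealed is the identity $\bigcap_{\rho>0}X_\rho=\Gamma$, which is exactly where the assumption that $\Gamma$ does not disconnect $\Q$ is used; all the remaining estimates are elementary neighbourhood bookkeeping.
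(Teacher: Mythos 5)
Your proof is correct, and it takes a genuinely different route from the paper's even though both rest on Sard's theorem. The paper works with a \emph{regularised distance function} $\varphi$ from $\Gamma$ (a smooth function with $\dist(\cdot,\Gamma)\le\varphi<2\dist(\cdot,\Gamma)$), picks a small regular value $\eps'$, and observes that one of the finitely many $C^1$ level curves of $\{\varphi=\eps'\}$ must enclose $\Gamma$ since the level set separates $\Gamma$ from $\partial\Q$; the $\eps$-closeness of the chosen curve to $\Gamma$ is then automatic from $\dist(\cdot,\Gamma)\le\varphi=\eps'$. You instead build a smooth \emph{plateau} function by mollifying the indicator of a compactified tubular neighbourhood $X_\rho$, and you need the extra ``fill in the holes'' step and the nested-compactness identity $\bigcap_\rho X_\rho=\Gamma$ — which is exactly where you invoke that $\Gamma$ does not disconnect $\Q$ — in order to force the transition region (hence the level set) into $N_\eps(\Gamma)$. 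This makes the non-disconnecting hypothesis do genuine work in your argument, whereas the paper's route, reading off $\eps$-closeness from the distance function itself, does not appear to use that hypothesis at all. On the other hand you are considerably more explicit about the final planar-topology step of extracting a single separating Jordan curve from the regular level set — the paper simply asserts that ``one of those curves must contain $\Gamma$ in its internal part'', while you organise the curves by nesting and argue via the symmetric difference of the enclosing families. One small point there: the assertion that any curve in the symmetric difference has $\Gamma$ in its interior (rather than $q$) is correct only because $q$ is enclosed by no curve of $L$; so ``$q$ far from $\Gamma$'' should be promoted from a parenthetical suggestion to an essential part of the argument, namely that $q$ lies in the unbounded component of $\R^2\setminus L$.
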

\begin{proof}
Let $\varphi:\Q\to\R^+$ be a regularised distance function from $\Gamma$, that is, $\varphi$ is a regular function and for every $x\in\Q$ one has
\[
{\rm dist}(x,\Gamma) \leq \varphi(x) < 2 {\rm dist}(x,\Gamma)\,.
\]
By Sard's Lemma, there is some $0<\eps'\leq \eps$ such that $\varphi>\eps'$ on $\partial\Q$ and $\varphi^{-1}(\eps')$ is the union of finitely many ${\rm C}^1$ curves. Since the level set $\{\varphi=\eps'\}$ disconnects $\Gamma$ and $\partial\Q$, one of those curves must contain $\Gamma$ in its internal part. 
\end{proof}

\begin{lemma}\label{1or2inter}
Let $\C\subseteq {\rm Int}(\Q)$ be a closed set which does not disconnect $\Q$, and let $\gamma:[0,1]\to{\rm Int}(\Q)$ be an admissible curve in the sense of Definition~\ref{admcur}. If $\gamma$ intersects $\C$ exactly once, then $\C\cup \gamma$ does not disconnect $\Q$; if $\gamma$ intersects $\C$ exactly twice, then $\Q\setminus(\C\cup\gamma)$ has at most two connected components, in particular surely two if $\C$ is connected.
\end{lemma}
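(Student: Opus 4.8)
The plan is to treat the two assertions in turn, passing in both to the sphere $\S^2\supseteq\Q$. Note first that $\C$, being closed and contained in the bounded open set ${\rm Int}(\Q)$, is compact; and for any compact $K\subseteq{\rm Int}(\Q)$ the components of $\Q\setminus K$ are in bijection with those of $\S^2\setminus K$, since $\S^2\setminus{\rm Int}(\Q)$ is a closed disc glued to the complement along the connected set $\partial\Q$, which lies in a single component of $\Q\setminus K$. Thus ``does not disconnect'' and ``has at most two components'' are equivalent for $\Q$ and for $\S^2$, and we work in $\S^2$. For the first assertion, where $\gamma$ meets $\C$ only at a point $P$, I would invoke Janiszewski's theorem: $\C$ and $\gamma([0,1])$ are closed subsets of $\S^2$ whose intersection $\{P\}$ is connected, $\C$ separates no pair of points of $\S^2\setminus\C$ (it does not disconnect $\S^2$), and an arc separates no pair of points either; hence $\C\cup\gamma$ separates no pair of points, i.e.\ does not disconnect $\S^2$. (If one prefers to avoid quoting Janiszewski, the same follows by hand: any auxiliary path in $\Q\setminus\C$ between two points of $\Q\setminus(\C\cup\gamma)$ can be pushed off $\gamma$ by rerouting each crossing around a free endpoint of $\gamma$, using tubular neighbourhoods of the compact sub-arcs of $\gamma$ avoiding a fixed neighbourhood of $P$, which stay at positive distance from $\C$.)

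For the second assertion, write $P=\gamma(t_1)$, $R=\gamma(t_2)$ with $t_1<t_2$, and split $\gamma$ at these parameters into $\gamma_0\cup\gamma_1\cup\gamma_2$. Applying the first assertion twice — to $\C$ and $\gamma_0$, then to $\C\cup\gamma_0$ and $\gamma_2$ (these pairs meet only at $P$, resp.\ only at $R$, by injectivity of $\gamma$) — we get that $\C_1:=\C\cup\gamma_0\cup\gamma_2$ is compact, does not disconnect $\Q$, is connected whenever $\C$ is, and meets $\gamma_1$ exactly at the two endpoints $P,R$ of $\gamma_1$. So the statement reduces to the following ``chord lemma'': if $\C$ is compact and does not disconnect $\Q$, and $\gamma$ is an admissible curve (Definition~\ref{admcur}) with $\gamma([0,1])\cap\C=\{\gamma(0),\gamma(1)\}$, then $\Q\setminus(\C\cup\gamma)$ has at most two components, and exactly two if $\C$ is connected.

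For the ``at most two'' half I would work with the open connected set $U:=\Q\setminus\C$ and the arc $\alpha:=\gamma((0,1))$, which by the hypothesis $\gamma([0,1])\cap\C=\{\gamma(0),\gamma(1)\}$ is closed in $U$, hence properly embedded there, and so admits a product neighbourhood $N\cong\mathbb{R}\times(-1,1)$ with $\alpha=\mathbb{R}\times\{0\}$. The Mayer--Vietoris sequence for the open cover $U=N\cup(U\setminus\alpha)$, whose overlap $N\setminus\alpha$ has two components, yields $\widetilde H_0(U\setminus\alpha)\cong\operatorname{coker}\!\big(H_1(U)\to\widetilde H_0(N\setminus\alpha)\cong\mathbb{Z}\big)$; since the reduced zeroth homology of any space is free abelian, this cokernel is $0$ or $\mathbb{Z}$, i.e.\ $\Q\setminus(\C\cup\gamma)=U\setminus\alpha$ has one or two components. (A purely hands-on variant: declare $x\sim y$ if some path in $U$ joining them crosses $\alpha$ transversally an even number of times; this is an equivalence relation with at most two classes, and a finger-move along compact sub-arcs of $\alpha$ shows each class is a single component of $U\setminus\alpha$.)

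For the ``exactly two'' half, assume $\C$ connected and suppose, for contradiction, that $\Q\setminus(\C\cup\gamma)$ is connected. Choose an interior $C^1$ point $M=\gamma(s^*)$ of $\gamma$ (so $M\notin\C$) and a short segment $\sigma$ through $M$, transversal to $\gamma$, with $\sigma\cap\gamma=\{M\}$ and $\sigma\subseteq\Q\setminus\C$; its two endpoints $A,B$ lie on opposite sides of $\gamma$ and in $\Q\setminus(\C\cup\gamma)$. Joining $A$ to $B$ by a path $\beta$ inside the (assumed connected, locally connected) set $\Q\setminus(\C\cup\gamma)$, the closed curve $\ell:=\beta*\overline{BA}$ lies in $\Q\setminus\C$ and meets $\gamma$ transversally exactly once, at $M$. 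Hence the winding number $w(\ell,\cdot)$ is constant on $\gamma([0,s^*))$ and on $\gamma((s^*,1])$ and jumps by $\pm1$ across $s^*$, so $w(\ell,P)\neq w(\ell,R)$; but $\C$ is connected and disjoint from $\ell$, so $w(\ell,\cdot)$ is constant on $\C\ni P,R$, a contradiction. Together with the previous step this gives exactly two components. The part I expect to demand the most care is the point-set bookkeeping — compactness of $\C$, the $\Q\leftrightarrow\S^2$ dictionary, closedness of $\alpha$ in $U$ and the existence of its product neighbourhood, and (if the first assertion is proved by hand) the rerouting around free endpoints — rather than the three conceptual ingredients (Janiszewski, Mayer--Vietoris with a product neighbourhood, and the winding-number argument).
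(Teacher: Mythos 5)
Your proposal is correct, but it follows a genuinely different route from the paper's. The paper proves everything ``by hand'': it approximates $\Q\setminus(\C\cup\gamma)$ by the sets $\A_\delta$ of points connectible to $\partial\Q$ at distance $\geq\delta$ from $\C$, uses Lemma~\ref{sard} to produce a regular level curve $\theta$ hugging $\gamma$, extracts the subpaths $\theta_1,\theta_2$, and performs explicit path surgeries to show that every point of $\A_\delta$ reaches $\partial\Q$, or one of $\theta_1,\theta_2$, inside the complement; the ``exactly two'' part is obtained by exhibiting a closed curve separating $\gamma(0)$ from $\gamma(1)$ and disjoint from $\C$. You instead compactify to $\S^2$ and import three classical tools: Janiszewski's theorem for the one-intersection case, a Mayer--Vietoris computation on a product neighbourhood of the properly embedded open arc for ``at most two'', and a winding-number parity argument (essentially the same mechanism as the paper's final step, phrased via degree theory) for ``exactly two''. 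Your reduction of the two-intersection case to the ``chord lemma'' by absorbing $\gamma_0$ and $\gamma_2$ into $\C$ via two applications of the first assertion is clean and matches in spirit the paper's own reduction of the interior-intersection case to the endpoint case. What your approach buys is brevity and conceptual clarity at the price of external inputs; what the paper's buys is self-containedness. Three points in your write-up deserve the care you already anticipate: (i) Janiszewski must be applied on $\S^2$ (where both closed sets are compact), which your dictionary between components in $\Q$ and in $\S^2$ handles; (ii) the existence of the product neighbourhood $N$ of $\alpha$ rests on the tameness of the properly embedded arc, which holds here because admissible curves are piecewise ${\rm C}^1$ and injective, but is not free for merely topological arcs; (iii) for the transversal segment $\sigma$ you should choose $s^*$ at a ${\rm C}^1$ point where the one-sided derivative of $\gamma$ is non-zero (such points exist since $\gamma$ is injective, so $\gamma'$ cannot vanish on a whole subinterval of a ${\rm C}^1$ piece), and take $\sigma$ shorter than the distance from $M$ to the rest of $\gamma$ so that $\sigma\cap\gamma=\{M\}$. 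With these points spelled out, your argument is complete.
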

\begin{proof}
For any number $\delta>0$, much smaller than the diameter of $\gamma$ and the distance between $\gamma$ and $\partial\Q$, we call $\A_\delta$ the set of the points $x\in\Q\setminus (\C\cup\gamma)$ with distance larger than $\delta$ from $\C\cup\gamma$, and which can be connected with $\partial\Q$ with a path having distance at least $\delta$ from $\C$. Notice that, since $\C$ does not disconnect $\Q$, the sets $\A_\delta$ fill the whole $\Q\setminus (\C\cup\gamma)$ when $\delta\searrow 0$.\par

Let us now assume that $\gamma$ intersects $\C$ only once, and precisely at $\gamma(0)$. For every $\delta\ll {\rm diam}(\gamma)$ we can find $\eta\ll \delta$ such that, if for some $\bar t\in (0,1)$ one has $|\gamma(0)-\gamma(\bar t)|=\eta$, then for all $0<t<\bar t$ one has $|\gamma(0)-\gamma(t)|<\delta$. Applying now Lemma~\ref{sard} with $\eps\ll \eta$ to the image of $\gamma$, which is a closed, connected set which does not disconnect $\Q$, we find a ${\rm C}^1$, closed, injective curve $\theta$, containing $\gamma$ in its internal part, and so that ${\rm dist}(\theta(s),\gamma)<\eps$ for every $s\in\S^1$. If $\eps$ is small enough, depending on $\C$ and $\eta$, then the intersection between $\theta$ and $\C$ contains only points which have distance strictly less than $\eta$ from $\gamma\cap \C=\gamma(0)$. There is a unique subpath $\theta_1$ of $\theta$, with ${\rm diam}(\theta_1)\approx {\rm diam}(\theta)$, whose both endpoints have distance $\eta$ from $\gamma(0)$, and whose internal points have all distance strictly larger than $\eta$ from $\gamma(0)$; by construction, $\theta_1\cap (\C\cup\gamma)=\emptyset$. We can extend $\theta_1$ to an injective, closed curve $\tilde\theta$, in such a way that $\tilde\theta\setminus\theta_1$ is an arc of the circle $\{z\in\Q:\, |z-\gamma(0)|=\eta\}$; it is possible to do this in two distinct ways, we just pick one of them. Notice that $\gamma$ is not necessarily contained in the internal part of $\tilde\theta$; nevertheless, let us call $\bar t\in[0,1]$ the smallest number such that the restriction of $\gamma$ to $[\bar t,1]$ is contained in the internal part of $\tilde\theta$. Since $\gamma$ does not intersect $\theta_1$, then either $\bar t=0$ or $\gamma(\bar t)$ has distance exactly $\eta$ from $\gamma(0)$: in both cases, the curve $\gamma$ in $[0,\bar t]$ only contains points within distance $\eta\ll\delta$ from $\gamma(0)$.\par

Let now $x\in\A_\delta$, and let $\sigma$ be a path connecting $x$ with $\partial\Q$ having always distance at least $\delta$ from $\C$. If $\sigma\cap \tilde\theta=\emptyset$, then $\sigma$ is entirely contained in the external part of $\tilde\theta$ (we can exclude that $\sigma$ is entirely in the internal part because $x$ has distance larger than $\delta$ from $\gamma$). As a consequence, $\sigma$ does not contain any point of the restriction of $\gamma$ to $[\bar t,1]$; on the other hand, $\sigma$ does also not contain points with distance less than $\delta$ from $\gamma(0)\in \C$, hence it cannot intersect the restriction of $\gamma$ to $[0,\bar t]$. As a consequence, the whole path $\sigma$ is contained in $\Q\setminus (\C\cup\gamma)$, so $x$ is in the same connected component of $\Q\setminus (\C\cup\gamma)$ as $\partial\Q$. Assume instead that $\sigma\cap\tilde\theta\neq \emptyset$, and let $x^+$ and $y^-$ be the first and the last point of $\sigma$ which intersect $\tilde\theta$; arguing as before, the curve $\sigma$ cannot intersect $\gamma$ between $x$ and $x^+$, nor between $y^+$ and the last point of $\sigma$, and moreover $x^+$ and $y^-$ belong to $\theta_1$; indeed, points of $\sigma$ have distance at least $\delta\gg \eta$ from $\C$, while points of $\tilde\theta\setminus\theta_1$ have distance $\eta$ from $\gamma(0)\in\C$. Then, we can find a path between $x$ and $\partial\Q$ putting together $\sigma$ from $x$ to $x^+$, then $\theta_1$ between $x^+$ and $y^-$, and finally again $\sigma$ between $y^-$ and $\partial\Q$. By construction, this path does not intersect $\C\cup\gamma$, so again $x$ is in the same connected component of $\Q\setminus (\C\cup \gamma)$ as $\partial\Q$. Summarizing, $\A_\delta$ belongs to a connected component of $\Q\setminus (\C\cup\gamma)$, and since this holds for every $\delta\ll 1$ we deduce that $\C\cup\gamma$ does not disconnect $\Q$. We have then obtained the thesis in the case that $\gamma$ intersect $\C$ only at $\gamma(0)$, and of course the same argument works if the only intersection point is at $\gamma(1)$.\par

Assume now that $\gamma$ intersects $\C$ only once, and precisely at $\gamma(t)$ for some $0<t<1$. Then, we can write $\gamma=\gamma_1\cup\gamma_2$, where $\gamma_1$ and $\gamma_2$ are the restrictions of $\gamma$ to $[0,t]$ and $[t,1]$ respectively. The above argument applied to $\C$ and $\gamma_1$ ensures that $\C\cup \gamma_1$ is a closed set which does not disconnect $\Q$. And then, the above argument again applied to $\C\cup \gamma_1$ and $\gamma_2$ ensures that also $(\C\cup\gamma_1)\cup \gamma_2=\C\cup \gamma$ does not disconnect $\Q$.\par

Let us now pass to consider the case that $\gamma$ intersects $\C$ exactly twice; the very same argument of the last sentence ensures that it is not restrictive to assume that the intersection points are $\gamma(0)$ and $\gamma(1)$.\par

The argument is similar as before: for any $\delta$ much smaller than the diameter of $\gamma$ and the distance between $\gamma$ and $\partial\Q$ we select $\eta\ll \delta$ such that if for some $\bar t\in (0,1)$ one has $|\gamma(0)-\gamma(\bar t)|=\eta$ (resp., $|\gamma(1)-\gamma(\bar t)|=\eta$), then for all $0<t<\bar t$ (resp., all $\bar t < t < 1$) one has $|\gamma(0)-\gamma(t)|<\delta$ (resp., $|\gamma(1)-\gamma(t)|<\delta$). Let then again $\theta$ be given by Lemma~\ref{sard}, with $\eps\ll \eta$ in such a way that $\theta\cap \C$ only contains points with distance strictly less than $\eta$ from $\gamma(0)$ or $\gamma(1)$. This time, we find two distinct subpaths $\theta_1$ and $\theta_2$ of $\theta$, each of them with the property that one endpoint has distance $\eta$ from $\gamma(0)$, the other one has distance $\eta$ from $\gamma(1)$, and all the interior points (that is, all the points which are not endpoints) have distance strictly larger than $\eta$ from both $\gamma(0)$ and $\gamma(1)$. Again, by construction $\theta_1$ and $\theta_2$ do not intersect $\C\cup\gamma$. We define now a closed, injective curve $\tilde\theta$ by putting together $\theta_1$, $\theta_2$ and two arcs of circle with radius $\eta$, one with center at $\gamma(0)$ and the other at $\gamma(1)$. Up to decrease $\delta$ if necessary, we can assume the existence of a path between $\gamma(1/2)$ and $\partial\Q$ having distance larger than $\delta$ from $\C$, so in particular from $\gamma(0)\cup\gamma(1)$; since $\gamma(1/2)$ is in the internal part of $\tilde\theta$, there exists a last point of this path in $\tilde\theta$, and we assume without loss of generality that this point belongs to $\theta_1$. In other words, all the points of $\theta_1$ are in the same connected component as $\partial\Q$ inside $\Q\setminus (\C\cup\gamma)$. Let us now take any point $x\in\A_\delta$, and let us consider a path $\sigma$ between $x$ and $\partial\Q$ having distance larger than $\delta$ from $\C$. If $\sigma$ does not intersect $\tilde\theta$, then exactly as before we deduce that it also does not intersect $\gamma$, so $x$ is in the same connected component of $\Q\setminus(\C\cup\gamma)$ as $\partial\Q$, hence also the same as $\theta_1$. Otherwise, let $x^+$ be the first point of $\sigma$ which intersects $\tilde\gamma$: as before, we know that $x^+$ belongs either to $\theta_1$ or to $\theta_2$. Summarizing, every point of $\A_\delta$ is connected, in $\Q\setminus(\C\cup\gamma)$, either with $\theta_1$ or with $\theta_2$; then, every $\A_\delta$ intersects at most two connected components of $\Q\setminus(\C\cup\gamma)$, hence $\Q\setminus(\C\cup\gamma)$ has at most two connected components.\par

To conclude, we assume that $\C\cap\gamma=\gamma(0)\cup \gamma(1)$ and that $\Q\setminus (\C\cup\gamma)$ is connected, and we have to prove that $\C$ is not connected. Let the paths $\theta_1$ and $\theta_2$ be as before, and let $x\in\theta_1$ and $y\in\theta_2$ be two points very close to $\gamma(1/2)$. Since $\Q\setminus (\C\cup\gamma)$ is connected, there is a path $\sigma$ between $x$ and $y$ which does not intersect $\gamma\cup \C$, and we can easily assume that this path does not intersect the open segment $xy$. As a consequence, the union between $\sigma$ and the segment $xy$ is an injective, closed curve, which does not intersect $\C$; by construction, one between $\gamma(0)$ and $\gamma(1)$ is in the internal part of this curve, and the other one is in the external part, hence $\C$ is not connected and the proof is concluded.
\end{proof}

\begin{lemma}\label{opposite}
Let $\gamma:[0,1]\to\Q\setminus\SS_u$ be an admissible curve with both endpoints in $\partial\Q$, let $\u P\in{\rm Int}(\u\Q)$ and assume that $(u\circ\gamma)^{-1}(\u P)$ consists of exactly two numbers $t_1$ and $t_2$ and that $v_1=D(u\circ\gamma)(t_1)$ and $v_2=D(u\circ\gamma)(t_2)$ are both well-defined and non-zero. Then, the two vectors $v_1$ and $v_2$ are parallel and with opposite directions.
\end{lemma}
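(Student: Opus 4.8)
The plan is to close $\gamma$ into a Jordan curve and to recover the relative direction of $v_1$ and $v_2$ from the winding number of the image curve around points near $\u P$, which by Lemma~\ref{ThCINV} can only equal $0$ or $1$. Concretely, I would join $\gamma(0)$ and $\gamma(1)$ by an arc running in $\R^2\setminus{\rm Int}(\Q)$, thus obtaining an injective, piecewise $C^1$, counterclockwise closed curve $\bar\gamma$ agreeing with $\gamma$ except in the exterior of $\Q$, where $u={\rm id}$ (completing $\gamma$ through the exterior rather than along $\partial\Q$ keeps $\bar\gamma$ injective even if $\gamma$ touches $\partial\Q$ away from its endpoints). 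Since $\gamma$ takes values in $\Q\setminus\SS_u$ and $u\circ\gamma\in W^{1,p}$, so that $u^*\circ\gamma$ is continuous, while $u={\rm id}$ on the added arcs, the map $u^*$ is continuous along $\bar\gamma$; hence, after extending $u$ by the identity outside $\Q$ as in the proof of Lemma~\ref{ThCINV}, that lemma yields $\deg(\u Q,u^*\circ\bar\gamma)\in\{0,1\}$ for every $\u Q\notin u^*(\bar\gamma)$.

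Next I would carry out the local analysis at $\u P$. Since $(u\circ\gamma)^{-1}(\u P)=\{t_1,t_2\}$ and $u^*(\bar\gamma)$ meets a neighbourhood of $\u P$ only along $\gamma$, for $r$ small the set $u^*(\bar\gamma)\cap B(\u P,r)$ is a union of two ``strands'' $L_1,L_2$, where $L_i$ is the $u^*$-image of a short parameter interval around $t_i$. The first order expansion $u^*(\gamma(t_i+s))-\u P=sv_i+o(s)$ shows that, for $r$ small, $L_i$ lies inside an arbitrarily thin double cone around the line $\R v_i$, passes through $\u P$, and is traversed by $\bar\gamma$ through $\u P$ with a velocity pointing along $v_i$ in a sense ($+v_i$ or $-v_i$) that is the same for $L_1$ and $L_2$, as $\bar\gamma$ runs along all of $\gamma$ in one and the same sense. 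Consequently, for every unit vector $w$ outside the at most four directions $\pm v_1/|v_1|,\pm v_2/|v_2|$ the point $\u P+\rho w$ avoids $u^*(\bar\gamma)$ for all small $\rho>0$, so $d(w):=\lim_{\rho\to0^+}\deg(\u P+\rho w,u^*\circ\bar\gamma)\in\{0,1\}$ is well defined and locally constant in $w$ away from those directions. By the elementary fact that the winding number changes by exactly $\pm1$ across a transversal crossing, there is a sign $\sigma\in\{1,-1\}$, depending only on the orientation of $\bar\gamma$, such that the jump of $d$ contributed by the strand $L_i$ as $w$ sweeps counterclockwise equals $+\sigma$ at $v_i/|v_i|$ and $-\sigma$ at $-v_i/|v_i|$, for $i=1,2$ alike.

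The conclusion is then a short bookkeeping check. If $v_1$ and $v_2$ were parallel and equally oriented, the four directions above would collapse to $\pm v_1/|v_1|$ and $d$ would jump by $\sigma+\sigma=\pm2$ across $v_1/|v_1|$, impossible for a $\{0,1\}$-valued function. If $v_1$ and $v_2$ were not parallel, the four distinct directions would occur in the cyclic order $v_1/|v_1|,v_2/|v_2|,-v_1/|v_1|,-v_2/|v_2|$ (after relabelling $v_2$ as $-v_2$ if necessary), with successive jumps $\sigma,\sigma,-\sigma,-\sigma$ (respectively $\sigma,-\sigma,-\sigma,\sigma$); in either case $d$ would run through three consecutive integers, again contradicting $d(w)\in\{0,1\}$. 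Hence $v_1$ and $v_2$ must be parallel with opposite orientations, as claimed.

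The main obstacle is precisely the local analysis at $\u P$: since $u\circ\gamma$ is only assumed differentiable at $t_1$ and $t_2$, with no control in between, the strands $L_i$ need not be embedded arcs, and some care is needed to deduce, \emph{solely} from the first order data $v_i$, both that each strand remains confined to a thin cone around $\R v_i$ and that the winding-number jumps across $\pm v_i/|v_i|$ take the stated form; everything else (closing up $\gamma$, the degree bound from Lemma~\ref{ThCINV}, and the bookkeeping above) is routine.
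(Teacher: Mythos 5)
Your proof is correct and follows essentially the same route as the paper's: close $\gamma$ to an injective loop on which $u$ is continuous, invoke Lemma~\ref{ThCINV} to force the degree into $\{0,1\}$, and rule out the non-parallel and parallel-equal-orientation cases by a local crossing count near $\u P$ using the first-order expansions at $t_1,t_2$. The only differences are cosmetic (closing the curve through the exterior rather than along $\partial\Q$, and sweeping the test direction around the circle instead of choosing a single transversal segment crossed twice), and the delicate point you flag at the end is present, and treated no more carefully, in the paper's own argument.
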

\begin{proof}
Since $u={\rm Id}$ in a neighborhood of $\partial\Q$, we can assume without loss of generality that $\gamma$ intersects $\partial\Q$ only at $\gamma(0)$ and $\gamma(1)$, and we can extend $\gamma$ to an injective, closed curve $\tilde\gamma$ with $\tilde\gamma\setminus\gamma\subseteq \partial\Q$. Since $u$ is continuous also on $\tilde\gamma\subseteq\Q\setminus\SS_u$, by Lemma~\ref{ThCINV} we know that $u$ satisfies the INV condition and, in particular, that the points in $\u\Q\setminus u(\tilde\gamma)$ have all degree $0$ or $1$ with respect to $u(\tilde\gamma)$. Since $u^{-1}(\u P)\cap\gamma=\{t_1,\, t_2\}$, $u\circ\gamma(t)$ is close to $\u P$ only if $t$ is close to $t_1$ or $t_2$; as a consequence, the points near $\u P$ which belong to the image of $u\circ\gamma$ must have direction with respect to $\u P$ very close to $v_1$ or $v_2$. Call $\omega_1$ and $\omega_2$ the unit vectors obtained by rotating clockwise of $90^\circ$ the directions of $v_1$ and $v_2$. Assume first that $v_1$ and $v_2$ are not parallel, hence so are neiter $\omega_1$ and $\omega_2$: then, there is a short segment $\sigma$ near $\u P$, with direction $\nu$, which intersects $u\circ\gamma$ exactly twice, with $\nu\cdot \omega_1>0$, $\nu\cdot \omega_2>0$. As a consequence, the degree of the last point of $\sigma$ equals the degree of the first one plus two, hence the two degrees cannot both be in $\{0,\, 1\}$ and we have a contradiction. Assume instead that $v_1$ and $v_2$ are parallel: then, we can find again a segment $\sigma$ near $\u P$, with direction $\nu$, which intersects $u\circ\gamma$ exactly twice, and we can assume that $\nu\cdot \omega_1>0$. If the direction of $v_2$ is the same as that of $v_1$, then also $\nu\cdot \omega_2>0$ and we have a contradiction as before.
\end{proof}

\begin{lemma}\label{inclsegm}
Let $\u P\in {\rm Int}(\u\Q)$, and let the segments $A^-A^+$ and $B^-B^+$ be two admissible curves in ${\rm Int}(\Q)\setminus\SS_u$ which intersect $u^{-1}(\u P)$ exactly in two points $A\in A^-A^+$ and $B\in B^-B^+$. Then, there exists an admissible curve $\gamma:[0,1]\to\Q\setminus\SS_u$, with $\gamma(0)$ and $\gamma(1)$ in $\partial\Q$, which intersects $u^{-1}(\u P)$ exactly at $A$ and $B$, and which contains the segment $A^-A^+$ and either $B^-B^+$ or $B^+B^-$. In particular, only one of these two possibilities can occur if the derivatives of $u$ at $A$ in the direction of $A^-A^+$ and at $B$ in the direction of $B^-B^+$ are both well-defined and non-zero.
\end{lemma}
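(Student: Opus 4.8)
The plan is to build $\gamma$ by joining the two given segments with connecting arcs inside the complement of $u^{-1}(\u P)$ — which is connected, thanks to the standing assumption of this section — and then to read off the ``in particular'' from Lemma~\ref{opposite}.

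I would first record the topological input. Since $\{\u P\}$ is a closed subset of $\u\Q$ not disconnecting it (we are removing an interior point of a square), the standing hypothesis of this section gives that $E:=u^{-1}(\u P)$ is closed, connected and does not disconnect $\Q$; moreover, since $u=\id$ near $\partial\Q$ and $\u P\in{\rm Int}(\u\Q)$, one has $E\subseteq{\rm Int}(\Q)$ with ${\rm dist}(E,\partial\Q)>0$. Hence $\Omega:=\Q\setminus E$ is open and connected, thus polygonally path-connected, and it contains $\partial\Q$ as well as the four points $A^\pm$ and $B^\pm$: indeed the segments $A^-A^+$ and $B^-B^+$ meet $E$ only at $A$, resp.\ $B$, so $A^-A^+\setminus\{A\}$ and $B^-B^+\setminus\{B\}$ lie in $\Omega$.

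Next comes the construction of $\gamma$. Fix one endpoint of $B^-B^+$, say $B^-$. Using path-connectedness of $\Omega$, choose three arcs in $\Omega$: $\sigma_0$ from a point of $\partial\Q$ to $A^-$, $\sigma_1$ from $A^+$ to $B^-$, and $\sigma_2$ from $B^+$ to a point of $\partial\Q$; then consider the concatenation
\[
\gamma \;=\; \sigma_0 \,*\, A^-A^+ \,*\, \sigma_1 \,*\, B^-B^+ \,*\, \sigma_2 \,.
\]
One makes this into an injective, piecewise $C^1$ curve by taking the $\sigma_i$ polygonal and in general position (so that they meet one another and the two segments in finitely many transversal points) and removing each unwanted crossing by a local rerouting inside the open set $\Omega$, finally rounding the corners. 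To fulfil Definition~\ref{admcur} — that $\gamma(t)$ be a Lebesgue point of $Du$ for $\H^1$-a.e.\ $t$ and that $u\circ\gamma\in W^{1,p}$ — I would perform the selection of each $\sigma_i$ inside a thin foliated tube: since $\SS_u$ is $\H^1$-negligible and $Du\in L^p(\Q)$, Fubini's theorem yields that $\H^1$-a.e.\ leaf avoids $\SS_u$, consists of Lebesgue points of $Du$ for a.e.\ parameter, and carries an $L^p$ tangential derivative of $u$; since the two segments are admissible by assumption, the concatenation $\gamma$ is then admissible, takes values in $\Q\setminus\SS_u$, and has both endpoints on $\partial\Q$. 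By construction $\gamma$ contains $A^-A^+$ and $B^-B^+$ (routing $\sigma_1$ to $B^+$ rather than $B^-$ would instead give $B^+B^-$). Finally, every point of $\gamma$ other than $A$ and $B$ lies in $\Omega\cap(\Q\setminus\SS_u)$, hence is a continuity point of $u$ outside $E$, so there $u_{\rm multi}$ is single valued and different from $\u P$; since $u(A)=u(B)=\u P$ and $A\neq B$, it follows that $(u\circ\gamma)^{-1}(\u P)=\{t_A,t_B\}$, where $\gamma(t_A)=A$, $\gamma(t_B)=B$ and $t_A\neq t_B$. This proves the existence part.

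For the ``in particular'' clause, let $\tau_A,\tau_B$ be the unit directions of $A^-A^+$ and $B^-B^+$, and assume that $w_A:=D_{\tau_A}u(A)$ and $w_B:=D_{\tau_B}u(B)$ are well-defined and non-zero; then, along any $\gamma$ as above, $u\circ\gamma$ has at $t_A$ and $t_B$ the well-defined non-zero derivatives $w_A$ and $\pm w_B$. Suppose, towards a contradiction, that there existed both a curve $\gamma$ containing $A^-A^+$ and $B^-B^+$ and a curve $\gamma'$ containing $A^-A^+$ and $B^+B^-$. Each of them satisfies the hypotheses of Lemma~\ref{opposite} (admissible, endpoints on $\partial\Q$, counter-image of $\u P$ equal to the two numbers identified above, non-zero derivatives there). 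Applying Lemma~\ref{opposite} to $\gamma$, which traverses $B^-B^+$ in direction $\tau_B$, we get $w_A=-\lambda\,w_B$ for some $\lambda>0$; applying it to $\gamma'$, which traverses $B^+B^-$ in direction $-\tau_B$, we get $w_A=-\mu\,(-w_B)=\mu\,w_B$ for some $\mu>0$. Hence $(\lambda+\mu)\,w_B=0$ with $\lambda+\mu>0$, contradicting $w_B\neq 0$. Thus at most one of the two orientations can occur, which combined with the existence part gives exactly one.

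The main obstacle I expect is the second step: producing the connecting arcs so that, simultaneously, the resulting curve is injective, is admissible (so that $u\circ\gamma\in W^{1,p}$ and a.e.\ point is a Lebesgue point of $Du$), and meets $E=u^{-1}(\u P)$ only at $A$ and $B$ — all this while $\SS_u$ is merely $\H^1$-small. The purely topological ingredient, namely connectedness of $\Omega$, is the easy part and is handed to us by the standing hypothesis via Definition~\ref{def:multifunction}; the real work is the measure-theoretic selection of ``good'' arcs, for which the Fubini-type device already used, e.g., in Lemmas~\ref{startinggrid} and~\ref{existencearrival} is the natural tool.
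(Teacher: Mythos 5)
Your existence argument has a genuine gap that goes to the heart of the lemma. You claim that, after choosing three arcs $\sigma_0,\sigma_1,\sigma_2$ in $\Omega=\Q\setminus u^{-1}(\u P)$, one can always obtain an injective curve containing $A^-A^+$ and $B^-B^+$ (with that orientation) by ``removing each unwanted crossing by a local rerouting inside $\Omega$.'' This is not always possible, and it is exactly the point the statement is careful about. Crossings of planar arcs can be \emph{globally} forced: the topology of $\Q\setminus\bigl(u^{-1}(\u P)\cup A^-A^+\cup B^-B^+\bigr)$ may put $A^+$ and $B^+$ on one ``side'' and $B^-$ and $\partial\Q$ on the other, so that any path from $A^+$ to $B^-$ must cross any path from $B^+$ to $\partial\Q$, and local rerouting cannot fix this. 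In that case the only way to get an injective curve is to change the orientation and traverse $B^+B^-$ instead. The paper handles precisely this: after using Lemma~\ref{1or2inter} (which you do not invoke) to ensure $\Q\setminus(\C\cup\gamma_0\cup A^-A^+\cup B^-B^+)$ is connected and the arcs $\gamma_1$ (from $A^+$ to $B^-$) and $\gamma_2$ (from $B^+$ to $\partial\Q$) exist, it distinguishes the case $\gamma_1\cap\gamma_2=\emptyset$ (yielding $B^-B^+$) from the case in which they meet in one point, where a rerouting near the crossing yields the arcs $\tilde\gamma_1,\tilde\gamma_2$ and the orientation $B^+B^-$. Your argument, as written, would also be internally inconsistent with your own ``in particular'' conclusion: if local rerouting could always achieve $B^-B^+$, it could equally always achieve $B^+B^-$, contradicting the exclusivity you then prove under the derivative hypothesis.

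A second, smaller gap: you choose the $\sigma_i$ inside $\Omega=\Q\setminus u^{-1}(\u P)$, but they must also avoid the two segments $A^-A^+$ and $B^-B^+$ and each other, and merely knowing $\Omega$ is connected does not give this. The needed ingredient is Lemma~\ref{1or2inter}: applying it twice shows that $\C\cup A^-A^+\cup B^-B^+$ (and then $\C\cup\gamma_0\cup A^-A^+\cup B^-B^+$) does not disconnect $\Q$, which is what lets one place the connecting arcs in the complement. Your ``in particular'' argument via Lemma~\ref{opposite} is correct and matches the paper's reasoning, and the measure-theoretic device for making the arcs admissible is sound; the topological bookkeeping in the existence step, however, needs to be replaced by the case analysis above.
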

\begin{proof}
Let us call for brevity $\C=u^{-1}(\u P)$, which is a closed, connected set which does not disconnect $\Q$. By Lemma~\ref{1or2inter}, we know that $\C\cup A^-A^+\cup B^-B^+$ does not disconnect $\Q$, hence we find an injective curve $\gamma_0$ between $\partial\Q$ and $A^-$ whose interior does not intersect $\C\cup A^-A^+\cup B^-B^+$. Since this curve has a strictly positive Hausdorff distance from $\C$, we can assume without loss of generality that it is an admissible curve; moreover, since $\H^1(\SS_u)=0$, we can also assume without loss of generality that $\gamma_0\cap\SS_u=\emptyset$. Again by Lemma~\ref{1or2inter}, $\C\cup\gamma_0\cup A^-A^+\cup B^-B^+$ does not disconnect $\Q$, so we can find an admissible curve $\gamma_1\subseteq\Q\setminus\SS_u$ between $A^+$ and $B^-$, and an admissible curve $\gamma_2\subseteq\Q\setminus\SS_u$ between $B^+$ and $\partial\Q$, whose interiors do not intersect $\C\cup \gamma_0\cup A^-A^+\cup B^-B^+$. Up to a trivial modification of the curves $\gamma_1$ and $\gamma_2$, we can assume that $\gamma_1\cap\gamma_2$ is either empty or consists of exactly a point.\par

If $\gamma_1\cap\gamma_2=\emptyset$, then the curve $\gamma_0$, followed by the segment $A^-A^+$, then by the curve $\gamma_1$, the segment $B^-B^+$, and the curve $\gamma_2$, is an admissible curve in $\Q\setminus\SS_u$ starting and ending at $\partial\Q$, which intersects $\C$ exactly at $A$ and $B$ and which contains the segments $A^-A^+$ and $B^-B^+$. In particular, if the derivatives of $u$ at $A$ in the direction of $A^-A^+$ and at $B$ in the direction of $B^-B^+$ are both well-defined and non-zero, then by Lemma~\ref{opposite} these two derivatives are parallel and with opposite direction.\par

\begin{figure}[thbp]
\input{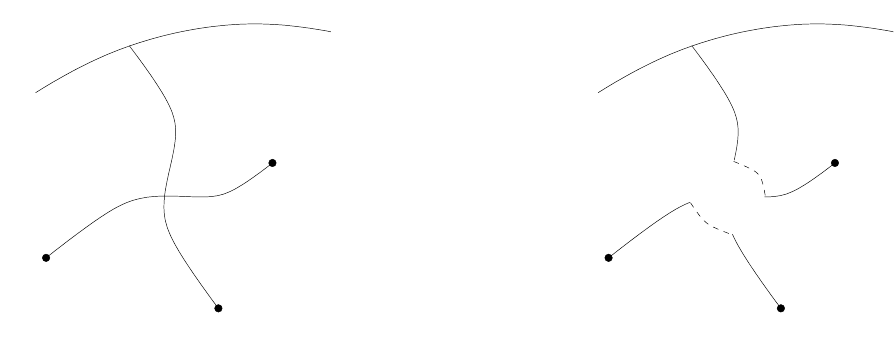_t}
\caption{Definition of $\tilde\gamma_1$ and $\tilde\gamma_2$ starting from $\gamma_1$ and $\gamma_2$ in Lemma~\ref{inclsegm}.}\label{Fig:1}
\end{figure}

Instead, suppose that $\gamma_1\cap\gamma_2$ consist of a point; then, we can trivially build an admissible curve $\tilde\gamma_1$ between $A^+$ and $B^+$, and an admissible curve $\tilde\gamma_2$ between $B^-$ and $\partial\Q$, with empty intersection and with interiors not intersecting $\C\cup\gamma_0\cup A^-A^+\cup B^-B^+$ (see Figure~\ref{Fig:1}). In this case, the searched curve is $\gamma_0$, followed by $A^-A^+$, by $\tilde\gamma_1$, by $B^+B^-$, and by $\tilde\gamma_2$. Notice that, in this second case, if the derivatives of $u$ at $A$ and $B$ in the direction of $A^-A^+$ and $B^-B^+$ are both well-defined and non-zero, then they must be parallel and with the same direction, since $\gamma$ contains the oriented segment $B^+B^-$, and not $B^-B^+$. This concludes the proof.
\end{proof}

\subsection{Left and right connectibility}\label{sec:lr}

In this section we present the definition of the left and right connectibility, and we show their main properties.

\begin{defin}[Compatible curve]
Let $\sigma:[0,1]\to \Q\setminus\SS_u$ be an admissible curve such that $\sigma(0),\, \sigma(1) \in \partial\Q$ while $\sigma(t)\in {\rm Int}(\Q)$ for every $0<t<1$, and let $\u P\in{\rm Int}(\u\Q)$. We say that \emph{$\sigma$ is compatible with $\u P$} if $u^{-1}(\u P)\cap \sigma$ consists of finitely many points $\{\sigma(t_i),\, 1\leq i \leq N\}$, $\sigma$ contains a short segment around each $\sigma(t_i)$, each point $\sigma(t_i)$ is a Lebesgue point for $Du$,  $D(u\circ \sigma)$ is non-zero at each $t_i$ and 
\[
D(u\circ \sigma)(t)=Du(\sigma(t))\sigma'(t).
\]
We will call $\sigma^-$ and $\sigma^+$ the closures of the left and the right connected components of $\Q\setminus \sigma$.
\end{defin}

In the following Lemma we show that if \(\sigma\) is compatible with \(\u P\) and   $\H^{0}( u^{-1}(\u P))>1$, then \(Du\) has rank $1$ at each point of $\sigma\cap u^{-1}(\u P)$. This will be useful in the proof of Lemma~\ref{well-defined}.

\begin{lemma}\label{lm:direction}
Let \(\u P\in \u \Q\) be such that \(u^{-1}(\u P)\) consists of more than one element. Then, for every $Q\in u^{-1}(\u P)$, there exists exactly one direction $\nu\in\S^1$ such that, whenever $\sigma$ is a path compatible with $\u P$, then \(D u(Q)\nu =0\). Moreover, for every sequence \(\{Q_j\} \subseteq u^{-1}(\u P)\) converging to $Q$, one has
\begin{equation}\label{tan}
\nu(Q)=\lim_{j\to \infty} \pm \frac{Q_j-Q}{|Q_j-Q|}\,.
\end{equation}
\end{lemma}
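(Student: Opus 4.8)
The plan is to prove the two assertions separately — the existence of a unique direction $\nu(Q)$ with $Du(Q)\nu(Q)=0$, and the tangent-cone identity \eqref{tan} — and then to reconcile them. Write $\C:=u^{-1}(\u P)$, a closed, connected set not disconnecting $\Q$ with $\#\C>1$; by Lemma~\ref{ThCINV} the map $u$ satisfies $INV^+$, so along any good closed curve the degree of $u^*$ takes only the values $0$ and $1$, and by Remark~\ref{contconn} the pointwise description $u_{\rm multi}(P)=\bigcap_D F_u(D)$ (intersection over small Lipschitz domains $D\ni P$ with $u^*$ continuous on $\partial D$) is available.

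First I would show that $Du$ has rank exactly one at every Lebesgue point $Q\in\C$ carrying a compatible path. Rank $\geq 1$ is immediate, since compatibility forces $Du(Q)\sigma'(t_i)\neq 0$ at $Q=\sigma(t_i)$. For rank $\leq 1$, suppose $\det Du(Q)\neq 0$: using the Lebesgue-point property of $Du$ at $Q$ together with the oscillation bound \eqref{eq:osc2}, one finds radii $r_k\downarrow 0$ on which $u^*\res\partial B(Q,r_k)$ is continuous and uniformly $o(r_k)$-close to the affine circle $x\mapsto \u P+Du(Q)(x-Q)$; since $\u P$ lies well inside this nondegenerate ellipse, the winding number of $u^*\res\partial B(Q,r_k)$ around $\u P$ is $\pm 1$, hence equals $1$ by Lemma~\ref{ThCINV}. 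Applying degree additivity on a fine dyadic grid refining $B(Q,r_k)$ exactly as in Steps~I--II of the proof of Lemma~\ref{ThCINV}, the degree around $\u P$ of every dyadic square outside $B(Q,r_k)$ is forced to be $0$ (degrees sum to $0$ there and lie in $\{0,1\}$), so $\u P\notin F_u(D)$ for small $D$ disjoint from $\overline{B(Q,r_k)}$, and the description of $u_{\rm multi}$ upgrades this to $\C\subseteq\overline{B(Q,r_k)}$ for every $k$, i.e.\ $\C=\{Q\}$, a contradiction. Hence $\mathrm{rank}\,Du(Q)=1$, $\ker Du(Q)$ is a line, and there is a unique direction $\nu(Q)$ (up to sign) with $Du(Q)\nu(Q)=0$.

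Next I would establish the tangent-cone identity \eqref{tan}, which also defines $\nu(Q)$ at the remaining points of $\C$. Arguing by contradiction, suppose there are $Q_j,Q_j'\to Q$ in $\C\setminus\{Q\}$ with $(Q_j-Q)/|Q_j-Q|\to e$ and $(Q_j'-Q)/|Q_j'-Q|\to e'$, $e\neq\pm e'$. The idea is to exploit the connectedness of $\C$: for $j$ large, $\C$ must cross any short segment separating $Q$ from $Q_j$ (resp.\ from $Q_j'$), and — discarding the $\H^1$-negligible set $\SS_u\cup\{$non-Lebesgue points of $Du\}\cup\{$points where $D_\tau u$ vanishes$\}$ — a generic member of a suitable family of parallel such segments is an admissible curve in the sense of Definition~\ref{admcur}, meeting $\C$ in a single point with nonzero tangential derivative. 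Choosing the two families nearly orthogonal to $e$ and to $e'$ produces segments $A^-A^+$, $B^-B^+$ as in Lemma~\ref{inclsegm}, whose conclusion forces $D_\tau u(A)\parallel D_\tau u(B)$; combining this with the rank-one structure from the previous step (so these derivatives lie on the respective image lines), with Lemma~\ref{opposite} (which, through the $INV$ degree lying in $\{0,1\}$, fixes the relative sign along any routing through $A$ and $B$), and with Lemma~\ref{1or2inter} (controlling how the transversal curves cut $\Q\setminus\C$), and letting $A$, $B$ sweep small arcs of $\C$ near $Q$ with chord-directions near $e$ and near $e'$, one reaches a contradiction with $Du$ having near $Q$ mutually non-parallel kernel directions. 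Thus the blow-up of $\C$ at $Q$ is a single line, and calling its direction $\nu(Q)$ proves \eqref{tan}.

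Finally I would reconcile the two constructions. At a Lebesgue point $Q\in\C$ carrying a compatible path, taking $Q_j\to Q$ in $\C$ along a chord converging to the tangent direction of \eqref{tan}, using that $\C$ now lies in a thin cone around $Q+\R\nu(Q)$ so that a short segment through $Q$ transversal to that cone meets $\C$ only at $Q$, routing a compatible curve through $Q$ and $Q_j$, and passing to the limit with the approximate differentiability of $u$ at $Q$ gives that $Du(Q)$ annihilates the tangent direction; uniqueness of the kernel line then forces it to coincide with $\pm\nu(Q)$. For $Q\in\C$ that are not Lebesgue points — so that no compatible path passes through $Q$ — one simply sets $\nu(Q)$ equal to the tangent direction from the previous step, and the condition $Du(Q)\nu(Q)=0$ for every compatible $\sigma$ is then vacuous. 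I expect the essential difficulty to be concentrated in the middle step: producing the right transversal admissible curves near $Q$ and squeezing the contradiction out of Lemmas~\ref{inclsegm}, \ref{opposite} and~\ref{1or2inter} together with the degree bound, whereas the rank computation and the differentiability point in the last step are comparatively routine.
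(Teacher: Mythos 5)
Your proof of the rank-one statement follows a genuinely different route than the paper's, and it is plausible (though heavier): the paper simply blows up $u$ at $Q$, i.e.\ uses that $u_r(y)=u(ry)/r\to\mathrm{Id}$ in $W^{1,1}_{\rm loc}$ because $Q$ is a Lebesgue point of $Du$, and then invokes the one-dimensional Sobolev embedding on a circle $\partial B(0,R)$ to get $\|u_{r_j}-\mathrm{Id}\|_{L^\infty(\partial B(0,R))}\to 0$, which after rescaling shows $u^{-1}(\u P)\cap\partial B(Q,r_jR)=\emptyset$ and contradicts connectedness of $u^{-1}(\u P)$. Your detour through degree on refined dyadic grids reaches the same conclusion but with much more bookkeeping (and you would need to reconcile the ball $B(Q,r_k)$ with a dyadic square decomposition before applying degree additivity); the blow-up plus ``connected set with diameter $>r$ must meet $\partial B(Q,r)$'' is considerably shorter.

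For the tangent-cone identity~\eqref{tan}, however, your argument has a genuine gap and does not resemble the paper's. The paper again uses the blow-up: normalizing $Q=\u P=0$, $Du(0)=\mathrm{e}_1\otimes\mathrm{e}_1$, the $1$-dimensional Sobolev embedding applied to $u_r$ gives, for a.e.\ small $r$, that $|u(x)-(x_1,0)|<\eps|x|$ on $\{|x|=r\}$, and the same estimate on a.e.\ horizontal line; combined with connectedness of $u^{-1}(\u P)$, this pins the preimage into an arbitrarily thin cone around $\mathrm{e}_2$ and yields~\eqref{tan} directly. Your contradiction argument via Lemmas~\ref{inclsegm}, \ref{opposite} and~\ref{1or2inter} instead assumes that a generic short transversal segment near $Q$ meets $\C=u^{-1}(\u P)$ in a \emph{single} point, which you do not justify and which need not be true (a short segment does not even separate $Q$ from $Q_j$, and $\C$ could meet a transversal in many points). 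Moreover the ``sweeping'' step asks for a continuity of $\ker Du$ along small arcs of $\C$ that is not available: the kernel direction is only defined $\H^1$-a.e.\ and there is no a priori reason that nearby kernel lines are non-parallel to each other in a contradictory way. Finally, the reconciliation paragraph, where you derive $Du(Q)\nu(Q)=0$ by ``passing to the limit with the approximate differentiability of $u$ at $Q$'' along $Q_j\to Q$, is not valid: approximate differentiability controls $u$ in an averaged sense near $Q$, not at the specific sequence $Q_j\in\C$. In the paper there is no reconciliation to do, because both statements are extracted at once from the same blow-up estimate.
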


\begin{proof}
To prove the first part of the Lemma, it is enough to assume the existence of a path $\sigma$ compatible with $\u P$ and containing $Q$, and to deduce that ${\rm rank}(Du(Q))=1$. Indeed, by definition $Du(Q)\neq 0$, so we only have to exclude that \(Du(Q)\) is invertible; to do so, we will show that there exists $0<r\ll 1$ such that 
\begin{equation}\label{cont}
u^{-1}(\u P)\cap \partial B(Q,r)=\emptyset\,,
\end{equation}
which gives a contradiction with the fact that \(u^{-1}(\u P)\) is a connected set with more than one point.\par

To prove~\eqref{cont}, by contradicion, we can assume without loss of generality that \(Q=\u P=(0,0)\) and that \(Du(Q)=\id\). Using that \(Q\) is a Lebesgue point of the gradient it is then well known that the maps
\[
u_r(y)=\frac{1}{r} u(ry)
\]
converge in \(W^{1,1}_{\rm loc}(\R^2)\) to \(u_\infty={\rm Id}\). By the one dimensional Sobolev embedding and a simple slicing argument, we can find a radius \(R\in (1/2,1)\) and a subsequence \(r_j\searrow 0\) such that 
\[
\|u_{r_j}-\id\|_{L^\infty(\partial B(0,R))}\to 0
\] 
Choosing \(j\) big enough and scaling back, we deduce~\eqref{cont}. 

Let us now show~\eqref{tan}; we can assume without loss of generality that $Q=\u P=(0,0)$ and that $Du(0)={\rm e}_1\otimes {\rm e}_1$, so that $\nu={\rm e}_2$. Let $\eps>0$; arguing as before, thanks to the $1$-dimensional Sobolev embedding we obtain that, for almost every $r\ll 1$,
\[
|u(x) - (x_1,0)| < \eps |x|
\]
for every $x$ with $|x|=r$. This means that, for almost every $r\ll 1$, one has
\[
\big\{\theta\in \S^1:\, r\theta \in u^{-1}(\u P)\big\} \subseteq \Big(\,\frac \pi 2 - 2\eps,\frac \pi 2+2\eps\,\Big)\cup
\Big(\,-\frac \pi 2 - 2\eps,-\frac \pi 2+2\eps\,\Big)\,.
\]
In the very same way, for almost every $|y|\ll 1$ we obtain that
\[
\big\{\theta\in \S^1:\, (y/\tan\theta,y) \in u^{-1}(\u P)\big\} \subseteq \Big(\,\frac \pi 2 - 2\eps,\frac \pi 2+2\eps\,\Big)\cup
\Big(\,-\frac \pi 2 - 2\eps,-\frac \pi 2+2\eps\,\Big)\,.
\]
The last two inclusions, together with the fact that $u^{-1}(\u P)$ is connected, immediately imply the validity of~(\ref{tan}).
\end{proof}

\begin{defin}[Connectibility]\label{connectibility}
Let $\u P\in{\rm Int}(\u\Q)$, let $\sigma:[0,1]\to\Q$ be compatible with $\u P$, and let $A$ and $B$ be any two points of $u^{-1}(\u P)\cap \sigma$. We say that \emph{$A$ and $B$ are left-connectible} if there is a connected component of $u^{-1}(\u P)\cap \sigma^-$ containing both $A$ and $B$. The definition of the right-connectibility is the same one, replacing $\sigma^-$ with $\sigma^+$.
\end{defin}

The following characterization of the connectibility will be useful in the sequel. Here, and in the rest of the section, whenever two points $A$ and $B$ belong to a same given curve, we write $\arc{AB}$ to denote the part of the curve between them, and we call it ``arc between $A$ and $B$''.

\begin{lemma}\label{daichevale}
Let $\u P\in {\rm Int}(\u \Q)$, let $\sigma:[0,1]\to\Q$ be compatible with $\u P$, and let $A$ and $B$ be any two points of $u^{-1}(\u P)\cap\sigma$. Then, $A$ and $B$ are left-connectible if and only if, for every two points $C,\, D$ in $\sigma\setminus \{A,\,B\}$, exactly one of those in the arc $\arc{AB}\subseteq\sigma$, it is impossible to find a path in $\sigma^-\setminus u^{-1}(\u P)$ connecting $C$ and $D$.
\end{lemma}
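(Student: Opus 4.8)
Throughout write $\C:=u^{-1}(\u P)$; by the standing assumption of the section (applied to the set $\{\u P\}$, which does not disconnect $\u\Q$) the set $\C$ is closed, connected and does not disconnect $\Q$, while $\C\cap\partial\Q=\emptyset$ because $u={\rm Id}$ on $\partial\Q$ and $\u P\in{\rm Int}(\u\Q)$, and $\C\cap\sigma$ is finite because $\sigma$ is compatible with $\u P$. Since $\sigma$ is a crosscut of $\Q$, the set $\sigma^-$ is a closed topological disk bounded by the Jordan curve $\partial\sigma^-=\sigma\cup\alpha^-$, where $\alpha^-$ is one of the two arcs of $\partial\Q$ joining $\sigma(0)$ and $\sigma(1)$; put $K:=\C\cap\sigma^-$, a closed subset of $\sigma^-$ with $K\cap\partial\sigma^-=\C\cap\sigma$ finite and containing $A,B$. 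Writing $A=\sigma(a)$, $B=\sigma(b)$ with $0<a<b<1$, the two arcs of $\partial\sigma^-\setminus\{A,B\}$ are $\alpha:=\arc{AB}=\sigma\big((a,b)\big)$ and $\beta:=\sigma\big([0,a)\big)\cup\alpha^-\cup\sigma\big((b,1]\big)$. Since $\alpha^-\subseteq\partial\Q$ is connected and disjoint from $\C$, every point of $\alpha^-$ can be joined inside $\sigma^-\setminus\C$ to a point of $\sigma\setminus\arc{AB}$ (travel along $\alpha^-$ to a neighbourhood of $\sigma(0)$ and then a short way along $\sigma$, staying away from the closed set $\C$); hence the right–hand side of the statement is equivalent to: \emph{no connected component of $\sigma^-\setminus K$ meets both arcs $\alpha$ and $\beta$} (we use that $\sigma^-\setminus K$ is open in $\sigma^-$, hence locally path connected, so that ``path'' and ``component'' may be used interchangeably). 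The assertion then becomes the purely topological claim that $A$ and $B$ lie in the same connected component of $K$ if and only if $\alpha$ and $\beta$ lie in different components of $\sigma^-\setminus K$.

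For the ``only if'' direction I would argue by contradiction: assume $A,B$ are left–connectible and that there is a path $\rho$ in $\sigma^-\setminus\C$ from $C\in\arc{AB}$ to $D\in\sigma\setminus\arc{AB}$. By a routine modification (rerouting $\rho$ slightly into ${\rm Int}(\sigma^-)$ wherever it touches $\partial\sigma^-$, which is possible since $\C$ stays at positive distance from $\rho$; compare the curve constructions in Lemmas~\ref{1or2inter} and~\ref{inclsegm}) we may take $\rho$ to be an injective arc with $\rho\cap\partial\sigma^-=\{C,D\}$ and $\rho\setminus\{C,D\}\subseteq{\rm Int}(\sigma^-)$, i.e.\ a crosscut of the disk $\sigma^-$, which therefore splits it into two closed sub‑disks $R_1,R_2$ with $R_1\cap R_2=\rho$. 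Inspecting the cyclic order of the four points on $\partial\sigma^-$ (one checks it is $A,C,B,D$ in both possible cases), the two arcs of $\partial\sigma^-$ cut off by $C$ and $D$ are one containing $A$ but not $B$ and one containing $B$ but not $A$; hence, after relabelling, $A\in R_1\setminus R_2$ and $B\in R_2\setminus R_1$. As $\rho\cap\C=\emptyset$ we get a partition $K=(\C\cap R_1)\sqcup(\C\cap R_2)$ of $K$ into disjoint closed sets separating $A$ from $B$, contradicting left–connectibility.

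For the ``if'' direction assume $A$ and $B$ lie in different components of $K$; as $K$ is compact metric, components are quasi‑components, so $K=K_1\sqcup K_2$ with $K_1,K_2$ disjoint and compact, $A\in K_1$, $B\in K_2$. I would translate to a square: choose a homeomorphism $\Psi:\sigma^-\to[0,1]^2$ with $\Psi(A)=(0,1)$, $\Psi(B)=(1,0)$, sending $\alpha$ onto (the top edge $\cup$ right edge) minus corners and $\beta$ onto (the bottom edge $\cup$ left edge) minus corners, and — using that $\C\cap\partial\sigma^-$ is finite — arranged so that $\Psi$ maps a tiny $\C$–free sub‑arc of $\alpha$ emanating from $A$ onto the open top edge and a tiny $\C$–free sub‑arc of $\beta$ emanating from $B$ onto the open bottom edge; then $\widehat K:=\Psi(K)$ meets the top edge only at $(0,1)$ and the bottom edge only at $(1,0)$. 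The hypothesis says $\widehat K$ does not separate $\Psi(\alpha)$ from $\Psi(\beta)$ in $[0,1]^2\setminus\widehat K$, hence in particular it does not separate the right edge from the left edge; by the classical lemma on separating opposite sides of a square (no path between left and right edge in the complement of a closed set forces that set to contain a connected closed subset meeting both the top and the bottom edge), $\widehat K$ contains a connected closed set $\Gamma$ meeting both the top and the bottom edge. By the normalisation these meeting points can only be $(0,1)=\Psi(A)$ and $(1,0)=\Psi(B)$, so $\Psi^{-1}(\Gamma)\subseteq K$ is a connected set containing $A$ and $B$; thus $A,B$ are left–connectible, a contradiction.

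The main obstacle is this last step: producing a genuinely \emph{connected} subset of $K$ through the two prescribed points out of the mere failure of separation. The translation to the unit square, and especially the choice of $\Psi$ pushing all the finitely many remaining points of $\C\cap\sigma$ onto the left and right edges so that the connected separator is forced to pass through $A$ and $B$, is the delicate part of the bookkeeping; the underlying square–separation lemma itself is classical, but to keep the section self‑contained one should supply its short proof (an elementary dyadic bisection/compactness argument).
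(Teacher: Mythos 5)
Your argument is correct, and the first implication (left-connectibility forbids such a path) is essentially the paper's: the paper closes the path $\tau_1$ through an arc $\tau_2\subseteq\sigma^+$ and invokes the Jordan curve theorem, while you view $\sigma^-$ as a disk and use the connecting path directly as a crosscut separating $A$ from $B$; these are the same idea. The converse is where you genuinely diverge. The paper \emph{constructs} the points $C,D$ and the path explicitly: it observes that $u^{-1}(\u P)\cap\sigma^-$ has finitely many components, all a positive distance apart, surrounds the component $\Gamma_A$ of $A$ by a smooth Jordan curve via Lemma~\ref{sard} at a scale smaller than that gap, and extracts the subpath $\tilde\theta$ whose endpoints straddle $\Gamma_A\cap\sigma$. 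You instead argue by contradiction through the classical separation lemma for the square (a closed set admitting no connection of the left and right edges in its complement must contain a continuum joining top and bottom), after a normalising homeomorphism that forces the continuum through $\Psi(A)$ and $\Psi(B)$; the normalisation, using the finiteness of $u^{-1}(\u P)\cap\sigma$ to keep the top and bottom edges clean, is correctly handled, and the reduction of ``no path from $\arc{AB}$ to $\sigma\setminus\arc{AB}$'' to ``no path from the left edge to the right edge'' also checks out. What your route buys is that it bypasses the explicit tubular-neighbourhood construction; what it costs is the import of a nontrivial topological lemma (equivalent to unicoherence of the square) that the paper's self-contained style deliberately avoids by re-using Lemma~\ref{sard}, which you could equally have done.

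Two small points. First, the sentence ``The hypothesis says $\widehat K$ does not separate $\Psi(\alpha)$ from $\Psi(\beta)$'' has the negation backwards: in your contradiction setup the right-hand side of the statement asserts precisely that $\widehat K$ \emph{does} separate them (no component of the complement meets both), which is what the classical lemma needs; the rest of your argument makes clear this is a slip of wording rather than of logic. Second, as you acknowledge, the square-separation lemma would need a proof or a precise citation to keep the section self-contained.
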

\begin{proof}
Let us first assume the existence of two points $C,\, D\in\sigma\setminus\{A,\,B\}$, only one of which in the arc $\arc{AB}$, and of a path $\tau_1\subseteq \sigma^-\setminus u^{-1}(\u P)$ connecting $C$ and $D$. We have to show that $A$ and $B$ are not left-connectible. Let $\tau_2$ be any arc connecting $C$ and $D$ in $\sigma^+$, in particular we can assume that $\tau_2\setminus \{C,\,D\}$ is contained in the internal part of $\sigma^+$. Putting together $\tau_1$ and $\tau_2$ (we can assume without loss of generality that they are both injective), we have a closed curve which does not intersect $u^{-1}(\u P)\cap \sigma^-$, and by construction exactly one between $A$ and $B$ belongs to the internal part of this curve. As a consequence, $A$ and $B$ do not belong to the same connected component of $u^{-1}(\u P)\cap\sigma^-$.\par

Conversely, assume that $A$ and $B$ are not left-connectible: we have to find two points $C$ and $D$ in $\sigma\setminus\{A,\,B\}$, only one of which in $\arc{AB}$, and a path connecting $C$ and $D$ in $\sigma^-\setminus u^{-1}(\u P)$. For the sake of clarity, and without loss of generality, we can think that $\sigma$ is a vertical segment. First of all notice that, since $u^{-1}(\u P)$ is connected, then every connected component of $u^{-1}(\u P)\cap\sigma^-$ must intersect some of the finitely many points of $u^{-1}(\u P)\cap\sigma$. Hence, there are finitely many connected components of the set $u^{-1}(\u P)\cap\sigma$, and since $u^{-1}(\u P)$ is closed then every component is closed, and any two are a strictly positive distance apart. Let then $\Gamma_A$ and $\Gamma_B$ be the connected components of $u^{-1}(\u P)\cap \sigma^-$ containing $A$ and $B$ respectively.
\begin{figure}[thbp]
\input{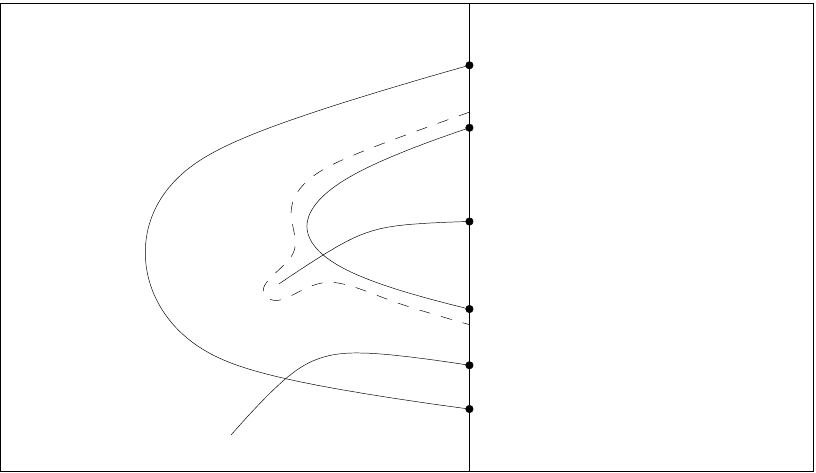_t}
\caption{Construction for the proof of Lemma~\ref{daichevale}; the dashed curve is $\tilde\theta$.}\label{Fig:2}
\end{figure}
Let us call $A^-$ and $A^+$ the first and the last point of $\sigma$ which belong to $\Gamma_A$ (they could both coincide with $A$), and $B^-$ and $B^+$ those in $\Gamma_B$: a possible situation is depicted in Figure~\ref{Fig:2}. Notice that the intersection between the two curves $\arc{A^-A^+}$ and $\arc{B^-B^+}$ must be either one of the two curves (as in the figure), or empty. Indeed, otherwise $\Gamma_A$ and $\Gamma_B$ could clearly not be both connected and with no intersection.\par

As a consequence, without loss of generality we can assume that, as in the figure, the point $B$ is not contained in the arc $\arc{A^-A^+}$. Let us then apply Lemma~\ref{sard} to $\Gamma_A$, with $\eps$ smaller than the distance between $\Gamma_A$ and any other connected component of $u^{-1}(\u P)\cap \sigma^-$, so to find a closed, injective curve $\theta$, which has to intersect both $\sigma^-$ and $\sigma^+$ by construction. Let $P$ be a point of $\theta$, in the interior of $\sigma^-$, with ordinate higher than that of any point of $\Gamma_A$. Then, let $\tilde\theta$ be the shortest subpath of $\theta$ containing $P$ and having both endpoints on $\sigma$, and call $C$ and $D$ its endpoints, being $C$ below $D$; a simple geometric argument, using the fact that $\Gamma_A\subseteq \sigma^-$ is connected, ensures that $C$ is below $A^-$ and $D$ is above $A^+$, however the distance between $C$ and $A^-$, as well as that between $D$ and $A^+$, is less than $\eps$. As a consequence, $C$ belongs to $\arc{AB}$ while $D$ does not, and $\tilde\theta$ is a path connecting $C$ and $D$ in $\sigma^-\setminus u^{-1}(\u P)$, since $\tilde\theta$ does not intersect $\Gamma_A$ by construction and it does not intersect any other component of $\sigma^-\cap u^{-1}(P)$, since they have all distance larger than $\eps$ from $\Gamma_A$, while each point of $\tilde\theta$ has distance less than $\sigma$ from it.
\end{proof}

The first use of the definition of connectibility is to decide which of the two possibilities in Lemma~\ref{inclsegm} holds.

\begin{lemma}\label{onlytwo}
Let $\u P\in {\rm Int}(\u \Q)$, let $\sigma:[0,1]\to\Q$ be compatible with $\u P$, let $A$ and $B$ be two connectible points of $u^{-1}(\u P)\cap\sigma$, and let $A^-A^+$ and $B^-B^+$ be two short segments around $A$ and $B$, contained in $\sigma$ and which do not contain other points of $u^{-1}(\u P)$ apart from $A$ and $B$. Then, there exists a curve $\gamma:[0,1]\to\Q$ compatible with $\u P$, which intersects $u^{-1}(\u P)$ exactly at $A$ and $B$, and which contains the segments $A^-A^+$ and $B^-B^+$. 
\end{lemma}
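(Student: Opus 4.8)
The plan is to deduce the statement from Lemma~\ref{inclsegm}, and then to upgrade the curve produced there from merely admissible to compatible with $\u P$; the connectibility hypothesis enters only at the end, to pin down the orientation that Lemma~\ref{inclsegm} leaves undetermined.

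First I would check that Lemma~\ref{inclsegm} applies to the segments $A^-A^+$ and $B^-B^+$. Since $\sigma$ is compatible with $\u P$, these segments are sub-arcs of $\sigma$, hence admissible curves in the sense of Definition~\ref{admcur} contained in ${\rm Int}(\Q)\setminus\SS_u$; moreover $A$ and $B$ are distinct, the segments are short, and by hypothesis they contain no point of $u^{-1}(\u P)$ other than $A$ and $B$, so their union meets $u^{-1}(\u P)$ exactly in $\{A,B\}$. Lemma~\ref{inclsegm} then yields an admissible curve $\gamma\colon[0,1]\to\Q\setminus\SS_u$ with $\gamma(0),\gamma(1)\in\partial\Q$, which meets $u^{-1}(\u P)$ exactly at $A$ and $B$ and which contains the segment $A^-A^+$ together with either $B^-B^+$ or $B^+B^-$; in either case $\gamma$ contains both segments $A^-A^+$ and $B^-B^+$ as subsets of $\Q$.

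Next I would verify that $\gamma$ is compatible with $\u P$. The set $u^{-1}(\u P)\cap\gamma=\{A,B\}$ is finite, and $\gamma$ contains a short segment around each of $A$ and $B$, namely $A^-A^+$ and $B^-B^+$ themselves. It remains to check the infinitesimal conditions at $A$ and $B$. Near $A$ the curve $\gamma$ coincides, up to reparametrisation, with the piece of $\sigma$ given by $A^-A^+$; since $\sigma$ is compatible with $\u P$, the point $A$ is a Lebesgue point for $Du$, the identity $D(u\circ\sigma)=Du(\sigma)\,\sigma'$ holds there, and $D(u\circ\sigma)\neq 0$ at $A$. As $\gamma$ and $\sigma$ have at $A$ the same tangent line up to sign, and $Du(A)$ does not annihilate it, the same three properties hold for $\gamma$ at $A$; the argument at $B$ is identical. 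Hence $\gamma$ is compatible with $\u P$, which already gives the existence statement.

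The remaining point — and the one I expect to be the real obstacle — is to see that the connectibility of $A$ and $B$ forces $\gamma$ to contain $B^-B^+$ with the correct orientation, i.e. that it selects the branch of the dichotomy in Lemma~\ref{inclsegm} in which the tangential derivatives of $u$ at $A$ and $B$ (in the directions of $A^-A^+$ and $B^-B^+$) are anti-parallel rather than parallel. Since $u\circ\sigma$ may hit $\u P$ more than twice, Lemma~\ref{opposite} does not apply directly to $\sigma$, so the relative direction of $D(u\circ\sigma)$ at $A$ and $B$ must instead be read off from the local structure of $u^{-1}(\u P)$ along $\sigma$: here I would use the characterisation of connectibility in Lemma~\ref{daichevale}, together with Lemma~\ref{lm:direction} which identifies the common tangent direction $\nu$ of $u^{-1}(\u P)$ at each of $A$ and $B$, to show that being connectible on one side of $\sigma$ dictates on which side of $u^{-1}(\u P)$ the auxiliary curves of Lemma~\ref{inclsegm}'s construction can be routed without crossing $u^{-1}(\u P)$. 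That constrains the construction to the branch producing the orientation claimed. The bookkeeping matching left/right connectibility, the parallel/anti-parallel alternative, and the admissible routings is where the care is needed; the reduction to Lemma~\ref{inclsegm} and the compatibility check above are routine.
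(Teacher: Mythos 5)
Your first two paragraphs reproduce the routine part of the paper's proof: invoke Lemma~\ref{inclsegm} to get an admissible curve through $A^-A^+$ and (up to orientation) $B^-B^+$, and verify that such a curve is in fact compatible with $\u P$ because it agrees with $\sigma$ on small segments around $A$ and $B$. That much is fine. But you then explicitly defer the orientation question — whether the curve contains $B^-B^+$ rather than $B^+B^-$ — and that deferral is the whole content of the lemma once Lemma~\ref{inclsegm} is granted. The lemma is used in Corollary~\ref{connopp} precisely through the identity $D(u\circ\gamma)(\gamma^{-1}(B))=D(u\circ\sigma)(\sigma^{-1}(B))$, which fails (it flips sign) if the orientation at $B$ is reversed; so "contains the segments $A^-A^+$ and $B^-B^+$" must be read as an oriented statement, and your remark that compatibility "already gives the existence statement" is premature.

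The gap is that your third paragraph is a plan, not a proof, and the tools you propose for it are not the ones that make the plan go through. You suggest combining Lemma~\ref{daichevale} (the path-characterisation of left-connectibility) with Lemma~\ref{lm:direction} (the tangent direction $\nu$ of $u^{-1}(\u P)$) to control where the connecting arcs of Lemma~\ref{inclsegm} can be routed. It is not clear how these two lemmas would pin down the orientation; in particular Lemma~\ref{lm:direction} is infinitesimal, while the obstruction is a global planar-topology one. The paper instead argues by contradiction with Lemma~\ref{1or2inter}: supposing $\gamma$ carries the wrong orientation $B^+B^-$, it writes $\gamma$ as $\gamma_0\cdot A^-A^+\cdot\gamma_1\cdot B^+B^-\cdot\gamma_2$, takes the connected component $\Gamma$ of $u^{-1}(\u P)\cap\sigma^-$ containing both $A$ and $B$ (this is exactly where left-connectibility enters), and applies Lemma~\ref{1or2inter} to $\Gamma$ and the arc $AA^+\cup\gamma_1\cup B^+B$ (which meets $\Gamma$ only at $A$ and $B$) to conclude that $\Gamma^+:=\Gamma\cup AA^+\cup\gamma_1\cup B^+B$ separates $\Q$ into $\Q_{\rm int}$ and $\Q_{\rm ext}$. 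Then $\gamma_0$ forces $A^-\in\Q_{\rm ext}$, the inclusion $\Gamma\subseteq\sigma^-$ forces $B^-\in\Q_{\rm int}$, and $\gamma_2$ (joining $B^-$ to $\partial\Q$ without touching $\Gamma^+$) gives the contradiction. To repair your proposal you would either need to carry out this disconnection argument (or something equivalent), or make precise how your routing constraint actually rules out the $B^+B^-$ branch; as written, the crucial step is only identified, not proved.
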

\begin{proof}
By Lemma~\ref{inclsegm}, we know that the conclusion is true, up to replace the segment $B^-B^+$ with the segment $B^+B^-$. Let us then assume by contradiction the existence of a curve $\gamma$ as in the claim, but with the wrongly oriented segment $B^+B^-$. In particular, the curve $\gamma$ is done by a path $\gamma_0$ between $\partial\Q$ and $A^-$, then the segment $A^-A^+$, then a path $\gamma_1$ between $A^+$ and $B^+$, then the segment $B^+B^-$, and finally a last path $\gamma_2$ between $B^-$ and $\partial\Q$. Let us assume, without loss of generality, that $A$ and $B$ are left-connectible, the analogous argument works if they are right-connectible. Let then $\Gamma$ be the connected component of $u^{-1}(\u P)\cap\sigma^-$ containing both $A$ and $B$. Since it is a closed, connected set which does not disconnect $\Q$, by Lemma~\ref{1or2inter} its union $\Gamma^+$ with the segment $AA^+$, the curve $\gamma_1$ and the segment $B^+B$ disconnects $\Q$ in two connected components $\Q_{\rm int}$ and $\Q_{\rm ext}$, where we call $\Q_{\rm ext}$ the one containing $\partial\Q$. Since the curve $\gamma_0$ does not intersect $\Gamma^+$, we deduce that $A^-\in \Q_{\rm ext}$, and since $\Gamma\subseteq \sigma^-$ we deduce that the right part of the curve $\gamma_1$ (that is, the points arbitrarily close to $\gamma_1$ on the right part with respect to the vector $\gamma_1'$) is done by points in $\Q_{\rm ext}$. In particular, the point $B^-$ belongs to $\Q_{\rm int}$, which is against the existence of the curve $\gamma_2$ connecting $B^-$ with $\partial\Q$ without intersecting $\Gamma^+$.
\end{proof}

The following corollary is immediate.
\begin{corol}\label{connopp}
Let $\u P\in {\rm Int}(\u \Q)$, let $\sigma:[0,1]\to\Q$ be compatible with $\u P$, let $A$ and $B$ be two connectible points of $u^{-1}(\u P)\cap\sigma$. Then, the vectors $D(u\circ\sigma)(A)$ and $D(u\circ\sigma)(B)$ are parallel and with opposite directions.
\end{corol}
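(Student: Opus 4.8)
The plan is to obtain the corollary by feeding the curve produced by Lemma~\ref{onlytwo} into Lemma~\ref{opposite}. First I would fix short segments $A^-A^+$ and $B^-B^+$ around $A$ and $B$, contained in $\sigma$ and containing no point of $u^{-1}(\u P)$ other than $A$, respectively $B$: such segments exist because $\sigma$, being compatible with $\u P$, already contains a short segment around each of the finitely many points of $u^{-1}(\u P)\cap\sigma$, and one only has to shrink them. Since $A$ and $B$ are connectible, Lemma~\ref{onlytwo} then yields a curve $\gamma:[0,1]\to\Q$ compatible with $\u P$, meeting $u^{-1}(\u P)$ exactly at $A$ and $B$, and containing the two segments $A^-A^+$ and $B^-B^+$ \emph{with these orientations}.

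Next I would verify that $\gamma$ satisfies the hypotheses of Lemma~\ref{opposite}. Being compatible with $\u P$, the curve $\gamma$ is admissible, takes values in $\Q\setminus\SS_u$, and has both endpoints on $\partial\Q$; being injective and meeting $u^{-1}(\u P)$ only at $A$ and $B$, the set $(u\circ\gamma)^{-1}(\u P)$ consists of exactly the two parameters $t_A,\,t_B$ with $\gamma(t_A)=A$, $\gamma(t_B)=B$; and, again by compatibility, $A$ and $B$ are Lebesgue points of $Du$, with $D(u\circ\gamma)(t_A)=Du(A)\gamma'(t_A)\neq 0$ and $D(u\circ\gamma)(t_B)=Du(B)\gamma'(t_B)\neq 0$. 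Lemma~\ref{opposite} therefore applies and gives that $D(u\circ\gamma)(t_A)$ and $D(u\circ\gamma)(t_B)$ are parallel and point in opposite directions.

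Finally I would transfer this back to $\sigma$. Near $A$ the curve $\gamma$ runs along the segment $A^-A^+$ in the direction from $A^-$ to $A^+$, which is exactly the direction in which $\sigma$ traverses that same segment; hence $\gamma'(t_A)$ is a positive multiple of the tangent of $\sigma$ at $A$, so $D(u\circ\gamma)(t_A)=Du(A)\gamma'(t_A)$ is a positive multiple of $D(u\circ\sigma)(A)$, and likewise $D(u\circ\gamma)(t_B)$ is a positive multiple of $D(u\circ\sigma)(B)$. Consequently $D(u\circ\sigma)(A)$ and $D(u\circ\sigma)(B)$ are parallel and with opposite directions, as claimed.

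The only point needing care — and the reason this is stated as a corollary rather than a triviality — is the matching of orientations. Lemma~\ref{inclsegm} by itself leaves open that $\gamma$ might contain $B^+B^-$ instead of $B^-B^+$, which would force the wrong conclusion that the two derivatives are parallel \emph{with the same direction}; it is precisely Lemma~\ref{onlytwo}, which uses the connectibility of $A$ and $B$, that excludes this. So beyond invoking that lemma with the correct orientation and keeping track of the positive scalars relating $\gamma'$ and $\sigma'$ at $A$ and $B$, there is no genuine obstacle.
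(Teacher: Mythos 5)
Your proposal is correct and follows essentially the same route as the paper: take the curve $\gamma$ from Lemma~\ref{onlytwo} (which, unlike Lemma~\ref{inclsegm} alone, guarantees the correctly oriented segment $B^-B^+$ thanks to connectibility), observe that $D(u\circ\gamma)$ at $A$ and $B$ agrees with $D(u\circ\sigma)$ there since both curves contain the same oriented segments, and conclude by Lemma~\ref{opposite}. Your closing remark about the orientation issue being the entire content of the corollary is exactly the point.
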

\begin{proof}
Let $\gamma$ be a curve given by Lemma~\ref{onlytwo}, and notice that since both $\sigma$ and $\gamma$ contain the segments $A^-A^+$ and $B^-B^+$ then
\begin{align*}
D(u\circ \gamma)\big(\gamma^{-1}(A)\big) = D(u\circ \sigma)\big(\sigma^{-1}(A)\big)\,, &&
D(u\circ \gamma)\big(\gamma^{-1}(B)\big) = D(u\circ \sigma)\big(\sigma^{-1}(B)\big)\,.
\end{align*}
Moreover, both the vectors are non-zero by definition of compatible curves, so the thesis follows by applying Lemma~\ref{opposite} to the curve $\gamma$.
\end{proof}

\begin{corol}\label{conncomp<2}
Let $\u P\in {\rm Int}(\u \Q)$, let $\sigma:[0,1]\to\Q$ be compatible with $\u P$. Then, every connected component of $\sigma^-\cap u^{-1}(\u P)$ contains at most two points in $\sigma$. Hence, any point of $\sigma\cap u^{-1}(\u P)$ is left-connectible with at most one other point.
\end{corol}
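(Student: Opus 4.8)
The plan is to argue by contradiction, reducing everything to Corollary~\ref{connopp} together with a parity argument on the directions of the tangent vectors. Suppose that some connected component $\Gamma$ of $\sigma^-\cap u^{-1}(\u P)$ contains three \emph{distinct} points lying on $\sigma$, say $A$, $B$ and $C$. First I would note that, since $\sigma\subseteq\sigma^-$, the fact that $A$, $B$, $C$ all belong to the single connected component $\Gamma$ of $\sigma^-\cap u^{-1}(\u P)$ means precisely, by Definition~\ref{connectibility}, that each of the three pairs $(A,B)$, $(B,C)$ and $(A,C)$ is left-connectible, hence in particular connectible in the sense of Corollary~\ref{connopp}.

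Next I would introduce the vectors $v_X := D(u\circ\sigma)\big(\sigma^{-1}(X)\big)$ for $X\in\{A,B,C\}$, which are all well-defined and non-zero because $\sigma$ is compatible with $\u P$. Applying Corollary~\ref{connopp} to the pair $(A,B)$ gives that $v_A$ and $v_B$ are parallel with opposite directions, and applied to the pair $(B,C)$ it gives the same for $v_B$ and $v_C$; combining these two facts, $v_A$ and $v_C$ are parallel with the \emph{same} direction. On the other hand, Corollary~\ref{connopp} applied to the connectible pair $(A,C)$ forces $v_A$ and $v_C$ to be parallel with \emph{opposite} directions. Since $v_C\neq 0$, these two statements are incompatible, and this contradiction proves the first assertion: every connected component of $\sigma^-\cap u^{-1}(\u P)$ contains at most two points in $\sigma$.

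For the ``Hence'' part, suppose a point $P\in\sigma\cap u^{-1}(\u P)$ were left-connectible with two distinct points $Q_1\neq Q_2$ of $\sigma\cap u^{-1}(\u P)$, both different from $P$. Let $\Gamma_1$ and $\Gamma_2$ be the connected components of $\sigma^-\cap u^{-1}(\u P)$ containing $\{P,Q_1\}$ and $\{P,Q_2\}$ respectively; since $P\in\Gamma_1\cap\Gamma_2$ and two connected components of a set are either equal or disjoint, we get $\Gamma_1=\Gamma_2$. This common component would then contain the three distinct points $P$, $Q_1$, $Q_2$ of $\sigma$, contradicting the first part. I do not expect any genuine obstacle here: the only subtlety is the identification of ``left-connectible between two points of $\sigma$'' with ``lying in the same connected component of $\sigma^-\cap u^{-1}(\u P)$'', which is immediate from the definitions, and the heart of the matter is simply the observation that three non-zero vectors cannot be pairwise anti-parallel.
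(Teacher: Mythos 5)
Your proposal is correct and follows essentially the same route as the paper: three points in one connected component of $\sigma^-\cap u^{-1}(\u P)$ are pairwise left-connectible, so by Corollary~\ref{connopp} the three tangent vectors would have to be pairwise anti-parallel, which is impossible. The paper states this more tersely and leaves the ``Hence'' part implicit, but the underlying argument is identical.
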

\begin{proof}
Assume that $A,\, B$ and $C$ are three distinct points in $u^{-1}(\u P)\cap\sigma$, contained in the same connected component of $\sigma^-\cap u^{-1}(\u P)$; then, any two of these three points are left-connectible. As a consequence, the derivatives of $u$ in the direction of $\sigma$ at the points $A,\, B$ and $C$ are three non-zero vectors, and by Corollary~\ref{connopp} any two of them have opposite orientation, which is clearly absurd.
\end{proof}

\begin{lemma}\label{nonboth}
Let $\u P\in {\rm Int}(\u \Q)$, let $\sigma:[0,1]\to\Q$ be compatible with $\u P$; then, two points in $\sigma\cap u^{-1}(\u P)$ cannot be both left-connectible and right-connectible.
\end{lemma}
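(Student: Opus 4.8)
The plan is to argue by contradiction, using the characterisation of connectibility in Lemma~\ref{daichevale} together with the standing hypothesis that the counter-image of a point does not disconnect $\Q$. Suppose that two distinct points $A,B\in\sigma\cap u^{-1}(\u P)$ were both left-connectible and right-connectible. Since $\sigma$ is compatible with $\u P$, the set $\sigma\cap u^{-1}(\u P)$ is finite, so I may choose a point $D$ in the open arc $\arc{AB}\subseteq\sigma$ and a point $C$ in one of the two remaining (``outer'') open sub-arcs of $\sigma$, both lying in ${\rm Int}(\Q)\setminus u^{-1}(\u P)$. Thus $C,D\neq A,B$ and, among $C$ and $D$, exactly one --- namely $D$ --- lies in $\arc{AB}$.

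First I would join $C$ to $D$ by a path avoiding $u^{-1}(\u P)$: since $\{\u P\}$ is a closed connected set that does not disconnect $\u\Q$, by hypothesis $u^{-1}(\u P)$ is closed and $\Q\setminus u^{-1}(\u P)$ is connected, hence (being open in $\Q$) path-connected, so there is a continuous $\pi:[0,1]\to\Q\setminus u^{-1}(\u P)$ with $\pi(0)=C$ and $\pi(1)=D$. Then I would slice $\pi$ along $\sigma$: the set $T=\pi^{-1}(\sigma)$ is closed and contains $0$ and $1$, and on each connected component $(\alpha,\beta)$ of $[0,1]\setminus T$ the connected set $\pi\big((\alpha,\beta)\big)$ avoids $\sigma$, hence lies entirely in the open region $\sigma^-\setminus\sigma$ or entirely in $\sigma^+\setminus\sigma$ (recall $\Q\setminus\sigma$ has exactly these two components, $\sigma$ being an injective arc between two points of $\partial\Q$). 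Consequently $\pi|_{[\alpha,\beta]}$ is a path contained in $\sigma^-\setminus u^{-1}(\u P)$ (resp.\ $\sigma^+\setminus u^{-1}(\u P)$) joining $\pi(\alpha)$ and $\pi(\beta)$, which are two points of $\sigma\setminus\{A,B\}$ because $\pi$ avoids $u^{-1}(\u P)\supseteq\{A,B\}$. Applying Lemma~\ref{daichevale} in its left-connectibility form in the first case and in its right-connectibility form in the second, I conclude that it cannot happen that exactly one of $\pi(\alpha),\pi(\beta)$ lies in $\arc{AB}$; that is, these two points lie on the same ``side'' of $\arc{AB}$ (either both in $\arc{AB}$ or both in outer arcs).

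To conclude I would propagate this information from $0$ to $1$. For $t\in T$ set $\chi(t)=1$ if $\pi(t)\in\arc{AB}\setminus\{A,B\}$ and $\chi(t)=0$ otherwise; since $\pi(T)\subseteq\sigma\setminus\{A,B\}$ and the latter is the disjoint union of the open arc $\arc{AB}\setminus\{A,B\}$ and the two outer open arcs, $\chi$ is continuous on $T$. Extending $\chi$ on each gap $(\alpha,\beta)$ of $T$ by the common value $\chi(\alpha)=\chi(\beta)$ found above yields a $\{0,1\}$-valued function on $[0,1]$ which a routine check shows to be continuous, hence constant; this contradicts $\chi(0)=0$ (as $C$ lies in an outer arc) and $\chi(1)=1$ (as $D\in\arc{AB}$), and the lemma follows. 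If one wishes to bypass this point-set bookkeeping, one can instead first perturb $\pi$ --- by a transversality argument, keeping its endpoints and staying in the open set $\Q\setminus u^{-1}(\u P)$ --- so that it meets $\sigma$ in only finitely many points, after which the propagation is a trivial finite induction.

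The substance of the proof is entirely topological; the only delicate point is this bookkeeping, namely checking that each slice $\pi|_{[\alpha,\beta]}$ is a legitimate competitor for Lemma~\ref{daichevale} (its endpoints really lie on $\sigma$ and differ from $A$ and $B$) and that the ``same side'' relation can indeed be chained along all of $[0,1]$. No quantitative estimate plays any role.
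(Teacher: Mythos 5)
Your proof is correct and follows essentially the same route as the paper's: choose $C,D\in\sigma\setminus u^{-1}(\u P)$ with exactly one in $\arc{AB}$, connect them by a path avoiding $u^{-1}(\u P)$ (possible since by the standing hypothesis $u^{-1}(\u P)$ does not disconnect $\Q$), slice that path along $\sigma$, and apply Lemma~\ref{daichevale} to each subpath to conclude that consecutive intersection points lie on the same side of $\arc{AB}$. The only difference is cosmetic: the paper disposes of the possibly-infinite intersection with $\sigma$ by a ``without loss of generality'' perturbation to finitely many points, which is exactly the alternative you mention at the end, whereas your primary version with the $\chi$-propagation handles an arbitrary closed $T=\pi^{-1}(\sigma)$ directly and is marginally more self-contained.
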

\begin{proof}
Assume that $A$ and $B$ are both left-connectible and right-connectible. Let then $C,\, D\in\sigma\setminus u^{-1}(\u P)$ be any two points, exactly one of which in the arc $\arc{AB}$. Since $u^{-1}(\u P)$ does not disconnect $\Q$, there exists a path connecting $C$ and $D$ in $\Q\setminus u^{-1}(\u P)$; without loss of generality, we can assume that this path intersects $\sigma$ in finitely many points $P_1=C,\, P_2 ,\, \dots \,,\, P_N=D$. The subpath between any $P_i$ and $P_{i+1}$ is then either entirely in $\sigma^-$ or entirely in $\sigma^+$. By Lemma~\ref{daichevale}, keeping in mind that $A$ and $B$ are both left- and right-connectible, we deduce that $P_{i+1}$ belongs to the arc $\arc{AB}$ if and only if so does $P_i$. By recursion, $D=P_N$ belongs to $\arc{AB}$ if and only if so does $C=P_1$, which is a contradiction.
\end{proof}

\begin{lemma}\label{order}
Let $\u P\in {\rm Int}(\u \Q)$, let $\sigma:[0,1]\to\Q$ be compatible with $\u P$. Then, we can write $\sigma\cap u^{-1}(\u P)=\{P_1,\, P_2,\, \dots \, , \, P_N\}$, where each $P_i$ is either left- or right-connectible with $P_{i+1}$, and $P_1$ and $P_N$ are not connectible.
\end{lemma}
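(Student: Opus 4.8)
The plan is to encode the connectibility relation on the finite set $V:=\sigma\cap u^{-1}(\u P)$ as a graph $G$: a vertex for each point of $V$, and an edge between $A$ and $B$ whenever $A$ and $B$ are connectible. By Lemma~\ref{nonboth} this is unambiguous, since two points that are connectible are either left- or right-connectible but never both. The goal is to show that $G$ is a single (possibly trivial) path. Granting this, one lists the vertices of $G$ along the path as $P_1,\dots,P_N$; consecutive vertices are then connectible by the very definition of an edge, and, $G$ being a path and not a cycle, $P_1$ and $P_N$ are not connectible with each other. The cases $N\le 2$, where the path is a point or a single edge, are immediate, so we may think of $N\ge 3$.

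The first ingredient is that every vertex of $G$ has degree at most $2$. Indeed, by Corollary~\ref{conncomp<2} every point of $V$ is left-connectible with at most one other point, and by the analogous statement with $\sigma^+$ in place of $\sigma^-$ (whose proof is identical, all the auxiliary lemmas being symmetric under the exchange of $\sigma^-$ and $\sigma^+$) it is right-connectible with at most one other point; moreover, by Lemma~\ref{nonboth}, if both such points exist they must be distinct. Hence $G$ is a disjoint union of simple paths and cycles, and it only remains to rule out that $G$ is disconnected or that it contains a cycle.

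That $G$ is connected follows from the hypothesis that $u^{-1}(\u P)$ is connected: writing $u^{-1}(\u P)$ as the union of its intersections with the two closed half-domains $\sigma^-$ and $\sigma^+$, which overlap exactly along $V$, each connected component of $u^{-1}(\u P)\cap\sigma^-$ meets $\sigma$ only at points lying in a single left-connectibility class (Corollary~\ref{conncomp<2}), and similarly on the right, so that connectedness of $u^{-1}(\u P)$ forces these components to chain together through points of $V$ — which is precisely the statement that $G$ is connected. That $G$ has no cycle follows from the hypothesis that $u^{-1}(\u P)$ does not disconnect $\Q$: a cycle in $G$, which by the degree count together with Corollary~\ref{conncomp<2} and Lemma~\ref{nonboth} must have even length at least $4$ with edges alternating between left- and right-connectibility, would — upon concatenating the connected pieces of $u^{-1}(\u P)$ inside $\sigma^\mp$ that realize its successive edges — produce a connected subset of $u^{-1}(\u P)$ arranged in a closed loop around a region, hence a point of $\Q$ separated from $\partial\Q$ by $u^{-1}(\u P)$, contradicting the hypothesis. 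Since a connected graph of maximal degree $\le 2$ with no cycle is a path, $G$ is a path and we are done.

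I expect the main obstacle to be making these last two steps fully rigorous at the topological level, since $u^{-1}(\u P)$ is only assumed closed, connected and non-disconnecting and need not be locally connected or an arc: one must argue with care that a cycle in $G$ genuinely encloses a point of $\Q$, and that the connectedness of the set $u^{-1}(\u P)$ genuinely descends to the graph $G$ (in particular that there are no ``floating'' components of $u^{-1}(\u P)\cap\sigma^\pm$ missing $\sigma$). This is carried out in the spirit of Lemmas~\ref{sard},~\ref{1or2inter} and~\ref{daichevale}: one replaces the relevant pieces of $u^{-1}(\u P)$ by nearby $\mathrm{C}^1$ curves by Lemma~\ref{sard}, tracks the connected components of the complements of these curves exactly as in Lemma~\ref{daichevale} (equivalently, one invokes a Janiszewski-type separation argument), and uses that $u^{-1}(\u P)$ is compactly contained in $\mathrm{Int}(\Q)$ because $u=\mathrm{Id}$ near $\partial\Q$ while $\u P\in\mathrm{Int}(\u\Q)$.
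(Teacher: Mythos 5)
Your skeleton agrees with the paper's: interpret connectibility as a graph $G$ on the finite set $V=\sigma\cap u^{-1}(\u P)$, show degree $\le 2$ (Corollary~\ref{conncomp<2} on both sides together with Lemma~\ref{nonboth}), show $G$ connected using connectedness of $u^{-1}(\u P)$, and rule out a cycle using the non-disconnecting hypothesis. The paper does not name the graph explicitly, but its ``chain'' argument and the inductive construction of $P_1,\dots,P_N$ encode exactly the same object, and your framing is a cleaner presentation of it. The first three ingredients are therefore essentially identical to the paper's.

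Where you diverge, and where the gap is, is the acyclicity step. You try to conclude directly: a cycle $P_1\to P_2\to\cdots\to P_N\to P_1$ gives components $\Gamma_1,\dots,\Gamma_N$ alternating between $\sigma^+$ and $\sigma^-$, and you claim their union is ``a closed loop around a region'', hence separates $\Q$, contradicting the hypothesis. You flag this yourself as the obstacle, and indeed it is a genuine gap, not a routine verification: the $\Gamma_i$ are merely closed connected subsets of $u^{-1}(\u P)$ (not arcs, not locally connected), the points $P_1,\dots,P_N$ are not in any obvious cyclic order along $\sigma$, and turning the abstract graph cycle into a separating ``quasi-Jordan'' configuration requires a real topological argument (for instance an iterated use of Lemma~\ref{daichevale} and a parity count, or a Janiszewski-type theorem after a Lemma~\ref{sard} regularisation). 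The paper deliberately sidesteps exactly this difficulty by an indirect move: it applies Lemma~\ref{onlytwo} to the single edge $P_1P_2$ to obtain a compatible curve $\gamma$ that meets $u^{-1}(\u P)$ only at $P_1$ and $P_2$ and agrees with $\sigma$ near them; then it shows that $\Gamma_1\subseteq\gamma^+$ while the connected union $\Gamma_2\cup\cdots\cup\Gamma_N\subseteq\gamma^-$ also joins $P_1$ and $P_2$, so $P_1$ and $P_2$ are simultaneously left- and right-connectible \emph{with respect to $\gamma$}, directly contradicting Lemma~\ref{nonboth}. This reuses the separation work already packaged inside Lemma~\ref{nonboth} (which in turn leans on Lemma~\ref{daichevale}) instead of re-proving a separation statement from scratch. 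If you were to close your gap honestly, you would most likely be led to re-derive essentially that argument; so while your route is sound in spirit, the paper's is the more economical one, and you should replace the heuristic ``closed loop encloses a region'' with the Lemma~\ref{onlytwo}/Lemma~\ref{nonboth} device (or with an explicit Lemma~\ref{daichevale}-based parity argument) to make the acyclicity step rigorous.
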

\begin{proof}
We can assume without loss of generality that $u^{-1}(\u P)\cap\sigma^-$ contains at least two points, otherwise the claim is emptily true. Let us consider the connected components of $u^{-1}(\u P)\cap\sigma^-$ and of $u^{-1}(\u P)\cap\sigma^+$; since $u^{-1}(\u P)$ is connected, each of these connected components must have at least a point in $\sigma$. Moreover, any two of these connected components have strictly positive distance unless they have a common point in $\sigma$. Again since $u^{-1}(\u P)$ is connected, we deduce that any connected component can reach any other one in finitely many ``steps'', where any step means passing from a connected component to another one which has a common point in $\sigma$. In other words, for any two points $A$ and $B$ in $u^{-1}(\u P)\cap\sigma$, there is a finite ``chain'' of points $Q_1=A,\, Q_2,\, \dots\,, \, Q_M=B$, where every $Q_i$ is left- or right-connectible with $Q_{i+1}$.\par

Take now any point in $u^{-1}(\u P)\cap\sigma$: for what we just said, it must be left- or right-connectible with some other point, but by Corollary~\ref{conncomp<2} it can be left-connectible, as well as right-connectible, with at most one other point. Assume for a moment the existence of a point $P_1\in u^{-1}(\u P)\cap\sigma$ which is connectible with only one other point; let us call $P_1$ this point, and $P_2$ the point with which $P_1$ is connected, and let us assume that $P_1$ and $P_2$ are right-connectible (if they are left-connectible the obvious modification of the argument works). If $u^{-1}(\u P)\cap\sigma=\{ P_1,\, P_2\}$ we are done; otherwise, since $P_1$ is not connectible with any other point, $P_2$ must be connectible with some point $P_3$, in particular it must be left-connectible, because it is already right-connectible with $P_1$. Again, if we have taken all the points of $u^{-1}(\u P)\cap\sigma$ we are done, otherwise $P_3$ must be right-connectible with some $P_4$ (since $P_1$ and $P_2$ already found all their connectibility points). With an obvious induction, we are done.\par

To conclude the proof, we must then exclude that all the points have a left-connectible and a right-connectible other point. If that was the case, we could still write $\sigma\cap u^{-1}(\u P)=\{P_1,\, P_2,\, \dots \, , \, P_N\}$, where every $P_i$ is left- (resp., right-)connectible with $P_{i+1}$ if $i$ is even (resp., odd); moreover, $N$ would be surely even and $P_N$ and $P_1$ would be left-connectible. For any even (resp., odd) number $1\leq i<N$, let us call $\Gamma_i$ the connected component of $u^{-1}(\u P)\cap \sigma^-$ (resp., $u^{-1}(\u P)\cap \sigma^+$) which contains $P_i$ and $P_{i+1}$, while $\Gamma_N$ is the connected component of $u^{-1}(\u P)\cap \sigma^-$ containing $P_N$ and $P_1$. Applying Lemma~\ref{onlytwo} to $P_1$ and $P_2$, we find a compatible curve $\gamma$, starting and ending in $\partial\Q$, which contains exactly $P_1$ and $P_2$ in $u^{-1}(\u P)$, and which coincides with $\sigma$ in two short segments around $P_1$ and around $P_2$. Since $\Gamma_1$ is contained in $\sigma^+$ and intersects $\gamma$ only at $P_1$ and $P_2$, we deduce that $\Gamma_1$ is also contained in $\gamma^+$; then, $P_1$ and $P_2$ are right-connectible also with respect to the curve $\gamma$. Similarly, since $\Gamma_2$ and $\Gamma_N$ are contained in $\sigma^-$, we deduce that they are also contained in $\gamma^-$. Let us now consider any $\Gamma_i$ with $3\leq i \leq N-1$: it has no intersection with $\gamma$, hence it is entirely contained in the interior of $\gamma^-$ or in the interior of $\gamma^+$. However, since $\Gamma_3$ has one point in common with $\Gamma_2$ (that is, $P_3$), we deduce that $\Gamma_3$ is entirely in the interior of $\gamma^-$, and by obvious recursion the same holds true for every $\Gamma_i$ with $4\leq i \leq N-1$. The union of all the sets $\Gamma_i$ for $i\neq 1$ is then a connected set contained in $\gamma^-$: this means that $P_1$ and $P_2$ are also left-connectible with respect to $\gamma$. This is a contradiction with Lemma~\ref{nonboth}, so we conclude.
\end{proof}
Notice that the order of the points $P_i$ given by Lemma~\ref{order} is not necessarily the order in which points are met by $\sigma$, see Figure~\ref{Fig:3} left for an example, where again $\sigma$ is depicted as a vertical segment just for the sake of clarity. Notice also that, if $P_1$ and $P_2$ are right-connectible, as in the figure, then every $P_i$ is right-connectible with $P_{i+1}$ when $i$ is odd and left-connectible when $i$ is even. The opposite clearly holds if $P_1$ and $P_2$ are left-connectible.

\begin{figure}[thbp]
\input{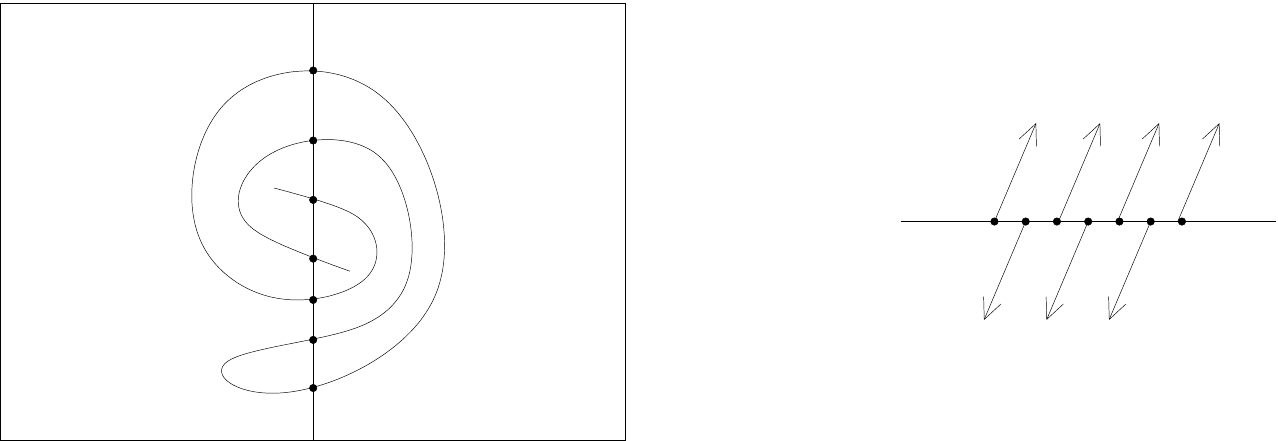_t}
\caption{Left: the points $P_1,\, P_2,\, \dots \, ,\, P_N$ in Lemma~\ref{order}. Right: the corresponding points $\u P_1,\, \u P_2,\, \dots\,,\, \u P_N$ in Definition~\ref{whereupts}.}\label{Fig:3}
\end{figure}

Notice that, by Lemma~\ref{order} and Corollary~\ref{connopp}, the vectors $D(u\circ\sigma)(\sigma^{-1}(P_i))$ are all parallel. We can then conclude the section with a last definition.
\begin{defin}[Characteristic vector]\label{chaver}
Let $\u P\in{\rm Int}(\u\Q)$ and let $\sigma$ be a path compatible with $\u P$. We call \emph{characteristic vector of $\sigma$ at $\u P$} the vector $v\in\S^1$, defined up to a sign, such that $D(u\circ\sigma)(t_i)$ is parallel to $v$ for every $t_i \in (u\circ\sigma)^{-1}(\u P)$.
\end{defin}

\subsection{Separation of points on a curve}

This subsection is devoted to present a rule to ``separate'' the image of the points $u^{-1}(\u P)\cap\sigma$, where $\sigma$ is some given curve compatible with $\u P$. Let us be more precise.

\begin{defin}[Adapted curve]
Let $\widetilde\G\subseteq\u\Q$ be a grid as in~(\ref{defwidegxnyn}), and let $\sigma:[0,1]\to\Q\setminus\SS_u$ be an admissible curve with $\sigma(0),\,\sigma(1)\in\partial\Q$ and $\sigma(t)\in{\rm Int}(\Q)$ for $0<t<1$. We say that \emph{$\sigma$ is adapted to $\widetilde\G$} if $u(\sigma)\cap\widetilde\G$ is a finite set not containing any vertex of $\widetilde\G$, and for any $\u P\in u(\sigma)\cap\widetilde\G$ the curve $\sigma$ is compatible with $\u P$, and the characteristic vector of $\sigma$ at $\u P$, according to Definition~\ref{chaver}, is not parallel to the side of $\widetilde\G$ containing $\u P$. In this case, a number $\xi>0$ will be said \emph{small for $\sigma$} if it is much smaller than the distance between any two elements of $u(\sigma)\cap\widetilde\G$, as well as between any such element and any vertex of $\widetilde\G$.
\end{defin}

\begin{remark}\label{obvrem}
An obvious consequence of the above definition, together with Definitions~\ref{goodstarting} and~\ref{goodarrival}, is the following. If $\widetilde\G$ is a good arrival grid associated with the good starting grid $\G$, then every path contained in $\G$ is adapted to $\widetilde\G$.
\end{remark}

Our aim is to define a modification of $u$ on $\sigma$ which is piecewise linear an injective. We start by defining the images of the finitely many points of $\sigma\cap u^{-1}(\widetilde\G)$.
\begin{defin}\label{whereupts}
Let $\widetilde\G\subseteq\u\Q$ be a grid, let $\sigma$ be a curve adapted to $\widetilde\G$, and let $\xi$ be small for $\sigma$. Consider any point $\u P\in u(\sigma)\cap\widetilde\G$: being $\sigma$ compatible with $\u P$, we write $\sigma\cap u^{-1}(\u P)=\{P_1,\, P_2 ,\, \dots \, ,\, P_N\}$ as in Lemma~\ref{order}. We let then $\u P_1,\, \u P_2,\, \dots \,,\, \u P_N$ be $N$ distinct points on $\widetilde\G$, all contained within a distance $\xi$ from $\u P$ (hence they are all in the same side of $\widetilde\G$ as $\u P$), and ordered as follows. Assume first that $P_1$ and $P_2$ are right-connected. If the side containing $\u P$ is horizontal, then the points $\u P_i$ are ordered from left to right if $D(u\circ\sigma)(\sigma^{-1}(P_1))$ has a positive vertical component (as in Figure~\ref{Fig:3} right), and from right to left otherwise. Similarly, if the side of $\u P$ is vertical, the points $\u P_i$ are ordered from top to bottom if $D(u\circ\sigma)(\sigma^{-1}(P_1))$ has a positive horizontal component, and from bottom to top otherwise. If, instead, $P_1$ and $P_2$ are left-connected, then the orientation of the points $\u P_i$ is done in the opposite way.
\end{defin}

It is important to understand the meaning of the orientation of the points $\u P_i$ just defined, again a look to Figure~\ref{Fig:3} right can also help. Roughly speaking, if the point $P_i$ is right-connected with the point $P_{i+1}$, then an observer sitting at $\u P_i$, looking in the direction of the curve $u\circ \sigma$ at $\sigma^{-1}(P_i)$, will have the point $\u P_{i+1}$ at his right. Notice that, at the same time, also $P_{i+1}$ is right-connected with $P_i$; and in fact, the point $\u P_i$ is also at the right of the point $\u P_{i+1}$, from the point of view of $P_{i+1}$, at which the direction of $u\circ \sigma$ is opposite.

We are now ready to define the modification of $u$ on $\sigma$. Before doing so note that, if \(\xi\) is  taken sufficiently small, then the following simple geometric fact about the generalised segments introduced in Definiton~\ref{gensegm} holds: let \(\u \RR\) be a rectangle of the grid \(\widetilde \G\) and  $\u A,\, \u B,\, \u C$ and $\u D$ be four distinct ordered points on a same side of $\partial\u\RR$, then  $[\u{AB}]$ and $[\u{CD}]$ have empty intersection, as well as $[\u{AD}]$ and $[\u{BC}]$, while $[\u{AC}]$ and $[\u{BD}]$ have exactly an intersection point. 

\begin{defin}[The function $v_\sigma$]\label{defvsigma}
Let $\widetilde\G\subseteq\u\Q$ be a grid, let $\sigma:[0,1]\to\Q$ be adapted to $\widetilde\G$, and let $\xi$ be small for $\sigma$. The function $v_\sigma:\sigma\to\u\Q$ is defined as follows. For any point $\u P\in u(\sigma)\cap\widetilde\G$, we define the points $P_i$ as in Lemma~\ref{order} and the points $\u P_i$ as in Definition~\ref{whereupts}, and we set $v_\sigma(P_i)=\u P_i$ for any $i$; we also set $v_\sigma=u$ at $\sigma(0)$ and $\sigma(1)$. We have then defined $v_\sigma$ at all the points of $\sigma\cap u^{-1}(\widetilde\G)$, and at $\sigma(0)$ and $\sigma(1)$. Let now $A,\,B$ be any two consecutive points on $\sigma$ on which $v_\sigma$ is already defined. Notice that, by definition, the function $u$ in the open curve $\arc{AB}$ is contained in the interior of a square $\u\RR$ of $\widetilde\G$, and $u(A)$ and $u(B)$, as well as $v_\sigma(A)$ and $v_\sigma(B)$, lie on $\partial\u\RR$. We extend then $v_\sigma$ on the curve $\arc{AB}$ as the generalized segment $[\u{AB}]$, being $\u A=v_\sigma(s)$ and $\u B=v_\sigma(t)$, parametrized with constant speed.
\end{defin}

The main goal of this section is to show that the function $v_\sigma$, which is uniformly very close to $u$, is injective.

\begin{prop}\label{injective}
Let $\widetilde\G,\, \sigma$ and $v_\sigma$ be as in Definition~\ref{defvsigma}. Then, the piecewise linear function $v_\sigma:\sigma\to\u\Q$ is injective.
\end{prop}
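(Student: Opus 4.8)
The plan is to show that $v_\sigma$ fails to be injective only if two generalized segments $[\u{AB}]$ and $[\u{CD}]$ along $\sigma$ intersect, where $A,B$ and $C,D$ are consecutive pairs of points of $\sigma\cap u^{-1}(\widetilde\G)$ (together with $\sigma(0),\sigma(1)$). Since along each arc $\arc{AB}$ the function $u$ — hence $v_\sigma$, which is a generalized segment with endpoints at distance $\xi$ from $u(A),u(B)$ — stays inside a single square $\u\RR$ of $\widetilde\G$, two such generalized segments can meet only if they live in the same square $\u\RR$. So the whole question is local to one rectangle of the arrival grid. Fix such a rectangle $\u\RR$ and consider all arcs of $\sigma$ whose $u$-image lies in (the interior of) $\u\RR$: each contributes a generalized segment whose two endpoints lie on $\partial\u\RR$ near two of the points $\u P_i$ or near $u(\sigma(0)),u(\sigma(1))$. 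By the geometric fact recalled just before Definition~\ref{defvsigma}, two generalized segments in $\u\RR$ are disjoint unless their endpoint pairs ``cross'' on $\partial\u\RR$, i.e. unless one pair separates the other along $\partial\u\RR$; and if they do cross they meet in exactly one point. So I must rule out such a crossing.

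First I would reduce to the case where the four endpoints involved are the blown-up points $\u P_1,\dots,\u P_N$ associated to a single target point $\u P\in u(\sigma)\cap\widetilde\G$ (the arcs whose endpoints come from different $\u P$'s, or from $u(\sigma(0)),u(\sigma(1))$, are automatically separated because the clusters of blown-up points around distinct targets, and around $\sigma(0),\sigma(1)$, are $\xi$-far apart and $\xi$ is small for $\sigma$). Thus fix $\u P$, write $\sigma\cap u^{-1}(\u P)=\{P_1,\dots,P_N\}$ as in Lemma~\ref{order}, and let $\u P_1,\dots,\u P_N$ be the ordered blown-up points of Definition~\ref{whereupts}. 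An arc of $\sigma$ contributing a generalized segment near $\u P$ runs between two consecutive points among $\{P_1,\dots,P_N\}$ \emph{in the order in which $\sigma$ meets them}, say from $P_a$ to $P_b$; its image generalized segment runs between $\u P_a$ and $\u P_b$. I need: whenever $P_aP_b$ and $P_cP_d$ are two such arcs of $\sigma$ (so $a,b$ are consecutive and $c,d$ are consecutive in the $\sigma$-order, all distinct), the pairs $\{\u P_a,\u P_b\}$ and $\{\u P_c,\u P_d\}$ do not cross on the side of $\widetilde\G$ containing $\u P$.

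The heart of the matter — and the step I expect to be the main obstacle — is exactly this combinatorial compatibility between (i) the cyclic order in which $\sigma$ visits $P_1,\dots,P_N$, (ii) the left/right-connectibility pattern from Lemma~\ref{order} (consecutive $P_i,P_{i+1}$ alternately right- and left-connectible), and (iii) the induced linear order of $\u P_1,\dots,\u P_N$ on $\partial\u\RR$ fixed by Definition~\ref{whereupts}. I would argue as follows. Consider two arcs $\arc{P_aP_b}$ and $\arc{P_cP_d}$ of $\sigma$ in $u^{-1}(\u\RR)$. Since $\sigma$ is a simple arc, these two subarcs are disjoint (or share only an endpoint, excluded since all four points are distinct); hence on $\sigma$, viewed as an interval, one of the pairs $\{a,b\}$ either nests inside or lies entirely outside the interval cut out by the other — that is, $\{P_a,P_b\}$ and $\{P_c,P_d\}$ do \emph{not} interleave along $\sigma$. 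I then need to transfer ``non-interleaving along $\sigma$'' to ``non-crossing along $\partial\u\RR$'', and here the connectibility structure enters: using Lemma~\ref{daichevale} and the characterization of left/right connectibility, together with Lemma~\ref{order}, one shows that the map $P_i\mapsto\u P_i$ sends the $\sigma$-order of the $P_i$'s to the prescribed order of the $\u P_i$'s in a way that preserves the ``separates / does not separate'' relation on pairs. Concretely: two consecutive-on-$\sigma$ points $P_a,P_b$ are joined by an arc of $\sigma$ lying (except at its endpoints) on one side, say $\sigma^+$; by the connectibility analysis (Corollary~\ref{conncomp<2}, Lemma~\ref{nonboth}, Lemma~\ref{order}) the indices $a$ and $b$ in the Lemma~\ref{order} enumeration have the same parity class relative to the connectibility alternation, and the definition of the ordering of the $\u P_i$ in Definition~\ref{whereupts} — which was precisely rigged so that ``$P_i$ right-connected with $P_{i+1}$'' $\Longleftrightarrow$ ``$\u P_{i+1}$ on the right of $\u P_i$ as seen from the direction of $u\circ\sigma$'' — forces the two pairs of $\u P$'s to be non-crossing on $\partial\u\RR$. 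Once this combinatorial lemma is established, the geometric fact recalled before Definition~\ref{defvsigma} immediately gives $[\u P_a\u P_b]\cap[\u P_c\u P_d]=\emptyset$, and putting all rectangles together yields injectivity of $v_\sigma$.

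I would organize the write-up as: (1) localization to a single rectangle $\u\RR$ and reduction to a single target $\u P$; (2) the combinatorial lemma relating the $\sigma$-order of the $P_i$'s, the connectibility pattern of Lemma~\ref{order}, and the $\partial\u\RR$-order of the $\u P_i$'s — this is where Lemmas~\ref{daichevale}, \ref{onlytwo}, \ref{nonboth}, \ref{order} and Corollary~\ref{conncomp<2} all get used; (3) conclude via the disjointness/single-intersection dichotomy for generalized segments. The constants $\xi$ being small for $\sigma$ is used only in step (1), to keep the clusters of blown-up points separated so that crossings can only occur within a single cluster.
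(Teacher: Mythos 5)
Your localization to a single rectangle $\u\RR$ of the arrival grid is correct and matches the paper's setup. However, your proposal has a genuine gap at the very step you wave away as ``automatic'': the claim that two arcs of $\sigma$ whose endpoints map near \emph{different} target points of $\widetilde\G$ give rise to disjoint generalized segments because the clusters of blown-up points are $\xi$-separated. This is simply not true. If $\arc{AB}$ and $\arc{CD}$ are disjoint arcs of $\sigma$ with $u$-images in $\u\RR$ and $\u A=u(A),\,\u B,\,\u C,\,\u D$ are four distinct, well-separated points on $\partial\u\RR$, the corresponding generalized segments are essentially the chords $\u A\u B$ and $\u C\u D$ of $\u\RR$, and these cross precisely when $\{\u C,\u D\}$ separates $\{\u A,\u B\}$ on $\partial\u\RR$. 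Nothing in the definition of $v_\sigma$ or in the smallness of $\xi$ forbids this configuration; the separation of clusters controls which points get blown up where, not whether the resulting chords interleave.

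Ruling out this crossing is exactly the content of the paper's Lemma~\ref{4distinct}, which is a degree-theoretic argument: one closes the two arcs into an injective loop $\gamma$ using the assumption that $u^{-1}(\u\RR)$ is connected and does not disconnect $\Q$, and then exploits the fact (from Lemma~\ref{ThCINV}) that the degree of $u(\gamma)$ takes only the values $0$ and $1$, to show that a ``crossing'' of the four boundary points would force a jump of the degree by $\pm 2$. Your proposal never invokes the INV$^+$ property or any degree count, so this case is entirely missing. The same issue recurs at the paper's Case~II, where exactly two of the four boundary points coincide and one needs Lemma~\ref{correctorientation} (another argument built on the topology of $u^{-1}(\u\RR)$ and the degree) to see that the ordering forced by Definition~\ref{whereupts} is consistent with the positions of the other two boundary points; your combinatorial machinery based on Lemmas~\ref{daichevale}, \ref{order}, \ref{nonboth}, \ref{conncomp<2} only sees the internal structure of $u^{-1}(\u P)$ for a \emph{single} $\u P$, not how different target points are arranged along $\partial\u\RR$. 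Finally, a small local confusion: an arc of $\sigma$ between consecutive points of $\sigma\cap u^{-1}(\widetilde\G)$ is a piece of $\sigma$ itself, not a set lying in $\sigma^+$ or $\sigma^-$; the side $\sigma^{\pm}$ is only meaningful for the connected components of $u^{-1}(\u P)\setminus\sigma$, so the sentence starting ``two consecutive-on-$\sigma$ points $P_a,P_b$ are joined by an arc of $\sigma$ lying\dots on one side, say $\sigma^+$'' needs to be reformulated. The pure-combinatorics part of your plan does capture the flavor of the paper's Cases~III--V, but as written the proof cannot be completed without supplying the two degree lemmas.
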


Since the proof of this result is quite involved, we divide it in several intermediate results. First of all, we show that the order given by Lemma~\ref{order} and Definition~\ref{whereupts} is ``well-defined'', that is, if we take two different adapted curves containing the same two points in $u^{-1}(\u P)$ for some $\u P\in\widetilde\G$, the order between the images of these two points in the two corresponding functions in Definition~\ref{defvsigma} is the same.
\begin{lemma}\label{well-defined}
Let $\widetilde\G\subseteq\u\Q$ be a grid as in~(\ref{defwidegxnyn}), let $\u P\in\widetilde\G$, let $A,\, B\in u^{-1}(\u P)$, and let $\sigma$ and $\tilde\sigma$ be two curves adapted to $\widetilde\G$, both containing $A$ and $B$. Then, the order of the points $v_\sigma(A)$ and $v_\sigma(B)$ in the segment of $\widetilde\G$ containing $\u P$ is the same as that of $v_{\tilde\sigma}(A)$ and $v_{\tilde\sigma}(B)$, where $v_\sigma$ and $v_{\tilde\sigma}$ are as in Definition~\ref{defvsigma}.
\end{lemma}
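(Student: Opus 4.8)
The plan is as follows. If $A=B$ there is nothing to prove, so assume $A\neq B$; then $u^{-1}(\u P)$ has more than one point, so by Lemma~\ref{lm:direction} the differentials $Du(A)$ and $Du(B)$ both have rank one, with kernels spanned by the unit vectors $\nu(A)$ and $\nu(B)$. By~(\ref{tan}), near $A$ the set $u^{-1}(\u P)$ is squeezed inside a thin cone around the line $A+\R\nu(A)$; hence a small punctured disc around $A$, minus $u^{-1}(\u P)$, splits into exactly two components, which we call the $+$ and $-$ sides at $A$ — an intrinsic datum of $u^{-1}(\u P)$ — and similarly at $B$. For any curve $\tau$ compatible with $\u P$ through $A$, the vector $D(u\circ\tau)(\tau^{-1}(A))$ is a non-zero multiple of the characteristic vector at $\u P$ (Definition~\ref{chaver}), and its sign records which of the two sides $\tau$ enters immediately after passing through $A$; the same at $B$.

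The first step is to unwind Definition~\ref{whereupts}. Along the chain $P_1,\dots,P_N$ of Lemma~\ref{order}, Corollary~\ref{connopp} forces the directions of $D(u\circ\sigma)(\sigma^{-1}(P_k))$ to alternate, and the left/right connectibility types of consecutive pairs alternate as well; carrying out the resulting (elementary but bookkeeping-heavy) computation shows that, writing $A=P_i$ and $B=P_j$, the relative order of $v_\sigma(A)=\u P_i$ and $v_\sigma(B)=\u P_j$ on the side depends only on which side $\sigma$ enters immediately after $A$ and on whether $A$ and $B$ are left- or right-connectible along $\sigma$ (equivalently, only on the side at $B$ and on the connectibility type). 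Thus the Lemma reduces to showing that the pair consisting of this one local sign and the connectibility type is the same for $\sigma$ as for $\tilde\sigma$.

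The second step is to reduce to curves meeting $u^{-1}(\u P)$ only at $A$ and $B$. Applying Lemma~\ref{inclsegm} — or Lemma~\ref{onlytwo} iterated along the chain of Lemma~\ref{order}, when $A$ and $B$ are not connectible along $\sigma$ — to short segments of $\sigma$ around $A$ and around $B$, one produces a compatible curve $\gamma_\sigma$ that meets $u^{-1}(\u P)$ only at $A$ and $B$, coincides with $\sigma$ near these two points, and realises the same connectibility type between $A$ and $B$ (this being precisely the role of those two lemmas, which transport both the short segments and, through Lemma~\ref{onlytwo}, the left/right relation). By the first step, $v_{\gamma_\sigma}$ and $v_\sigma$ order $A$ and $B$ in the same way. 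Performing the same reduction on $\tilde\sigma$, it remains to prove the Lemma for two compatible curves $\gamma$ and $\tilde\gamma$ that meet $u^{-1}(\u P)$ only at $A$ and $B$.

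This last comparison is the crux. Because $u^{-1}(\u P)$ is connected and does not disconnect $\Q$, while $\gamma$ joins $A$ to $B$ and meets $u^{-1}(\u P)$ only there, the arc of $\gamma$ between $A$ and $B$, together with $u^{-1}(\u P)$, bounds a ``bigon''-type region whose two sides are, near $A$, one of the two components at $A$ and, near $B$, one of the two components at $B$. I would make this precise exactly as in Lemmas~\ref{1or2inter} and~\ref{inclsegm}: enclose $u^{-1}(\u P)$ by a ${\rm C}^1$ closed curve $\theta$ at distance $\eps$ (Lemma~\ref{sard}), and follow on which sub-arc of $\theta$ the curve $\gamma$ leaves near $A$ and re-enters near $B$, using Lemma~\ref{1or2inter} to control the number of components of $\Q\setminus(u^{-1}(\u P)\cup\gamma)$. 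The outcome is that, once the orientation of the curve at $A$ is fixed, both the side at $B$ and the connectibility type of $A,B$ are determined by $A$, $B$ and $u^{-1}(\u P)$ alone, hence agree for $\gamma$ and $\tilde\gamma$; reversing the orientation at $A$ flips simultaneously the side at $A$, the side at $B$ and the order of the two marked points, so it is harmless. Combined with the first step this proves the Lemma. The main obstacle is exactly this topological step, together with keeping the sign and left/right conventions of Definition~\ref{whereupts} straight throughout; the remaining parts follow the patterns already used in Section~\ref{sec:lr}.
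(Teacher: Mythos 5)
Your proposal follows essentially the same route as the paper's proof: reduce via Lemma~\ref{inclsegm} to an auxiliary curve meeting $u^{-1}(\u P)$ only at $A$ and $B$, transport the left/right datum through the connected components of $u^{-1}(\u P)$ minus the curve, and compare tangent signs at $A$ using Lemma~\ref{lm:direction}. The paper avoids most of your ``bookkeeping-heavy'' Step~1 by observing directly that the union $\Gamma_i\cup\dots\cup\Gamma_{j-1}$ of the chain components is a connected subset of $u^{-1}(\u P)$ joining $A$ and $B$ that meets the auxiliary curve only there, hence lies on a single side of it — this also fixes the imprecision in your phrase ``connectibility type of $A$ and $B$ along $\sigma$'', which is not defined when $A=P_i$ and $B=P_j$ are not adjacent in the chain.
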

\begin{proof}
Since $\sigma$ is adapted to $\widetilde\G$, it is compatible with $\u P$ and we can apply Lemma~\ref{order} to $\sigma$ so to write $u^{-1}(\u P)=\{ P_1,\, P_2,\, \dots \,,\, P_N\}$ . Up to exchange $A$ and $B$, there are $1\leq i < j \leq N$ such that $A=P_i$ and $B=P_j$. Just to fix the ideas, we assume that $P_i$ is right connected with $P_{i+1}$ (of course, the symmetric argument works in the opposite case). Then, by Definition~\ref{whereupts} we know that $v_\sigma(B)$ is on the ``right'' of $v_\sigma(A)$, where the right, in the segment of $\widetilde\G$ containing $\u P$, is intended with respect to $\omega=D(u\circ \sigma)(\sigma^{-1}(A))$. For every $1\leq l\leq N-1$ such that $i-l$ is even (resp., odd), let us call $\Gamma_l$ the connected component of $u^{-1}(\u P)\cap \sigma^+$ (resp., $u^{-1}(\u P)\cap\sigma^-$) which contains $P_l$ and $P_{l+1}$.\par

Let now $\gamma:[0,1]\to\Q$ be a curve, compatible with $\u P$, which intersects $u^{-1}(\u P)$ exactly at $A$ and $B$ (such a curve surely exists by Lemma~\ref{inclsegm}). Up to reverse the orientation of $\gamma$, we can assume that $\Gamma_i\subseteq\gamma^+$. But then, the set $\mathcal U=\Gamma_i \cup \Gamma_{i+1} \cup \cdots \cup \Gamma_{j-1}$ is a connected subset of $u^{-1}(\u P)$, which intersects $\gamma$ exactly at $A$ and $B$; since $\Gamma_i$ is contained in $\gamma^+$, then the same is true for the whole $\mathcal U$, which proves that $A$ and $B$ are right-connectible with respect to $\gamma$. Then, defining $v_\gamma$ according to Definition~\ref{defvsigma}, we get that $v_\gamma(B)$ is on the right of $v_\gamma(A)$: notice that, this time, the ``right'' on the segment of $\widetilde\G$ containing $\u P$ must be intended with respect to $\omega'=D(u\circ\gamma)(\gamma^{-1}(A))$, not with respect to $\omega$. Nevertheless, we can readily observe that the two things are equivalent; in fact, we can show that the non-zero vectors $\omega$ and $\omega'$ are parallel and with the same direction. Indeed, by Lemma~\ref{lm:direction}, there is a direction $\nu\in\S^1$ such that $Du(A)\nu=0$, and points of $u^{-1}(\u P)$ close to $A$ have direction, with respect to $A$, converging to $\pm\nu$. As a consequence, calling $\nu^\perp$ a vector orthogonal to $\nu$, and calling $\alpha=Du(A)\nu^\perp\neq 0$, we have that
\begin{align*}
\omega=D(u\circ \sigma)(\sigma^{-1}(A))=Du(A) \sigma'(\sigma^{-1}(A)) = \alpha \sigma'(\sigma^{-1}(A))\cdot \nu^\perp\,, &&
\omega'=\alpha \gamma'(\gamma^{-1}(A))\cdot \nu^\perp\,.
\end{align*}
And in turn, the scalar products of $\sigma'(\sigma^{-1}(A))$ and $\gamma'(\gamma^{-1}(A))$ with $\nu^\perp$ have the same sign, since $\Gamma'_i$ is both in $\sigma^+$ and in $\gamma^+$.\par
Summarizing, we have proved that the order of the images of $A$ and $B$ under $v_\sigma$ is the same as under $v_\gamma$, where $\gamma$ is any curve, compatible with $\u P$, intersecting $u^{-1}(\u P)$ exactly at $A$ and $B$. But then, the very same argument shows that also the order of the images of $A$ and $B$ under $v_{\tilde\sigma}$ is the same as under $v_\gamma$, hence in turn also as under $v_\sigma$. The proof is then concluded.
\end{proof}

We can now show that the orientation of points on the boundary of squares of $\widetilde\G$ is consistent, in the following sense.

\begin{lemma}\label{4distinct}
Let $\widetilde\G\subseteq\u\Q$ be a grid, let $\sigma:[0,1]\to\Q$ be adapted to $\widetilde\G$, let $\u\RR$ be a rectangle of the grid $\widetilde\G$, and let $A,\, B,\, C,\, D$ be four distinct points on $\sigma$ such that the arcs $\arc{AB}$ and $\arc{CD}$ are disjoint, and their images under $u$ are in ${\rm Int}(\u\RR)$ except the four points $\u A=u(A),\, \u B=u(B),\, \u C=u(C),\, \u D=u(D)$ on $\partial\u\RR$. If $\u A\neq \u B$, then the points $\u C$ and $\u D$ cannot belong to the two different open paths in which $\partial\u\RR$ is divided by $\u A$ and $\u B$.
\end{lemma}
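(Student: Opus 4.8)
The plan is to argue by contradiction. It is enough to treat the case in which $\u A,\u B,\u C,\u D$ are four distinct points of $\partial\u\RR$: if two of them coincide, then at least one of $\u C,\u D$ fails to lie in an open arc of $\partial\u\RR\setminus\{\u A,\u B\}$ and there is nothing to prove. So assume these four points are distinct and, against the claim, that $\u C$ lies in the open arc $\u J_1^\circ$ and $\u D$ in the open arc $\u J_2^\circ$, where $\u J_1^\circ,\u J_2^\circ$ are the two components of $\partial\u\RR\setminus\{\u A,\u B\}$. Since $\arc{AB}$ and $\arc{CD}$ are disjoint sub-arcs of $\sigma$, the four points are met along $\sigma$ in the cyclic order $A,B,C,D$ (up to relabelling and re-orientation); moreover $u$ is continuous on $\sigma\subseteq\Q\setminus\SS_u$, so $\u\Gamma:=u(\arc{AB})$ and $u(\arc{CD})$ are continua contained in $\overline{\u\RR}$ that meet $\partial\u\RR$ exactly at $\{\u A,\u B\}$ and $\{\u C,\u D\}$ respectively.

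Two ingredients go into the contradiction. The first is the classical crosscut fact: a continuum $\u\Gamma\subseteq\overline{\u\RR}$ with $\u\Gamma\cap\partial\u\RR=\{\u A,\u B\}$ separates $\overline{\u\RR}$, with $\u J_1^\circ$ and $\u J_2^\circ$ lying in two distinct connected components $\u\Omega_1\supseteq\u J_1^\circ$ and $\u\Omega_2\supseteq\u J_2^\circ$ of $\overline{\u\RR}\setminus\u\Gamma$; indeed, a path in $\overline{\u\RR}\setminus\u\Gamma$ joining $\u J_1^\circ$ to $\u J_2^\circ$ would contain a crosscut of $\overline{\u\RR}$ separating $\u A$ from $\u B$, against the connectedness of $\u\Gamma$. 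In particular the continuum $u(\arc{CD})$, which joins $\u C\in\u\Omega_1$ to $\u D\in\u\Omega_2$, must meet $\u\Gamma$. The second ingredient is the standing hypothesis applied to $\u Z:=\u\Q\setminus{\rm Int}(\u\RR)$, which is closed, connected and does not disconnect $\u\Q$: hence $u^{-1}(\u Z)$ is closed, connected and does not disconnect $\Q$, and since $\u A,\u B,\u C,\u D\in u^{-1}(\partial\u\RR)\subseteq u^{-1}(\u Z)$, the four points $A,B,C,D$ all lie in the connected set $u^{-1}(\u Z)$, whereas $\arc{AB}\setminus\{A,B\}$ and $\arc{CD}\setminus\{C,D\}$ lie in its complement $\Q\setminus u^{-1}(\u Z)$.

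The next step is to close $\sigma$ into a Jordan curve adapted to this picture: connecting $B$ to $C$ and $D$ to $A$ by two injective, admissible arcs $\mu',\mu\subseteq\Q\setminus\SS_u$ lying in (a thin neighbourhood of) the connected set $u^{-1}(\u Z)$, disjoint from $\arc{AB}\setminus\{A,B\}$, from $\arc{CD}\setminus\{C,D\}$ and, after a shortcut as in the proof of Lemma~\ref{inclsegm}, from each other, one obtains a counterclockwise Jordan curve $J=\arc{AB}\cup\mu'\cup\arc{CD}\cup\mu$ on which $u^*$ is continuous and with $u^*(J)\cap{\rm Int}(\u\RR)=\big(\u\Gamma\cup u(\arc{CD})\big)\cap{\rm Int}(\u\RR)$. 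By Lemma~\ref{ThCINV}, $\deg(\u Q,u^*\res J)\in\{0,1\}$ for all $\u Q\notin u^*(J)$. The core of the argument is then a degree count inside ${\rm Int}(\u\RR)$: there the function $\u Q\mapsto\deg(\u Q,u^*\res J)$ can change only across $\u\Gamma\cup u(\arc{CD})$, so one uses the separation of $\u\Omega_1$ from $\u\Omega_2$ by $\u\Gamma$ together with the fact that $u(\arc{CD})$ crosses $\u\Gamma$ to exhibit a point of ${\rm Int}(\u\RR)$ at which the degree is forced to be at least $2$, contradicting Lemma~\ref{ThCINV}. Equivalently, one may fix an intersection point $\u Y\in\u\Gamma\cap u(\arc{CD})\subseteq{\rm Int}(\u\RR)$, note that the fibre $\C:=u^{-1}(\u Y)$ is connected, does not disconnect $\Q$ and contains two distinct points $s\in\arc{AB}$, $t\in\arc{CD}$, split $J$ at $s,t$ into two loops based at $\u Y$ whose winding numbers add up to $\deg(\,\cdot\,,u^*\res J)\le1$, and derive the contradiction from the linked position of $\u A,\u C,\u B,\u D$. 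I expect the delicate point, where most of the work lies, to be precisely this last step: controlling the behaviour of $u^*$ on the auxiliary rerouting arcs $\mu,\mu'$ — equivalently, the winding contributions of the two loops in $\u\Q\setminus\overline{\u\RR}$ — well enough that the degree bookkeeping near $\u\Gamma$ cleanly produces a point of degree $\ge2$; the reduction, the crosscut fact and the construction of $J$ are all routine once this is understood.
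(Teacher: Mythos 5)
Your reduction, the construction of a Jordan curve $J$ through $A,B,C,D$ by rerouting outside the counter-image of $\u\RR$, and the appeal to Lemma~\ref{ThCINV} all line up with the paper's approach. The gap is in the degree count itself, which is the step where the contradiction has to materialize, and which you carry out in the wrong place. You propose to locate a point of ${\rm Int}(\u\RR)$ of degree $\ge 2$ by combining the crosscut separation of ${\rm Int}(\u\RR)$ by $\u\Gamma=u(\arc{AB})$ with the forced crossing of $u(\arc{CD})$ across $\u\Gamma$. Inside ${\rm Int}(\u\RR)$, however, $\u\Gamma$ and $u(\arc{CD})$ are merely continua (images of admissible arcs under a $W^{1,p}$ map): they need not be curves, their intersection need not be transversal or even made of isolated points, and there is no generic-position structure from which to read off a degree jump. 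Turning ``one crossing'' into ``degree $2$ somewhere'' in this generality is precisely the hard part, and it is left undone.

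The paper avoids the interior entirely and does the degree count on $\partial\u\RR$, where everything is under control by construction: since the rerouting arcs avoid $u^{-1}(\u\RR)$, the set $u^*(J)\cap\partial\u\RR$ consists of exactly the four points $\u A,\u B,\u C,\u D$, and at each of these the crossing of $u\circ\sigma$ with $\partial\u\RR$ is transversal with nonzero tangential derivative (this is built into the definitions of good arrival grid and adapted curve). Hence the degree $\deg(\cdot,u^*\res J)$ jumps by exactly $\pm1$ across each of the four points. As $u\circ J$ is \emph{exiting} $\u\RR$ at both $\u B$ and $\u D$, and the assumption that $\u C,\u D$ lie on opposite arcs of $\partial\u\RR\setminus\{\u A,\u B\}$ supplies an arc $\theta\subseteq\partial\u\RR$ from $\u B$ to $\u D$ avoiding $\u A,\u C$ (hence avoiding $u^*(J)$), one reads off directly a difference of $2$ between two degree values, contradicting the $\{0,1\}$ constraint of Lemma~\ref{ThCINV}. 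Your crosscut ingredient is correct but superfluous in this scheme. Finally, the worry you flag about ``controlling $u^*$ on $\mu,\mu'$'' evaporates if, as the paper does, the rerouting arcs are chosen inside the open connected set $\Q\setminus u^{-1}(\u\RR)$: then $u^*(\mu)\cup u^*(\mu')$ is disjoint from all of $\u\RR$. Your choice of ``a thin neighbourhood of $u^{-1}(\u Z)$'' with $\u Z=\u\Q\setminus{\rm Int}(\u\RR)$ is weaker — it does not prevent the image of $\mu,\mu'$ from meeting $\partial\u\RR$, which would destroy the finiteness of $u^*(J)\cap\partial\u\RR$ on which the clean boundary count relies.
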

\begin{proof}
Since $\sigma$ is a segment near the four points $A,\, B,\, C$ and $D$, then it contains four small disjoint segments $A_{\rm int}A_{\rm ext}$, $B_{\rm int}B_{\rm ext}$, $C_{\rm int}C_{\rm ext}$ and $D_{\rm int}D_{\rm ext}$ around them, and the image of these segments under $u$ crosses $\partial\u\RR$ at $\u A,\, \u B,\, \u C$ and $\u D$ respectively, being $u(X_{\rm int})$ inside $\u\RR$ and $u(X_{\rm ext})$ outside $\u\RR$ for each $X\in \{A,\,B,\,C,\,D\}$. By assumption, the set $\mathcal D=u^{-1}(\u\RR)$ is a closed, connected subset of $\Q$ which does not disconnect $\Q$, hence we can find two injective and continuous paths $\gamma_1,\,\gamma_2:[0,1]\to\Q\setminus \mathcal D$ connecting $B_{\rm ext}$ with $C_{\rm ext}$, and $D_{\rm ext}$ with $A_{\rm ext}$ respectively. Arguing as in Lemma~\ref{inclsegm}, we can assume that $\gamma_1$ and $\gamma_2$ are disjoint, up to exchange $C$ with $D$; we can clearly also assume without loss of generality that $\gamma_1$ and $\gamma_2$ do not intersect the segments $X_{\rm ext}X$ for $X\in\{A,\,B,\,C,\,D\}$, nor the set $\SS_u$ (keep in mind that $\SS_u$ is $\H^1$-negligible). Since the arcs $\arc{AB}$ and $\arc{CD}$ are disjoint, and by assumption they belong to $\mathcal D$, then the closed curve $\gamma:\S^1\to\Q\setminus\SS_u$ obtained joining $\arc{A_{\rm ext}B_{\rm ext}}$ with $\gamma_1$, then $\arc{C_{\rm ext}D_{\rm ext}}$ and then $\gamma_2$ is injective. As a consequence, by Lemma~\ref{ThCINV}, for every point of $\u\Q\setminus u(\gamma)$ the degree with respect to $u(\gamma)$ must be either $0$ or $1$.\par

Notice now that $u(\gamma)\cap\partial\u\RR$ consists precisely of the points $\u A,\,\u B,\, \u C$ and $\u D$. Therefore, since $u\circ\gamma$ is exiting from $\partial\u\RR$ at $\u B$, we have that
\begin{equation}\label{B-+}
\deg(\u B^+,\, u(\gamma))=\deg(\u B^-,\, u(\gamma))-1\,,
\end{equation}
where $\u B^+$ and $\u B^-$ are two points on $\partial\u\RR$, very close to $\u B$, and in such a way that $\u B^+$ and $\u B^-$ are respectively on the right and of the left of $\u B$, with respect to the direction of $u\circ\gamma$ at $\u B$. Similarly, since $u\circ\gamma$ is exiting from $\partial\u\RR$ also at $\u D$, calling $\u D^+$ and $\u D^-$ two points on $\partial\u\RR$, close to $\u D$ and respectively at its right and at its left, we have that
\begin{equation}\label{D-+}
\deg(\u D^+,\, u(\gamma))=\deg(\u D^-,\, u(\gamma))-1\,.
\end{equation}
Let us now assume that $\u C$ and $\u D$ belong to the two different open paths in which $\partial\u\RR$ is divided by $\u A$ and $\u B$. As a consequence, there is a path $\theta$ in $\partial\u\RR$ connecting $\u B$ and $\u D$ without intersecting $\u A$ and $\u D$; then, $\theta$ must contain either both $\u B^+$ and $\u D^-$, or both $\u B^-$ and $\u D^+$. In the first case, the part of $\theta$ between $\u B^+$ and $\u D^-$ is entirely contained in $\u\Q\setminus u(\gamma)$, hence $\deg(\u B^+,\, u(\gamma))=\deg(\u D^-,\, u(\gamma))$, so by~(\ref{B-+}) and~(\ref{D-+}) we find $\deg(\u B^-,\, u(\gamma))=\deg(\u D^+,\, u(\gamma))+2$, which is absurd because the degrees can only be $0$ or $1$ (or only $0$ and $-1$) by the INV condition. In the second case, the analogous argument shows $\deg(\u B^+,\, u(\gamma))=\deg(\u D^-,\, u(\gamma))-2$, so again a contradiction.
\end{proof}

The above lemma says, roughly speaking, that the orientation of the points on the boundaries of the squares of $\widetilde\G$ is ``correct''; nevertheless, the lemma is useless if the images of some of the points coincide, which is in fact the case that we have to deal with more care. The next result can also treat the case when two points have the same image, if they are connectible.

\begin{lemma}\label{correctorientation}
Let $\widetilde\G\subseteq\u\Q$ be a grid, $\sigma:[0,1]\to\Q$ be adapted to $\widetilde\G$, $\u R\in\widetilde\G$, $P,\, Q\in u^{-1}(\u R)\cap\sigma$, and assume that $P$ and $Q$ are left-connectible. Since $D(u\circ\sigma)$ is opposite at $\sigma^{-1}(P)$ and $\sigma^{-1}(Q)$, let $\u\RR$ be the rectangle of $\widetilde\G$ in which $u\circ \sigma$ is entering at $\sigma^{-1}(P)$ and from which it is exiting at $\sigma^{-1}(Q)$, and let $P^+$ (resp., $Q^-$) be the first point on $\sigma$ after $P$ (resp., the last point before $Q$) at which $u\circ\sigma$ is exiting from $\u\RR$ (resp., entering in $\u\RR$). If the three points $\u P^+=u(P^+)$, $\u Q^-=u(Q^-)$ and $\u R$ on $\partial\u\RR$ are distinct, then the path on $\partial\u\RR$ connecting $\u R$ and $\u P^+$ without intersecting $\u Q^-$ starts at $\u R$ towards right, with respect to $D(u\circ \sigma)(\sigma^{-1}(P))$.
\end{lemma}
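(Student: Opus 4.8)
The plan is to argue by degree, along the lines of the last part of the proof of Lemma~\ref{4distinct}; the new feature is that here $u(P)=u(Q)=\u R$, so the information which is missing in Lemma~\ref{4distinct} (its conclusion being empty when two of the four image points coincide) has to be extracted from the left-connectibility of $P$ and $Q$. Set $v:=D(u\circ\sigma)(\sigma^{-1}(P))\neq 0$. Since $\sigma$ is adapted to $\widetilde\G$, the vector $v$ is transverse to the side of $\widetilde\G$ through $\u R$ and points into $\u\RR$; by Corollary~\ref{connopp}, $D(u\circ\sigma)(\sigma^{-1}(Q))$ is parallel to $v$ with the opposite orientation, so $u\circ\sigma$ leaves $\u R$ tangentially to $v$ at $\sigma^{-1}(P)$ and arrives at $\u R$ (from the $\u\RR$-side, again tangentially to $v$) at $\sigma^{-1}(Q)$. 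Accordingly, ``right with respect to $v$'' along $\partial\u\RR$ is the direction obtained by rotating $v$ by $-\pi/2$ and projecting onto the side through $\u R$. Left-connectibility provides a connected component $\Gamma$ of $u^{-1}(\u R)\cap\sigma^-$ containing both $P$ and $Q$; it is closed, it does not disconnect $\Q$, one has $u(\Gamma)=\{\u R\}$, and by Corollary~\ref{conncomp<2} it meets $\sigma$ only at $P$ and $Q$.

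Next I would build an injective closed curve $\gamma^*:\S^1\to\Q\setminus\SS_u$. Since $\u\RR$ does not disconnect $\u\Q$, the hypothesis on $u$ makes $u^{-1}(\u\RR)$ a closed, connected set which does not disconnect $\Q$, so $\Q\setminus u^{-1}(\u\RR)$ is open and path connected. Moreover $P^+\neq Q^-$ (else $\u P^+=\u Q^-$), and the closed parameter intervals $[\sigma^{-1}(P),\sigma^{-1}(P^+)]$ and $[\sigma^{-1}(Q^-),\sigma^{-1}(Q)]$ are disjoint: their interiors are sent by $u\circ\sigma$ into ${\rm Int}(\u\RR)$, while $P,\,P^+,\,Q^-,\,Q$ are sent to $\partial\u\RR$, so an overlap would put one of these four points in the interior of the other interval. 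I let $\gamma^*$ be the concatenation of: the arc of $\sigma$ from $P$ along $\arc{PP^+}$ to a point $P^+_{\rm ext}$ just past $P^+$ with $u(P^+_{\rm ext})\notin\u\RR$; a path $\rho_1\subseteq\Q\setminus u^{-1}(\u\RR)$ from $P^+_{\rm ext}$ to a point $Q^-_{\rm ext}$ just before $Q^-$ with $u(Q^-_{\rm ext})\notin\u\RR$; the arc of $\sigma$ from $Q^-_{\rm ext}$ along $\arc{Q^-Q}$ to $Q$; and a path $\rho_2$ from $Q$ back to $P$ contained in $\sigma^-$, meeting $\sigma$ only at $P$ and $Q$, and lying in an arbitrarily small neighbourhood of $\Gamma$ (the latter produced by surrounding $\Gamma$ with a ${\rm C}^1$ closed curve via Lemma~\ref{sard} and joining its endpoints to $P$ and $Q$ inside $\sigma^-$). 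Using $\H^1(\SS_u)=0$, the disjointness of the two $\sigma$-arcs, and the path-connectedness of $\Q\setminus u^{-1}(\u\RR)$, one may choose $\rho_1$ and $\rho_2$ so that $\gamma^*$ is injective and Lipschitz, exactly as in Lemmas~\ref{inclsegm} and~\ref{4distinct}.

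Then I would read off the degree. By construction $u(\arc{PP^+})$ and $u(\arc{Q^-Q})$ lie in $\u\RR$ with interiors in ${\rm Int}(\u\RR)$ and endpoints $\u R,\,\u P^+$ and $\u Q^-,\,\u R$; along the short extensions $u\circ\gamma^*$ crosses $\partial\u\RR$ transversally only at $\u P^+$ and at $\u Q^-$; $u(\rho_1)\subseteq\u\Q\setminus\u\RR$; and $u(\rho_2)$ is contained in a ball $B(\u R,\delta)$ with $\delta>0$ as small as we wish. Since a loop based at $\u R$ inside the disk $B(\u R,\delta)$ is contractible there, $u\circ\gamma^*$ is homotopic in $\u\Q$, by a homotopy that moves only points of $B(\u R,\delta)$, to the closed curve $\hat\gamma$ obtained by concatenating $u(\arc{PP^+})$, $u(\rho_1)$, $u(\arc{Q^-Q})$ and closing up directly at $\u R$; hence $\deg(\cdot,u\circ\gamma^*)=\deg(\cdot,\hat\gamma)$ at every point outside $\overline{B(\u R,\delta)}$. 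On the other hand $\gamma^*$ is injective and $u^*$ is continuous on it, so Lemma~\ref{ThCINV} gives $\deg(\cdot,u\circ\gamma^*)\in\{0,1\}$ (or $\{0,-1\}$, according to the orientation of $\gamma^*$). Assume now, towards a contradiction, that the arc of $\partial\u\RR$ joining $\u R$ to $\u P^+$ and avoiding $\u Q^-$ left $\u R$ towards the left with respect to $v$. Then, reasoning exactly as at the end of the proof of Lemma~\ref{4distinct} — with $\hat\gamma$ exiting $\u\RR$ at $\u P^+$, entering it at $\u Q^-$, and leaving and arriving at $\u R$ along $v$ on the $\u\RR$-side — one produces two adjacent test points at distance $\approx\delta$ from $\u R$ whose degrees with respect to $\hat\gamma$, hence with respect to $u\circ\gamma^*$, differ by $2$; this is incompatible with $\deg(\cdot,u\circ\gamma^*)\in\{0,1\}$. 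Therefore that arc leaves $\u R$ towards the right, which is the assertion.

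The step I expect to be the main obstacle is the local picture near $\u R$. On one hand one has to control $u(\rho_2)$: since $\rho_2$ hugs $\Gamma$ and $u(\Gamma)=\{\u R\}$, one wants $u(\rho_2)$ to stay close to $\u R$, but a clean statement of this requires ruling out (or reabsorbing) ``cavitation'' of the multivalued map $u$ along $\Gamma$; and one must check that, at scale $\delta$, the branches $u(\arc{PP^+})$ and $u(\arc{Q^-Q})$ genuinely look like the two prongs of $\hat\gamma$ leaving and arriving tangentially to $v$ on the $\u\RR$-side, with nothing else crossing $\partial B(\u R,\delta)$ there — this uses that $P$ and $Q$ are Lebesgue points for $Du$, so $u\circ\sigma$ is differentiable with non-zero derivative there, together with a first-moment estimate preventing these branches from re-entering $B(\u R,\delta)$. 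On the other hand, once this local picture is secured, the degree bookkeeping near $\u R$ — deciding on which side of $\u R$ the points $\u P^+$ and $\u Q^-$ must lie — is precisely the mechanism of the final part of the proof of Lemma~\ref{4distinct}, now ``doubled'' at $\u R$; and the uncontrolled loop $u(\rho_2)$ does not affect the count because the test points are taken outside $\overline{B(\u R,\delta)}$, where $u\circ\gamma^*$ and $\hat\gamma$ have the same winding. The verification that $\gamma^*$ can be made injective, and the placement of the test points, are routine, following the patterns of Lemmas~\ref{inclsegm}, \ref{4distinct} and~\ref{daichevale}.
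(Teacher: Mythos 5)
Your proof takes a genuinely different route from the paper's, and I believe there are two problems with it — one which you yourself flag, and a second, more structural one which you do not.

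The paper's Step~I builds a closed, injective, admissible, \emph{counterclockwise} curve $\gamma$ containing the two arcs $\arc{PP^+}$, $\arc{Q^-Q}$, closing up along $\partial\Q$ via paths in $\Q\setminus u^{-1}(\u\RR)$; by construction $\gamma^{-1}(\u\RR)=\arc{PP^+}\cup\arc{Q^-Q}$. In Step~II, assuming the orientation is wrong, the paper looks at the far arc $\u\C\subseteq\partial\u\RR$ joining $\u P^+$ to $\u Q^-$ and avoiding $\u R$: the degree of $u\circ\gamma$ on $\u\C\setminus\{\u P^+,\u Q^-\}$ is determined to be $0$ (using that $u\circ\gamma$ exits at $\u P^+$, enters at $\u Q^-$, and the orientation hypothesis), hence by INV the connected set $\C=u^{-1}(\u\C)$ lies in $\gamma^+$. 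But $\Gamma\subseteq\gamma^-$ connects $P$ and $Q$, and $u^{-1}(\u\RR)$ is connected and does not disconnect $\Q$, and contains $\Gamma$, $\C$, and both arcs; so it would have to contain the subarc $\arc{P^+Q^-}$ of $\gamma$ — impossible since $\gamma$ is outside $u^{-1}(\u\RR)$ there. The degree is used only to place $\C$ on the correct side of $\gamma$; the contradiction itself is a \emph{connectivity} one.

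Your proof instead closes the curve through $\rho_2$ hugging $\Gamma$, contracts $u(\rho_2)$ to $\u R$, and then tries to get a \emph{degree} contradiction near $\u R$. There are two issues. First, the acknowledged one: you need $u(\rho_2)\subseteq B(\u R,\delta)$, but $\Gamma\cap\SS_u$ need not be empty, and at such points $u$ can cavitate; continuity of $u$ on $\Q\setminus\SS_u$ and $u(\Gamma\setminus\SS_u)=\{\u R\}$ do \emph{not} give a uniform bound on $u(\rho_2)$ no matter how closely $\rho_2$ hugs $\Gamma$. The paper avoids this entirely by closing along $\partial\Q$ rather than near $\Gamma$.

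Second, and more fundamentally: the degree bookkeeping near $\u R$ does not, by itself, produce a contradiction. The contracted curve $\hat\gamma$ \emph{touches but does not cross} $\partial\u\RR$ at $\u R$ — both prongs leave $\u R$ tangentially to $v$ and into $\u\RR$. Hence the degree of $\hat\gamma$ on $\partial\u\RR$ is \emph{constant} across $\u R$; calling $d_1,d_2,d_3$ the constant degrees on the three arcs of $\partial\u\RR\setminus\{\u R,\u P^+,\u Q^-\}$ (short arc to $\u P^+$, short arc to $\u Q^-$, far arc), one gets $d_1=d_2$, $|d_1-d_3|=|d_2-d_3|=1$, and $d_i\in\{0,1\}$. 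Both triples $(1,1,0)$ and $(0,0,1)$ satisfy all of these, in both orientation hypotheses; there are simply no two adjacent points on $\partial\u\RR$ whose degrees differ by $2$, and a test point between the two prongs inside $\u\RR$ has degree $d_1\pm1$, where the sign depends on the relative order of the prongs near $\u R$ — information which cannot be read off the local picture, precisely because the two prongs are tangent to each other there. This is exactly why the mechanism of Lemma~\ref{4distinct} does not transfer: that lemma has four \emph{distinct} transversal crossings of $\partial\u\RR$, whereas here two of them coincide at $\u R$ and cancel. The orientation hypothesis does fix $d_3$ (indeed $d_3=0$ in the contradictory case, $d_3=1$ in the correct one), but that alone is not contradictory; the missing information has to be extracted, as the paper does, from one more application of the hypothesis that $u^{-1}$ of connected non-disconnecting sets is connected and non-disconnecting, applied to $\u\C$ and to $\u\RR$.
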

\begin{proof}
The geometrical meaning of the claim might seem a bit obscure, but it becomes quite evident with the help of a figure. Figure~\ref{Fig:4} (right) shows the rectangle $\u\RR$, with the points $\u R,\, \u P^+$ and $\u Q^-$. Notice that, since $u\circ\sigma$ is entering in $\u\RR$ at $\sigma^{-1}(P)$, then the vector $D(u\circ \sigma)(\sigma^{-1}(P))$ has a positive vertical component if, as in the figure, we assume just to fix the ideas that $\u R$ is on the bottom side of $\u\RR$. In the situation of the figure, the path connecting $\u R$ with $\u P^+$ on $\partial\u\RR$ without intersecting $\u Q^-$ starts at $\u R$ towards \emph{left}: then, the claim of the lemma says that the situation of the figure is impossible, because the order of the points $\u P^+$ and $\u Q^-$ should be the opposite. Notice that, since $P$ and $Q$ are left-connectible, then the function $v_\sigma$ of Definition~\ref{defvsigma} will have $v_\sigma(Q)$ on the left of $v_\sigma(P)$, then in the situation of the figure the map $v_\sigma$ would not be injective: this perfectly clarifies the importance of the present result. For the sake of clarity, we divide the proof in two steps.
\begin{figure}[thbp]
\input{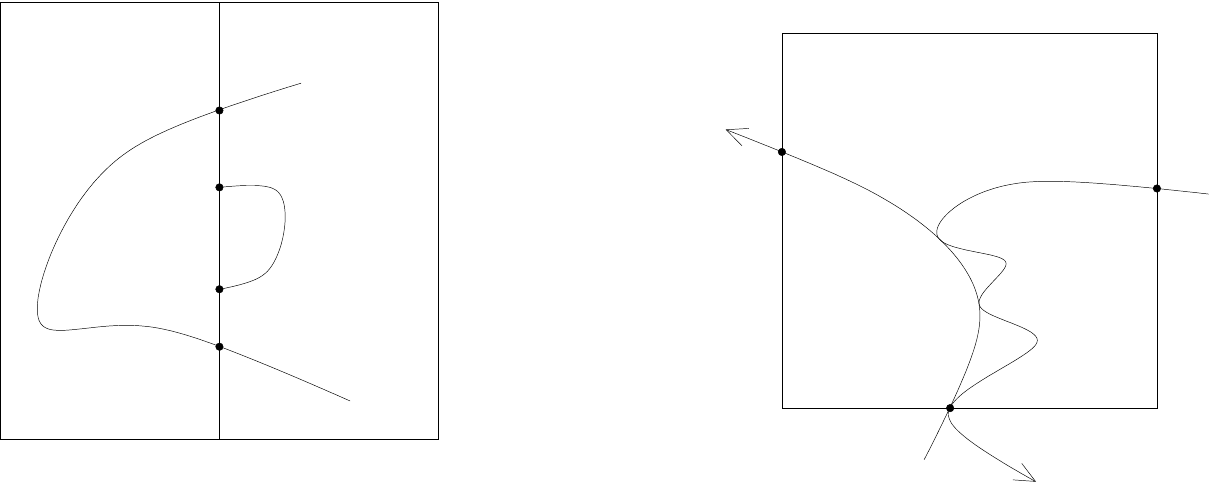_t}
\caption{The situation in Lemma~\ref{correctorientation}.}\label{Fig:4}
\end{figure}
\step{I}{There is an admissible, counterclockwise curve $\gamma:\S^1\to\Q\setminus\SS_u$, containing the arcs $\arc{PP^+}$ and $\arc{Q^-Q}$ of $\sigma$, being a segment around $P,\, P^+,\, Q^-$ and $Q$, and for which $\gamma^{-1}(\u\RR)=\arc{PP^+}\cup\arc{Q^-Q}$.}
Since, by assumption, $u^{-1}(\u\RR)$ is a closed, connected set which does not disconnect $\Q$, as usual we can find two disjoint and admissible paths $\gamma_0$ and $\gamma_2$ which connect $\partial\Q$ with $P$ and $Q$ with $\partial\Q$ respectively, which are contained in ${\rm Int}(\Q)\setminus u^{-1}(\u\RR)$ except at their endpoints, which do not intersect $\SS_u$, and which coincide with the two segments of $\sigma$ near $P$ and $Q$. We claim that there exists also a path $\gamma_1$ connecting $P^+$ and $Q^-$, contained in $\Q\setminus u^{-1}(\u\RR)$ except at $P^+$ and $Q^-$, not intersecting $\gamma_0$ and $\gamma_2$, and coinciding with the segments of $\sigma$ near $P^+$ and $Q^-$. The claim will immediately conclude the step, being $\gamma$ simply the curve obtained putting together $\gamma_0$, $\arc{PP^+}$, $\gamma_1$, $\arc{Q^-Q}$, $\gamma_2$, and a part of $\partial\Q$ connecting the last point of $\gamma_2$ with the first point of $\gamma_0$, since by definition the arcs $\arc{PP^+}$ and $\arc{Q^-Q}$ are contained in $u^{-1}(\u\RR)$. Notice that there are two paths in $\partial\Q$ which connect the last point of $\gamma_2$ with the first one of $\gamma_0$, we have to choose the ``left'' one, so that $\gamma$ is percurred in the counterclockwise sense.\par

Let us now show the claim: if it were false, then arguing again as in Lemma~\ref{inclsegm} we could find two disjoint paths $\tilde\gamma_1$ and $\tilde\gamma_2$, connecting respectively $P^+$ and $Q$, and $Q^-$ and $\partial\Q$, being in $\Q\setminus (u^{-1}(\u\RR)\cup \gamma_0)$ except at $P^+,\, Q^-$ and $Q$. In this case, we could define $\tilde\gamma$ putting together $\gamma_0$, $\arc{PP^+}$, $\tilde\gamma_1$, $\arc{QQ^-}$ and $\tilde\gamma_2$. This path would intersect $u^{-1}(\u R)$ exactly at $P$ and $Q$, which would then be connectible for $\tilde\gamma$, so the vectors $D(u\circ \tilde\gamma)$ at $\tilde\gamma^{-1}(P)$ and $\tilde\gamma^{-1}(Q)$ would be parallel and with opposite directions by Lemma~\ref{connopp}. On the other hand, also the vectors $D(u\circ\sigma)$ at $\sigma^{-1}(P)$ and $\sigma^{-1}(Q)$ must be parallel and with opposite directions for the same reason, and this brings to a contradiction because the directions of $D(u\circ \tilde\gamma)(\tilde\gamma^{-1}(P))$ and $D(u\circ\sigma)(\sigma^{-1}(P))$ are the same, while the directions of $D(u\circ \tilde\gamma)(\tilde\gamma^{-1}(Q))$ and $D(u\circ\sigma)(\sigma^{-1}(Q))$ are opposite because $\tilde\gamma$ contains the curve $\arc{QQ^-}$ instead of $\arc{Q^-Q}$.

\step{II}{Conclusion.}
We will work with the curve $\gamma$ given by Step~I, instead of the original curve $\sigma$; this curve is depicted in Figure~\ref{Fig:4} (left): as usual, just for the sake of clarity, we draw $\gamma\cap{\rm Int}(\Q)$ as a vertical segment. Let us suppose by contradiction that the claim is false, hence the relative position of the points $\u P^+$ and $\u Q^-$ is as in Figure~\ref{Fig:4} (right). Then, we call $\u{\mathcal C}$ the closed arc of $\partial\u\RR$ connecting $\u P^+$ and $\u Q^-$ without containing $\u R$. By assumption, the set $\mathcal C=u^{-1}(\u{\mathcal C})$ is a closed, connected subset of $\Q$ which does not disconnect it. Moreover, by construction $\mathcal C$ intersects $\gamma$ precisely at $P^+$ and $Q^-$. Since $u\circ\gamma$ is exiting from $\u\RR$ at $P^+$ and entering in $\u\RR$ at $Q^-$, the points of $\u{\mathcal C}\setminus \{\u P^+,\, \u Q^-\}$ have all degree $0$ with respect to the curve $\gamma$, hence the set $\mathcal C$ is contained in $\gamma^+$. Summarizing, $\mathcal C$ is a connected set in $\gamma^+$ which connects $P^+$ and $Q^-$, while by assumption there is a connected component $\Gamma$ of $u^{-1}(\u R)\cap\gamma^-$ which connects $P$ and $Q$. Let us now consider $u^{-1}(\u\RR)$: this is a connected subset of $\Q$, which does not disconnect it, and by construction it contains $\Gamma\subseteq \gamma^-$, $\mathcal C\subseteq \gamma^+$, and the two arcs $\arc{PP^+}$ and $\arc{Q^-Q}$. Therefore, it must contain the whole arc $\arc{P^+Q^-}$ of $\gamma$, which is absurd since the curve $\gamma$ is outside $\u\RR$ between $P^+$ and $Q^-$.
\end{proof}

We are now ready to show Proposition~\ref{injective}.
\proofof{Proposition~\ref{injective}}
The function $v_\sigma$ is piecewise linear; in particular, each linear piece connects two points of the grid $\widetilde\G$ (with a small abouse of notation, we refer to each generalised segment as a ``linear piece'', even if it can be actually either one or two linear pieces). Moreover, by construction the points of $v_\sigma$ which are on $\widetilde\G$ are finitely many and disjoint. As a consequence, assuming by contradiction that $v_\sigma$ is not injective, there must exist two disjoint arcs $\arc{AB}$ and $\arc{CD}$ on $\sigma$ such that $v_\sigma(\arc{AB})$ and $v_\sigma(\arc{CD})$ have a non-empty intersection. In particular, there exists a rectangle $\u\RR$ of the grid $\widetilde\G$ which contains the whole linear pieces $v_\sigma(\arc{AB})$ and $v_\sigma(\arc{CD})$. Let us call $\u A=u(A)$, $\u B=u(B)$, $\u C=u(C)$ and $\u D=u(D)$: notice that these points do not necessarily coincide with $v_\sigma(A)$, $v_\sigma(B)$, $v_\sigma(C)$ and $v_\sigma(D)$, but they are very close to them.\par

Since $u^{-1}(\u\RR)$ is a closed, connected set which does not disconnect $\Q$, as already done several times we can find an admissible path $\gamma:[0,1]\to\Q$ which contains the arc $\arc{AB}$ and either the arc $\arc{CD}$ or the arc $\arc{DC}$, which coincides with $\sigma$ on four segments around the points $A,\, B,\, C$ and $D$, and such that $u^{-1}(\u\RR)$ consists precisely of the arcs $\arc{AB}$ and $\arc{CD}$ (or $\arc{DC}$). Notice that it is admissible to assume that $\gamma$ is also adapted to $\widetilde\G$; in fact, we are only interested in the behaviour of $\sigma$, or $\gamma$, around $\u\RR$. Thanks to Lemma~\ref{well-defined}, the two generalised segments $v_\sigma(\arc{AB})$ and $v_\sigma(\arc{CD})$ have a non-empty intersection if and only if the same happens to the two generalised segments $v_\gamma(\arc{AB})$ and $v_\gamma(\arc{CD})$ (and the fact whether $\gamma$ contains the arc $\arc{CD}$ or $\arc{DC}$ has no influence at all). Hence, it is not restrictive to assume directly that $u^{-1}(\u\RR)\cap\sigma$ consists precisely of the arcs $\arc{AB}$ and $\arc{CD}$, and we will do so. We will consider separately the possible cases of the relative positions of the points $\u A,\, \u B,\, \u C$ and $\u D$.

\case{I}{One has $\{ \u A,\, \u B\} \cap \{ \u C,\, \u D\} = \emptyset$.}
We start by considering the case in which each of the points $\u A$ and $\u B$ is different from both $\u C$ and $\u D$. If the four points $\u A,\, \u B,\, \u C$ and $\u D$ are different, then by Lemma~\ref{4distinct} there are two disjoint paths on $\partial\u\RR$ which connect $\u A$ with $\u B$, and $\u C$ and $\u D$ respectively; since the four points are distinct, by construction the same is true for the points $v_\sigma(A),\, v_\sigma(B),\, v_\sigma(C)$ and $v_\sigma(D)$. Keeping in mind Definition~\ref{gensegm}, we obtain that the two generalised segments $v_\sigma(\arc{AB})$ and $v_\sigma(\arc{CD})$ have no intersection, against the assumption. To conclude this case, we assume then that the four points are not distinct. In view of the assumption $\{ \u A,\, \u B\} \cap \{ \u C,\, \u D\} = \emptyset$, this means that either $\u A=\u B$, or $\u C=\u D$, or both. Suppose for instance that $\u A=\u B$; then, the generalised segment $v_\sigma(\arc{AB})$ is extremely close to the point $\u A=\u B$; instead, the points $\u C$ and $\u D$ are both different from $\u A$, hence the generalised segment $v_\sigma(\arc{CD})$ is away from the point $\u A$. This shows again that the two generalised segments are disjoint, against the assumption; hence, this case is concluded.\par

As a consequence, we can assume that $\{ \u A,\, \u B\} \cap \{ \u C,\, \u D\} \neq \emptyset$. Keep in mind that, since $\sigma$ intersects $\u\RR$ only in the two arcs $\arc{AB}$ and $\arc{CD}$, then the curve $u\circ\sigma$ enters in $\u\RR$ at $\u A$ and exits at $\u B$, then enters again at $\u C$ and exits at $\u D$; this shows that necessarily $\u A=\u D$ or $\u B=\u C$ (or both). In fact, the case $\u A=\u C\notin\{\u B,\, \u D\}$ would contradict Lemma~\ref{opposite}, as well as $\u B=\u D\notin \{\u A,\,\u D\}$. Therefore, from now on we will assume that $\u A=\u D$, being the case $\u B=\u C$ completely analogous.

\case{II}{One has $\u A=\u D$, and the points $\u B$ and $\u C$ are different, and different from $\u A$.}
In this case, the points $v_\sigma(A)$ and $v_\sigma(D)$ are very close to $\u A$ and to each other, while $v_\sigma(B)$ and $v_\sigma(C)$ are close to $\u B$ and $\u C$, so away from each other and from $\u A$. Since $u^{-1}(\u A)\cap\sigma$ consists precisely of $A$ and $D$, the two points must be connectible; let us assume, just to fix the ideas, that they are left-connectible. We have then to show that the path in $\u\RR$ connecting $\u A$ and $\u B$ without containing $\u C$ starts at $\u A$ towards right, with respect to the direction of $D(u\circ \sigma)$ at $\sigma^{-1}(A)$. And in fact, this is precisely given by Lemma~\ref{correctorientation}.

\case{III}{One has $\u A=\u D$, $\u C=\u B$ and $\u A\neq \u B$.}
By construction, $u^{-1}(\u A)\cap\sigma$ consists only of $A$ and $D$, while $u^{-1}(\u B)$ only of $B$ and $C$. As a consequence, $A$ and $D$ are connectible in $u^{-1}(\u A)$, as well as $B$ and $C$ in $u^{-1}(\u B)$; just to fix the ideas, assume that $A$ and $D$ are right-connectible. Notice that the non-emptiness of the intersection between the two generalised segments $v_\sigma(\arc{AB})$ and $v_\sigma(\arc{CD})$ implies that $B$ and $C$ are left-connectible. As a consequence, there is a connected set $\Gamma_1\subseteq u^{-1}(\u A)\cap \sigma^+$ which contains both $A$ and $D$, and a connected set $\Gamma_2\subseteq u^{-1}(\u B)\cap \sigma^-$ which contains both $B$ and $C$. Let us now consider the connected set $u^{-1}(\u\RR)\subseteq\Q$, which does not discnnect $\Q$: by construction, it contains $\Gamma_1$ and $\Gamma_2$, as well as both the arcs $\arc{AB}$ and $\arc{CD}$. As a consequence, it must also contain the whole arc $\arc{BC}$, which is absurd because $\sigma$ exits from $\u\RR$ at $\u B$.

\case{IV}{One has $\u A=\u D=\u B\neq \u C$, or $\u A=\u D=\u C\neq \u B$.}
Assume that $\u A=\u D=\u B\neq \u C$, the other case being fully analogous. The non-emptiness of the intersection between the two generalised segments $v_\sigma(\arc{AB})$ and $v_\sigma(\arc{CD})$ means that $v_\sigma(D)$ is between $v_\sigma(A)$ and $v_\sigma(B)$; since $u^{-1}(\u A)\cap\sigma$ consists precisely of $A,\, B$ and $D$, this means that $D$ is left connected, in $u^{-1}(\u A)$, with one between $A$ and $B$, and right connected with the other one. Again, this means that there exists a connected set $\Gamma_1\subseteq u^{-1}(\u A)\cap \sigma^-$ which contains $D$ and one between $A$ and $B$, and a connected set $\Gamma_2\subseteq u^{-1}(\u A)\cap\sigma^+$ which contains $D$ and the other one. Since $u^{-1}(\u\RR)$ does not disconnect $\Q$ and contains both $\Gamma_1$ and $\Gamma_2$, as well as the arc $\arc{AB}$, this means again that the arc $\arc{BC}$ is contained in $u^{-1}(\u\RR)$, a contradiction.

\case{V}{One has $\u A=\u D=\u B=\u C$.}
This final case is very similar to the last ones. The non-emptiness of $v_\sigma(\arc{AB})\cap v_\sigma(\arc{CD})$ implies, this time, that the two segments $v_\sigma(A) v_\sigma(B)$ and $v_\sigma(C)v_\sigma(D)$ are not disjoint neither contained one into the other. Hence, up to exchange the roles of the points, we can assume that $v_\sigma(D)$ is between $v_\sigma(A)$ and $v_\sigma(B)$, while $v_\sigma(C)$ is not between them. By definition, this means that $D$ is left-connectible with one between $A$ and $B$, and right-connectible with the other one: thus, we find a contradiction exactly as in Case~IV.
\end{proof}

\subsection{Proof of Theorem~\mref{NonSconn}}

This last subsection is devoted to show Theorem~\mref{NonSconn}, which now comes as a consequence of Proposition~\ref{injective}.

\proofof{Theorem~\mref{NonSconn}}
Let us call $\SS_u$ the set of the points at which $u$ is not continuous. Since $u$ is INV by Lemma~\ref{ThCINV}, then we know that $\SS_u$ is $\H^1$-negligible. Let then $\gamma_i$ be curves as in Definition~\ref{defnocross}. By a trivial induction argument, up to a diffeomorphisms of $\Q$ onto itself, coinciding with the identity on the boundary, we can assume that the union of the curves $\gamma_i$ is contained in some grid $\G=\G(K)$. As a consequence, to show the result it is enough to show that, if the grid $\G$ is contained in $\Q\setminus\SS_u$, then there exists an injective function $v:\G\to\u\Q$ such that $\|u-v\|_{L^\infty(\G)}<\eps$. Since $u$ is continuous on $\G$, it is not restrictive to assume that $\G$ is a good starting grid.\par

We can now apply Lemma~\ref{existencearrival} to obtain a good arrival grid $\widetilde\G$ with side-length $\eta<\eps/(2\sqrt 2)$; keep in mind that, as observed in Remark~\ref{obvrem}, any path contained in $\G$ is adapted to $\widetilde\G$. Since for any vertex $V_{ij}=(i/K,j/K)$ of the grid $\G$ we have that $u(V_{ij})$ is in the interior of some square of $\widetilde\G$, and since $u$ is continuous at $V_{ij}$, we can define an injective path $\sigma:[0,1]\to\Q$, adapted to $\widetilde\G$, as in Figure~\ref{pathsigma}. More precisely, we select very small balls $B_{ij}$ around the vertices $V_{ij}$, so that $u(B_{ij})$ is contained in the interior of the same square as $u(V_{ij})$, and the image of $\sigma$ consists of the whole grid $\G$ outside the union of the balls $B_{ij}$. Inside each ball, instead, $\sigma$ is the union of two disconnected piecewise linear paths joining the west and the north pole, and the south and the east pole respectively.

\begin{figure}[thbp]
\input{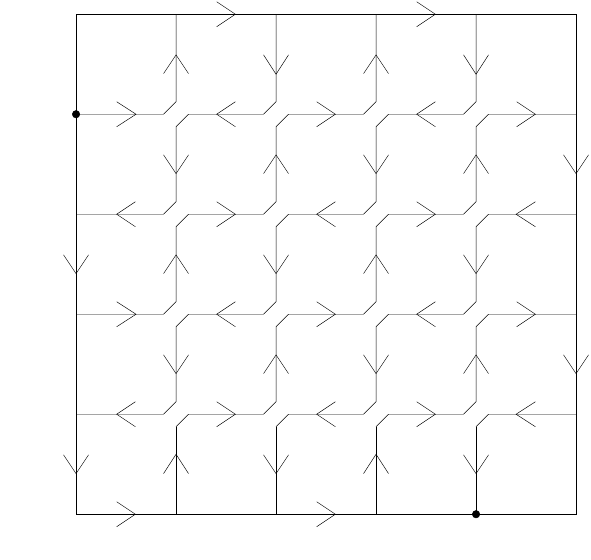_t}
\caption{The function $\sigma$ in the proof of Theorem~\mref{NonSconn}.}\label{pathsigma}
\end{figure}
Let us now call $P_l$, with $1\leq l\leq N$, the finitely many points of $\G$ contained in $u^{-1}(\widetilde\G)$: by construction, $P_l$ are exactly the points of $\sigma\cap u^{-1}(\widetilde\G)$. We apply Proposition~\ref{injective}, finding the points $\u P_l$ on $\widetilde\G$ such that the function $v_\sigma$ of Definition~\ref{defvsigma} is injective. Our function $v:\G\to\u\Q$ will be such that $v(P_l)=\u P_l$ for every $1\leq l\leq N$.\par

Let us be more precise. We introduce the finite set $\A=\bigcup_{l=1}^N P_l \cup \bigcup_{i,\,j=0}^K V_{ij}$. Notice that $\G$ is a finite union of segments, each having both endpoints in $\A$ and no interior point in $\A$. Our aim is to select suitable points $\u V_{ij}\in\u\Q$, for $0\leq i,\,j\leq K$, and to define $v:\G\to\u\Q$ as the function which is linear on each of the segments defined above, and such that $v(P_l)=\u P_l$ and $v(V_{ij})=\u V_{ij}$ for every admissible $l,\,i,\,j$.\par

Of course, we only have to define the image of the points $V_{ij}$ for $1\leq i,\,j\leq K-1$, because for the other points it must necessarily be $\u V_{ij}=(i/K,j/K)$. Let then $V_{ij}$ be any of these points, and notice that $u(V_{ij})$ is in the interior of some rectangle $\u\RR$ of the grid $\widetilde\G$. We will define $\u V_{ij}$ as a suitable point in the interior of the same rectangle: notice that, as a consequence, for every $P\in\G$ the point $v(P)$ will be in the same rectangle of $\widetilde\G$ as $u(P)$: hence, $\|v-u\|_{L^\infty(\G)}\leq 2\sqrt 2 \eta<\eps$. Therefore, to conclude we only have to find points $\u V_{ij}$ so that the resulting function $v$ is injective.

Let us now fix $1\leq i,\,j\leq K-1$. Let $AB$ (resp., $CD$) be the smallest horizontal (resp., vertical) segment containing $V_{ij}$, and such that $A$ and $B$ (resp., $C$ and $D$) are of the form $P_l$. Since the injective function $v_\sigma$ does not intersect $\widetilde\G$ in the union of the open segments $AB$ and $CD$, then $\u A,\, \u B,\, \u C$ and $\u D$ are four distinct points on the boundary of the same rectangle $\u\RR$ of the grid $\widetilde\G$ containing $u(V_{ij})$. We state what follows.\\
{\bf Claim: }{\it The points $\u C$ and $\u D$ belong to the two different parts in which $\partial\u\RR$ is subdivided by $\u A$ and $\u B$.}\\
Indeed, there exists a path $\tilde\sigma$, adapted to $\widetilde\G$, which contains the four points $A,\, B,\, C$ and $D$, and for which $A$ and $C$, as well as $B$ and $D$, are consecutive points of $\tilde\sigma\cap u^{-1}(\widetilde\G)$: notice that we can simply take $\tilde\sigma=\sigma$ if the four points $A,\,B,\, C$ and $D$ belong to the four sides of $\G$ starting at $V_{ij}$, but also otherwise the existence of $\tilde\sigma$ is obvious. As a consequence, we have an injective function $v_{\tilde\sigma}:\tilde\sigma\to\u\Q$ from Proposition~\ref{injective}: if we call $\widetilde{\u A},\,\widetilde{\u B},\,\widetilde{\u C}$ and $\widetilde{\u D}$ the images of $A,\,B,\,C$ and $D$ under $v_{\tilde\sigma}$, by injectivity we know that the generalised segments $[\widetilde{\u A}\widetilde{\u C}]$ and $[\widetilde{\u B}\widetilde{\u D}]$ do not intersect themselves, so by Lemma~\ref{well-defined} we have that also the generalised segments $[\u{AC}]$ and $[\u{BD}]$ do not intersect themselves. Analogously, there exists another path adapted to $\widetilde\G$ containing the four points, and such that $A$ and $D$, as well as $B$ and $C$, are consecutive points on the path in $u^{-1}(\widetilde\G)$. Arguing as before, this implies that also $[\u{AD}]$ and $[\u{BC}]$ have no intersection, and finally, this concludes the claim.\par

Thanks to the above claim, we know that the generalized segments $[\u{AB}]$ and $[\u{CD}]$ have exactly a point of intersection, which is in the interior of $\u\RR$. We let $\u V_{ij}$ be this point, hence the function $v:\G\to\u\Q$ is now defined. Notice that, by construction, if $AB\subseteq\G$ is a segment with both endpoints and no interior point of the form $P_l$, then the image of the open segment $AB$ under $v$ coincides with the open generalised segment $[\u{AB}]$; up to now, we still do not know that $v$ is injective on $AB$. To conclude the proof, we have only to check that $v$ is injective.\par

Keep in mind that $\G$ is a finite union of essentially disjoint segments having both endpoints and no interior point in $\A$, and that $v$ is linear on each of these pieces. We take now any two of these segments which are disjoint, call them $XY$ and $ZW$: we have to show that $v(XY)\cap v(ZW)=\emptyset$. Similarly to what already done above, let us call $AB$ the smallest segment containing $XY$ with both endpoints of the form $P_l$, and $CD$ the one corresponding to $ZW$. Notice that $AB$ (resp., $CD$) can coincide with $XY$ (resp., $ZW$) or be larger, and even if $XY$ and $ZW$ have no intersection, the segments $AB$ and $CD$ could have a non-empty intersection, and even coincide. Let us consider the four different possible cases for $AB$ and $CD$.
\case{I}{The segments $AB$ and $CD$ are disjoint.}
Suppore first that the two segments $AB$ and $CD$ have empty intersection; in this case, we will prove that $v(AB)\cap v(CD)=\emptyset$, which of course implies $v(XY)\cap v(ZW)=\emptyset$. As noticed above, $v(AB)$ and $v(CD)$ are the (generalized) segments connecting $\u A=v(A)$ and $\u B=v(B)$, and $\u C=v(C)$ and $\u D=v(D)$ respectively. We can assume that these two segments are in the same rectangle $\u\RR$ of the grid $\widetilde\G$, since otherwise there is nothing to prove; we have only to check that $\u C$ and $\u D$ belong to the same of the two parts in which $\partial\u\RR$ is divided by $\u A$ and $\u B$. Again by Lemma~\ref{well-defined}, this follows by the existence of a path $\tilde\sigma$, adapted to $\widetilde\G$, which contains the four points $A,\,B,\,C$ and $D$, and for which $A$ and $B$ are two consecutive points on $\sigma$ of $u^{-1}(\widetilde\G)$, as well as $C$ and $D$; and in turn, the existence of such a path is again obvious.

\case{II}{The segments $AB$ and $CD$ coincide.}
Suppose now that $AB=CD$, and assume just to fix the ideas that this is a horizontal segment. Since $XY$ and $ZW$ are two disjoint subsegments of $AB$, it is enough to prove that $v$ is injective on $AB$. By construction, there exist $1\leq i^-,\,i^+,\, j \leq K-1$, with $i^-<i^+$, such that a vertex $V_{im}$ is contained in $AB$ if and only if $i^-\leq i \leq i^+$ and $m=j$. Let then $i^-\leq i\leq i^+$, and let us call $C_iD_i$ the shortest vertical segment containing $V_{ij}$ such that $C_i$ and $D_i$ are of the form $P_l$. By definition, $\u V_{ij}$ is the unique intersection point of the generalised segments $[\u{AB}]$ and $[\u C_i\u D_i]$: all we have to do, then, is to show that the points $\u V_{ij}$ are ordered from $\u A$ to $\u B$, on $[\u{AB}]$, when $i$ ranges from $i^-$ to $i^+$. Once again by Lemma~\ref{well-defined}, this immediately follows by taking the path $\tilde\sigma$ which is the union of the segments $V_{i^-0}V_{i^-K}$, then $V_{i^-K}V_{(i^-+1)K}$, then $V_{(i^-+1)K}V_{(i^-+1)0}$, then $V_{(i^-+1)0}V_{(i^-+2)0}$, and so on up to $V_{i^+0}V_{i^+K}$ or $V_{i^+K}V_{i^+0}$, depending whether the difference $i^+-i^-$ is even or odd.

\case{III}{The intersection between $AB$ and $CD$ is a common endpoint.}
Suppose now that $AB\cap CD$ is done by a single point, which is a common endpoint. Up to a change in the name of the points, we can then assume that $B=C$. Keep again in mind that $v(AB)$ and $v(CD)$ are the two generalised segments between $\u A$ and $\u B$, and between $\u C$ and $\u D$ respectively; since $B=C$, hence $\u B=\u C$, these two generalised segments are contained in the two different rectangles of the grid $\widetilde\G$ having $\u B=\u C$ on the boundary. As a consequence, the only intersection between $v(AB)$ and $v(CD)$ is the point $\u B=\u C$; moreover, as noticed above, the only point of $AB$ (resp., $CD$) having $\u B$ as image is $B$ (resp., $C$) itself. As a consequence, $v(XY)=v(ZW)$ can be non-empty only if $B=C$ belongs both to $XY$ and to $ZW$, which is in turn absurd since $XY\cap ZW=\emptyset$. Also this case is then concluded.

\case{IV}{The intersection between $AB$ and $CD$ is a point, which is not a common endpoint.}
The last possible case is that the two segments $AB$ and $CD$ have a single intersection point, which is internal to at least one of them, hence automatically to both. This means that necessarily $AB\cap CD=V_{ij}$ for a suitable $1\leq i,\,j\leq K-1$, and $AB$ and $CD$ are the shortest horizontal and vertical segment containing $V_{ij}$ and with both endpoints of the form $P_l$. We know then that the intersection $v(AB)\cap v(CD)$ consists of a single point, namely $\u V_{ij}$. Moreover, since by Case~II we know that $v$ is injective both on $AB$ and on $CD$, then $V_{ij}$ is the unique point of $AB$, as well as of $CD$, having $\u V_{ij}$ as image under $v$. Therefore, the only possibility for the intersection $v(XY)\cap v(ZW)$ not to be empty, is that $V_{ij}$ is contained both in $XY$ and in $ZW$, which is in turn impossible since $XY$ and $ZW$ have empty intersection. We have then concluded also this final case.
\end{proof}

\begin{remark}\label{onlyptsgsq}
A quick look to the proof of Theorem~\mref{NonSconn} ensures that what we really use is the fact that $u^{-1}(\u \C)$ is a closed, connected set which does not disconnect $\Q$ whenever $\u\C\subseteq\u\Q$ is a point, or a square, or a connected piece of the boundary of a square. Actually, we only need the assumption about the points in order to introduce the function $v_\sigma$ in Definition~\ref{defvsigma}, while the assumption about squares and parts of the boundary of squares is only used --several times-- in order to prove Proposition~\ref{injective} and its preparatory Lemmas~\ref{4distinct} and~\ref{correctorientation}. An interesting question is then whether given $u\in W^{1,p}$ it suffices, in order to satisfy the no-crossing condition, that the counterimage of every point in $\u\Q$ is a connected set which does not disconnect $\Q$.
\end{remark}

\subsection{Monotone continuous maps satisfy the conclusion of Theorem~\mref{NonSconn}}

In this short final section we show that a continuous, $W^{1,p}$ map is monotone if and only if it satisfies the assumptions of Theorem~\mref{NonSconn}, hence Theorem~\mref{NonSconn} is somehow a reasonable extension to the non-continuous case of the results of~\cite{IO}. In particular, thanks to Theorem~\mref{Caratt}, this extends Theorem~\ref{thm:summ}~(ii) to the case when \(p=1\), see also~\cite{CORT} for further extensions. Notice that, by the results of Young, it is clear that a monotone map satisfies the no-crossing condition, so the fact that it can be approximated by diffeomorphisms follows directly from Theorem~\mref{Caratt}, even without using Theorem~\mref{NonSconn}.

\begin{lemma}\label{condThC}
Let $u:\Q\to\u\Q$ be a continuous, monotone function which is equal to the identity on \(\partial \Q\). Then, for every closed, connected set $\u C\subseteq\u\Q$ such that $\u\Q\setminus\u C$ is connected, the set $C=u^{-1}(\u C)\subseteq\Q$ is also a closed, connected set such that $\Q\setminus C$ is connected.
\end{lemma}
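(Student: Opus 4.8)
The plan is to prove the statement in two halves: first that $C=u^{-1}(\u C)$ is closed and connected, and then that $\Q\setminus C$ is connected. The first half is essentially Youngs' theorem. Since $u$ is continuous, $C$ is automatically closed. For connectedness, I would use the standard monotone-map argument: write $\u C$ as a decreasing intersection of compact connected neighborhoods $\u C_k$ of $\u C$ in $\u\Q$ (for instance superlevel sets of a regularized distance function, as in Lemma~\ref{sard}), chosen so that each $\u\Q\setminus\u C_k$ is connected. For a continuous monotone map the preimage of a compact connected set is connected — this follows because $u^{-1}(\u C_k)$ is a nested decreasing intersection of the sets $u^{-1}(\u P)$ along a connected family, or more directly from the fact that a monotone continuous surjection of compact Hausdorff spaces sends connected sets to connected sets under preimage when the target set is connected; in the planar square case one can also argue via the degree, exactly as in Steps~I and~II of the proof of Lemma~\ref{ThCINV}. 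Then $C=\bigcap_k u^{-1}(\u C_k)$ is a decreasing intersection of compact connected sets, hence compact and connected.

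For the second half I would argue by contradiction. Suppose $\Q\setminus C$ is disconnected, so $\Q\setminus C = U_1 \sqcup U_2$ with $U_1,U_2$ nonempty, open, disjoint. Since $u=\id$ on $\partial\Q$ and $\u C$ is a proper subset of $\u\Q$ (as $\u\Q\setminus\u C$ is connected and in particular nonempty — here one needs $\u C\neq\u\Q$, which holds since otherwise $C=\Q$ and the statement is trivial; and $\partial\Q\not\subseteq C$ because $u(\partial\Q)=\partial\Q$ meets $\u\Q\setminus\u C$), the boundary $\partial\Q$ lies in $\Q\setminus C$ and, being connected, lies entirely in one of the two pieces, say $\partial\Q\subseteq U_1$. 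Now consider $\u\Q\setminus\u C$, which is open and connected. Its preimage $u^{-1}(\u\Q\setminus\u C)=\Q\setminus C$. The key point is that $u$ restricted to the open set $\Q\setminus C$ maps into $\u\Q\setminus\u C$, and I want to show that the image of the ``inner'' component $U_2$ must still be all of $\u\Q\setminus\u C$ by a degree/surjectivity argument, contradicting the fact that $u$ is ``small'' — i.e. produce a point $\u P\in\u\Q\setminus\u C$ whose entire fiber $u^{-1}(\u P)$ lies in $U_2$, yet another point $\u P'$ in the same connected set $\u\Q\setminus\u C$ whose fiber lies in $U_1$; since $u^{-1}(\,\cdot\,)$ of the connected set $\u\Q\setminus\u C$ should be connected by the same monotonicity argument as above (applied now to the complementary connected set, after an exhaustion by compact connected subsets of the open set $\u\Q\setminus\u C$), we get that $\Q\setminus C$ is connected, a contradiction. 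Concretely: exhaust $\u\Q\setminus\u C$ by an increasing sequence of compact connected sets $\u E_k$ with connected complement; each $u^{-1}(\u E_k)$ is compact and connected by the first half of the argument (monotonicity + degree), it meets $\partial\Q\subseteq U_1$ for $k$ large (since $\u E_k$ eventually contains points of $\partial\Q = u(\partial\Q)$), and $\bigcup_k u^{-1}(\u E_k) = \Q\setminus C$ up to the boundary fibers; an increasing union of connected sets sharing the common point set $\partial\Q$ is connected, so $\Q\setminus C$ is connected, contradiction.

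Let me streamline this into the cleanest form. I would actually phrase the whole second half as: the set $\u\Q\setminus\u C$ is open and connected; approximate it from inside by compact connected sets $\u E_k \nearrow \u\Q\setminus\u C$ with $\u\Q\setminus\u E_k$ connected (possible in the plane for any open connected set, e.g. by taking $\u E_k$ to be a suitable union of closed dyadic squares, or a sublevel set of a regularized distance to $\u C$ truncated inside); apply the monotone-preimage fact (first half, with $\u E_k$ in place of $\u C$) to conclude each $u^{-1}(\u E_k)$ is compact and connected; observe $u^{-1}(\u E_k)\supseteq \partial\Q$ for $k$ large; hence $\bigcup_k u^{-1}(\u E_k)$ is connected, and it equals $\Q\setminus C$. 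Thus $\Q\setminus C$ is connected.

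The main obstacle is the precise justification of the ``monotone-preimage'' lemma — that for a continuous monotone $u:\Q\to\u\Q$ equal to the identity on the boundary, the preimage of a compact connected set with connected complement is again compact connected with connected complement. Connectedness of $u^{-1}(\u E)$ for $\u E$ compact connected follows from writing $\u E$ as a decreasing intersection of compact connected neighborhoods and using that each fiber $u^{-1}(\u P)$ is connected together with a Vietoris-type argument; the subtlety is the complement assertion, i.e. that $\Q\setminus u^{-1}(\u E)$ is connected, which is exactly the same statement we are trying to prove one level down. To avoid circularity I would prove the complement statement directly by the degree argument: if $\u P\notin\u E$ then, since $\u\Q\setminus\u E$ is connected and unbounded-side-accessible, $\deg(\u P, u\res\partial D)=\deg(\u P_0,u\res\partial D)$ for a boundary point $\u P_0$, and one tracks how the fiber structure forces $\Q\setminus u^{-1}(\u E)$ to be connected — this is precisely the mechanism already used in Steps~I–III of the proof of Lemma~\ref{ThCINV}, which we are entitled to invoke. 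So the real work is assembling these degree-theoretic facts about continuous monotone maps, all of which are classical (Youngs) and morally already present in the excerpt; the exhaustion-by-compact-connected-sets bookkeeping is routine.
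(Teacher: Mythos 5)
Your plan never actually settles on a complete argument for either half, and the paper's proof is dramatically simpler. For the first half, connectedness of $C=u^{-1}(\u C)$, you oscillate between a decreasing-intersection Vietoris argument, an appeal to general Whyburn-type theory of monotone maps, and a degree argument, and you yourself flag a circularity issue, which you then try to resolve by appealing to Steps~I--III of Lemma~\ref{ThCINV} --- but that lemma is set in the Sobolev, multivalued framework and establishes INV$^+$ and the degree-in-$\{0,1\}$ property, not a connected-complement statement. The paper instead proves connectedness of $C$ directly, with no degree at all: if $C=A\cup B$ were a disjoint closed nonempty partition, then each fiber $u^{-1}(\u P)$ for $\u P\in\u C$, being connected and contained in $A\cup B$, lies entirely in $A$ or entirely in $B$; the sets $\u A$, $\u B$ of those $\u P$ whose fiber lies in $A$, resp.\ $B$, form a disjoint closed nonempty partition of the connected set $\u C$, a contradiction. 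That argument uses only that $\u C$ is closed and connected, never that $\u\Q\setminus\u C$ is connected, which instantly dissolves the circularity you were worried about.

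For the second half the paper is again much shorter: for $P,Q\in\Q\setminus C$, join $u(P)$ and $u(Q)$ by a path $\gamma$ inside the set $\u\Q\setminus\u C$, which is open in $\u\Q$ and connected, hence path-connected; then $u^{-1}(\gamma)$ is connected by the first half, contains $P$ and $Q$, and is disjoint from $C$, so $P$ and $Q$ lie in the same component of $\Q\setminus C$. Your exhaustion $\u E_k\nearrow\u\Q\setminus\u C$ by compact connected sets can be made to work, but two points should be corrected. First, the step ``$u^{-1}(\u E_k)\supseteq\partial\Q$ for $k$ large'' is false in general: the hypotheses allow $\u C$ to intersect $\partial\u\Q$, and since $u=\id$ on $\partial\Q$ this forces $\partial\Q\cap C\neq\emptyset$, so no $\u E_k\subseteq\u\Q\setminus\u C$ can pull back to contain all of $\partial\Q$. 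Second, you do not need that step, nor do you need $\u\Q\setminus\u E_k$ to be connected: the first half applies to any closed connected set, and an increasing union of nonempty connected sets is automatically connected. The ``main obstacle'' you describe at the end --- proving the complement assertion for the $\u E_k$ --- is therefore a red herring in your own scheme; once the first half is established via the partition trick, nothing else is required.
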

\begin{proof}
Let \(\u C\) be a closed connected subset of \(\u \Q\) such that $\u\Q\setminus\u C$ is connected. Clearly, \(C\) is closed. Assume that there are two disjoint closed sets \(A\) and \(B\) such that \(u^{-1}(\u C)=A\cup B\) and let us define
\[
\u A=\{ \u P\in \u C \textrm{ such that \(u^{-1}(\u P)\subseteq A\)}\}\qquad \textrm{and} \qquad \u B=\{ \u P\in \u C \textrm{ such that \(u^{-1}(\u P)\subseteq B\)}\}.
\]
For every $\u P\in\u\C$, by monotonicity the set \(u^{-1}(\u P)\) is connected, hence it must be entirely containd either in $A$ or in $B$. Thus, $\u A$ and $\u B$ are a closed partition of \(\u C\) and then, by connectedness, either \(\u A\) or \(\u B\) is empty, which in turn implies that either \(A\) or \(B\) is empty and thus that \(u^{-1}(\u C)\) is connected. 

Let now \(P, Q\in \Q\setminus u^{-1}(\u C)$ and let \(\u P=u(P)\) and \(\u Q=u(Q)\). Since \(\u P, \u Q\in \u\Q\setminus\u C\) and the last is set pathwise connected (since \(\u C\) is closed), we can find a continuous path \(\gamma\) in $\u\Q$ connecting \(\u P\) and \(\u Q\) and not intersecting \(\u C\). From the first part \(u^{-1}(\gamma)\) is a connected set containing \(P\) and \(Q\) but not intersecting \(C\), which shows that \(P\) and \(Q\) belong to the same connected component of \(\Q\setminus u^{-1}(\u C)\). Being \(P\) and \(Q\) arbitrary points, this concludes the proof.
\end{proof}

\section{Counterexamples\label{sec:Ex}}

This last section is devoted to show four counterexamples, which help to clarify the contrast between the no-crossing condition, the INV condition, and the sufficient condition that we use in Theorem~\mref{NonSconn}. In particuar, we will obtain the following situations.\par

In the first example, Section~\ref{primo}, we show that the INV condition is very poor without the additional assumption that $\det Du>0$ almost everywhere. Let us be more precise: in the paper~\cite{MS}, which introduced the INV condition, many of the results for INV functions were proved under this additional assumption, which is physically meaningful, since a deformation not satisfying it has infinite energy in most of the models. In particular, in~\cite[Theorem~9.1]{MS} it is proved that the INV condition with the additional assumption $\det Du>0$ a.e. is stable under a regular change of variables; using the notation of this paper, we can equivalently say that the INV condition implies the INV$^+$ one. Our example of Section~\ref{primo} shows an INV function $u$, not satisfying the assumption $\det Du>0$ a.e., which fails to be INV$^+$; our function is actually INV and Lipschitz continuous but not monotone, and the counter-image of some points is disconnected. In particular, it is an INV function which is not approximable by diffeomorphisms.\par

In the second example, Section~\ref{secondo}, we show a function which is not approximable by diffeomorphisms, but which satisfies the INV$^+$ condition, and for which the counter-image of any point is connected. This example shows that even the condition INV$^+$ is too weak to guarantee that a function is limit of diffeomorphisms. In addition, also the assumption that the counter-image of any point is connected is not sufficient to be limit of diffeomorphisms, while it is necessary as noted in Remark~\ref{contconn}. Keep in mind that, in the particular case of continuous functions, the condition that the counter-image of points is connected is the monotonicity condition, hence it is both necessary and sufficient, as discussed in the Introduction.

In the third example, Section~\ref{terzo}, we show that the INV condition is not sufficient even with the additional assumption that $\det Du>0$ almost everywhere. In fact, we present a function $u$ for which $\det Du>0$ a.e., which is INV (hence also INV$^+$), but which is not limit of diffeomorphisms.

In the last example, Section~\ref{quarto}, we show that the sufficient condition of Theorem~\mref{NonSconn} is far from being necessary. In fact, it is very simple to present examples of limits of diffeomorphisms for which the counter-image of some points disconnects $\Q$. However, a quick inspection to the proof of the Theorem clearly shows that the assumption on the counter-image of points is actually needed only for points which are in the arrival grid $\widetilde\G$; as a consequence, if for a function the set of points with counter-image which disconnects $\Q$ is $\H^1$-negligible, the proof works exactly in the same way, up to chose an arrival grid which does not meet any of those points. In our counterexample, instead, we show a function which is limit of diffeomorphisms, but for which the set of points whose counter-image disconnects $\Q$ is a whole segment.

\subsection{The first counterexample\label{primo}}

Let us construct our first example. We consider the rectangles $\RR=[0,4]\times [-10,10]$ and $\RR^-=[1,3]\times [-9,9]$, and the segment $\SS=[1,3]\times\{0\}$. Let $v:\RR\to\RR$ be a smooth function for which $v(x,y)=(x,0)$ on $\RR^-$, and which is a diffeomorphism between $\RR\setminus \RR^-$ and $\RR\setminus \SS$, coinciding with the identity on $\partial\RR$. It is obvious that this function is a limit of diffeomorphisms, and that it satisfies the INV condition. Our ``special'' function is instead $u:\RR\to\RR$ given by
\[
u(x,y)=\left\{
\begin{array}{ll}
v(x,y) & \hbox{if $(x,y)\in\RR\setminus\RR^-$},\\
(\varphi(x,y),0) & \hbox{if $(x,y)\in\RR^-$},
\end{array}\right.
\]
where $\varphi:\RR^-\to [1,3]$ is a continuous function such that $\varphi(x,y)=x$ whenever $(x,y)\in\partial\RR^-$, so that $u$ is continuous. To define $\varphi$, we divide the rectangle $\RR^-$ in the three essentially disjoint parts $\A,\,\B$ and $\C$ as in Figure~\ref{firstcounter}.
\begin{figure}[thbp]
\input{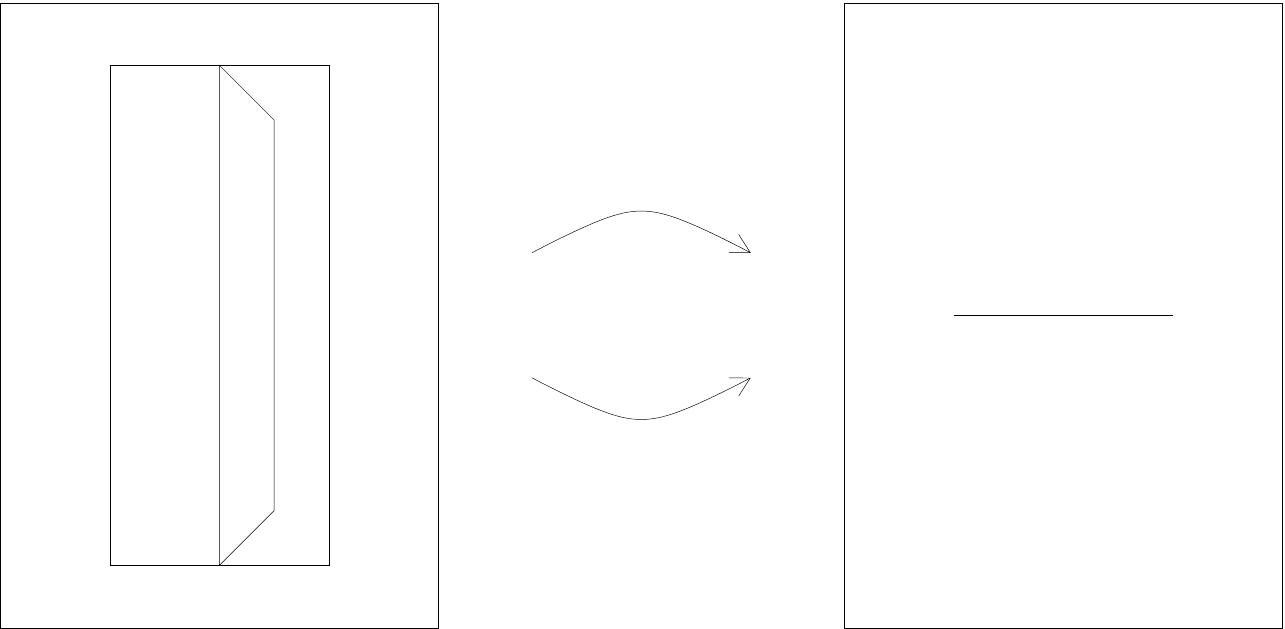_t}
\caption{The function $u$ for the example of Section~\ref{primo}.}\label{firstcounter}
\end{figure}
More precisely, $\A$ is given by all the points $(x,y)\in\RR^-$ with $1\leq x\leq 2$, $\B$ by the points $(x,y)\in\RR^-$ with $2\leq x\leq \min\{11-|y|,2.5\}$, and $\C$ by the other points. The function $\varphi$ is identified by the properties that $\varphi(x,y)=x$ in $\A$, that $\partial\varphi/\partial x=-1$ in $\B$, and that $\partial\varphi/\partial x$ is constant on each horizontal segment contained in $\C$. Hence, on any horizontal segment $[1,3]\times \{y\}\subseteq\RR^-$ one has that $\varphi$ is linear with slope $1$ between $1$ and $2$, then linear with slope $-1$ between $2$ and $\min\{11-|y|,2.5\}$, and then again linear with some strictly positive slope depending on $|y|$ between $\min\{11-|y|,2.5\}$ and $3$.\par

It is clearly impossible to obtain the function $u$, which is Lipschitz continuous then in any $W^{1,p}$ space, as a limit of diffeomorphisms. In addition, there are points in $\RR$ whose counter-image is not even connected, for instance the point $(1.5,0)$. Nevertheless, we will show in a moment that $u$ satisfies the INV condition. It is easy to observe that $u$ does not satisfy the INV$^+$ condition: just take a curve $\gamma$ contained in the interior of the union of the two regions $\B$ and $\C$, and which contains the whole vertical segment between $\B$ and $\C$. Then, any point of this vertical segment is in the interior of the curve $\gamma$, but its image is $(1.5,0)$, which is external to $u(\gamma)$, since $u(\gamma)$ is a segment $[a,b]\times \{0\}$ with $1.5<a<b<3$.

\begin{lemma}
The function $u:\RR\to\RR$ defined above satisfies the INV condition.
\end{lemma}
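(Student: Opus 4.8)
The claim is that the explicit map $u$ constructed above (a diffeomorphism $v$ outside $\RR^-$, and a horizontal "squashing" $(\varphi(x,y),0)$ inside $\RR^-$) satisfies the INV condition. The plan is to verify Definition~\ref{defINV} directly, exploiting the fact that $u$ is Lipschitz and hence equals its precise representative everywhere, so degrees are computed for genuine continuous curves. First I would fix an arbitrary point $P\in{\rm Int}(\RR)$ and a radius $r$ with $u^*$ continuous on $\partial B(P,r)$; since $u$ is continuous this is automatic for \emph{every} $r<\dist(P,\partial\RR)$, so there is no "almost every" subtlety to manage. I then need to show $u(Q)\in F_u(B(P,r))$ for a.e.\ $Q\in B(P,r)$ and $u(Q)\in \u\RR\setminus F_u(B(P,r))\cup u^*(\partial B(P,r))$ for a.e.\ $Q$ outside.

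The key structural observation is that $u$ factors through $v$ in the following sense: writing $\SS=[1,3]\times\{0\}$ and noting $u=v$ on $\RR\setminus\RR^-$ while $u(\RR^-)\subseteq\SS = v(\RR^-)$, one has that $u$ and $v$ agree outside $\RR^-$ and both collapse $\RR^-$ onto the segment $\SS$ (just by different rules). So for any curve $\gamma=\partial B(P,r)$ avoiding the measure-zero set where things are delicate, the curves $u\circ\gamma$ and $v\circ\gamma$ coincide on the arcs of $\gamma$ lying outside $\RR^-$ and both lie inside $\SS$ on the arcs inside $\RR^-$. I would argue that this forces $u^*(\partial B(P,r))=v^*(\partial B(P,r))$ possibly up to points of $\SS$, and more importantly that the winding number $\deg(\u P, u^*\res\partial B(P,r))$ equals $\deg(\u P, v^*\res\partial B(P,r))$ for every $\u P\notin\SS\cup u^*(\partial B(P,r))$. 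The point is that modifying a closed curve only on sub-arcs whose images stay on a fixed segment $\SS$ cannot change winding numbers around points off $\SS$: one can homotope the $u$-image to the $v$-image keeping the motion inside $\SS$ on those arcs, which never crosses $\u P$. Consequently $F_u(B(P,r))\setminus\SS = F_v(B(P,r))\setminus\SS$.

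With this identification, the INV property for points $\u P\notin\SS$ follows immediately from the INV property of $v$, which holds because $v$ is a diffeomorphism outside $\RR^-$ and collapses $\RR^-$ to $\SS$ — i.e.\ $v$ is itself a (limit of) diffeomorphism(s) and hence INV, or this can be checked by hand since $v^{-1}(\u P)$ is a single point for $\u P\notin\SS$. So the only remaining work is to handle target points $\u P\in\SS$: I must show that if $\deg(\u P, u^*\res\partial B(P,r))\neq 0$ then a.e.\ $Q\in B(P,r)$ has $u(Q)\in F_u(B(P,r))$ and a.e.\ $Q$ outside has $u(Q)\in\u\RR\setminus F_u(B(P,r))\cup u^*(\partial B(P,r))$. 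Here I would split into cases according to whether $B(P,r)$ meets $\RR^-$. If $B(P,r)$ is disjoint from $\RR^-$, then $u=v$ on $B(P,r)$ and we reduce to $v$. If $B(P,r)\subseteq\RR^-$, then $u(B(P,r))\subseteq\SS$ and $u(\RR\setminus B(P,r))\ni$ most points, and one checks the conditions using that $\varphi$ is continuous and the boundary degree is $0$ or $1$; the content here is essentially that $\varphi(\partial B(P,r))$ as an interval in $[1,3]$ determines which values are "inside". The genuinely delicate case — and the main obstacle — is when $\partial B(P,r)$ straddles $\partial\RR^-$, so that $u\circ\partial B(P,r)$ is partly a nondegenerate loop (from the $v$-part) and partly confined to $\SS$ (from the $\varphi$-part); there I would use the homotopy argument above to replace the $\SS$-confined arcs by the corresponding $v$-arcs, compute the degree via $v$, and then separately verify that the \emph{image points} $u(Q)$ of $Q\in\RR^-$, which all lie in $\SS$, land in $F_u$ or its complement correctly — this last verification is where the peculiar structure of $\varphi$ (the slope $+1$, then $-1$, then positive pattern) must be used to locate $\varphi(Q)$ relative to the interval swept out on $\partial B(P,r)$, and I expect it to require a careful but elementary case analysis on the position of $B(P,r)$ inside $\RR^-$.
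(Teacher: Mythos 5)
Your high-level framing is on the right track: you correctly identify that $u$ and $v$ agree outside $\RR^-$, that both send $\RR^-$ into the segment $\SS$, and that a homotopy confined to $\SS$ cannot change the winding number around any target point off $\SS$ — so $F_u(B(P,r))$ and $F_v(B(P,r))$ agree away from $\SS$ and the non-$\SS$ target points inherit INV from $v$. This matches the first part of the paper's argument. You also correctly isolate the remaining issue as target points on $\SS$.

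But you then explicitly defer the substance: ``this last verification \dots must be used to locate $\varphi(Q)$ relative to the interval swept out on $\partial B(P,r)$, and I expect it to require a careful but elementary case analysis.'' That case analysis \emph{is} the proof — it is where the specific slope pattern of $\varphi$ (slope $+1$ on $\A$, slope $-1$ on $\B$, positive slope depending on $|y|$ on $\C$) and the specific shape of $\B$ (pinched by $\min\{11-|y|,2.5\}$) must be used, and it does not reduce to a one-line argument. Two ingredients you would need and do not mention are: (a) the dimensional constraint that any circle $\Gamma\subseteq\RR$ has diameter at most $4$, so $\Gamma$ can never touch both the top and bottom sides of $\partial\RR^-$ (which are $18$ apart) — this controls how $\Gamma$ can meet $\RR^-$ and is what keeps the case analysis finite; and (b) the consequence that \emph{no} point of $\SS$ has nonzero degree with respect to $u\res\Gamma$, so that for $P\in\RR^-\cap B(P_0,r)$ the INV requirement $u(P)\in F_u(B(P_0,r))$ reduces to showing $(\varphi(P),0)\in u(\Gamma)$, which one establishes by exhibiting $A',B'\in\Gamma\cap\overline{\RR^-}$ with $\varphi(A')\le\varphi(P)\le\varphi(B')$ using where the horizontal chord through $P$ crosses the interfaces $\partial\A\cap\partial\B$ and $\partial\B\cap\partial\C$. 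Without that interval-trapping argument and the three-way case split on the position of $P$ relative to $\A,\B,\C$, the proof is not complete; as written you have a plausible plan but the load-bearing step is missing.
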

\begin{proof}
Let $\Gamma$ be a circle contained in $\RR$. Notice that $\Gamma$ cannot intersect both the upper and the lower segment of $\partial\RR^-$; as a consequence, also keeping in mind that $u$ and $v$ are continuous, we obtain that the points which are internal to $u(\Gamma)$ and to $v(\Gamma)$ are the same, and in particular no such point belongs to the segment $\SS$. Let then $P\notin\RR^-$: since $v$ satisfies the INV condition and $u(P)=v(P)$ we have that, if $P$ is internal to $\Gamma$, then $u(P)$ is not external to $u(\Gamma)$, while if $P$ is external to $\Gamma$, then $u(P)$ is not internal to $u(\Gamma)$.\par

To conclude checking the INV condition, let us then take $P=(x,y)\in\RR^-$. Since $u(P)$ belongs to the segment $\SS$, then it cannot be internal to $u(\Gamma)$. We can then assume that $P$ is internal to $\Gamma$, and we have to exclude that $u(P)$ is external to $u(\Gamma)$: since $\varphi$ is continuous, to do so it is enough to find two points $A',\,B' \in \Gamma$ such that $\varphi(A')\leq \varphi(P)\leq \varphi(B')$. Let $AB$ be the horizontal segment containing $P$ with $A,\, B\in\Gamma$.\par

Assume first that $x\leq 2$: in this case, if $A\in\RR^-$ then $\varphi(A)<\varphi(P)$, and if $A\notin\RR^-$ then there is another $A'\in\Gamma$ belonging to the left side of $\RR^-$, so $\varphi(A')<\varphi(P)$. Thus, if $\varphi(B)\geq \varphi(P)$ we are done. On the other hand, if $\varphi(B)<\varphi(P)$, then the segment $AB$ crosses the vertical segment $\partial\A\cap\partial\B$, thus $\Gamma$ must cross the same segment; hence, there is a point $B'\in\Gamma\cap\partial\A\cap\partial\B$, which means $\varphi(B')=2$, and since $x\leq 2$ implies $\varphi(P)\leq 2$ we are done.\par

Assume now that $2< x < \min\{11-|y|,2.5\}$. The existence of a point $B'\in\Gamma$ with $\varphi(B')\geq \varphi(P)$ is clear: indeed, either $\varphi(A)>\varphi(P)$, or the segment $AB$ crosses the vertical segment $\partial\A\cap\partial\B$, so the argument above again applies. If $\varphi(B)<\varphi(P)$ we are done; otherwise, the segment $AB$ crosses $\partial\B\cap\partial\C$ in the point $(\tilde x,y)$, with $\tilde x=\min\{11-|y|,2.5\}$. As a consequence, we find another point $A'=(x',y')\in \Gamma\cap\partial\B\cap\partial\C$, and in particular there is one with $x'\geq \tilde x$: this implies $\varphi(A')<\varphi (P)$, so we are again done.

Let us finally assume that $x\geq\min\{11-|y|,2.5\}$. This time, it is clear that $\varphi(B)>\varphi (P)$ (or, if $B\notin \RR^-$, that there is some $B'\in\Gamma$ belonging to the right side of $\RR^-$, thus with $\varphi(B')>\varphi(P)$); moreover, either $\varphi(A)<\varphi(P)$, or the segment $AB$ crosses $\partial\B\cap\partial\C$, so the existence of a point $A'$ with $\varphi(A')\leq \varphi(P)$ follows by the above argument. The proof is then concluded.
\end{proof}

\subsection{The second counterexample\label{secondo}}

Let us now construct our second example. We consider the domain $\Omega=\{|x|<2\}$, the balls $\B=\{|x|< 1\}$ and $\B^-=\{|x|< 1/2\}$, and the unit segment $\SS=[0,1]\times \{0\}$, and we aim to define a function $u:\Omega\to\Omega$ which equals the identity on $\partial\Omega$. First of all, we define $u$ outside $\B$. For $\eps$ small, we let the image under $u$ of the circle with radius $1+\eps$ be the outer boundary $\Gamma_\eps$ of an $\eps$-neighborhood of the segment $\SS$, parametrized with two different speeds. More precisely, as Figure~\ref{Fig:49} (left) shows, the part of length $\eps$ around the point $(-1-\eps,0)$ is sent, with constant speed, to the whole $\Gamma_\eps$ except an $\eps$-neighborhood of the point $(1+\eps,0)$, and the remaining part of the circle is sent, again with constant speed, to the remaining part of $\Gamma_\eps$. It is clearly possible to extend the definition of $u$ to the rest of $\Omega\setminus\B$ in such a way that $u$ is a $W^{1,p}$ function on $\Omega\setminus\B$ coinciding with the identity on $\partial\Omega$ for every $p<2$. Notice that $u$ is a homeomorphism of $\Omega\setminus\B$ onto $\Omega\setminus\SS$, and that the whole boundary of $\B$ except the ``west pole'' $W=(-1,0)$ is sent on $(1,0)$, while the generalized image of $W$ is the whole segment $\SS$.\par
Let us now define $u$ in the annulus between $\partial\B$ and $\partial\B^-$. First of all, the image of the circle $\partial\B^-$ is the segment $\SS$ done four times with constant speed: more precisely, for every $\theta\in\S^1$ we let $u(\frac 12 \,\cos\theta,\frac 12 \sin\theta)=(\varphi(\theta),0)$, where $\varphi:\S^1\to [0,1]$ is the Lipschitz function such that
\begin{align*}
\varphi(0)=\varphi(\pi)=1\,, && \varphi(\pi/2)=\varphi(-\pi/2)=0\,,
\end{align*}
and $|\varphi'|=2/\pi$. Then, let us consider the circle $\{|x|=1-\eps\}$ for any $0<\eps<1/2$: in this case, for every $\theta\in\S^1$ we let $u\big((1-\eps)(\cos\theta,\sin\theta)\big)=\varphi(\tau_\eps(\theta))$, where $\tau_\eps:\S^1\to\S^1$ is the Lipschitz homeomorphism which maps at constant speed the arc $L_\eps$ centered at $\pi$ of width $2\pi\eps$ on the whole $\S^1$ except the arc $R_\eps$ centered at $0$ of width $2\pi\eps$, while $\S^1\setminus L_\eps$ is mapped, again at constant speed, onto $R_\eps$. Notice that $\tau_{1/2}$ is the identity, so the function $u$ is continuous on $\partial\B^-$, and moreover it is still in $W^{1,p}$ for every $p<2$.\par
\begin{figure}[thbp]
\input{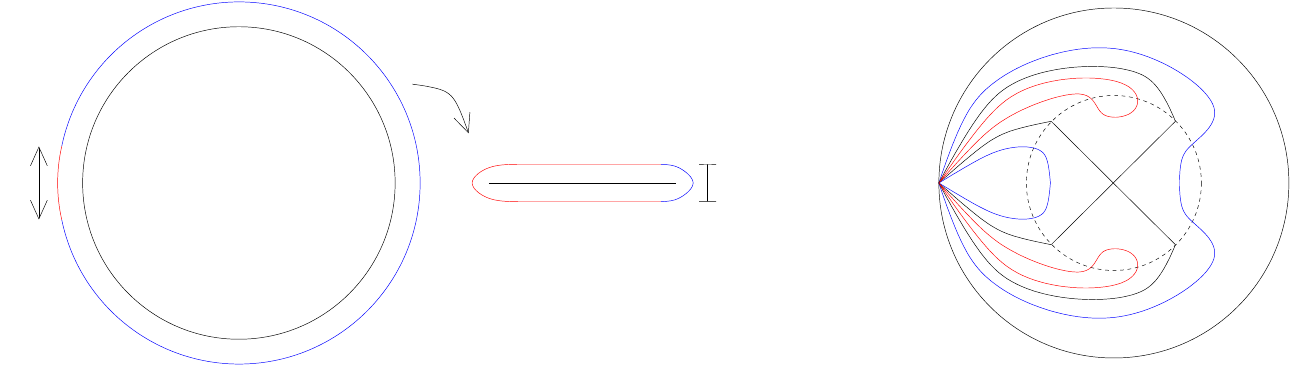_t}
\caption{Definition of $u$ outside the circle $\partial\B$ (left); counter-image of $(x,0)$ for $0<x<1/2$ in red, for $x=1/2$ in black, for $1/2<x<1$ in blue (right). The dashed circle is $\partial\B^-$.}\label{Fig:49}
\end{figure}
To conclude, we have to define $u$ inside $\B^-$, and we simply shrink everything onto the point $(1/2,0)$. More precisely, for every $0<\rho<1/2$ and every $\theta\in\S^1$ we set
\[
u(\rho\cos\theta,\rho\sin\theta)= \frac 12 + \bigg(\varphi(\theta)-\frac 12\bigg) 2\rho\,.
\]
The resulting function $u:\Omega\to\Omega$ is then a $W^{1,p}$ function, coinciding with the identity on the boundary; it is smooth except at the point $W=(-1,0)$. It is easy to see that this function is not limit of diffeomorphisms, and this also follows from Theorem~\mref{Caratt} since it is simple to see that $u$ does not satisfy the no-crossing condition. Nevertheless, we can prove the following.

\begin{lemma}
The function $u$ defined above satisfies the INV$^+$ condition. Moreover, the counter-image of every point is connected.
\end{lemma}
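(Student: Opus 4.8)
The plan is to deduce both assertions from the connectedness of the counter-images. Since the disk $\Omega$ is bi-Lipschitz equivalent to a square, by Remark~\ref{rmk:changeofvariables} together with Lemma~\ref{ThCINV} it suffices to prove that $u^{-1}(\u P)$ is connected for every $\u P\in{\rm Int}(\Omega)$: this gives at once the $INV^+$ condition and the ``moreover'' part, the case $\u P\in\partial\Omega$ being trivial because $u={\rm Id}$ in a neighbourhood of $\partial\Omega$, so that $u^{-1}(\u P)=\{\u P\}$ there.

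First I would record two structural facts, both immediate from the construction. Since $u$ is continuous (indeed smooth) on $\Omega\setminus\{W\}$, the multivalued map reduces to the single value $u$ at every such point, while $u_{\rm multi}(W)=\SS$; and $u(\overline{\B})\subseteq\SS$, because the region between $\partial\B^-$ and $\partial\B$, the ball $\B^-$ and the circle $\partial\B^-$ are all mapped into $\SS$, $\partial\B\setminus\{W\}$ is collapsed to $(1,0)\in\SS$, and $u_{\rm multi}(W)=\SS$, whereas $u$ restricted to $\Omega\setminus\overline{\B}$ is a homeomorphism onto $\Omega\setminus\SS$. Consequently, for every $\u P\in{\rm Int}(\Omega)\setminus\SS$ the set $u^{-1}(\u P)$ is a single point of $\Omega\setminus\overline{\B}$, hence connected, and it only remains to treat $\u P=(x_0,0)\in\SS$, i.e. $0\le x_0\le 1$.

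Using polar coordinates $(r,\theta)$ on $\overline{\B}$, the map reads $u(r,\theta)=(\Psi_r(\theta),0)$, where $\Psi_r(\theta)=\tfrac12+(\varphi(\theta)-\tfrac12)\,2r$ for $0\le r\le\tfrac12$ and $\Psi_r(\theta)=\varphi(\tau_{1-r}(\theta))$ for $\tfrac12\le r<1$, and in addition $W\in u^{-1}(\u P)$ for every such $\u P$; thus $u^{-1}(\u P)\cap\overline{\B}=\{W\}\cup\{(r,\theta):r<1,\ \Psi_r(\theta)=x_0\}$, together with $\partial\B$ in the single case $x_0=1$ (where $\partial\B\setminus\{W\}$ is collapsed to $(1,0)$). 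In the annulus $\tfrac12\le r<1$ the equation $\Psi_r(\theta)=x_0$ is $\tau_{1-r}(\theta)\in\varphi^{-1}(x_0)$, and $\varphi^{-1}(x_0)$ has four points if $x_0\in(0,1)$ and two points if $x_0\in\{0,1\}$; since $\tau_{1-r}$ is a homeomorphism of $\S^1$ depending continuously on $r$, with $\tau_{1/2}={\rm Id}$, this describes that many continuous arcs issuing from the points of $\varphi^{-1}(x_0)$ on $\partial\B^-$. Here is the key asymptotic point: as $r\to1^-$ each such arc converges to $W$, because the points of $\varphi^{-1}(x_0)$ are bounded away from $0$, hence for $\eps$ small they lie outside $R_\eps$, so their $\tau_\eps$-preimages lie in the arc $L_\eps$, which shrinks to $\{\pi\}$ (the angle of $W$); the only exception is $x_0=1$, for which $0\in\varphi^{-1}(1)$ is a fixed point of $\tau_\eps$ and the corresponding arc instead joins $\partial\B^-$ to the ``east pole'' $(1,0)\in\partial\B$.

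Inside the ball, for $0\le r<\tfrac12$, the equation $\Psi_r(\theta)=x_0$ becomes $\varphi(\theta)=c(r):=\tfrac12+(x_0-\tfrac12)/(2r)$, which is solvable only while $c(r)\in[0,1]$; tracking the solutions as $r$ decreases, one sees that when $x_0\neq\tfrac12$ the four (or two) solutions merge pairwise, upon $\varphi$ reaching the value $0$ or $1$, along a coordinate axis, producing arcs joining in pairs the points of $\varphi^{-1}(x_0)$ on $\partial\B^-$ (and for $x_0\in\{0,1\}$ there is nothing inside $\B^-$ beyond those two boundary points), whereas for $x_0=\tfrac12$ one gets four radii meeting at the centre $0$, which belongs to $u^{-1}((\tfrac12,0))$. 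Putting the pieces together, in every case $u^{-1}(\u P)$ is a finite union of arcs, segments and possibly the circle $\partial\B$, glued along common points: every annulus arc has $W$ (or, if $x_0=1$, a point of $\partial\B$) as an endpoint, every arc inside $\B^-$ has its endpoints on $\partial\B^-$ at the inner ends of annulus arcs or at the centre, and for $x_0=1$ the circle $\partial\B$ carries the inner endpoints of the two relevant annulus arcs. Hence $u^{-1}(\u P)$ is connected, and by Lemma~\ref{ThCINV} (via Remark~\ref{rmk:changeofvariables}) $u$ satisfies the $INV^+$ condition. I expect the main work to be precisely this bookkeeping — in particular the asymptotic analysis of $\tau_\eps^{-1}$ that forces the annulus arcs to converge to $W$, and the case distinction according to the position of $x_0$ relative to $\tfrac12$ and to the endpoints $0$ and $1$ — rather than any single delicate estimate.
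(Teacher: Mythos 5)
Your proof is correct and follows essentially the same route as the paper: an explicit case-by-case description of $u^{-1}(\u P)$ (single point off $\SS$; four arcs in the annulus collapsing onto $W$ plus inner arcs merging at $\varphi^{-1}(0)$ or $\varphi^{-1}(1)$ for $\u P\in\SS$, with the degenerate cases $x_0\in\{0,\tfrac12,1\}$ treated separately), yielding connectedness. The only cosmetic difference is that you make the final step explicit by citing Lemma~\ref{ThCINV} together with Remark~\ref{rmk:changeofvariables}, where the paper simply asserts that INV$^+$ then follows by an obvious geometric argument.
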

\begin{proof}
Let us first consider the counter-image of a point $\u P=(x,y)\in\Omega$. If $\u P\notin\SS$, then $u^{-1}(\u P)$ is a single point in $\Omega\setminus\B$, hence connected. Assume instead that $\u P\in\SS$, so $\u P=(x,0)$ for some $0\leq x\leq 1$. Let us for a moment consider the case $0<x<1$: in this case, $\u P$ has exactly four counter-images on the circle of radius $1/2$; therefore, for any radius $\rho$ between $1/2$ and $1$, the point $\u P$ has still four counter-images on the circle of radius $\rho$, and these are four points which all converge to $W$ when $\rho\to 1$. When the radius $\rho$ goes from $1/2$ to $0$, instead, the counter-images remain four for a while, in particular until $\rho$ becomes equal to $\rho_{\rm min}=|x-1/2|$; then, on the boundary of the $\rho_{\rm min}$, the four points have become only the two points $(0,\pm\rho_{\rm min})$ if $0<x<1/2$, only the two points $(\pm \rho_{\rm min},0)$ if $1/2<x<1$, and only the origin if $\rho_{\rm min}=0$, that is, $x=1/2$. The counter-image does not have points in the open ball with radius $\rho_{\rm min}$. Figure~\ref{Fig:49} (right) depicts the counter-image of $\u P$ in these three cases. In the limiting case $x=0$, the counter-image is done only by two points on each circle $\{|x|=\rho\}$ for every $1/2\leq \rho<1$, which again converge to $W$ when $\rho\to 1$; in the limiting case $x=1$, instead, the counter-image is the whole circle $\partial\B$ together with the two segment $WA$ and $BC$ with $A=(-1/2,0)$, $B=(1/2,0)$ and $C=(1,0)$.\par

As a consequence of the above characterization, we have that the counter-image of any point of $\Omega$ is a connected set. The validity of the INV$^+$ condition follows then by an obvious geometric argument.
\end{proof}

\subsection{The third counterexample\label{terzo}}

We can now present our third counterexample, which is an INV map $u\in W^{1,p}([-2,2]^2;[-2,2]^2)$ such that $\det Du >0$ almost everywhere, and which does not satisfy the no-crossing condition.\par

First of all, we need to present the auxiliary function $v:[-2,2]^2\to [-2,2]^2$, coinciding with the identity on the boundary. For any $0\leq t\leq 1$, we give four paths $\alpha_t,\, \beta_t,\, \gamma_t$ and $\sigma_t$, see Figure~\ref{Fig:219} for an illustration. The path $\alpha_t$ is simply the segment between $(-2,t)$ and $(0,0)$, while $\sigma_t$ is the segment between $(0,0)$ and $(2,t)$. Instead, the paths $\beta_t$ and $\gamma_t$ are two loops, starting and ending at $(0,0)$, lying on the half-space $\{y\geq 0\}$, percurred in the clockwise sense. In addition, these paths depend smoothly on $t$, each path $\beta_t$ (resp., $\gamma_t$) is contained in the internal part of $\beta_s$ (resp., $\gamma_s$) when $t<s$, the paths $\beta_0$ and $\gamma_0$ consist of the sole point $(0,0)$, and the intersection between any two paths of the form $\alpha_t,\, \beta_t,\, \gamma_t$ and/or $\sigma_t$ is always the sole point $(0,0)$. Finally, each path $\beta_t$ is on the left of each path $\gamma_s$, as in the figure.\par
\begin{figure}[thbp]
\input{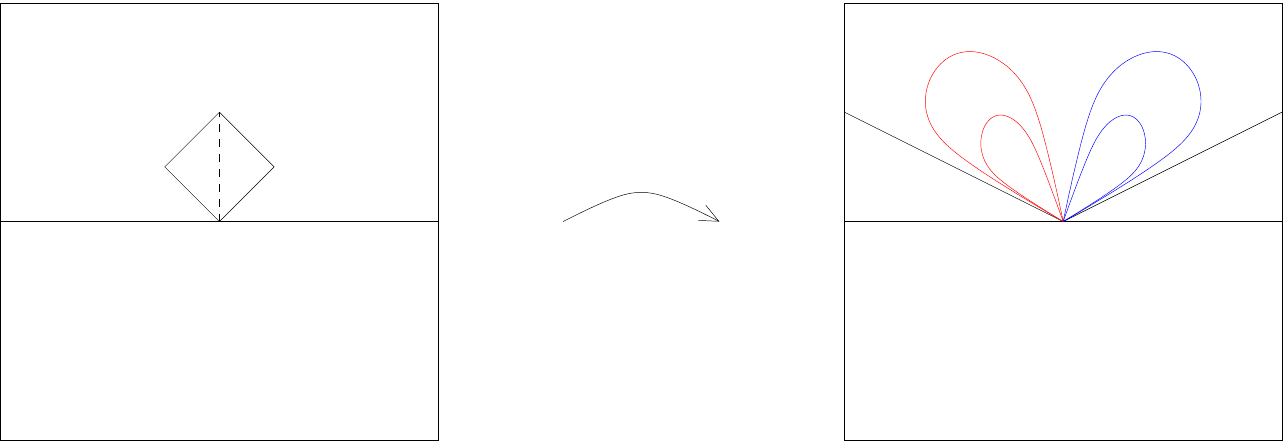_t}
\caption{Definition of $v$ in the example of Section~\ref{terzo}.}\label{Fig:219}
\end{figure}
For any $0\leq t\leq 1$, we call now $x_t=\min\{ t, 1-t\}$, and we introduce the points $A_t=(-2,t)$, $B_t=(-x_t,t)$, $C_t=(0,t)$, $D_t=(x_t,t)$ and $E_t=(2,t)$; notice that these points are all distinct except $B_0=C_0=D_0=(0,0)$ and $B_1=C_1=D_1=(0,1)$. We can then define $v$ on the horizontal segment $A_tE_t$ as follows: on the segment $A_tB_t$ (resp., $B_tC_t$, $C_tD_t$, $D_tE_t$), the image of $v$ is the curve $\alpha_t$ (resp., $\beta_t$, $\gamma_t$, $\sigma_t$), parametrized at constant speed. Notice that, up to now, the function $v$ is smooth except at the point $(0,1)$, where it is discontinuous. Moreover, by construction the function $v$ belongs to $W^{1,p}([-2,2]\times [0,1])$ for any $p<2$. We can now extend $v$ to the whole square $[-2,2]^2$: on the rectangle $[-2,2]\times [-2,0)$ we can simply set $v={\rm Id}$, and it is simple to observe the existence also of a $W^{1,p}$ extension of $v$ on the rectangle $[-2,2]\times (1,2]$ which is a homeomorphism onto the image. The resulting function $v$ belongs to $W^{1,p}([-2,2]^2)$, it is smooth except at the point $(0,1)$, where it is discontinuous, it is injective except at $v^{-1}(0,0)$, and it is clearly a strong $W^{1,p}$ limit of diffeomorphisms, in particular it satisfies the INV and the INV$^+$ conditions, which are equivalent since by construction $\det Dv>0$ almost everywhere.\par

Let us now call $\T^\pm$ the two triangles having vertices in $(0,0)$, $(0,1)$ and $(\pm 1/2,1/2)$, and let $\psi:[-2,2]^2 \to [-2,2]^2$ the function defined as
\[
\psi(s,t) = \left\{\begin{array}{ll}
(s+x_t,t) & \hbox{if $(s,t)\in\T^-$}\,, \\
(s-x_t,t) & \hbox{if $(s,t)\in\T^+$}\,, \\
(s,t) & \hbox{if $(s,t)\notin\T^+\cup\T^-$}\,.
\end{array}\right.
\]
Notice that $\psi$ is a bijection moving $\T^-$ on $\T^+$ and vice-versa. We can then define our function $u:[-2,2]^2\to [-2,2]^2$ simply as $u=v\circ \psi$. Since $v(x)=(0,0)$ for every point $x\in \partial \T^-\cup\partial \T^+$ except at the discontinuity point $(0,1)$, also $u$ belongs to $W^{1,p}([-2,2]^2)$; moreover, $u$ is also continuous except at $(0,1)$, injective except at $\partial\T^-\cup\partial\T^+$, and coincides with the identity on the boundary, and we have that also $\det Du>0$ almost everywhere (so, the INV and the INV$^+$ conditions are equivalent for $u$, as well as for $v$). On the other hand, $u$ does not satisfy the no-crossing condition, so it is not a limit of diffeomorphisms. To observe the failure of the no-crossing condition, it is enough to consider the image of an horizontal segment joining $(-2,t)$ with $(2,t)$ for any $0<t<1$: it is a continuous mappings, namely, the union of the four paths $\alpha_t,\, \gamma_t,\, \beta_t$ and $\sigma_t$, and it is clearly impossible to make it injective with a small uniform variation because $\beta_t$ has been postponed after $\gamma_t$. Nevertheless, we can show the validity of the INV condition for $u$.

\begin{lemma}
The function $u$ defined above satisfies the INV condition.
\end{lemma}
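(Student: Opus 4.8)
The plan is to exploit the decomposition $u=v\circ\psi$. Here $v$ is a strong $W^{1,p}$ limit of diffeomorphisms, hence $v\in INV^+([-2,2]^2)\subseteq INV([-2,2]^2)$ by Corollary~\ref{cor:diff}, and $\psi$ is the involutive, piecewise-affine, orientation- and measure-preserving bijection of $[-2,2]^2$ that equals the identity off the rhombus $R:=\T^-\cup\T^+$ and interchanges $\T^-$ and $\T^+$ inside $R$. Since $\psi$ is \emph{discontinuous} across $\partial\T^-\cup\partial\T^+$, the membership $v\in INV^+$ does not by itself give $u\in INV$; the proof must use the following special features of the construction: $v\equiv(0,0)$ on $\partial\T^-\cup\partial\T^+$ except at $(0,1)$ (hence so does $u$); the nested, exhausting loop families $\{\beta_t\}$ and $\{\gamma_t\}$ sweep out two closed topological disks $\overline{\B}:=v(\overline{\T^-})$ and $\overline{\C}:=v(\overline{\T^+})$ which, being ``on the left'' and ``on the right'', are disjoint except for the common boundary point $(0,0)$; consequently $u(\overline{\T^-})=\overline{\C}$, $u(\overline{\T^+})=\overline{\B}$; $u=v$ on $[-2,2]^2\setminus R$; the image of $[-2,2]^2\setminus R$ under $v$ (equivalently $u$) meets $\overline{\B}\cup\overline{\C}$ only at $(0,0)$; and $v$, hence $u$, is injective away from the counter-image of $(0,0)$.

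The first step is a degree--comparison lemma. Fix $P\in{\rm Int}([-2,2]^2)$. For a.e.\ $r>0$ the circle $\Gamma:=\partial B(P,r)$ meets $\partial\T^-\cup\partial\T^+$ transversally (all but finitely many $r$), avoids $(0,1)$, and carries a continuous $u^*$ and $v^*$ (a.e.\ $r$, as in Remark~\ref{rmk:degree}); fix such an $r$. Split $\Gamma$ at its finitely many intersections with $\partial\T^-\cup\partial\T^+$. On the arcs outside $R$ one has $u^*=v^*$; on an arc contained in $\overline{\T^+}$ (resp.\ $\overline{\T^-}$) the discontinuities of $\psi$ sit only at the two endpoints, which lie on $\partial\T^-\cup\partial\T^+$ where $v\equiv(0,0)$, so $(v\circ\psi)\res\Gamma$ is genuinely a continuous closed curve, and $u^*$ maps such an arc into $\overline{\B}$ (resp.\ $\overline{\C}$) as a loop based at $(0,0)$, while $v^*$ maps it into $\overline{\C}$ (resp.\ $\overline{\B}$) as a loop based at $(0,0)$. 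A loop contained in one of the two disjoint disks is null-homotopic in the complement of any point outside that disk, so collapsing these loops shows that $u^*\res\Gamma$ and $v^*\res\Gamma$ are homotopic, in $\R^2\setminus\{\u P\}$, to one and the same curve for every $\u P\notin\overline{\B}\cup\overline{\C}$; hence $\deg(\u P,u^*\res\Gamma)=\deg(\u P,v^*\res\Gamma)$ for such $\u P$, and $u^*(\Gamma)$ and $v^*(\Gamma)$ coincide outside $\overline{\B}\cup\overline{\C}$. In particular $F_u(B(P,r))$ and $F_v(B(P,r))$ coincide outside $\overline{\B}\cup\overline{\C}$.

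It then remains to verify conditions (i) and (ii) of Definition~\ref{defINV} for $u$, at a.e.\ point $Q$ (so $u(Q)=u^*(Q)$ is single-valued; we may also discard $\partial\T^-\cup\partial\T^+$ and the sets $u^{-1}\big(\{(0,0)\}\cup\beta_1\cup\gamma_1\cup u^*(\Gamma)\big)$, which are Lebesgue-negligible because $\det Du>0$ a.e.\ and their images are $\H^2$-negligible). If $u(Q)\notin\overline{\B}\cup\overline{\C}$, then $Q\notin R$, so $u(Q)=v(Q)$; the $INV$ conclusion holds for $v$ at $Q$ by Corollary~\ref{cor:diff}, and it transfers to $u$ because $v(Q)$ lies outside $\overline{\B}\cup\overline{\C}$, where $F_u$ and $F_v$ (and $u^*(\Gamma)$, $v^*(\Gamma)$) agree. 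If instead $u(Q)\in{\rm Int}\,\C$ (the case ${\rm Int}\,\B$ being symmetric), then $Q\in{\rm Int}\,\T^-$, and since $v$ is injective off $v^{-1}(0,0)$ the full counter-image $u^{-1}(u(Q))=\psi^{-1}\big(v^{-1}(u(Q))\big)$ reduces to the single point $\{Q\}$; moreover $u$ is a smooth, orientation-preserving local diffeomorphism near $Q$. Therefore $\deg\big(u(Q),u^*\res\partial B(P,r)\big)$ equals the signed number of preimages of $u(Q)$ inside $B(P,r)$, which is $1$ if $Q\in B(P,r)$ and $0$ otherwise --- precisely what (i) and (ii) require.

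The main obstacle is making this last degree statement rigorous: for a.e.\ $r$ one must identify $\deg\big(u(Q),u^*\res\partial B(P,r)\big)$ with the signed count of preimages of $u(Q)$ in $B(P,r)$ for the Sobolev map $u$, which requires a slicing/approximation argument controlling at once the continuity of $u^*$ on $\partial B(P,r)$ and the absence of preimages of $u(Q)$ on that circle. The rest is essentially bookkeeping --- the genericity reductions for $r$ and for $Q$, and the verification that the loop families are exhausting so that $\overline{\B}$, $\overline{\C}$ really are two disjoint closed disks with $v(\overline{\T^\pm})=\overline{\B},\overline{\C}$ --- all of which is routine given the explicit construction. (The same argument in fact yields $u\in INV^+$, consistently with the earlier remark that, since $\det Du>0$ a.e., the two conditions are equivalent for $u$.)
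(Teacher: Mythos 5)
Your approach is genuinely different from the paper's, but there is a real gap where you acknowledge ``the main obstacle.'' The paper proceeds by a case analysis on how the circle $\tau$ meets $\partial\T^-\cup\partial\T^+$: it observes that $\psi(\tau)$ splits into finitely many open arcs, and then closes them up into one or two \emph{injective, closed} curves $\tau_1,\tau_2$ by adjoining segments that lie on $\partial\T^-\cup\partial\T^+$ (where $u\equiv(0,0)$ except at $(0,1)$). Because $u$ is constant on the inserted segments, $u(\tau)=v(\tau_1)\cup v(\tau_2)$ as sets, and the winding numbers add: $\deg(y,u(\tau))=\deg(y,v(\tau_1))+\deg(y,v(\tau_2))$ for $y\notin u(\tau)$. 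One then applies the $INV^+$ property of $v$ to the closed curves $\tau_i$ and $\psi(x)$, which handles \emph{all} points $y$ at once, including those in the interiors of $\overline{\B}$ and $\overline{\C}$.

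Your homotopy argument cleanly replaces that case analysis for targets $\u P\notin\overline{\B}\cup\overline{\C}$ --- collapsing the $\C$- and $\B$-loops to $(0,0)$ shows $\deg(\u P,u^*\res\Gamma)=\deg(\u P,v^*\res\Gamma)$, which is a nice reformulation. But the loops in $\C$ traced by $u\res\Gamma$ come from $\Gamma\cap\T^-$ while those traced by $v\res\Gamma$ come from $\Gamma\cap\T^+$; these are different curves and cannot be identified, so the homotopy gives no information for $\u P\in{\rm Int}\,\C$ (or ${\rm Int}\,\B$). That is precisely the case you defer to a ``degree equals signed count of preimages'' identity for the Sobolev map $u$. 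This identity is not a bookkeeping step: $u$ is discontinuous at $(0,1)$, whose multivalued image is the whole set $\overline\B\cup\overline\C$ (containing your target $u(Q)$), and if $(0,1)\in B(P,r)$ the classical Brouwer-degree/preimage-count argument on $\overline{B(P,r)}$ does not apply. Establishing it would essentially require the decomposition-into-closed-curves step you were hoping to avoid (or some other excision argument around $(0,1)$ that you would then have to control). So as written, the proposal proves the claim only for $u(Q)\notin\overline\B\cup\overline\C$; the remaining case, which is the one where the interesting behaviour of $u$ lives, is left open. To close it you should follow the paper's route of completing $\psi(\tau)$ to closed injective curves along $\partial\T^\pm$ and invoking $INV^+$ of $v$, rather than appealing to an unproven degree formula.
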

\begin{proof}
Let $\tau:\S^1\to [-2,2]^2\setminus (0,1)$ be a circle: we have to prove that all the points internal (resp., external) to $\tau$ have degree $1$ (resp., $0$) with respect to $u(\tau)$, unless they are contained on $u(\tau)$ itself. We check this in few possible cases.
\case{I}{If $\tau$ does not intersect $\partial \T^-\cup\partial \T^+$.}
In this case, the property comes immediately from the analogous property of $u$. Indeed, any point $x\in [-2,2]^2$ is internal (resp., external) to $\tau$ if and only if $\psi(x)$ is internal (resp., external) to $\psi(\tau)$, which in this case is also an injective closed path. Then, the degree of $u(x)=v(\tau(x))$ with respect to $u(\tau)=v(\psi(\tau))$ behaves correctly because $u$ satisfies the INV condition.
\case{II}{If $\tau$ intersects only $\partial\T^-$, or only $\partial\tau^+$.}
Let us now assume that $\tau$ intersects only $\partial\T^-$, the case when $\tau$ intersects only $\partial\T^+$ is clearly identical. Notice that $u(\tau)=v(\psi(\tau))$. However, this time $\psi(\tau)$ is done by two disconnected open curves, one outside $\T^-\cup\T^+$ and the other one inside $\T^-$; let us call them $\tau_a$ and $\tau_b$. As in Figure~\ref{Fig:229} (above), we denote by $\tau_1$ and $\tau_2$ the two injective, closed paths obtained adding to $\tau_a$ (resp., $\tau_b$) a segment contained in $\partial\T^-$ (resp., $\partial\T^-\cap\partial\T^+$). Notice that $u$ is constantly $(0,0)$ on both segments, hence $u(\tau)=v(\tau_a\cup\tau_b)=v(\tau_1)\cup v(\tau_2)$. Moreover, $v(\tau_1)$ and $v(\tau_2)$ are two injective curves, whose only intersection is the point $(0,0)$. In particular, for any $y\in [-2,2]^2\setminus u(\tau)$ one has that
\begin{equation}\label{sumdeg}
\deg(y,u(\tau))=\deg(y,v(\tau_1))+\deg(y,v(\tau_2))\,.
\end{equation}
Let now $x\in [-2,2]^2$ be any point. If it belongs to $\partial\T^-\cup\partial\T^+$, then $u(x)=(0,0)\in u(\tau)$, hence there is nothing to prove. If $x$ is internal to $\tau$ and not on $\partial\T^-$, then $y=\psi(x)$ is internal to one between $\tau_1$ and $\tau_2$, and external to the other one. Since $v$ satisfies the INV$^+$ condition, we can apply it to $\psi(x)$ with respect to $\tau_1$ and $\tau_2$, so we find that one between $\deg(u(x),v(\tau_1))=\deg(v(y),v(\tau_1))$ and $\deg(u(x),v(\tau_2))=\deg(v(y),v(\tau_2))$ is $0$ and the other one is $1$, thus by~(\ref{sumdeg}) we get $\deg(u(x),u(\tau))=1$, as required since $x$ is internal to $\tau$.\par
\begin{figure}[thbp]
\input{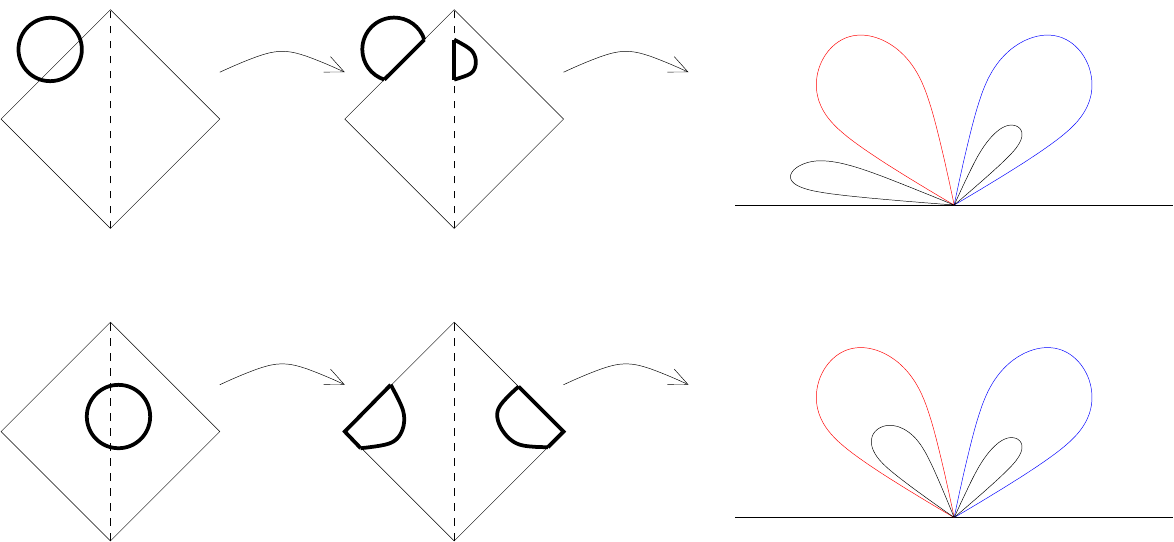_t}
\caption{The paths $\tau,\,\tau_1$ and $\tau_2$ in Case~II (above) and in Case~III (below).}\label{Fig:229}
\end{figure}
Finally, if $x$ is external to $\tau$ and not on $\partial\T^-\cup\partial\T^+$, then $\psi(x)$ is external to both $\tau_1$ and $\tau_2$, so an analogous argument ensures that $\deg(u(x),u(\tau))=0$, so we are done.
\begin{figure}[thbp]
\input{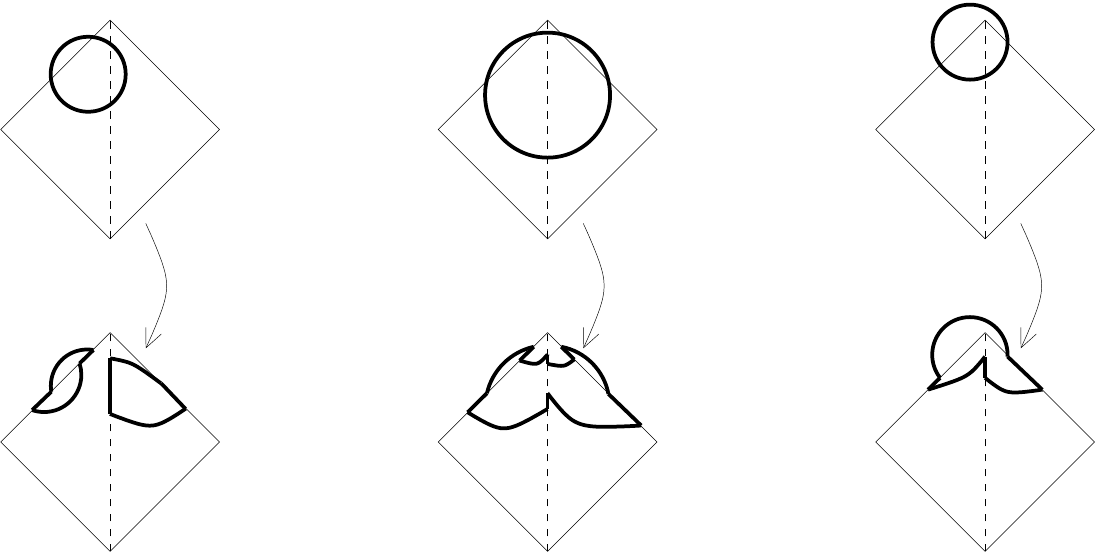_t}
\caption{The paths $\tau,\,\tau_1$ and $\tau_2$ in Case~IV (left) and in Case~V (center and right).}\label{Fig:229b}
\end{figure}
\case{III}{If $\tau$ intersects $\partial\T^-\cap\partial\T^+$ but not $\partial\T^-\setminus \partial\T^+$ nor $\partial\T^+\setminus \partial\T^-$.}
This case is completely analogous to Case~II, as a quick look to Figure~\ref{Fig:229} (below) shows.
\case{IV}{If $\tau$ intersects $\partial\T^-\cap\partial\T^+$ and one between $\partial\T^-\setminus \partial\T^+$ and $\partial\T^+\setminus \partial\T^-$.}
This case is again very similar to that of Case~II, see Figure~\ref{Fig:229b} (left); let us assume, just to fix the ideas, that $\gamma$ intersects $\partial\T^-\setminus\partial\T^+$ and not $\partial\T^+\setminus\partial\T^-$. This time, $\psi(\tau)$ is done by four disconnected open arcs, namely, an arc $\tau_a$ outside $\T^-\cup\T^+$, two arcs $\tau_b$ and $\tau_d$ inside $\T^+$ and an arc $\tau_c$ inside $\T^-$. As in the Figure, we denote by $\tau_1$ the closed, injective curve obtained adding to $\tau_a\cup\tau_c$ two segments on $\partial\T^-\setminus\partial\T^+$. Similarly, we denote by $\tau_2$ the closed, injective curve obtained adding to $\tau_b\cup\tau_d$ a segment on $\partial\T^-\cap\partial\T^+$ and another one on $\partial\T^+\setminus\partial\T^-$. It is then enough to ovserve that $\tau_1$ and $\tau_2$ are disjoint and that~(\ref{sumdeg}) holds as before, and then to argue as in the preceding steps.
\case{V}{If $\tau$ intersects $\partial\T^-\cap\partial\T^+$, $\partial\T^-\setminus \partial\T^+$ and $\partial\T^+\setminus \partial\T^-$.}
This case is now very simple to handle, it is enough to argue more or less exactly as in the last cases. As Figure~\ref{Fig:229b} (center and right) shows, there are two possible subcases, namely, if the point $(0,1)$ is internal or external to $\tau$. In the first case, $\psi(\tau)$ is done by six different pieces, and we can consider a single path $\tau_1$ by adding six intervals to it; in the second case, $\psi(\tau)$ is done by three pieces, and we obtain a single path $\tau_1$ by adding three intervals.
\end{proof}

\subsection{The fourth counterexample\label{quarto}}

We give now our last counterexample, which is the simplest one. It ensures that the sufficient condition in Theorem~\mref{NonSconn} is not necessary, not even up to $\H^1$-negligible sets. More precisely, we exhibit a function $u:[-2,2]^2\to [-2,2]^2$, coinciding with the identity on the boundary, being the $W^{1,p}$ limit of diffeomorphisms for $p<2$, but for which $u^{-1}(\u P)$ is a connected set which disconnects the domain for every $\u P$ in a segment.\par

For every $t\in [0,1/2]$, we consider the $2$-dimensional quadrilateral $\Q_t$ having as vertices the points $(0,0)$, $(2t,0)$ and $(t,\pm t \tan t)$. Notice that, for every $s<t$, $\Q_s$ is contained in $\Q_t$, having the origin as only common point. We start defining $u$ inside the biggest quadrilateral $\Q_{1/2}$ as the function which, for every $0<t<1/2$, maps the whole $\partial\Q_t$ on the point $(2t,0)$; Figure~\ref{Fig:69} depicts the function. Notice that this function is continuous except at $(0,0)$, whose image is the whole segment $[0,1]\times \{0\}$. Exactly as in the example of Section~\ref{secondo}, it is possible extend $u$ as a homeomorphism of $[-2,2]^2\setminus \Q_{1/2}$ onto $[-2,2]^2\setminus [0,1]\times\{0\}$, belonging to the Sobolev space $W^{1,p}([-2,2]^2\setminus \Q_{1/2})$ for $p<2$ and coinciding with the identity on $\partial([-2,2]^2)$. Notice that the whole $u$ is a Borel function, continuous up to the point $(0,0)$, and which is easily seen to satisfy the no-crossing condition; nevertheless, for every $0<t<1$, the counter-image of $(t,0)$ is the polygon $\partial\Q_{t/2}$, which disconnects $[-2,2]^2$. By Theorem~\mref{Caratt}, the function $u$ is then limit of diffeomorhisms if and only if it belongs to $W^{1,p}([-2,2]^2)$, and in turn this property is true, as we prove now.

\begin{figure}[thbp]
\input{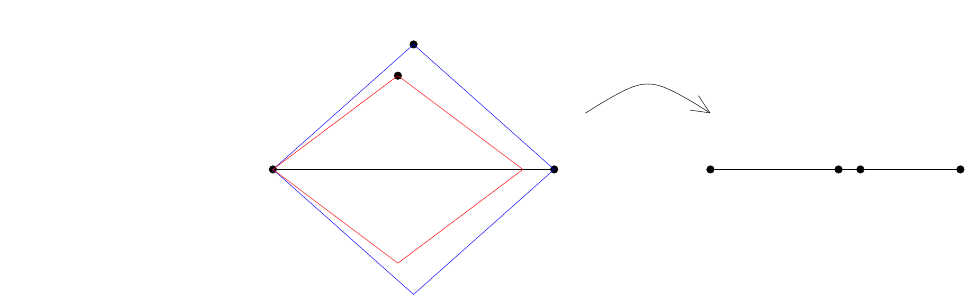_t}
\caption{Definition of $u$ and two quadrilaterals $\Q_t$ and $\Q_{t+\eps}$.}\label{Fig:69}
\end{figure}

\begin{lemma}
The function $u$ defined above belongs to $W^{1,p}([-2,2]^2)$.
\end{lemma}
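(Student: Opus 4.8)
The plan is to reduce the statement to showing that $u\in W^{1,p}(\Q_{1/2})$ for $p<2$, and to prove this last fact by an explicit computation in coordinates adapted to the foliation $\{\partial\Q_t\}_{0<t<1/2}$ of $\Q_{1/2}\setminus\{0\}$. For the reduction, note that by construction $u$ is already a $W^{1,p}$ homeomorphism on $[-2,2]^2\setminus\Q_{1/2}$, and $\partial\Q_{1/2}$ is a Lipschitz curve on which the traces of $u$ from the two sides agree, since both maps are continuous on $\partial\Q_{1/2}$ away from the single ($\H^1$-negligible) point $(0,0)$. Hence, by the standard gluing lemma for Sobolev functions across a Lipschitz interface, it suffices to prove $u\in W^{1,p}(\Q_{1/2})$. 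Now $u$ is bounded and, as will be transparent from the formulas below, Lipschitz on $\Q_{1/2}\cap\{|X|\ge\rho\}$ for every $\rho>0$; in particular $u\in W^{1,p}_{\mathrm{loc}}(\Q_{1/2}\setminus\{0\})$, and the origin is a removable singularity: for $\varphi\in C^\infty_c(\Q_{1/2})$ one has $\int u\,\partial_i\varphi=\lim_{\rho\to 0}\int_{\Q_{1/2}\setminus B_\rho(0)}u\,\partial_i\varphi$, and upon integrating by parts the only boundary term that does not obviously vanish in the limit is $\pm\int_{\partial B_\rho(0)}u\,\varphi\,\nu_i\,d\H^1$, whose modulus is at most $2\pi\rho\,\|u\|_\infty\|\varphi\|_\infty\to 0$. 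Thus it is enough to check $\int_{\Q_{1/2}}|Du|^p<\infty$.

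For the coordinates, observe that each $X\in\Q_{1/2}\setminus\{0\}$ lies on exactly one leaf $\partial\Q_{t}$, with $t=t(X)\in(0,1/2)$, and by definition $u\equiv(2t(X),0)$ on that leaf; hence $u$ depends only on $t$, so $|Du|=2|\nabla t|$ a.e., and the claim reduces to $\nabla t\in L^p(\Q_{1/2})$. By the symmetry $y\mapsto -y$ it suffices to integrate over the upper half $\Q_{1/2}^+$, which is parametrised by $(t,x)\mapsto X$ on $\{0<t<1/2,\ 0<x<2t\}$: denote by $R_-$ the part where $0<x<t$, on which $X=(x,x\tan t)$ lies on the upper-left side $y=x\tan t$ of $\partial\Q_t$, and by $R_+$ the part where $t<x<2t$, on which $X=(x,\tan t\,(2t-x))$ lies on the upper-right side $y=\tan t\,(2t-x)$; the line $x=t$ (the locus of top vertices) is $\H^1$-negligible.

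On $R_-$ we have $t=\arctan(y/x)$, so a direct differentiation gives $|\nabla t|=1/\sqrt{x^2+y^2}=\cos t/x$, while $(t,x)\mapsto(x,x\tan t)$ has Jacobian $x\sec^2 t$; hence
\[
\int_{R_-}|\nabla t|^p\,dX=\int_0^{1/2}\!\!\int_0^{t}\Big(\frac{\cos t}{x}\Big)^{p}x\sec^2 t\,\,dx\,dt=\frac{1}{2-p}\int_0^{1/2}\cos^{p-2}t\;t^{2-p}\,dt<\infty\qquad(p<2).
\]
On $R_+$, differentiating $\tan t\,(2t-x)=y$ gives $|\nabla t|=\sec t\,\big(\sec^2 t\,(2t-x)+2\tan t\big)^{-1}$, and $(t,x)\mapsto\big(x,\tan t\,(2t-x)\big)$ has Jacobian $\sec^2 t\,(2t-x)+2\tan t$; substituting $w=2t-x\in(0,t)$,
\[
\int_{R_+}|\nabla t|^p\,dX=\int_0^{1/2}\!\!\int_0^{t}\sec^p t\,\big(\sec^2 t\,w+2\tan t\big)^{1-p}\,dw\,dt,
\]
and the inner integral equals $\dfrac{\cos^2 t}{2-p}\Big((\sec^2 t\;t+2\tan t)^{2-p}-(2\tan t)^{2-p}\Big)$, which stays bounded on $(0,1/2)$ for $p<2$, so this integral is finite too. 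Adding these two contributions and their mirror images yields $\int_{\Q_{1/2}}|Du|^p<\infty$, which completes the proof.

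I do not expect a genuine difficulty here; the work is entirely in the bookkeeping --- checking that $u$ really depends only on the leaf parameter $t$ (so that $|Du|=2|\nabla t|$), that $R_-\cup R_+$ exhausts $\Q_{1/2}^+$ up to an $\H^1$-null set, and that $t$ is Lipschitz away from the origin (so that the removability argument and the change of variables are legitimate). Once the adapted coordinates are fixed, the integrals above make it visible that the borderline integrability $\int_0^t x^{1-p}\,dx<\infty$ forces exactly the range $p<2$, consistent with the rest of the construction.
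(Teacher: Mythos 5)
Your proof is correct, and it lands on the same key estimate that the paper uses, namely $|Du(X)| \lesssim 1/|X|$, but by a more explicit route. The paper derives that bound geometrically in one stroke by comparing nearby leaves: at a point $P\in\partial\Q_t$ with $|P|=\sigma$, the spacing to $\partial\Q_{t+\eps}$ is $\approx\sigma\,\eps$ while $u$ moves by $2\eps$, giving $|Du(P)|\lesssim 1/\sigma$ uniformly in $t$, and it then integrates in polar coordinates in a single line. You instead parametrise the foliation $(t,x)\mapsto X$, compute $|\nabla t|$ region by region, and perform the change of variables, arriving at the same $1/\sigma$ scaling (your $|\nabla t|=\cos t/x=1/\sigma$, so $|Du|=2|\nabla t|=2/\sigma$; the paper's stated constant $1/\sigma$ is off by a harmless factor of $2$). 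Your version involves more bookkeeping but makes every step explicit, and it also spells out two points the paper treats as standard and passes over silently: the removable-singularity argument at $(0,0)$ (justifying that the pointwise gradient is the distributional one) and the gluing of the two $W^{1,p}$ pieces across the Lipschitz interface $\partial\Q_{1/2}$. Both arguments are sound, and both make visible that $\int_0^1 \sigma^{1-p}\,d\sigma<\infty$ is exactly what forces $p<2$.
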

\begin{proof}
Since $u$ is continuous except at $(0,0)$, and we already know that it is of class $W^{1,p}$ on $[-2,2]^2\setminus \Q_{1/2}$, we have only to show that $u\in W^{1,p}$ inside $\Q_{1/2}$; in fact, it is clear that $u$ is smooth in the interior of $\Q_{1/2}$, so the possible problems are only close to $(0,0)$. Let $P$ be a point on $\partial\Q_t$, having distance $\sigma$ from the origin, and assume for a moment that $P$ belongs to one of the two sides originating at $(0,0)$: then, for every $\eps\ll t$ the point $P$ has distance $\sigma \tan \eps$ from $\partial \Q_{t+\eps}$, hence $|Du(P)|=1/\sigma$; a trivial geometrical argument shows that, if $P$ belongs to one of the two other sides of $\partial\Q_t$, then we have the stronger estimate $|Du(P)|<1/\sigma$, hence in general $|Du(P)|\leq 1/\sigma$ at every $P\in\partial\Q_t$ having distance $\sigma$ from the origin. Observe that this estimate does not depend on $t$. As a consequence, to check that $Du\in L^p(\Q_{1/2})$ it is enough to keep in mind that $p<2$ and to calculate
\[
\int_{\Q_{1/2}} |Du|^p \, dx \leq \int_{\theta=-1/2}^{1/2} \int_{\sigma=0}^1 \frac 1{\sigma^p}\, \sigma\, d\theta\,d\sigma
= \int_{\sigma=0}^1 \frac 1{\sigma^{p-1}}\, d\sigma < +\infty\,.
\]
\end{proof}

\section*{Acknowledgments}
G.D.P. is supported by the MIUR SIR-grant ``Geometric Variational Problems'' (RBSI14RVEZ).

\end{document}